\documentclass[final,onefignum,onetabnum]{siamart250211}
\usepackage{graphicx}
\usepackage{amsmath,multirow}
\usepackage{amssymb}
\usepackage{amscd}
\usepackage{amsfonts}
\usepackage{enumerate}
\usepackage{mathrsfs}
\usepackage{xypic}
\usepackage{stmaryrd}
\usepackage{graphicx}
\usepackage{fancybox}
\usepackage{color}
\usepackage{exscale}
\usepackage{enumitem,array}%
\usepackage{amscd}
\usepackage{amsfonts}
\usepackage{xy}
\usepackage{mathtools}
\usepackage{stmaryrd}
\usepackage{fancybox}
\usepackage{color}
\usepackage{booktabs}
\usepackage{subcaption}
\usepackage{extarrows}
\usepackage{xypic}
\usepackage{tikz}
\usetikzlibrary{arrows.meta,positioning,calc}
\usepackage{algorithm}
\usepackage{algorithmic}
\usepackage[a4paper,left=2.5cm,right=3cm,top=2.5cm,bottom=2.5cm]{geometry}
\usepackage{manyfoot}
\usepackage{verbatim}
\usepackage{amsopn}
\usetikzlibrary{arrows,chains,matrix,positioning,scopes}
\makeatletter
\tikzset{join/.code=\tikzset{after node path={%
\ifx\tikzchainprevious\pgfutil@empty\else(\tikzchainprevious)%
edge[every join]#1(\tikzchaincurrent)\fi}}}
\makeatother
\tikzset{>=stealth',every on chain/.append style={join},
         every join/.style={->}}
\tikzstyle{labeled}=[execute at begin node=$\scriptstyle,
   execute at end node=$]

\newsiamremark{remark}{Remark}
\newsiamremark{hypothesis}{Hypothesis}
\crefname{hypothesis}{Hypothesis}{Hypotheses}
\newsiamthm{claim}{Claim}

\usepackage{amssymb,amsmath,geometry,color}
\usepackage{epstopdf}
\usepackage{epsfig,multirow}
\usepackage{color,cases}

\hypersetup{
    colorlinks=true,
    linkcolor=black, 
    citecolor=black  
}

\newcommand{\fe}{\mathrm{e}}

\newcommand{\eps}{\varepsilon}

\newcommand{\bT}{{\mathbb T}}

\newcommand{\abs}[1]{\left\vert#1\right\vert}

\newcommand{\bx}{\mathbf{x}}

\newcommand{\mD}{\mathcal{D}}
\newcommand{\mF}{\mathcal{F}}

\newcommand{\mW}{\mathcal{W}}
\newcommand{\hbZ}{\widehat{\mathbf{Z}}}
\newcommand{\norm}[1]{\left\Vert#1\right\Vert}
\def\ii{\textmd{i}}

\headers{Uniformly accurate schemes for coupled dispersive models}{L. Wang, J. Li, B. Wang}

\title{High-order uniformly accurate fully discrete schemes with long time near-conservations for two nonlinear coupled dispersive models in the nonrelativistic regime
\thanks{Submitted to the editors DATE.
\funding{This work was supported partially by the National Natural Science Foundation of China (12371403), the Youth Project (Category A) of Shaanxi Provincial Natural Science Basic Research Program (2026JC-JCQN-013), Shaanxi Fundamental Science Research Project for Mathematics and Physics (25JSY046) and  Fundamental  Research  Funds  for  the Central Universities (xzy022026045).}}}

\author{Lina Wang\thanks{School of Mathematics and Statistics, Xi'an Jiaotong University, 710049 Xi'an, China
  (\email{wanglina@stu.xjtu.edu.cn}).}
  \and Jiyong Li\thanks{School of Mathematics Science, Hebei Key Laboratory of Computatioanal Mathematics  and Applications, Hebei International Joint Research Center for Mathematics and Interdisciplinary Science, Hebei Normal University, Shijiazhuang 050024,  China
  (\email{ljyong406@163.com}).}
  \and Bin Wang\thanks{Corresponding author.
  School of Mathematics and Statistics, Xi'an Jiaotong University, 710049 Xi'an, China
  (\email{wangbinmaths@xjtu.edu.cn}).}
 }

\begin{document}

\maketitle

\begin{abstract}
Nonlinear coupled dispersive models arise in numerous physical settings, including quantum electrodynamics, nonlinear optics, and plasma
dynamics. In the nonrelativistic limit, such systems exhibit rapid temporal oscillations of wavelength $\mathcal{O}(\varepsilon^2)$, controlled by a dimensionless parameter $0<\varepsilon\ll1$. Moreover, owing to the Yukawa-coupled
interaction terms, numerical schemes that simultaneously achieve high-order uniform
accuracy and approximately preserve the mass and the energy are essential for reliable simulations. In this work, we propose a class of fully discrete schemes for nonlinear coupled dispersive models in the nonrelativistic regime that achieve
higher-order uniform accuracy and approximately preserve both invariants over long times. Spatial discretization is
based on the Fourier pseudospectral method, and time advancement employs two-scale exponential Runge--Kutta methods. Numerical experiments on the highly oscillatory
Klein--Gordon--Dirac and Klein--Gordon--Schr\"odinger equations in one, two, and three dimensions confirm that the resulting schemes attain the theoretically predicted uniform accuracy and the long-time near-conservation of both invariants.  
\end{abstract}

\begin{keywords}
Nonlinear coupled dispersive models, Fourier pseudospectral method, Uniformly accurate scheme, Long time near-conservation, Two-scale formulation
\end{keywords}

\begin{MSCcodes}
35Q41, 65M70, 65N35, 81Q05
\end{MSCcodes}

\section{Introduction}
Nonlinear coupled dispersive models play a fundamental role in both mathematics
and physics. Representative examples include the Klein--Gordon--Schr\"{o}dinger
system, the Klein--Gordon--Dirac system, and the Klein--Gordon--Zakharov system,
whose variants have found wide applications in the nonlinear optics and optical communications \cite{Shen}, nonlinear quantum electrodynamics
\cite{Gre,Hol,Ohl,SK}, and Langmuir turbulence in plasma dynamics
\cite{MRZ,Ni}. Theoretical and numerical studies of such nonlinear coupled
systems have long attracted extensive attention.

 \subsection{Nonlinear coupled dispersive models}
In this work, we develop a class of fully discrete schemes that achieve uniform
high accuracy and approximately preserve two conserved quantities of two prominent
nonlinear coupled dispersive models in the nonrelativistic regime: the
Klein--Gordon--Dirac (KGD) equation and the Klein--Gordon--Schr\"{o}dinger (KGS)
equation. As fundamental models in quantum electrodynamics and meson--nucleon
interaction theory, respectively, the KGD and KGS systems and their variants have
been widely employed, e.g., in the Yukawa description of the nuclear force between
nucleons and in the study of Hawking radiation.

In $d$ dimensions ($d=1,2,3$), the dimensionless KGD system with external
time-independent electromagnetic potentials reads \cite{CY1,CY2,CY3}
\begin{equation}\label{KGD}
\begin{aligned}
&\eps^{2}\partial_{tt}\phi(t,\bx)-\Delta\phi(t,\bx)+\frac{1}{\eps^2}\phi(t,\bx)
=g\,\Psi^{*}(t,\bx)\sigma_{3}\Psi(t,\bx), \quad t>0, \quad \bx\in\mathbb{R}^{d}, \\
&\ii\partial_{t}\Psi(t,\bx)+\bigg(\frac{\ii}{\eps}\sum\limits_{j=1}^{d}\alpha_{j}\partial_{j}
-\frac{\omega}{\eps^2}\beta-W(\bx)\bigg)\Psi(t,\bx)=g\,\phi(t,\bx)\beta\Psi(t,\bx), \\
&\phi(0,\bx)=\phi_{0}(\bx), \quad \partial_{t}\phi(0,\bx)=\frac{1}{\eps^2}\phi_{1}(\bx),
\quad \Psi(0,\bx)=\Psi_{0}(\bx), \quad \bx\in\mathbb{R}^{d},
\end{aligned}
\end{equation}
which describes the interaction between a complex-valued Dirac spinor field
$\Psi(t,\bx)\in\mathbb{C}^{4}$ and a neutral real-valued scalar meson field
$\phi\coloneqq \phi(t,\bx)\in\mathbb{R}$ through the Yukawa coupling. Here,
$\ii=\sqrt{-1}$ is the imaginary unit, $t$ is the time variable, and
$\bx=(x_{1},\ldots,x_{d})^{\intercal}\in\mathbb{R}^{d}$ denotes the Cartesian
spatial coordinate with $\partial_{j}=\frac{\partial}{\partial x_{j}}$
($j=1,\ldots,d$); $\Delta=\nabla^{2}$ is the Laplace operator and
$\Psi^{*}=\overline{\Psi}^{\intercal}$ denotes the conjugate transpose. The
external electromagnetic potential is given by
$W\coloneqq W(\bx)=\omega\big(V(\bx)I_{4}-\sum_{j=1}^{d}A_{j}(\bx)\alpha_{j}\big)$,
where $V(\bx)$ and $A_{j}(\bx)$ ($j=1,\ldots,d$) represent the real-valued
electric and magnetic potentials, respectively. $g>0$ is the Yukawa coupling
constant, $0<\omega=\mathcal{O}(1)\leq 1$ denotes the ratio between the mass of
the electron and that of the meson, and $\alpha_{j}$ and $\beta$ are $4\times 4$
matrices given by
\begin{equation}\label{equ-9-19-1}
\alpha_{j}=\begin{pmatrix} \mathbf{0} & \sigma_{j} \\ \sigma_{j} & \mathbf{0} \end{pmatrix},
\quad j=1,2,3, \quad
\beta=\begin{pmatrix} I_{2} & \mathbf{0} \\ \mathbf{0} & -I_{2}  \end{pmatrix},
\end{equation}
where $I_{n}$ is the $n\times n$ identity matrix for $n\in\mathbb{N}$, and
$\sigma_{j}$ ($j=1,2,3$) are the Pauli matrices
\begin{equation}\label{paulimatr}
\sigma_{1}=\begin{pmatrix} 0 & 1 \\ 1 & 0 \end{pmatrix}, \quad
\sigma_{2}=\begin{pmatrix} 0 & -\ii \\ \ii & 0 \end{pmatrix}, \quad
\sigma_{3}=\begin{pmatrix} 1 & 0 \\ 0 & -1 \end{pmatrix}.
\end{equation}
Throughout this work, $0<\eps\leq 1$ denotes the dimensionless parameter inversely
proportional to the speed of light. The KGD system \eqref{KGD} is dispersive and
conserves two invariants. More precisely, the $L^{2}$-mass of the Dirac spinor
field $\Psi(t,\bx)$,
\begin{equation}\label{mass}
\norm{\Psi(t,\cdot)}_{L^{2}}^{2}\coloneqq \int_{\mathbb{R}^{d}}\abs{\Psi(t,\bx)}^{2}
\,\mathrm{d}\bx\equiv \norm{\Psi_{0}}_{L^{2}}^{2}, \quad t\geq 0,
\end{equation}
and the total energy
\begin{equation}\label{energy}
\begin{aligned}
E(t)\coloneqq &\ \frac{\eps^2}{2}\int_{\mathbb{R}^{d}}\abs{\partial_{t}\phi}^{2}\mathrm{d}\bx
+\frac{1}{2}\int_{\mathbb{R}^{d}}\abs{\nabla\phi}^{2}\mathrm{d}\bx
+\frac{1}{2\eps^2}\int_{\mathbb{R}^{d}}\abs{\phi}^{2}\mathrm{d}\bx \\
&+\int_{\mathbb{R}^{d}}\bigg[\frac{\ii}{\eps}\Psi^{*}\sum\limits_{j=1}^{d}\alpha_{j}\partial_{j}\Psi
-\frac{\omega}{\eps^2}\Psi^{*}\beta\Psi
-g\phi\,\Psi^{*}\beta\Psi\bigg]\mathrm{d}\bx \\
&-\int_{\mathbb{R}^{d}}\omega\bigg[V(\bx)\abs{\Psi}^{2}
-\sum\limits_{j=1}^{d}A_{j}(\bx)\Psi^{*}\alpha_{j}\Psi\bigg]\mathrm{d}\bx
\equiv E(0), \quad t\geq 0
\end{aligned}
\end{equation}
are invariants of the motion.

Furthermore, in lower dimensions ($d=1,2$), it is conventional to employ
two-component Dirac spinor fields \cite{BCJT,BCJY}, in which case the KGD system
\eqref{KGD} reduces to the following form ($d=1,2$):
\begin{equation}\label{KGD-two-com}
\begin{aligned}
&\eps^{2}\partial_{tt}\phi(t,\bx)-\Delta\phi(t,\bx)+\frac{1}{\eps^2}\phi(t,\bx)
=g\,\Psi^{*}(t,\bx)\sigma_{3}\Psi(t,\bx), \quad t>0, \quad \bx\in\mathbb{R}^{d}, \\
&\ii\partial_{t}\Psi(t,\bx)+\bigg(\frac{\ii}{\eps}\sum\limits_{j=1}^{d}\sigma_{j}\partial_{j}
-\frac{\omega}{\eps^2}\sigma_{3}
-\omega\Big(V(\bx)I_{2}-\sum\limits_{j=1}^{d}A_{j}(\bx)\sigma_{j}\Big)\bigg)\Psi(t,\bx)
=g\,\phi(t,\bx)\sigma_{3}\Psi(t,\bx),
\end{aligned}
\end{equation}
where $\phi\in\mathbb{R}$ and $\Psi\in\mathbb{C}^{2}$. Owing to its simplicity,
this formulation has been widely adopted in one and two dimensions. When
$0<\eps\ll 1$, the KGD system is characterized by rapid oscillations in time with
a wavelength of order $\mathcal{O}(\eps^2)$.

Another class of coupled nonlinear dispersive models that has attracted
considerable interest consists of the dimensionless KGS equations in $d$ dimensions
($d=3,2,1$) \cite{AA,FT,BZ}:
\begin{equation}\label{KGS}
\begin{aligned}
&\ii\partial_{t}\psi(t,\bx)+\Delta\psi(t,\bx)+\lambda\phi(t,\bx)\psi(t,\bx)=0,
\quad t>0, \quad \bx\in\mathbb{R}^{d}, \\
&\eps^{2}\partial_{tt}\phi(t,\bx)-\Delta\phi(t,\bx)+\frac{\mu^{2}}{\eps^{2}}\phi(t,\bx)
-\lambda\abs{\psi(t,\bx)}^{2}=0, \\
&\psi(0,\bx)=\psi_{0}(\bx), \quad \phi(0,\bx)=\phi_{0}(\bx), \quad
\partial_{t}\phi(0,\bx)=\frac{1}{\eps^2}\phi_{1}(\bx),
\end{aligned}
\end{equation}
which serves as a classical model for describing the dynamics of a complex-valued
scalar nucleon field $\psi(t,\bx)$ interacting with a neutral real-valued scalar
meson field $\phi(t,\bx)$ through the Yukawa coupling. Here $t$ is the time
variable, $\bx\in\mathbb{R}^{d}$ is the spatial coordinate, $\mu>0$ denotes the
ratio of the meson mass to the nucleon mass, and $\lambda\in\mathbb{R}$ is the
coupling constant; both $\mu$ and $\lambda$ are dimensionless and independent of
$\eps$. The KGS system \eqref{KGS} is dispersive and time-reversible, and it
conserves the mass of the nucleon field
\begin{equation}\label{KGS-mass}
\norm{\psi(t,\cdot)}^{2}_{L^{2}}\coloneqq \int_{\mathbb{R}^{d}}\abs{\psi(t,\bx)}^{2}
\,\mathrm{d}\bx\equiv \norm{\psi_{0}}_{L^{2}}^{2}, \quad t\geq 0,
\end{equation}
and the Hamiltonian, i.e., the total energy,
\begin{equation}\label{KGS-energy}
\begin{aligned}
E(t)\coloneqq\int_{\mathbb{R}^{d}}\bigg[\frac{1}{2}\Big(\eps^{2}\abs{\partial_{t}\phi}^{2}
+\abs{\nabla\phi}^{2}+\frac{\mu^{2}}{\eps^{2}}\abs{\phi}^{2}\Big)+\abs{\nabla\psi}^{2}
-\lambda\abs{\psi}^{2}\phi \bigg]\mathrm{d}\bx\equiv E(0), \quad t\geq 0.
\end{aligned}
\end{equation}

\subsection{State of the art}
\label{sec:state}

The KGD and KGS systems have been studied extensively, both theoretically and
numerically. For well-posedness and regularity results, we refer to
\cite{Bou1,Bou2,CZ,Bil,GM,WZ} and the references therein. On the numerical
side, a variety of efficient and accurate methods have been proposed and
analyzed, including time-splitting methods \cite{BY,YRS}, exponential wave
integrators \cite{LWCD,Li}, and finite difference methods \cite{BS,CY2,CY4,WZJ}.

In parallel, various structure-preserving methods have been developed for the
KGD and KGS systems to address their conservation properties. For instance, a
fourth-order compact finite difference scheme for the KGD system was
constructed in \cite{LGW}, with a proof of discrete conservation of both the
total mass and the total energy. By combining a symplectic Runge--Kutta method
with the quadratic auxiliary vector (QAV) approach, mass- and
energy-preserving schemes of arbitrarily high order were constructed in
\cite{FGWC} for the relativistic KGS system. A high-order energy-preserving
algorithm for the relativistic KGS system was proposed in \cite{LSZ} by
introducing Lagrange multipliers, and its feasibility and effectiveness in the
nonrelativistic regime were also verified; see also \cite{LJ,ZS,MLJ}.

In the nonrelativistic limit $0<\varepsilon\ll1$, the KGD and KGS systems
exhibit rapid temporal oscillations with a wavelength of order
$\mathcal{O}(\varepsilon^{2})$. For classical numerical methods, an accurate
resolution of the solution phase then entails a step-size restriction
governed by $\varepsilon$, which leads to a considerable computational burden.
Moreover, the Yukawa coupling term introduces additional difficulties in the
design and analysis of high-order uniformly accurate (UA) schemes.

Substantial progress has been made in the development of UA schemes for highly
oscillatory evolution equations \cite{CCLM,CCLMZ,BCZ,CY3,WZ,WWL}. A principal
advantage of UA methods is that their time-step and mesh strategies are
independent of $\varepsilon$. For nonlinear (coupled) dispersive equations,
three main approaches have been pursued: two-scale formulation (TSF) methods
\cite{CCLM,WWL}, multiscale time integrator Fourier pseudospectral (MTI-FP)
methods \cite{BZ,CY1}, and nested Picard iterative (NPI) integrators
\cite{BCZ,CY3}; see also \cite{BFS,FS}. For the KGD system in the
nonrelativistic regime, a uniformly first-order accurate MTI method was
introduced in \cite{CY1}, and an effective decoupling of the Yukawa
interaction subsequently enabled the construction of a UA NPI integrator of
order up to two in \cite{CY3}, with extensions to arbitrary order possible in
principle. For the KGS system, an MTI-FP scheme that is uniformly first-order
accurate for $\varepsilon\in(0,1]$ was developed in \cite{BZ}: the method
attains the optimal second-order rate $\mathcal{O}(h^{2})$ for
$0<h\lesssim\varepsilon^{2}$, with $h$ the time step, while its error is
$\mathcal{O}(\varepsilon^{2})$, independent of $h$, for
$0<\varepsilon\lesssim h^{1/2}$. Asymptotically consistent exponential-type
integrators of order up to two were proposed in \cite{BKS,Cal}, and UA NPI
integrators of order up to three were developed in \cite{CZ}, with
arbitrary-order generalizations feasible in principle. Based on the approach
of \cite{BZ}, a UA MTI-FP method with simplified transmission conditions was
further proposed in \cite{FL}, achieving second-order accuracy under
$h\lesssim\varepsilon^{2}$.

\subsection{Contributions and outline}
\label{sec:contrib}

The UA approaches reviewed above are each tailored to a particular class of
problems or accuracy requirements. The MTI-FP approach relies on
problem-dependent multiscale asymptotic expansions together with frequency
analysis, so that its higher-order extensions typically require
model-specific analysis. NPI methods admit a recursive extension to arbitrary
order in principle, while the associated algebraic constructions become
increasingly involved as the order grows. The TSF method was originally
introduced for highly oscillatory Schr\"odinger and Klein--Gordon equations
\cite{CCLM} and was subsequently extended to the Vlasov--Poisson equation
under strong magnetic fields \cite{CCLMZ}; more recently, within the two-scale
framework, a class of fourth-order UA integrators with approximate
structure-preserving properties was devised in \cite{WWL} for the Dirac
equation in the nonrelativistic regime. Since the two-scale framework
separates the fast and slow dynamics at the vector-field level, it provides a
natural setting for constructing high-order integrators with approximate
structure preservation. To date, however, it has not been applied to
nonlinear coupled dispersive models with Yukawa coupling, and no existing
scheme for the KGD and KGS systems simultaneously offers high-order uniform
accuracy and approximate conservation of the mass and the energy. Motivated by these observations, we aim to fill this gap in the present work.
We construct uniformly accurate fully discrete schemes with the following
features.
\begin{itemize}
    \item The two-scale approach is employed for the first time for highly
    oscillatory nonlinear coupled dispersive models, providing a general and
    systematic framework for constructing UA schemes of arbitrarily high
    order.
    \item Two fourth-order uniformly accurate fully discrete schemes are
    derived, one explicit and one symmetric: the explicit scheme is
    computationally efficient, while the symmetric scheme exhibits excellent
    long-time near-conservation of the mass and the energy for the coupled
    systems.
    \item The proposed schemes are validated numerically for the coupled
    systems in up to three spatial dimensions ($d=3$), which is reported here
    for the first time.
\end{itemize}

The remainder of this paper is organized as follows. In
Section~\ref{sec-2}, we present the two-scale framework for the nonlinear
coupled dispersive models. In Section~\ref{sec-3}, we introduce a class of
fully discrete schemes, in which the Fourier pseudospectral method is used
for the spatial discretization and explicit and symmetric exponential
Runge--Kutta integrators are adopted for the time integration within the
two-scale framework. In Section~\ref{sec-4}, we verify the effectiveness of
the fully discrete schemes for the two coupled dispersive systems in one,
two, and three dimensions.

\section{Two-scale framework}\label{sec-2}
In this section, the KGD and KGS systems are first reformulated by means of the
operator splitting technique, which facilitates the construction of numerical
schemes. The two-scale formulation and the corresponding initial conditions are
then introduced. For simplicity, we restrict ourselves to the one-dimensional
case $d=1$; the extension to higher dimensions is straightforward, and all the
results remain valid.

\subsection{Equivalent reformulation}\label{sec-2.1}

Throughout, for an integer $m>0$ and $\Omega=[a,b]$, let $H^{m}(\Omega)$ denote
the standard Sobolev space. In what follows, we confine ourselves to the case
where $\omega>0$ and $\mu>0$ are rational numbers.

We introduce two operators that are uniformly bounded with respect to $\eps$,
namely $\Lambda_{\mu}: H^{m}(\Omega)\to H^{m-1}(\Omega)$ with
$\Lambda_{\mu}=\sqrt{\mu^{2}-\eps^{2}\Delta}$, and
$\mathcal{S}^{\eps}: (H^{m}(\Omega))^{2}\to (H^{m-1}(\Omega))^{2}$ with
$\mathcal{S}^{\eps}=-\ii \eps\sigma_{1}\partial_{x}+\omega\sigma_{3}$.
In the phase space (Fourier domain), $\mathcal{S}^{\eps}$ is diagonalizable and
admits the decomposition (\cite{BCJT,BMP})
\begin{equation}\label{equ-9-3-1}
\mathcal{S}^{\eps}=\sqrt{\omega^{2}I_{2}-\eps^{2}\Delta}\,\Pi_{+}^{\eps}
-\sqrt{\omega^{2}I_{2}-\eps^{2}\Delta}\,\Pi_{-}^{\eps},
\end{equation}
where $\Pi_{\pm}^{\eps}: (H^{m}(\Omega))^{2}\to (H^{m}(\Omega))^{2}$ are
projector operators given by
\begin{equation}\label{equ-9-3-2}
\Pi_{+}^{\eps}=\frac{1}{2}\Big[I_{2}+(\omega^{2}I_{2}-\eps^2 \Delta)^{-1/2}\mathcal{S}^{\eps}\Big], \quad
\Pi_{-}^{\eps}=\frac{1}{2}\Big[I_{2}-(\omega^{2}I_{2}-\eps^2 \Delta)^{-1/2}\mathcal{S}^{\eps}\Big],
\end{equation}
which are uniformly bounded with respect to $\eps$. It is readily verified that
$\Pi_{+}^{\eps}+\Pi_{-}^{\eps}=I_{2}$,
$\Pi_{+}^{\eps}\Pi_{-}^{\eps}=\Pi_{-}^{\eps}\Pi_{+}^{\eps}=\mathbf{0}$, and
$(\Pi_{\pm}^{\eps})^{2}=\Pi_{\pm}^{\eps}$.
Define the operators
\begin{align}
&\mD_{\mu}\coloneqq \frac{1}{\eps^{2}}\big(\sqrt{\mu^{2}-\eps^{2}\Delta}-\mu\big)
=-(\sqrt{\mu^{2}-\eps^{2}\Delta}+\mu)^{-1}\Delta, \label{equ-9-3-4a} \\
&\mD_{\omega}\coloneqq \frac{1}{\eps^{2}}\big(\omega^{2}I_{2}-\eps^{2}\Delta\big)^{1/2}-\frac{\omega}{\eps^{2}}I_{2}
=-(\sqrt{\omega^{2}I_{2}-\eps^{2}\Delta}+\omega I_{2})^{-1}\Delta, \label{equ-9-3-4b}
\end{align}
which are uniformly bounded (with respect to $\eps$) from $H^{m}(\Omega)$ to
$H^{m-2}(\Omega)$ for $m\geq 2$. Moreover, they satisfy
$0\leq \mD_{\mu}\leq -\frac{1}{2\mu}\Delta$ and
$0\leq \mD_{\omega}\leq -\frac{1}{2\omega}\Delta I_{2}$ for all
$\eps\in(0,1]$.

Consequently, the evolution operators $\fe^{\pm\ii\Lambda_{\mu} t/\eps^{2}}$ and
$\fe^{\pm\ii\mathcal{S}^{\eps}t/\eps^{2}}$ can be decomposed as
\begin{equation}\label{operator-decom}
\fe^{\pm\frac{\ii\Lambda_{\mu} t}{\eps^2}}=\fe^{\pm\frac{\ii \mu t}{\eps^2}}\fe^{\pm\ii \mD_{\mu}t}, \qquad
\fe^{\pm\frac{\ii\mathcal{S}^{\eps}t}{\eps^2}}=\fe^{\pm\frac{\ii\omega t}{\eps^2}}\fe^{\pm\ii\mD_{\omega}t}\Pi_{+}^{\eps}
+\fe^{\mp\frac{\ii\omega t}{\eps^2}}\fe^{\mp\ii\mD_{\omega}t}\Pi_{-}^{\eps}.
\end{equation}

\subsubsection{KGD equation}\label{sec-2.1.1}
We first introduce the transformation
\begin{equation}\label{equ-9-3-3}
u(t,x)=\phi(t,x)-\ii\eps^{2}\Lambda_{1}^{-1}\partial_{t}\phi(t,x), \quad t>0, \quad x\in\Omega,
\end{equation}
where $u(t,x)\in\mathbb{C}$ satisfies periodic boundary conditions. It follows
immediately that $\phi(t,x)=\tfrac{1}{2}(u(t,x)+\overline{u}(t,x))
=\operatorname{Re}(u(t,x))$, with $\operatorname{Re}(\cdot)$ denoting the real
part, and $\partial_{t}\phi(t,x)=\frac{\ii\Lambda_{1}}{2\eps^2}
(u(t,x)-\overline{u}(t,x))$. Then the KGD system \eqref{KGD-two-com} can be
reformulated as the following first-order system in time:
\begin{equation}\label{first-order}
\begin{aligned}
&\partial_{t}u(t,x)-\frac{\ii}{\eps^{2}}\Lambda_{1} u(t,x)
=-\ii g\Lambda_{1}^{-1}(\Psi^{*}(t,x)\sigma_{3}\Psi(t,x)),
\quad t>0, \quad x\in\Omega, \\
&\partial_{t}\Psi(t,x)+\frac{\ii}{\eps^{2}}\mathcal{S}^{\eps}\Psi(t,x)
=-\ii\big(W(x)\Psi(t,x)+g\operatorname{Re}(u(t,x))\sigma_{3}\Psi(t,x)\big),
\quad t>0, \quad x\in\Omega, \\
&u(0,x)=\phi_{0}(x)-\ii\Lambda_{1}^{-1}\phi_{1}(x), \quad \Psi(0,x)=\Psi_{0}(x).
\end{aligned}
\end{equation}
To filter out the dominant oscillations in the above system, we introduce a
family of filtering transformations
\begin{equation}\label{filter}
\zeta(t,x)\coloneqq \fe^{-\frac{\ii\Lambda_{1} t}{\eps^{2}}}u(t,x), \quad
\Xi(t,x)=(\xi_{1}(t,x),\xi_{2}(t,x))\coloneqq \fe^{\frac{\ii\mathcal{S}^{\eps}t}{\eps^{2}}}\Psi(t,x),
\quad x\in\Omega.
\end{equation}
Then the system \eqref{first-order} is transformed into
\begin{equation}\label{fil-system}
\begin{aligned}
&\partial_{t}\zeta(t,x)=\fe^{-\frac{\ii\Lambda_{1} t}{\eps^{2}}}\left[-\ii g\Lambda_{1}^{-1}
\left(\fe^{-\frac{\ii\mathcal{S}^{\eps}t}{\eps^{2}}}\Xi(t,x)\right)^{*}\sigma_{3}
\left(\fe^{-\frac{\ii\mathcal{S}^{\eps}t}{\eps^{2}}}\Xi(t,x)\right)\right], \\
&\partial_{t}\Xi(t,x)=\fe^{\frac{\ii\mathcal{S}^{\eps}t}{\eps^{2}}}\left[-\ii
\left(W(x)+g\operatorname{Re}\!\left(\fe^{\frac{\ii\Lambda_{1} t}{\eps^{2}}}\zeta(t,x)\right)\sigma_{3}\right)
\left(\fe^{-\frac{\ii\mathcal{S}^{\eps}t}{\eps^{2}}}\Xi(t,x)\right)\right], \\
&\zeta(0,x)=\phi_{0}(x)-\ii\Lambda_{1}^{-1}\phi_{1}(x), \quad \Xi(0,x)=\Psi_{0}(x).
\end{aligned}
\end{equation}
In view of the decompositions \eqref{operator-decom}, the system \eqref{fil-system}
is equivalent to
\begin{equation}\label{fil-system-2}
\begin{aligned}
&\partial_{t}\zeta(t,x)=\Big(\fe^{-\frac{\ii t}{\eps^2}}\fe^{-\ii \mD_{1}t}\Big)\,
\varrho\bigg(\Big(\fe^{-\frac{\ii\omega t}{\eps^2}}\fe^{-\ii\mD_{\omega}t}\Pi_{+}^{\eps}
+\fe^{\frac{\ii\omega t}{\eps^2}}\fe^{\ii\mD_{\omega}t}\Pi_{-}^{\eps}\Big)\Xi(t,x)\bigg), \\
&\partial_{t}\Xi(t,x)=\Big(\fe^{\frac{\ii\omega t}{\eps^2}}\fe^{\ii\mD_{\omega}t}\Pi_{+}^{\eps}
+\fe^{-\frac{\ii\omega t}{\eps^2}}\fe^{-\ii\mD_{\omega}t}\Pi_{-}^{\eps}\Big)\\
&\qquad\qquad\times\mathcal{G}\bigg(\Big(\fe^{\frac{\ii t}{\eps^2}}\fe^{\ii \mD_{1}t}\Big)\zeta(t,x),\,
\Big(\fe^{-\frac{\ii\omega t}{\eps^2}}\fe^{-\ii\mD_{\omega}t}\Pi_{+}^{\eps}
+\fe^{\frac{\ii\omega t}{\eps^2}}\fe^{\ii\mD_{\omega}t}\Pi_{-}^{\eps}\Big)\Xi(t,x)\bigg), \\
&\zeta(0,x)=\phi_{0}(x)-\ii\Lambda_{1}^{-1}\phi_{1}(x), \quad \Xi(0,x)=\Psi_{0}(x),
\end{aligned}
\end{equation}
where $\varrho(\Psi)=-\ii g\Lambda_{1}^{-1}(\Psi^{*}\sigma_{3}\Psi)$ and
$\mathcal{G}(u,\Psi)=-\ii\big(W(x)+g\operatorname{Re}(u)\sigma_{3}\big)\Psi$.

\subsubsection{KGS equation}\label{sec-2.1.2}
The analogous transformation
$\mathfrak{u}=\phi-\ii\eps^{2}\Lambda_{\mu}^{-1}\partial_{t}\phi$ allows the KGS
system \eqref{KGS} in one dimension to be rewritten as
\begin{equation}\label{equ-9-20-1}
\begin{aligned}
&\partial_{t}\psi(t,x)=\ii\Delta\psi(t,x)+\ii\lambda\operatorname{Re}(\mathfrak{u}(t,x))\,\psi(t,x),
\quad \psi(0,x)=\psi_{0}(x), \\
&\partial_{t}\mathfrak{u}(t,x)=\frac{\ii\Lambda_{\mu}}{\eps^2}\mathfrak{u}(t,x)
-\ii\lambda\Lambda_{\mu}^{-1}\abs{\psi(t,x)}^{2}, \quad
\mathfrak{u}(0,x)=\phi_{0}(x)-\ii\Lambda_{\mu}^{-1}\phi_{1}(x).
\end{aligned}
\end{equation}
Introducing the filtered variables
\begin{equation}\label{equ-9-20-2}
\mathfrak{q}(t,x)=\fe^{-\ii\Delta t}\psi(t,x), \quad \mathfrak{p}(t,x)=\fe^{-\frac{\ii\Lambda_{\mu}t}{\eps^2}}\mathfrak{u}(t,x), \quad x\in\Omega,
\end{equation}
and invoking the operator \eqref{equ-9-3-4a}, we obtain
\begin{equation}\label{equ-9-20-3}
\begin{aligned}
&\partial_{t}\mathfrak{q}(t,x)=\ii\lambda\,\fe^{-\ii t\Delta}\left[
\operatorname{Re}\!\left(\fe^{\ii t\mD_{\mu}}\fe^{\frac{\ii\mu t}{\eps^{2}}}\mathfrak{p}(t,x)\right)
\left(\fe^{\ii t\Delta}\mathfrak{q}(t,x)\right)\right], \quad
\mathfrak{q}(0,x)=\psi_{0}(x), \\
&\partial_{t}\mathfrak{p}(t,x)=-\ii\lambda\,\Big(\fe^{-\ii\mD_{\mu}t}\fe^{-\frac{\ii\mu t}{\eps^2}}\Big)
\left(\Lambda_{\mu}^{-1}\abs{\fe^{\ii t\Delta}\mathfrak{q}(t,x)}^{2}\right), \quad
\mathfrak{p}(0,x)=\phi_{0}(x)-\ii\Lambda_{\mu}^{-1}\phi_{1}(x).
\end{aligned}
\end{equation}

Consequently, both the coupled KGD system \eqref{fil-system-2} and the KGS system
\eqref{equ-9-20-3} admit the unified representation
\begin{equation}\label{ori-coup}
\partial_{t}\mathfrak{b}(t,x)=\mF\big(t,t/\eps^{2},\mathfrak{b}(t,x)\big), \quad
\mathfrak{b}(0,x)=\mathfrak{b}_{0}(x), \quad t\in[0,T], \quad \eps\in(0,1],
\end{equation}
where $\mathfrak{b}=[\mathfrak{b}_{1},\ldots,\mathfrak{b}_{\tilde{k}}]^{\intercal}$
with $\tilde{k}\in\mathbb{N}_{+}$ (e.g., $\tilde{k}=3$ for the KGD system and
$\tilde{k}=2$ for the KGS system), the solution $t\mapsto \mathfrak{b}(t)$ is a
smooth map into the Sobolev space $H^{s}(\Omega)$ for $s>d/2+8$, and the vector
field $(t,\tau,\mathfrak{b})\mapsto \mF(t,\tau,\mathfrak{b})\in H^{s}$ is smooth
and $P$-periodic in $\tau\in\Theta$ with $\Theta\coloneqq\mathbb{R}/P\mathbb{Z}$
denoting the corresponding one-dimensional torus. Here the period $P$ is well
defined owing to the rationality assumption on $\omega$ and $\mu$ made at the
beginning of this subsection.

\subsection{Two-scale formulation and initial data}\label{sec-2.2}
We observe that system \eqref{ori-coup} involves two distinct time scales,
namely the slow scale $t$ and the fast scale $t/\eps^2$. To distinguish them
explicitly, we introduce an augmented two-scale formulation of the original
system, in which $U(t,\tau,x)$ denotes the augmented solution and is
$P$-periodic with respect to $\tau$. The resulting two-scale formulation reads
\begin{equation}\label{two-scale}
\partial_{t}U(t,\tau,x)+\frac{1}{\eps^2}\partial_{\tau}U(t,\tau,x)
=\mF\big(t,\tau,U(t,\tau,x)\big), \quad \tau\in\mathbb{T}_{\tau},
\end{equation}
where $\mathbb{T}_{\tau}\coloneqq\mathbb{R}/P\mathbb{Z}$. We recall that the
period $P$ is well defined owing to the rationality assumption on $\omega$ and
$\mu$ made in Subsection \ref{sec-2.1}. Since the operators
$\mD_{\mu}: H^{m}(\Omega)\to H^{m-2}(\Omega)$,
$\mD_{\omega}: (H^{m}(\Omega))^{2}\to (H^{m-2}(\Omega))^{2}$, and
$\Pi_{\pm}^{\eps}: (H^{m}(\Omega))^{2}\to (H^{m}(\Omega))^{2}$ are uniformly
bounded with respect to $\eps$, each differentiation of the vector field
$\mF(t,\tau,U)$ with respect to $t$ costs two derivatives in the space variable.
Accordingly, we impose the following assumption on $\mF$.

\vspace{6pt}
\noindent \textbf{Assumption (A)} The functional
\[
\partial_{t}^{\alpha}\partial_{\tau}^{\beta}\partial_{u}^{\gamma}\mF(t,\tau,U): \mathbb{R}_{+}\times \mathbb{T}_{\tau}\times H^{s}\mapsto \mathcal{L}(\underbrace{H^{\sigma}\times\cdots \times H^{\sigma}}_{\gamma\times\beta \ \text{times}},H^{\sigma-2\alpha})
\]
is continuous and locally bounded, for all $\alpha\in\{0\cdots 4\}$, $\beta\in \{0,1\}$, and $\gamma\in \{0\cdots 4\}$, for all $s,\sigma$ such that $s\geq \sigma>2\alpha+d/2$.
\vspace{6pt}

It suffices for the initial data to satisfy $U(0,0,x)=\mathfrak{b}_{0}(x)$, whereas $U(0,\tau,x)$ may be freely designed as follows.
For a periodic function $\kappa(\cdot)$ on $\bT$, define the averaging, differentiation and its inverse operators by
\[
\Pi \kappa:=\frac{1}{2\pi}\int_0^{2\pi}\kappa(\tau)d\tau, \quad
 L:=\partial_\tau, \quad
 L^{-1}\kappa:=(I-\Pi)\int_0^\tau \kappa(s)ds,\quad
  A:=L ^{-1} (I-\Pi).
\]
Following the Chapman–Enskog expansion \cite{CCLM,Zhao}, the solution $U(t,\tau,x)$ of \eqref{two-scale} can be decomposed as
 \begin{equation}\label{equ-9-4-1}
U(t,\tau,x)=\underline{U}(t,x)+h(t,\tau,x),
\end{equation}
where $\underline{U}(t,x):=\Pi U(t,\tau,x)$ is the averaged part and $h(t,\tau,x)$ is the correction part satisfying
$\Pi h(t,\tau,x)=0.$ Applying the operator $\Pi $ to \eqref{two-scale} gives
\begin{align}\label{equ-9-4-2}
\partial_{t} \underline{U}(t, x)=\Pi \mF(t,\tau,\underline{U}(t,x)+h(t,\tau,x)).
\end{align}
Subtracting \eqref{equ-9-4-2} from \eqref{two-scale} and applying the inverse operator $L$ yields
\begin{align}\label{equ-9-4-4}
h(t,\tau,x)=\eps^{2}\big( A \mF(t,\tau,\underline{U}(t,x)+h(t,\tau,x))- L^{-1}\left(\partial_{t} h(t,\tau,x)\right)\big).
\end{align}
Through asymptotic expansion in $\underline{U}(t,x)$, we obtain
 \begin{equation}\label{equ-9-4-5}
h(t,\tau,x)=\eps^{2} h_{1}(t,\tau,\underline{U}(t,x))+\eps^{4} h_{2}(t,\tau,\underline{U}(t,x))+\eps^{6} h_{3} (t,\tau,\underline{U}(t,x))+\mathcal{O}(\eps^{8}),
\end{equation}
with
\begin{equation}\label{equ-9-4-9}
\begin{aligned}
h_{1}(t,\tau,\underline{U}):=&A\mF(t,\tau,\underline{U}), \\
h_{2}(t,\tau,\underline{U}):=&A\partial_{u}\mF(t,\tau,\underline{U}) A\mF(t,\tau,\underline{U})-A^{2}\partial_{t}\mF(t,\tau,\underline{U})-A^{2}\partial_{u}\mF(t,\tau,\underline{U})
\Pi \mF(t,\tau,\underline{U}), \\
h_{3}(t,\tau,\underline{U}):=&A\partial_{u}\mF(t,\tau,\underline{U}) h_{2}+\frac{1}{2}A\partial_{u}^{2}\mF(t,\tau,\underline{U})(h_{1},h_{1})-A^{2}\partial_{u}\partial_{t}\mF(t,\tau,\underline{U})
A\mF(t,\tau,\underline{U}) \\
&-A^{2}\partial_{u}^{2}\mF(t,\tau,\underline{U})(\Pi \mF(t,\tau,\underline{U}),A\mF(t,\tau,\underline{U}))-A^{2}\partial_{u}\mF(t,\tau,\underline{U}) A\partial_{t}\mF(t,\tau,\underline{U}) \\
&-A^{2}\partial_{u}\mF(t,\tau,\underline{U}) A\partial_{u}\mF(t,\tau,\underline{U})\Pi \mF(t,\tau,\underline{U}) \\
&-A^{2}\partial_{u}\mF(t,\tau,\underline{U})(\Pi\partial_{u}\mF(t,\tau,\underline{U}),A \mF(t,\tau,\underline{U}))+A^{3}\partial_{t}^{2}\mF(t,\tau,\underline{U}) \\
&+2A^{3}\partial_{t}\partial_{u}\mF(t,\tau,\underline{U})\Pi \mF(t,\tau,\underline{U})+A^{3}\partial_{u}^{2}\mF(t,\tau,\underline{U})(\Pi \mF(t,\tau,\underline{U}), \Pi \mF(t,\tau,\underline{U})) \\
&+A^{3}\partial_{u}\mF(t,\tau,\underline{U})\Pi\partial_{t}\mF(t,\tau,\underline{U})+A^{3}\partial_{u}\mF(t,\tau,\underline{U})
\Pi\partial_{u}\mF(t,\tau,\underline{U})\Pi \mF(t,\tau,\underline{U}).
\end{aligned}
\end{equation}
Since $\Pi$  and $ A$ are bounded on $C^0(\mathbb{T}_{\tau};H^{\sigma})$ for any $\sigma$, the functions $h_{1}, h_{2}, h_{3}$ are of order $\mathcal{O}(1)$ in the appropriate Sobolev norms.

The complete initial value $U(0,\tau,x)$ is now to be determined. Evaluating the Chapman–Enskog expansions \eqref{equ-9-4-1} and \eqref{equ-9-4-5} at the initial time gives
\[
U(0,\tau,x)=\underline{U}(0)+\eps^{2} h_{1}(0,\tau,\underline{U}(0))+\eps^{4} h_{2}(0,\tau,\underline{U}(0))+\eps^{6} h_{3}(0,\tau,\underline{U}(0))+\mathcal{O}(\eps^{8}),
\]
In view of the constraint
$\mathfrak{b}_{0}=U(0,0,x),$
the proper $\underline{U}(0)$ can be determined through the iterative construction:
\begin{equation}\label{equ-9-4-10}
\underline{U}^{[0]}:=\mathfrak{b}_{0},\ \underline{U}^{[r]}=\mathfrak{b}_{0}-\sum_{l=1}^r\eps^{2(r-l+1)} h_{l}(0,0,\underline{U}^{[r-l]}),\ r=1,2,3,\  \ \mbox{where}\
\underline{U}^{[r]}-\underline{U}(0)=\mathcal{O}(\eps^{2(r+1)}).
\end{equation}
Consequently, the complete initial data for the two-scale equation \eqref{two-scale} is given by
\begin{equation}\label{init-data}
U(0,\tau,x)=\underline{U}^{[3]}+\eps^{2} h_{1}(0,\tau,\underline{U}^{[3]})+\eps^{4} h_{2}(0,\tau,\underline{U}^{[3]})+\eps^{6} h_{3}(0,\tau,\underline{U}^{[3]})=:U_0(\tau,x),\quad
\tau\in\bT.
\end{equation}

\section{Fully discrete scheme}\label{sec-3}
In this section, a UA fully discrete scheme is developed for the resulting
two-scale equation \eqref{two-scale}, and the implementation of the proposed
scheme is described in detail.

\subsection{Spatial semidiscretization}\label{sec-3.1}
The Fourier spectral method \cite{STW} is adopted to discretize the system
\eqref{ori-coup} in space. Denote by $\triangle x=(b-a)/N_{x}$ the uniform
spatial mesh size, where $N_{x}$ is an even positive integer. The corresponding
grid points are $x_{j}\coloneqq a+j\triangle x$ for $j=0,1,\ldots,N_{x}-1$. The
first- and second-order Fourier differentiation matrices are then written as
\begin{equation}\label{equ-2-19-2}
\begin{aligned}
&{ B^{(1)}:=b_{kj}^{(1)}}=\left\{
\begin{array}{ll}
\frac{(-1)^{k+j}}{2}\cot\big(\frac{(k-j)\pi}{N_{x}}\big),  & k\neq j, \\
0,  & k=j,
\end{array}
\right. \ \
{ B^{(2)}:=b_{kj}^{(2)}}=\left\{
\begin{array}{ll}
\frac{(-1)^{k+j}}{-2}\sin^{-2}\big(\frac{(k-j)\pi}{N_{x}}\big),   & k\neq j, \\
\frac{N_{x}^{2}}{-12}-\frac{1}{6},   & k=j,
\end{array}
\right.
\end{aligned}
\end{equation}
with $k, j=0,\ldots, N_{x}-1$. In the Fourier domain, $\Lambda_{\mu}$ takes the
form $\Lambda_{\mu, f}=(\mu^{2}I-\eps^{2}B^{(2)})^{1/2}$, and the operators
$\mD_{\mu}$, $\mD_{\omega}$, and $\Pi_{\pm}$ accordingly admit the discrete
representations
\begin{align*}
&\mD_{\mu,f}=\frac{1}{\eps^2}((\mu^{2}I-\eps^{2}B^{(2)})^{1/2}-\mu I), \
\mD_{\omega,f}=\frac{1}{\eps^{2}}\operatorname{diag}
\Big((\omega^{2}I-\eps^{2}B^{(2)})^{1/2}-\omega I,(\omega^{2} I-\eps^{2}B^{(2)})^{1/2}-\omega I\Big),\\
&\Pi_{\pm,f}^{\eps}=\frac{1}{2}\begin{pmatrix} I\pm \omega(\omega^{2}I-\eps^{2}B^{(2)})^{-1/2} & \mp \ii\eps B^{(1)}(\omega^{2}I-\eps^{2}B^{(2)})^{-1/2} \\
\mp \ii\eps B^{(1)}(\omega^{2}I-\eps^{2}B^{(2)})^{-1/2}  & I\mp \omega(\omega^{2}I-\eps^{2}B^{(2)})^{-1/2} \end{pmatrix},
\end{align*}
where $I=I_{N_{x}\times N_{x}}$ is the $N_{x}\times N_{x}$ identity matrix.
Both $\mathcal{D}_{\omega,f}$ and $\Pi_{\pm,f}^{\eps}$ are $2\times 2$ block
matrices consisting of blocks of size $N_{x}\times N_{x}$, and
$(I-\eps^{2}B^{(2)})^{1/2}$ denotes the element-wise square root.

The coupled system \eqref{ori-coup} is discretized into the following system of
ODEs:
\begin{equation}\label{equ-2-20-1}
\frac{\mathrm{d}}{\mathrm{d}t}\mW(t)=\mF(t,t/\eps^2,\mW(t)),
\end{equation}
where
$\mW(t)\coloneqq [\mW_{1},\ldots,\mW_{\tilde{k}}]^{\intercal}
=[\mathfrak{b}_{1}(x_{0},t),\ldots,\mathfrak{b}_{1}(x_{N_{x}-1},t),\ldots,
\mathfrak{b}_{\tilde{k}}(x_{0},t),\ldots,\mathfrak{b}_{\tilde{k}}(x_{N_{x}-1},t)]^{\intercal}$.
The discrete forms of the two coupled systems are detailed as follows:
\begin{itemize}
  \item \textbf{KGD equation.} We have $\tilde{k}=3$,
  $\mW_{1}(t)=[\zeta(x_{0},t),\ldots,\zeta(x_{N_{x}-1},t)]^{\intercal}$,
  $\vec{\mW}(t) \allowbreak\coloneqq \allowbreak [\mW_{2}(t);\mW_{3}(t)]
  \allowbreak=[\xi_{1}(x_{0},t),\ldots,\xi_{1}(x_{N_{x}-1},t),\xi_{2}(x_{0},t),\ldots,\xi_{2}(x_{N_{x}-1},t)]^{\intercal}$,
      and
  \begin{equation*}
    \mF(t,t/\eps^2,\mW)=\begin{bmatrix}
   (\fe^{-\frac{\ii t}{\eps^2}}\fe^{-\ii t\mD_{1,f}})\varrho_{f}\left((\fe^{-\frac{\ii\omega t}{\eps^2}}\fe^{-\ii t\mD_{\omega,f}}\Pi_{+,f}^{\eps}+\fe^{\frac{\ii\omega t}{\eps^2}}\fe^{\ii t\mD_{\omega,f}}\Pi_{-,f}^{\eps})\vec{\mW}(t)\right) \\
   \begin{aligned}
   &(\fe^{\frac{\ii\omega t}{\eps^2}}\fe^{\ii t \mD_{\omega,f}}\Pi_{+,f}^{\eps}+\fe^{-\frac{\ii\omega t}{\eps^2}}\fe^{-\ii t \mD_{\omega,f}}\Pi_{-,f}^{\eps})\mathcal{G}_{f}\big((\fe^{\frac{\ii t}{\eps^2}}\fe^{\ii t\mD_{1,f}})\mW_{1}(t) \\ &(\fe^{-\frac{\ii\omega t}{\eps^2}}\fe^{-\ii t\mD_{\omega,f}}\Pi_{+,f}^{\eps}+\fe^{\frac{\ii\omega t}{\eps^2}}\fe^{\ii t\mD_{\omega,f}}\Pi_{-,f}^{\eps})\vec{\mW}(t)\big)
   \end{aligned}
   \end{bmatrix},
  \end{equation*}
  where $\varrho_{f}(\Psi_{f})=-\ii g\Lambda_{f}^{-1}\Psi_{f}(t)\sigma_{3}\Psi_{f}(t)$,
  $\mathcal{G}_{f}(u_{f},\Psi_{f})=-\ii(\chi_{f}+g\operatorname{Re}(u_{f}(t)))\Psi_{f}(t)$, and
  $\chi_{f}\coloneqq\left(\begin{array}{cc} V_{f} & -A_{1,f} \\ -A_{1,f} & V_{f} \end{array}\right)$;
  here $V_f$ and $A_{1,f}$ correspond to the discrete potential values on the
  spatial grid, namely, ${V(x_j)}_{j=0,1,\ldots,N_{x}-1}$ and
  ${A_{1}(x_j)}_{j=0,1,\ldots,N_{x}-1}$.
  \item \textbf{KGS equation.} We have $\tilde{k}=2$ with
  $\mW_{1}(t)=[\mathfrak{q}(x_{0},t),\ldots,\mathfrak{q}(x_{N_{x}-1},t)]^{\intercal}$
  and
  $\mW_{2}(t)=[\mathfrak{p}(x_{0},t),\allowbreak \ldots,\mathfrak{p}(x_{N_{x}-1},t)]^{\intercal}$,
  and
  \begin{equation*}
  \mF(t,t/\eps^2,\mW)=\begin{bmatrix}
                        \ii\lambda\fe^{-\ii tB^{(2)}}\left(\frac{1}{2}\operatorname{Re}(\fe^{\ii t\mD_{\mu,f}}\fe^{\frac{\ii\mu t}{\eps^{2}}}\mW_{2}(t))(\fe^{\ii tB^{(2)}}\mW_{1}(t))\right) \\
                        -\ii\lambda (\fe^{-\ii\mD_{\mu,f}t}\fe^{-\frac{\ii\mu t}{\eps^2}})\left(\Lambda_{\mu,f}^{-1}\abs{\fe^{\ii tB^{(2)}}\mW_{1}(t)}^{2}\right)
                      \end{bmatrix}.
  \end{equation*}
\end{itemize}

Within the framework of Subsection \ref{sec-2.2}, we introduce $Z(t,\tau)$,
whereby system \eqref{equ-2-20-1} can be recast in the two-scale form
\begin{align}\label{equ-2-21-1}
\partial_{t}Z(t,\tau)+\frac{1}{\eps^{2}}\partial_{\tau}Z(t,\tau)
=\Upsilon(t,\tau,Z(t,\tau)).
\end{align}
For $\tau=t/\eps^{2}$, it holds that $Z(t,\tau)=\mathcal{W}(t)$. Let $N_{\tau}$
be an even positive integer, and introduce the uniform grid
$\tau_{k}=\frac{2\pi}{N_{\tau}}k$ with $\triangle\tau=\frac{2\pi}{N_{\tau}}$,
$k=0,1,\ldots, N_{\tau}-1$. For any periodic function $v(\tau)$ on $[0,2\pi)$,
denote by $P_{\mathcal{M}} : L^{2}([0,2\pi))\rightarrow Y_{\mathcal{M}}$ the
standard projection operator and by $I_{\mathcal{M}} :
C([0,2\pi))\rightarrow Y_{\mathcal{M}}$ the trigonometric interpolation
operator, defined as
\[
(P_{\mathcal{M}}v)(\tau)=\sum_{l\in\mathcal{M}}\widehat{v_{l}}\fe^{\ii l\tau},
\qquad
(I_{\mathcal{M}}v)(\tau)=\sum_{l\in\mathcal{M}}\widetilde{v_{l}}\fe^{\ii l\tau}
\]
with $l\in \mathcal{M}\coloneqq\left\{ -\frac{N_{\tau}}{2},-\frac{N_{\tau}}{2}+1,
\ldots, \frac{N_{\tau}}{2}-1 \right\}$. The Fourier coefficients
$\widehat{v_{l}}$ and the discrete Fourier coefficients $\widetilde{v_{l}}$ are
given by
$\widehat{v}_{l}=\frac{1}{2\pi}\int_{0}^{2\pi}v(\tau)\fe^{-\ii l\tau}\,\mathrm{d}\tau$
and
$\widetilde{v}_{l}=\frac{1}{N_{\tau}}\sum_{k=0}^{N_{\tau}-1}v(\tau_{k})\fe^{-\ii l\tau_{k}}$.
Applying the Fourier spectral method in the $\tau$-direction, we determine
trigonometric polynomials
\[
Z^{\mathcal{M}}(t,\tau)=P_{\mathcal{M}} Z(t,\tau)
=\left(Z_{1,j}^{\mathcal{M}}(t,\tau),Z_{2,j}^{\mathcal{M}}(t,\tau),\ldots,
Z_{\tilde{k},j}^{\mathcal{M}}(t,\tau)\right)_{j=1,2,\ldots,N_{x}}
\]
satisfying the equation
\begin{align}\label{equ-2-21-4}
\partial_{t}Z^{\mathcal{M}}(t,\tau)+\frac{1}{\eps^{2}}\partial_{\tau}Z^{\mathcal{M}}(t,\tau)
=\Upsilon(t,\tau,Z^{\mathcal{M}}(t,\tau)).
\end{align}
Let $\widehat{\mathbf{Z}}=[\widehat{\mathbf{Z}}_{1}; \widehat{\mathbf{Z}}_{2};
\ldots;\widehat{\mathbf{Z}}_{\tilde{k}}]$ with
$\widehat{\mathbf{Z}}_{\tilde{k}}\coloneqq(\widehat{Z}^l_{\tilde{k},j})_{j=1,2,\ldots,N_{x},\,l\in\mathcal{M}}$,
where $(\widehat{Z}^l_{\tilde{k},j})_{l\in\mathcal{M}}$ denote the Fourier
coefficients of $Z^{\mathcal{M}}_{\tilde{k},j}$. Then one arrives at
\begin{equation}\label{equ-2-21-2}
\frac{\mathrm{d}}{\mathrm{d}t}\widehat{\mathbf{Z}}(t)=\ii\Theta\widehat{\mathbf{Z}}(t)
+\Sigma(t,\widehat{\mathbf{Z}}(t)),
\end{equation}
in which $\widehat{\mathbf{Z}}$ is a $\tilde{k} N_{x}N_{\tau}$-dimensional
vector, and
$\Theta\coloneqq\operatorname{diag}(\Theta_{1},\Theta_{2},\ldots,\Theta_{\tilde{k} N_{x}})$
with
$\Theta_{1}=\Theta_{2}=\ldots=\Theta_{\tilde{k} N_{x}}\coloneqq
\frac{1}{\eps^{2}}\operatorname{diag}\left(\frac{N_{\tau}}{2},\frac{N_{\tau}}{2}-1,
\ldots,-\frac{N_{\tau}}{2}+1 \right)$. The term $\Sigma$ for the KGD system
\eqref{fil-system-2} is expressed as
\[
\Sigma(t,\widehat{\mathbf{Z}}(t))\coloneqq\Big[
\mathfrak{F}\big(\mathcal{C}_{1,-}(\varrho_{f}(\mathcal{C}_{\omega,-}
\mathfrak{F}^{-1}\widehat{\vec{\mathbf{Z}}}(t)))\big);
\mathfrak{F}\big(\mathcal{C}_{\omega,+}(\mathcal{G}_{f}
(\mathcal{C}_{1,+}\mathfrak{F}^{-1}\widehat{\mathbf{Z}}_{1},
\mathcal{C}_{\omega,-}\mathfrak{F}^{-1}\widehat{\vec{\mathbf{Z}}}(t)) )\big)\Big],
\]
where $\mathfrak{F}$ stands for the discrete fast Fourier transform (FFT), and
the operators are defined by
\begin{align*}
\mathcal{C}_{1,\pm}\coloneqq \left(\fe^{\pm\frac{\ii t}{\eps^2}}\fe^{\pm\ii t\mD_{1,f}}\right)_{k=0,1,\ldots,N_{\tau}-1}, \quad
\mathcal{C}_{\omega,\pm}\coloneqq \operatorname{diag}\left(\fe^{\mp\ii\tau_{k}}\fe^{\mp\ii t \mathcal{D}_{\omega,f}}\Pi_{+,f}^{\eps}+\fe^{\pm\ii\tau_{k}}\fe^{\pm\ii t \mathcal{D}_{\omega,f}}\Pi_{-,f}^{\eps} \right)_{k=0,1,\ldots,N_{\tau}-1}.
\end{align*}
The KGS case is similar, with only minor differences in the specific terms.

\subsection{Time discretization}\label{sec-3.2}
We next consider the time discretization. Applying Duhamel's formula to
\eqref{equ-2-21-2}, one obtains
\begin{align}\label{equ-6-23-1}
\hbZ(t_{n}+\eta)=\fe^{\frac{\ii\Theta\eta}{\eps^{2}}}\hbZ(t_{n})
+\int_{0}^{\eta}\fe^{(\eta-\theta)\frac{\ii\Theta}{\eps^{2}}}
\Sigma(t_{n}+\theta,\hbZ(t_{n}+\theta))\,\mathrm{d}\theta,
\ 0\leq\eta\leq\Delta t, \ n\geq 0.
\end{align}
We adopt the framework of $s$-stage exponential Runge--Kutta methods
\cite{HO06,HO10}, given by
\begin{equation}\label{equ-6-23-2}
\begin{aligned}
&\hbZ^{n,j}=\fe^{c_{j}\ii\Theta\triangle t}\hbZ^{n}+\triangle t
\sum_{k=1}^{s} a_{jk} (\ii\Theta\triangle t)\Sigma(t_{n}+c_{k}\triangle t,\hbZ^{n,k}),\quad j=1,2,\ldots,s,\\
&\hbZ^{n+1}=\fe^{\ii\Theta\triangle t}\hbZ^{n}+\triangle t
\sum_{j=1}^{s} b_{j} (\ii\Theta\triangle t)\Sigma(t_{n}+c_{j}\triangle t,\hbZ^{n,j}),\quad n\geq0,
\end{aligned}
\end{equation}
where $t_{n}=n\triangle t$ with the time step $\triangle t>0$, $c_j\in[0,1]$,
and $a_{jk}(z), b_j(z)$ are bounded coefficient functions of $z\in \mathbb{C}$
for $j,k=1,2,\ldots,s$ with $s\in\mathbb{N}_+$. If the coefficients satisfy
\begin{equation}\label{equ-2-10-5}
c_{\rho}=1-c_{s+1-\rho},\quad b_{\rho}(z)=\fe^{z}b_{s+1-\rho}(-z),\quad
a_{j\rho}(z)=\fe^{c_{j}z}b_{s+1-\rho}(-z)-a_{s+1-j,s+1-\rho}(-z)
\end{equation}
for $\rho,j=1,2,\ldots,s$, then the integrator \eqref{equ-6-23-2} is time
symmetric.

Under the symmetry and stiff order conditions of exponential Runge--Kutta methods
\cite{HO06,HO10}, we formulate two numerical schemes.

\medskip
\noindent$\bullet$ \textbf{Symmetric scheme}. In the case $s=3$, solving the fourth-order
order conditions of exponential Runge--Kutta methods with
symmetric nodes  leads to the coefficients (\cite{LWZ,WWL})
\begin{equation}\label{sori4 coe}
\begin{aligned}
&c_{1}=1, \ c_{2}=\frac{1}{2}, \ c_{3}=0, \  a_{31}=a_{32}=a_{33}=0,\ \
a_{21}=-\frac{1}{4} \varphi_{2,2} + \frac{1}{2}\varphi_{3,2},\ \ a_{22}=\varphi_{2,2} -\varphi_{3,2}, \\ &a_{23}=\frac{1}{2}\varphi_{1,2}-\frac{3}{4}\varphi_{2,2} +\frac{1}{2}\varphi_{3,2}, \ \
a_{11}=b_{1}= 4\varphi_3  - \varphi_2 , \ \
 a_{12}=b_{2}=4\varphi_2 -8\varphi_3, \\
& a_{13}=b_{3}=\varphi_1-3\varphi_2 +4\varphi_3,
\end{aligned}
\end{equation}
where
$\varphi_{i,j}=\varphi_{i,j}(\ii\Theta\triangle t)=\varphi_{i}(c_{j}\ii\Theta\triangle t)$
with
$\varphi_{\rho}(z)\coloneqq\int_0^1
\theta^{\rho-1}\frac{\fe^{(1-\theta)z}}{(\rho-1)!}\,\mathrm{d}\theta$.
The resulting scheme is symmetric and is denoted by SUA-EI4.

\medskip
\noindent$\bullet$ \textbf{Explicit scheme}. We now turn to the case $s=5$, for
which an explicit scheme is obtained by taking the coefficients as
\cite{HO06}
\begin{equation} \label{eori4 coe}
\begin{aligned}
&c_{1}=0,  \ c_{2}=c_{3}=c_{5}=\frac{1}{2}, \  c_{4}=1, \ a_{21}=\frac{1}{2}\varphi_{1,2}, \  a_{31}=\frac{1}{2}\varphi_{1,3}-\varphi_{2,3}, \   a_{32}=\varphi_{2,3}, \\
&a_{41}=\varphi_{1,4}-2\varphi_{2,4}, \ \qquad  a_{42}=a_{43}=\varphi_{2,4}, \ \ \qquad  a_{51}=\frac{1}{2}\varphi_{1,5}-2a_{52}-a_{54}, \\
&a_{52}=\frac{1}{2}\varphi_{2,5}-\varphi_{3,4}+\frac{1}{2}\varphi_{2,4}-\frac{1}{2}\varphi_{3,5}, \ \ a_{53}=a_{52}, \ \ a_{54}=\frac{1}{4}\varphi_{2,5}-\varphi_{5,2}, \\
&b_{1}=\varphi_{1}-3\varphi_{2}+4\varphi_{3}, \ \ \ b_{2}=b_{3}=0, \ \ \ b_{4}=-\varphi_{2}+4\varphi_{3}, \ \ \ b_{5}=4\varphi_{2}-8\varphi_{3}.
\end{aligned}
\end{equation}
This scheme is not symmetric but is explicit. It is referred to as EUA-EI4.

Consequently, the fully discrete scheme is given by \eqref{equ-6-23-2} with
coefficients \eqref{sori4 coe} or \eqref{eori4 coe}. In practical
implementations, the integrals required for computing the Fourier coefficients
are not amenable to direct evaluation; the scheme \eqref{equ-6-23-2} is
therefore typically implemented via interpolation. Given
$\mathbf{Z}^{0}=\big(Z^{0}(\tau_k)\big)_{k=0,1,\ldots,N_{\tau}-1}$, the Fourier
pseudospectral (FP) discretization takes the form
\begin{align}\label{equ-3-14-1}
Z^{n+1}(\tau)=\sum\limits_{l\in \mathcal{M}}(\widetilde{\mathbf{Z}}^{n+1})_{l}\fe^{\ii l\tau},
\end{align}
where
\begin{equation}\label{full-discre}
\begin{aligned}
&\widetilde{\mathbf{Z}}^{n,j}=\fe^{c_{j}\ii\Theta\triangle t}\widetilde{\mathbf{Z}}^{n}+\triangle t
\sum_{k=1}^{s} a_{jk} (\ii\Theta\triangle t)\Sigma(t_{n}+c_{k}\triangle t,\widetilde{\mathbf{Z}}^{n,k}),\quad j=1,2,\ldots,s,\\
&\widetilde{\mathbf{Z}}^{n+1}=\fe^{\ii\Theta\triangle t}\widetilde{\mathbf{Z}}^{n}+\triangle t
\sum_{j=1}^{s} b_{j} (\ii\Theta\triangle t)\Sigma(t_{n}+c_{j}\triangle t,\widetilde{\mathbf{Z}}^{n,j}),\quad n\geq0,
\end{aligned}
\end{equation}
with the coefficients \eqref{sori4 coe} or \eqref{eori4 coe}.

We conclude this section by presenting the fully discrete schemes for the
original coupled system; Algorithm \ref{algo-1} for the KGD equation \eqref{KGD}
is derived therefrom, and the KGS case \eqref{KGS} is analogous.

\begin{algorithm}[t]
\caption{UA fully discrete scheme for the KGD system}
\label{algo-1}
\begin{algorithmic}[1]

\REQUIRE A time step $\Delta t>0$, positive even integers $N_x$ and $N_\tau$, the final time $T$, and the initial data $[\phi_0;\Psi_0]$.

\STATE \textbf{Initialize:} Evaluate \eqref{init-data} with $\mathfrak{b}_0$ to obtain $Z^0(\tau):=Z(0,\tau)$, and set
\[
\widehat{\mathbf{Z}}^{0}:=\operatorname{FFT}\Big(\big(Z^0(\tau_k)\big)_{k=0}^{N_\tau-1}\Big).
\]

\FOR{$n=0,1,\ldots,T/\Delta t-1$}

    \STATE Compute $\widehat{\mathbf{Z}}^{n+1}$ from the UA-EI4s scheme \eqref{equ-6-23-2}, with coefficients specified by \eqref{sori4 coe} or \eqref{eori4 coe}.

    \STATE Reconstruct $Z^{n+1}$ according to
    \[
    Z^{n+1}:=\sum_{l\in\mathcal{M}}\big(\widehat{\mathbf{Z}}^{n+1}\big)_l\fe^{\ii l t_{n+1}/\eps^2}, \qquad t_{n+1}:=(n+1)\Delta t.
    \]

    \STATE Recover the KGD solution by
    \[
    \phi^{n+1}:=\operatorname{Re}\!\left(\fe^{\ii\Lambda t_{n+1}/\eps^2}Z_1^{n+1}\right), \ 
    \Psi^{n+1}:=\fe^{-\ii\mathcal{S}^{\eps}t_{n+1}/\eps^2}\vec{Z}^{n+1}.
    \]

\ENDFOR
\end{algorithmic}
\end{algorithm}

\subsection{Properties of the algorithm}\label{sec-3.3}
In this subsection, we establish both the uniform accuracy of the fully discrete
scheme \eqref{equ-6-23-2} and its long-time near-conservation of mass and energy;
these results constitute the main theorems of this paper. In what follows, we
write $A\lesssim B$ to indicate that there exists a generic constant $C>0$
independent of $\eps$, $\triangle t$, $\triangle x$, and $\triangle\tau$ such
that $A\leq C B$.

We begin by assuming that the initial data and the solution of the coupled
system \eqref{ori-coup} satisfy the following condition.
\begin{flalign*}
&\text{(B)} \qquad \norm{\mathfrak{b}_{0}}_{H^{\nu+m_{0}+4}}\lesssim 1, \quad \norm{\mathfrak{b}}_{L^{\infty}([0,T];H^{\nu+m_{0}+4})}\lesssim 1. &
\end{flalign*}
Specifically, for the KGD equation \eqref{KGD}, the initial data are assumed to
satisfy
\begin{flalign*}
&\text{(C1)} \qquad \norm{\phi_{0}}_{H^{\nu+m_{0}+4}}, \norm{\phi_{1}}_{H^{\nu+m_{0}+4}}, \ \mbox{and}  \ \norm{\Psi_{0}}_{(H^{\nu+m_{0}+4})^2} \ \mbox{are uniformly bounded for all} \  \eps\in(0,1], &
\end{flalign*}
and the electromagnetic potentials and the exact solution are assumed to satisfy
\begin{flalign*}
&\text{(C2)} \qquad \norm{V}_{W_{p}^{\nu+m_{0}+4,\infty}}+\norm{A_{1}}_{W_{p}^{\nu+m_{0}+4,\infty}}\lesssim 1, \ \norm{\Phi}_{L^{\infty}([0,T];H_{p}^{\nu+m_{0}+4})}+\norm{\Psi}_{L^{\infty}([0,T];(H_{p}^{\nu+m_{0}+4})^{2})}\lesssim 1, &
\end{flalign*}
where $\nu>d/2+8$, $m_{0}\geq 1$, and
$W_{p}^{m,\infty}=\{u| u\in W^{m,\infty}(\Omega),\partial_{x}^{l}u(a)=\partial_{x}^{l}u(b), l=0, \ldots, m-1\}$,
$H_{p}^{m}(\Omega)=\{u|u\in H^{m}(\Omega),\partial_{x}^{l}u(a)=\partial_{x}^{l}u(b), l=0, \ldots, m-1\}$ for $m\in\mathbb{N}$.
The assumptions on the initial data and the solution for the KGS equation are
analogous.

First, we derive error bounds for the fully discrete schemes proposed above.
\begin{theorem}\label{theo-1} (\textbf{Uniform accuracy})
Under assumptions \text{(A)} and \text{(B)}, there exist $t_{0}>0$, $h_{0}>0$ and $\tau_{0}>0$ independent of $\eps$, such that for any $0< \eps \leq 1$, whenever $0<\triangle t\leq t_{0}$, $0<\triangle x\leq h_{0}$ and $0<\triangle \tau\leq \tau_{0}$, the global errors of the fully discrete schemes \eqref{equ-6-23-2} defined by
$e_{\mathfrak{b}}^{n}:=\mathfrak{b}(t_{n},x)-\mathfrak{b}^n(x)$ satisfy the bounds
\begin{equation}\label{equ-2-10-6}
\norm{\fe_{\mathfrak{b}}^{n}}_{H^{1}}  \lesssim \triangle t^4 +\triangle x^{m_{0}}+ \triangle\tau^{\nu-1} ,\qquad \   n=0,1,\ldots, T/\triangle t,
\end{equation}
where $\nu>d/2+8$ and $m_{0}\geq 1$.
\end{theorem}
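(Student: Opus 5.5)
We split the total error by the triangle inequality into three independent contributions, one for each discretization: space, the fast variable $\tau$, and time. For the exact solution we write $\mathfrak{b}(t_n)=U(t_n,t_n/\eps^2)$, where $U$ solves the two-scale problem \eqref{two-scale} with the initial data \eqref{init-data}. The numerical value is $\mathfrak{b}^n=Z^n(t_n/\eps^2)$. In between we place two intermediate objects:
\begin{itemize}
\item the two-scale solution $Z$ of the spatially semidiscrete system \eqref{equ-2-21-1};
\item its Galerkin truncation $Z^{\mathcal{M}}$ in $\tau$, solving \eqref{equ-2-21-4}.
\end{itemize}
The error then decomposes as
$$
\begin{aligned}
e^n_{\mathfrak{b}}={}&\big[U(t_n,\cdot)-Z(t_n,\cdot)\big]
+\big[Z(t_n,\cdot)-Z^{\mathcal{M}}(t_n,\cdot)\big]\\
&+\big[Z^{\mathcal{M}}(t_n,\cdot)-Z^n\big],
\end{aligned}
$$
with every bracket evaluated at $\tau=t_n/\eps^2$. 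Since $\tau\mapsto$ (bracket) is a trigonometric polynomial or is smooth, we control the $H^1$ norm at a fixed $\tau$ by $\sup_\tau$ of the bracket, or by an $H^1_\tau$-type norm via a Sobolev embedding in $\tau$.

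\textbf{Step 1: well-posedness of the two-scale problem.} We first establish that \eqref{two-scale} with data \eqref{init-data} has a smooth solution, uniformly in $\eps$. The claim is
$$\sup_{t\le T}\norm{\partial_t^{k}U(t,\cdot)}_{C^{\nu}_\tau H^{\sigma-2k}}\lesssim 1,\qquad k\le 4 .$$
This follows the Chapman--Enskog argument of \cite{CCLM}. Writing $U=\underline U+h$ with $h$ given by \eqref{equ-9-4-5}, the choice \eqref{init-data} makes the initial data "prepared" to order $\eps^8$. As a consequence, $\partial_t^kU(0)$ is bounded uniformly in $\eps$, since the $\eps^{-2}\partial_\tau$ terms cancel up to $\mathcal{O}(\eps^{8-2k})$. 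Differentiating the equation in $t$ and using Assumption (A) then yields the bounds on $[0,T]$ by a Gronwall argument in $C_\tau H^\sigma$. Each time derivative costs two spatial derivatives, which explains the $H^{\nu+m_0+4}$ requirement in (B).

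\textbf{Step 2: spatial and $\tau$ errors.} For the spatial error we use standard Fourier pseudospectral estimates on $H^{\nu+m_0+4}$ data, giving
$$\norm{U-Z}_{H^1}\lesssim \triangle x^{m_0},$$
uniformly in $\eps$. This works because $\mD_{\mu},\mD_{\omega},\Pi_\pm^\eps$ commute with the Fourier projection and are uniformly bounded. For the $\tau$-truncation we argue as follows.
\begin{itemize}
\item The operator $\eps^{-2}\partial_\tau$ is skew-adjoint and commutes with $P_{\mathcal{M}}$, so it drops out of energy estimates for $Z-Z^{\mathcal{M}}$.
\item What remains is the projection error of a function in $C^\nu_\tau$, which is $\triangle\tau^{\nu-1}$ after losing one derivative for the pointwise evaluation in $\tau$.
\item The pseudospectral aliasing error is of the same order.
\item Gronwall on $[0,T]$ then gives the $\triangle\tau^{\nu-1}$ term.
\end{itemize}

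\textbf{Step 3: time discretization (main obstacle).} We apply the stiff-order theory of exponential Runge--Kutta methods \cite{HO06,HO10} to \eqref{equ-2-21-2}. The linear part $\ii\Theta$ is skew-Hermitian with norm $\sim\eps^{-2}\triangle\tau^{-1}$. The point of the exponential Runge--Kutta framework is that, provided the stiff order conditions hold (which they do for the coefficients \eqref{sori4 coe} and \eqref{eori4 coe}), the local error involves only derivatives of $t\mapsto\Sigma(t,\hbZ(t))$ up to order four. It does not involve powers of $\Theta$. We therefore need
$$\Big\|\frac{\mathrm{d}^k}{\mathrm{d}t^k}\Sigma\big(t,\hbZ(t)\big)\Big\|\lesssim 1,\qquad k\le 4,$$
uniformly in $\eps$, and this is precisely what Step 1 delivers.

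\begin{itemize}
\item \emph{Local error.} Expanding the Duhamel formula \eqref{equ-6-23-1} in Taylor series of the nonlinear term gives a local defect of size $\triangle t^{5}$ in $H^1$.
\item \emph{Stability.} Because $\fe^{\ii\Theta t}$ is unitary, the $\varphi_\rho$-functions are bounded uniformly on the imaginary axis. Combined with the local Lipschitz continuity of $\Sigma$ in $H^1$, this gives stability.
\item \emph{Symmetric case.} For the implicit scheme SUA-EI4, the stages are solved by fixed-point iteration, which is a contraction for $\triangle t\le t_0$.
\item \emph{Localization.} A bootstrap/induction argument keeps the numerical solution in a ball around the exact one where the Lipschitz constant is valid. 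This relies on an inverse inequality, or on the error bound itself dominating the $H^{\sigma}$ norms needed for the $L^\infty$ control.
\end{itemize}

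Lady Windermere's fan then gives a global error of order $\triangle t^4$. The hardest part is justifying the uniform-in-$\eps$ bound on the fourth time derivative in Step 1. This is where the preparation of the initial data \eqref{init-data} up to $h_3$ is essential.
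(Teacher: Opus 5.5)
\textbf{There is a genuine gap in Step 3.} Your overall skeleton matches the paper's: spatial projection error, $\tau$-truncation error, then an exponential Runge--Kutta error analysis driven by uniform bounds on $\frac{\mathrm d^k}{\mathrm dt^k}\Sigma$ coming from the prepared data. But Step~3 rests on a false claim: neither coefficient set satisfies the stiff order-four conditions.

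Both schemes use the same quadrature, namely the weights $\varphi_1-3\varphi_2+4\varphi_3$, $4\varphi_2-8\varphi_3$, $4\varphi_3-\varphi_2$ at the nodes $0,\tfrac12,1$. One checks $\Psi_1=\Psi_2=\Psi_3\equiv0$, but
\[
\Psi_4(z)=\varphi_4(z)-\tfrac12\varphi_3(z)+\tfrac1{12}\varphi_2(z)\not\equiv0,\qquad \Psi_4(0)=0 .
\]
This is exactly why the paper's local error keeps the term $\triangle t^4\Psi_4(\ii\Theta\triangle t)\frac{\mathrm d^3}{\mathrm dt^3}\Sigma(t_n,\hbZ(t_n))$.

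Consequences for your argument:
\begin{itemize}
\item Your local defect is $\mathcal O(\triangle t^4)$, not $\mathcal O(\triangle t^5)$, so Lady Windermere's fan gives only $\triangle t^3$.
\item Your remark that no powers of $\Theta$ appear does not help. Writing $\Psi_4(z)=z\widetilde\Psi_4(z)$ turns the term into $\triangle t^5\widetilde\Psi_4(\ii\Theta\triangle t)\,\ii\Theta\frac{\mathrm d^3}{\mathrm dt^3}\Sigma$.
\item The factor $\Theta\frac{\mathrm d^3}{\mathrm dt^3}\Sigma$ is of size $\eps^{-2}$, because $\eps^{-2}\partial_\tau$ falls on the explicit $\tau$-dependence of $\mathcal F$. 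The equation bounds $\eps^{-2}\partial_\tau U=\mathcal F-\partial_tU$, not $\eps^{-2}\partial_\tau\mathcal F$.
\end{itemize}
What is missing is a cancellation argument across steps for $\sum_k\fe^{(n-1-k)\ii\Theta\triangle t}\Psi_4(\ii\Theta\triangle t)\frac{\mathrm d^3}{\mathrm dt^3}\Sigma(t_k,\hbZ(t_k))$, for example summation by parts. That requires $(\fe^{\ii\Theta\triangle t}-I)^{-1}\Psi_4(\ii\Theta\triangle t)$ to be uniformly bounded.

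\textbf{This cannot be fixed by a sharper estimate.} At $z=2\pi\ii m$ with $m\neq0$ one has $\Psi_4(z)=-z^{-3}\neq0$. Take the linear instance $\partial_tU+\eps^{-2}\partial_\tau U=\fe^{\ii\tau}t^3/6$ of \eqref{two-scale}.
\begin{itemize}
\item Both schemes reduce to the quadrature above.
\item Every $\tau$-mode other than $l=1$ is integrated exactly.
\item For $\triangle t=2\pi\eps^2$ the local defects equal $\triangle t^4\Psi_4(-2\pi\ii)$ at every step and add up coherently. The global error therefore has modulus exactly $t_n\triangle t^3/(8\pi^3)$.
\end{itemize}
So closing Step~3 needs an extra hypothesis keeping $\triangle t\,l/\eps^2$ away from $2\pi\mathbb{Z}\setminus\{0\}$, or schemes with $\Psi_4\equiv0$. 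The paper's sketch also asserts that the order conditions and stability yield $\triangle t^4$ without supplying this step, so you cannot fill the gap by appealing to it.
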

\begin{proof}The error estimates established in Theorem \ref{theo-1} carry over to the scheme
\eqref{equ-3-14-1}--\eqref{full-discre}. The proof is analogous to those given in
\cite{FXY,WZ} and  therefore only the main steps are presented here.  

\emph{Step 1: spatial and micro-step errors.}
The Fourier pseudospectral error for functions in
$H_{p}^{\nu+m_{0}}(\Omega)$ yields 
$\norm{P_{N_{x}}\mathfrak{b}-\mathfrak{b}}_{H^{\nu}}
\lesssim \triangle x^{m_{0}}$,
while the resolution of the auxiliary micro-scale variables on the grid
of size $\triangle\tau$ contributes
$\mathcal{O}(\triangle\tau^{\nu-1})$ in the $H_{1}$ norm,
owing to the smoothness of the profile functions
guaranteed by \text{(B)} and $\nu>d/2+8$.

\emph{Step 2: local truncation error.}
By combining the preparation of the initial layer and the multiscale
expansions of the exact solution with the local error analysis of the
exponential Runge--Kutta method,
the local truncation errors of the interior variables are bounded by  
\begin{equation*}
  \norm{\mathcal{T}^{n+1}_{\mathfrak{b}}}_{H^{1}}
  \lesssim 
  \norm{\triangle t^{4} \Psi_{4}\big(\ii\Theta\triangle t\big)  \frac{\mathrm{d}^{3}}{\mathrm{d} t^{3}}\Sigma(t_{n},\hbZ(t_{n}))}_{H^{1}}+\triangle t^{5}
\end{equation*}
where $\Psi_{\rho}(z):=\varphi_{\rho}(z)-\sum_{j=1}^{s}b_j(z)c_j^{\rho-1}/(\rho-1)!$. The coefficients satisfy $\Psi_\rho(i\Theta\Delta t)=0$ for $\rho=1,2,3$
and $\Psi_4(0)=0$. The corresponding stage defects are treated using the order conditions of each scheme.

\textit{Step 3: stability and induction.}
Applying the convergence analysis of exponential Runge--Kutta method with the above defect expansion, the corresponding stiff order conditions, and the uniform stability bounds gives
$\norm{\hbZ(t_n)-\hbZ^{n}}_{H^{1}}\lesssim \Delta t^{4}.
$
Combining this estimate with \emph{Step 1} yields
\begin{align*}
\norm{e_{\mathfrak{b}}^{n}}_{H^{1}}\lesssim
\Delta t^{4}+\Delta x^{m_{0}}+\Delta\tau^{\nu-1}.
\end{align*}
This completes the proof.
\end{proof}

Next, we present results on the long-time near-conservation of mass and energy.
Following the modulated Fourier expansion (MFE) procedure in \cite{WZ23, LWZ},
we introduce the notation
\begin{equation*}\displaystyle
\begin{array}[c]{ll}
&
\mathbf{k}=\left(k_{-\frac{N_{\tau}}{2},1},\ldots,
k_{\frac{N_{\tau}}{2}-1,1},k_{-\frac{N_{\tau}}{2},2},\ldots,
 k_{\frac{N_{\tau}}{2}-1,2},\ldots,k_{-\frac{N_{\tau}}{2},2N_{x}},\ldots,
k_{\frac{N_{\tau}}{2}-1,2N_{x}}\right),\\
&\displaystyle |\mathbf{k}|=\sum_{l=1}^{2N_{x}}\sum_{j=-N_{\tau}/2}^{N_{\tau}/2-1}
|k_{j,l}|,\ \
 \boldsymbol{\omega}=(\textmd{diagonal elements of}\  \Theta),\ \ \mathbf{k}\cdot \boldsymbol{\omega}=\sum_{l=1}^{2N_{x}}\sum_{j=-N_{\tau}/2}^{N_{\tau}/2-1}
k_{j,l}\omega_{j,l}.
\end{array}
\end{equation*}
For
$\mathcal{Q}= \{\mathbf{k}\in \mathbb{Z}^{D}:  \textmd{there exists an}\ l\in \{1,\ldots,2N_{x}\}\
 \textmd{such that}\
|\mathbf{k}_{:,l}|=|\mathbf{k}|\}$, we define the resonance module
$\mathcal{M}= \{\mathbf{k}\in {\mathcal{Q}}:\ \mathbf{k}\cdot
\boldsymbol{\omega}=0\}$. Let $\mathcal{K}$ be a set of representatives of the
equivalence classes in ${\mathcal{Q}}/ \mathcal{M}$, chosen so that for each
$\mathbf{k}\in\mathcal{K}$, $|\mathbf{k}|$ is minimal in its class
$[\mathbf{k}] = \mathbf{k} +\mathcal{M}$, and such that
 $-\mathbf{k}\in\mathcal{K}.$
For any integer $N>0$, we introduce the notation
\begin{equation}\begin{array}{ll}\mathcal{N}_N={\{\mathbf{k}\in\mathcal{K}:  |\mathbf{k}| \leq N \}},\ \ \
\mathcal{N}_N^*=\mathcal{N} \bigcup \{\langle 0\rangle_l\}_{l=1,2,\ldots,2N_{x}},
\end{array} \label{NN}\end{equation}
where $\langle j\rangle_l$ stands for the unit coordinate vector in
$ \mathbb{R}^{D}$, written as $(0, \ldots , 0, 1, 0, \ldots,0)^{\intercal}$, with
its sole non-zero component $1$ at the $(j,l)$-th position.

\begin{theorem}(\textbf{Long-time near-conservation of mass and energy})\label{Long-time thm}
For the initial value \eqref{init-data} of the two-scale equation \eqref{two-scale}, assume
$0<\delta_0:=\norm{\mathfrak{b}_{0}}_{L_{\tau}^{\infty}(H^{\nu+m_{0}+4})}<1$.
We further require the non-resonance condition
$|\sin\big(\frac{1}{2}\triangle t \omega_{j,1}\big)| \geq c_1 \sqrt{\eps}
\   \textmd{for}\     j=-\frac{N_{\tau}}{2},-\frac{N_{\tau}}{2}+1,\ldots,\frac{N_{\tau}}{2}-1,$ and the time-step constraint $\triangle t/\sqrt{\eps} \geq c_2 > 0.$
For the symmetric numerical approximation \eqref{equ-6-23-2} produced by SUA-EI4, the following long-time near-conservation properties hold:
\begin{equation}\label{er}
\begin{aligned}
  &\frac{\eps^2}{\delta_0^2} |E(\mathfrak{b}^{n})- E(\mathfrak{b}^{0})|= {\mathcal{O}(\eps^{2} \delta_0^4)}+ \mathcal{O}(\delta_{\mathcal{F}}),\  \frac{1}{\delta_0^2}|M(\mathfrak{b}^{n})- M(\mathfrak{b}^{0})| = {\mathcal{O}(\eps^{2}\delta_0^4)}+ \mathcal{O}(\delta_{\mathcal{F}}),
\end{aligned}
\end{equation}
where $   0\leq n\triangle t\leq  \eps^{-2}\delta_0^{-N+5}$, $N$ is an arbitrarily large integer as in \eqref{NN}, $\mathcal{O}$ denotes a term bounded in the Euclidean norm, and $\delta_{\mathcal{F}}$ represents the error introduced by the Fourier pseudospectral discretizations in $x$ and $\tau$.
The constants symbolized by $\mathcal{O}$ may depend on $c_1, c_2,   N, N_{\tau}, N_{x}$ but not on $n, \Delta t, \eps, \delta_0$.
\end{theorem}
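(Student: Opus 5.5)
The plan is the modulated Fourier expansion (MFE) technique of \cite{WZ23,LWZ}, applied to the symmetric scheme SUA-EI4 written in the Fourier variables \eqref{full-discre}. The stiff linear part $\ii\Theta$ is diagonal with frequencies $\omega_{j,l}=j/\eps^2$. The nonlinearity $\Sigma$ is a sum of cubic-type terms (Yukawa coupling), so the small amplitude $\delta_0$ enters multiplicatively.

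\emph{Step 1: ansatz and construction.} We seek the numerical solution in the form
\[
\widetilde{\mathbf{Z}}^{n}\approx \sum_{\mathbf{k}\in\mathcal{N}_N^*} \fe^{\ii(\mathbf{k}\cdot\boldsymbol{\omega})t_n}\,\zeta^{\mathbf{k}}(\eps^2 t_n),
\]
with slowly varying coefficient functions $\zeta^{\mathbf{k}}$. Inserting this into the two-step symmetric form of the scheme gives formal equations. The symmetry conditions \eqref{equ-2-10-5} make the resulting operator $\fe^{z}$-balanced, so that it involves only $\sin(\tfrac12\triangle t(\mathbf{k}\cdot\boldsymbol{\omega}))$ factors. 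Dominant terms are then separated by type:
\begin{itemize}
\item for $\mathbf{k}=\langle j\rangle_l$, we obtain first-order ODEs in the slow time for $\zeta^{\langle j\rangle_l}$;
\item for the other $\mathbf{k}$, we obtain algebraic equations whose solvability requires
\[
\bigl|\sin\bigl(\tfrac12\triangle t(\mathbf{k}\cdot\boldsymbol{\omega})\bigr)\bigr|\gtrsim\sqrt{\eps}.
\]
\end{itemize}
The non-resonance hypothesis and $\triangle t/\sqrt{\eps}\ge c_2$ are used at this point, and only the single-index condition on $\omega_{j,1}$ is needed. The reason is that all frequencies are integer multiples of $1/\eps^2$, so $\mathbf{k}\cdot\boldsymbol{\omega}$ reduces to some $\omega_{j,1}$ modulo the resonance module $\mathcal{M}$. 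We then solve iteratively, truncating at order $N$, and prove the size bounds
\[
\zeta^{\langle j\rangle_l}=\mathcal{O}(\delta_0),\qquad \zeta^{\mathbf{k}}=\mathcal{O}\bigl(\eps^{|\mathbf{k}|/2}\delta_0^{|\mathbf{k}|}\bigr),
\]
together with a defect $\mathcal{O}(\delta_0^{N+1})$ per step.

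\emph{Step 2: approximation over a short interval.} We show that
\[
\widetilde{\mathbf{Z}}^{n}-\sum_{\mathbf{k}}\fe^{\ii(\mathbf{k}\cdot\boldsymbol{\omega})t_n}\zeta^{\mathbf{k}}=\mathcal{O}(\delta_0^{N})
\quad\text{for } t_n\le\eps^{-2}.
\]
The proof is a discrete Gronwall argument using the boundedness of the exponential coefficients $a_{jk},b_j$ and the Lipschitz bounds on $\Sigma$ supplied by Assumption (A).

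\emph{Step 3: almost-invariants.} The scheme is derived from a system with mass and Hamiltonian structure, so the modulation system inherits invariance under the phase shifts $\zeta^{\mathbf{k}}\mapsto \fe^{\ii(\mathbf{k}\cdot\boldsymbol{\mu})\theta}\zeta^{\mathbf{k}}$. It also has a symmetric extended potential $\mathcal{U}(\boldsymbol{\zeta})$. From these we derive two modified invariants $\mathcal{M}^*$ and $\mathcal{E}^*$. Their derivatives in slow time are $\mathcal{O}(\delta_0^{N+1})$, and
\[
\mathcal{M}^*=M+\mathcal{O}(\eps^2\delta_0^{6}),\qquad
\mathcal{E}^*=E+\mathcal{O}(\delta_0^{6})\ \text{(after the $\eps^2$ scaling)},
\]
which gives the factors $\eps^{2}\delta_0^{4}$ after dividing by $\delta_0^2$ and, for $E$, multiplying by $\eps^2$.

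\emph{Step 4: long-time patching.} The standard MFE interval-patching argument concatenates the short-interval estimates up to times $\eps^{-2}\delta_0^{-N+5}$. Finally, the $x$- and $\tau$-discretisation errors enter through Theorem~\ref{theo-1}, bounded by $\delta_{\mathcal{F}}$ when the discrete invariants are compared with the continuous $E$ and $M$.

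The main obstacle is Step 3, specifically proving the energy-type almost-invariant. The exponential RK stages do not directly inherit a Hamiltonian structure, so the symmetry relations \eqref{sori4 coe} have to be exploited to rewrite the modulation equations as $\Phi(\triangle t\,\ii\Theta)\,\ddot\zeta$-type expressions equal to $-\nabla_{\zeta^{-\mathbf{k}}}\mathcal{U}$ up to higher-order terms. We will first verify this structure in the scalar one-frequency case and then extend it blockwise.
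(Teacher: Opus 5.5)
Your overall architecture matches the paper's five-step sketch. Both arguments use a modulated Fourier expansion of the SUA-EI4 solution, modulation equations whose small divisors are controlled by the non-resonance and step-size hypotheses, and almost-invariants for mass and energy. Both then extend to $t_n\le\eps^{-2}\delta_0^{-N+5}$ and finish with $\delta_{\mathcal{F}}$. The one structural difference is that you use the standard short-window-plus-patching argument, whereas the paper simply asserts an expansion with defect $\mathcal{O}(\eps^2\delta_0^{N+1})$ ``on the whole interval''.

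The first genuine gap is that your own estimates do not give the stated mass bound. You take slow time $s=\eps^2 t$ and $\frac{\mathrm{d}}{\mathrm{d}s}\mathcal{M}^*=\mathcal{O}(\delta_0^{N+1})$. The range $t_n\le\eps^{-2}\delta_0^{-N+5}$ is $s\le\delta_0^{-N+5}$, so $\mathcal{M}^*$ drifts by $\mathcal{O}(\delta_0^{6})$ and
\[
\frac{1}{\delta_0^{2}}\bigl|M(\mathfrak{b}^{n})-M(\mathfrak{b}^{0})\bigr|=\mathcal{O}(\eps^{2}\delta_0^{4})+\mathcal{O}(\delta_0^{4})=\mathcal{O}(\delta_0^{4}),
\]
which is not $\mathcal{O}(\eps^{2}\delta_0^{4})$. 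The $\eps^2$ in $\mathcal{M}^*=M+\mathcal{O}(\eps^2\delta_0^6)$ is swamped by the drift. For the energy, the prefactor $\eps^2/\delta_0^2$ turns an $\mathcal{O}(\delta_0^6)$ drift into $\mathcal{O}(\eps^2\delta_0^4)$, but nothing plays that role for the mass. The paper's ``analogously for the mass'' has the same mismatch: its $M(\mathfrak{b}^n)=\widetilde{M}(t_n)+\mathcal{O}(\delta_0^6)$ and $\widetilde{M}(t_n)=\widetilde{M}(0)+\mathcal{O}(t_n\eps^2\delta_0^{N+1})$ also give only $\mathcal{O}(\delta_0^4)$. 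Following the paper therefore does not close the gap. You would need a mass almost-invariant that is $\mathcal{O}(\eps^2\delta_0^6)$-close to $M$ \emph{and} drifts by only $\mathcal{O}(\eps^2\delta_0^{N+1})$ per unit of $s$, and neither argument produces one.

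Three of your supporting steps would also fail as written.

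(i) The reduction of the non-resonance condition is false. Indeed $\mathbf{k}\cdot\boldsymbol{\omega}=m/\eps^2$ with $m\in\mathbb{Z}$, but $|m|$ can reach $|\mathbf{k}|N_\tau/2$. Passing to classes modulo $\mathcal{M}$ only identifies vectors with the same $m$, so it never brings $m$ back into the range of the $\omega_{j,1}$. The divisor in the equation for the $(j,l)$-component is $\fe^{\ii\triangle t\,\mathbf{k}\cdot\boldsymbol{\omega}}-\fe^{\ii\triangle t\,\omega_{j,l}}$. You therefore need $|\sin(\triangle t\,r/(2\eps^2))|\gtrsim\sqrt{\eps}$ for all integers $1\le|r|\lesssim NN_\tau$. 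The hypothesis covers only $1\le|r|\le N_\tau/2$, and it must exclude the zero frequency, where it fails trivially. For example, take $\triangle t/(2\eps^2)=\pi p/(N_\tau+1)$ with $p$ coprime to $N_\tau+1$ and of order $\eps^{-3/2}$. This satisfies both hypotheses, yet $r=N_\tau+1$ is exactly resonant. The paper merely asserts the small-divisor bound, so this is an extra assumption, not a consequence.

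(ii) Step~2 cannot be a plain Gronwall argument on $t_n\le\eps^{-2}$. The Yukawa terms are quadratic, not cubic, and for KGD $\mathcal{G}$ also contains the $\delta_0$-independent linear term $-\ii W\Psi$. Hence the Lipschitz constant $L$ of $\Sigma$ furnished by Assumption (A) is $\mathcal{O}(1)$, or at best $\mathcal{O}(\delta_0)$ when $W=0$. The Gronwall factor is then $\fe^{CL\eps^{-2}}$. Likewise, a per-step defect $\mathcal{O}(\delta_0^{N+1})$ accumulated over $\eps^{-2}/\triangle t$ steps is not $\mathcal{O}(\delta_0^{N})$. Relatedly, the modulation functions cannot be functions of $\eps^2 t$ alone, because $\Sigma$ itself carries $\eps$-independent oscillations such as $\fe^{\pm\ii t\mathcal{D}_{1,f}}$ and $\fe^{\pm\ii t\mathcal{D}_{\omega,f}}$. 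The window must satisfy $L\cdot(\text{window})=\mathcal{O}(1)$, which changes the window count in your patching step.

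(iii) Theorem~\ref{theo-1} is a bound on a fixed interval $[0,T]$ and contains the $\triangle t^4$ term. It cannot supply a time-uniform $\delta_{\mathcal{F}}$ up to $\eps^{-2}\delta_0^{-N+5}$. What is needed there is a pointwise-in-time comparison of the discrete and continuous invariants evaluated at the numerical solution.
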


\begin{proof}
The proof is carried out within the modulated Fourier expansion (MFE)
framework of \cite{WZ23,LWZ}, adapted to the present two-scale setting,
and consists of five steps. Full details are given in \cite{LWZ,WWL}.

\emph{Step 1: modulated expansion of the numerical solution.}
Over the whole time interval, the symmetric numerical solution
$\mathfrak{b}^{n}$ is expanded into a modulated Fourier series with respect to
the discrete frequency vector $\boldsymbol{\omega}$, truncated at
order $N$:
\begin{equation*}
  \mathfrak{b}^{n} \approx \sum_{\mathbf{k}\in\mathcal{N}_{N}}
  z_{\mathbf{k}}(\eps t)\,\mathrm{e}^{\,\mathrm{i}(\mathbf{k}\cdot\boldsymbol{\omega})t},
\end{equation*}
where the modulation functions $z_{\mathbf{k}}$ vary on the slow time
scale $\eps t$. The symmetric structure of SUA-EI4 ensures that the
defect of this expansion satisfies the backward error estimate
$\mathcal{O}(\eps^{2}\delta_0^{N+1})$ on the whole interval.

\emph{Step 2: modulation equations.}
Substituting the expansion into the two-scale system \eqref{two-scale}
and comparing the coefficients of
$\mathrm{e}^{\,\mathrm{i}(\mathbf{k}\cdot\boldsymbol{\omega})t}$ yield
a system of modulation equations for the modulation functions
$z_{\mathbf{k}}$, $\mathbf{k}\in\mathcal{N}_{N}$, posed on the slow
time scale. The non-resonance condition
$|\sin(\tfrac{1}{2}\triangle t\,\omega_{j,1})|\geq c_1\sqrt{\eps}$
together with the step-size constraint
$\triangle t/\sqrt{\eps}\geq c_2>0$ guarantees that all small divisors
$|\mathbf{k}\cdot\boldsymbol{\omega}|$ arising in this construction are
bounded from below, uniformly in $\eps$ and $\triangle t$. Moreover,
the Yukawa coupling enters the modulation system only through slowly
varying coefficients, so that no additional oscillatory components are
generated.

\emph{Step 3: almost-invariants of the modulation system.}
The modulation system admits almost-invariants $\widetilde{E}$ and
$\widetilde{M}$ that approximate the mass and the energy to leading
order:
\begin{equation*}
  E(\mathfrak{b}^{n})=\widetilde{E}( t_{n})
  +\mathcal{O}(\delta_0^{6}),\qquad
  M(\mathfrak{b}^{n})=\widetilde{M}( t_{n})
  +\mathcal{O}(\delta_0^{6}).
\end{equation*}
Along the solutions of the modulation equations, the almost-invariants
satisfy the estimates
\begin{equation*}
  \widetilde{E}( t_{n})=\widetilde{E}(0)
  +\mathcal{O}( t_{n}\eps^{2}\delta_0^{N+1}),\qquad
  \widetilde{M}( t_{n})=\widetilde{M}(0)
  +\mathcal{O}( t_{n}\eps^{2}\delta_0^{N+1}).
\end{equation*}

\emph{Step 4: near-conservation on the long time interval.}
Combining \emph{Step 3} with the error estimate of
\emph{Step 1} and the time interval
$0\leq n\triangle t\leq \eps^{-2}\delta_0^{-N+5}$, we obtain
\begin{equation*}
  \frac{\eps^{2}}{\delta_0^{2}}\bigl|E(\mathfrak{b}^{n})-E(\mathfrak{b}^{0})\bigr|
  =\mathcal{O}(\eps^{2}\delta_0^{4})
  +\mathcal{O}( n\triangle t\,\eps^{4}\delta_0^{N-1}),
\end{equation*}
and analogously for the mass. Since $N$ is arbitrary, the term
$\mathcal{O}( n\triangle t\,\eps^{4}\delta_0^{N-1})$ can be absorbed
into $\mathcal{O}(\eps^{2}\delta_0^{4})$ for
$0\leq n\triangle t\leq \eps^{-2}\delta_0^{-N+5}$.

\emph{Step 5: contribution of the spectral discretizations.}
Finally, the errors introduced by the Fourier pseudospectral
discretizations in $x$ and $\tau$ are bounded by the standard
interpolation estimates under the regularity assumed in \text{(B)},
which yields the remainder $\mathcal{O}(\delta_{\mathcal{F}})$ in
\eqref{er} and completes the proof.  
\end{proof}

\section{Numerical verification}\label{sec-4}
In this section, a series of numerical experiments are carried out for the KGD
and KGS equations to demonstrate the properties of the proposed numerical scheme
\eqref{equ-6-23-2}. The uniform accuracy of the scheme and the long-time
near-conservation of mass and energy are first examined in one dimension.
Subsequently, numerical simulations of the nonlinear dynamics of the KGD and KGS
equations are presented in two and three dimensions (2D and 3D). We take a
bounded domain $\Omega=(a,b)^{k}$, $k=1,2,3$, and impose periodic boundary
conditions. All computations are performed using MATLAB R2024a on a PC running
Windows 11, equipped with an Intel Core i7-13700 CPU @ 2.10 GHz and 64 GB of
RAM.

\subsection{KGD equation}\label{sec-4.1}
We first assess the performance of the two proposed numerical schemes for the
KGD equation \eqref{KGD}, for which the relative errors in accuracy
$err_{\phi}$ and $err_{\Psi}$, mass $err_{M}$, and energy $err_{E}$ are employed
as metrics.

\subsubsection{One-dimensional KGD equation}
We first solve the 1D KGD equation numerically with $g=1$ and $\omega=1$ on the
bounded interval $\Omega=(-32,32)$. The initial data are taken from \cite{CY3}:
\begin{align*}
\Psi_{0}(x)=(e^{-x^{2}/2},e^{-(x-1)^{2}/2})^{\intercal}, \quad \phi_{0}(x)=e^{-x^{2}/2}, \quad \phi_{1}(x)=\frac{3}{2}e^{-x^{2}/2}, \ x\in\mathbb{R}, \quad t\geq 0,
\end{align*}
and the electromagnetic potentials are chosen as
$V(x)=(1-x)/(1+x^2)$ and $A_{1}(x)=(x+1)^2/(1+x^2)$.

The spatial discretization error of the proposed SUA-EI4 scheme is first
investigated, with the corresponding results displayed in Figure
\ref{fig-5-1-1}. Here, the reference solution is computed by SUA-EI4 with
$h=0.01$, $N_{x}=2^{11}$, and $N_{\tau}=2^{6}$. Since EUA-EI4 yields comparable
results, they are omitted. These results indicate that the spatial
discretization in both the $x$- and $\tau$-directions achieves spectral accuracy
for all $\eps\in(0,1]$. Accordingly, $N_{x}=2^{10}$ and $N_{\tau}=2^{5}$ are
fixed in the subsequent experiments, which suffice to attain machine accuracy.

We then examine the temporal errors, where the reference solution is computed by
the second-order TSFP method \cite{YRS} with an ultra-fine time step of $10^{-7}$.
Figures \ref{fig-5-1-2} and \ref{fig-5-1-3} display the temporal errors at $T=1$
for varying $\eps$ and $h$. As the numerical results indicate, both SUA-EI4 and
EUA-EI4 exhibit fourth-order convergence in time, in agreement with the
theoretical bounds.

We proceed to evaluate the long-time performance of the two schemes through
their numerical energy $E(t_{n})$ and mass $M(t_{n})$. The corresponding
conservation errors are displayed in Figure \ref{fig-5-1-4}. The results clearly
reveal that the symmetric SUA-EI4 scheme retains excellent long-time
conservation, whereas the nonsymmetric EUA-EI4 scheme drifts appreciably in both
energy and mass.

\begin{figure}[t!]
    \centering
        \includegraphics[width=0.38\linewidth,height=3.6cm]{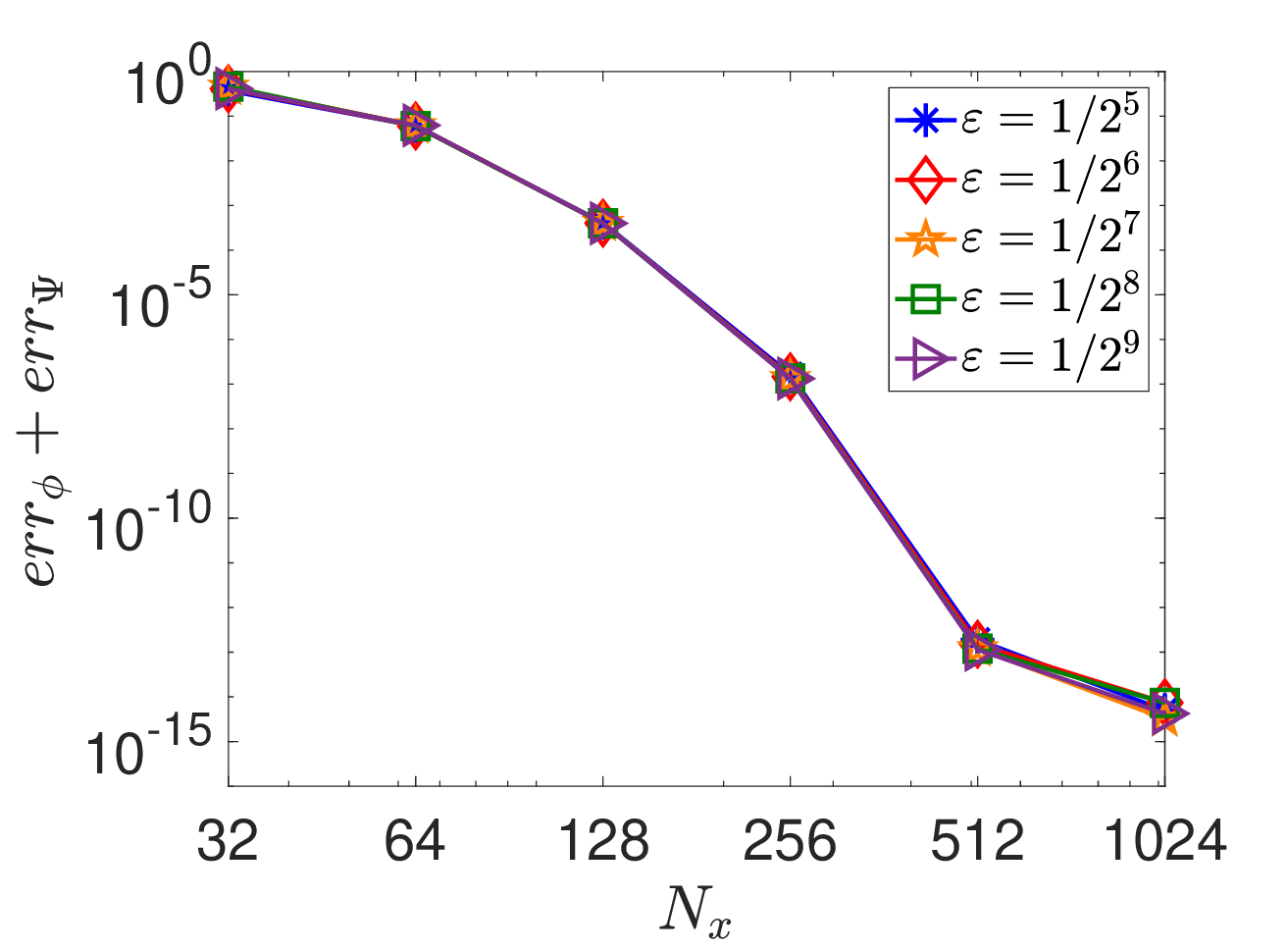}
        \includegraphics[width=0.38\linewidth,height=3.6cm]{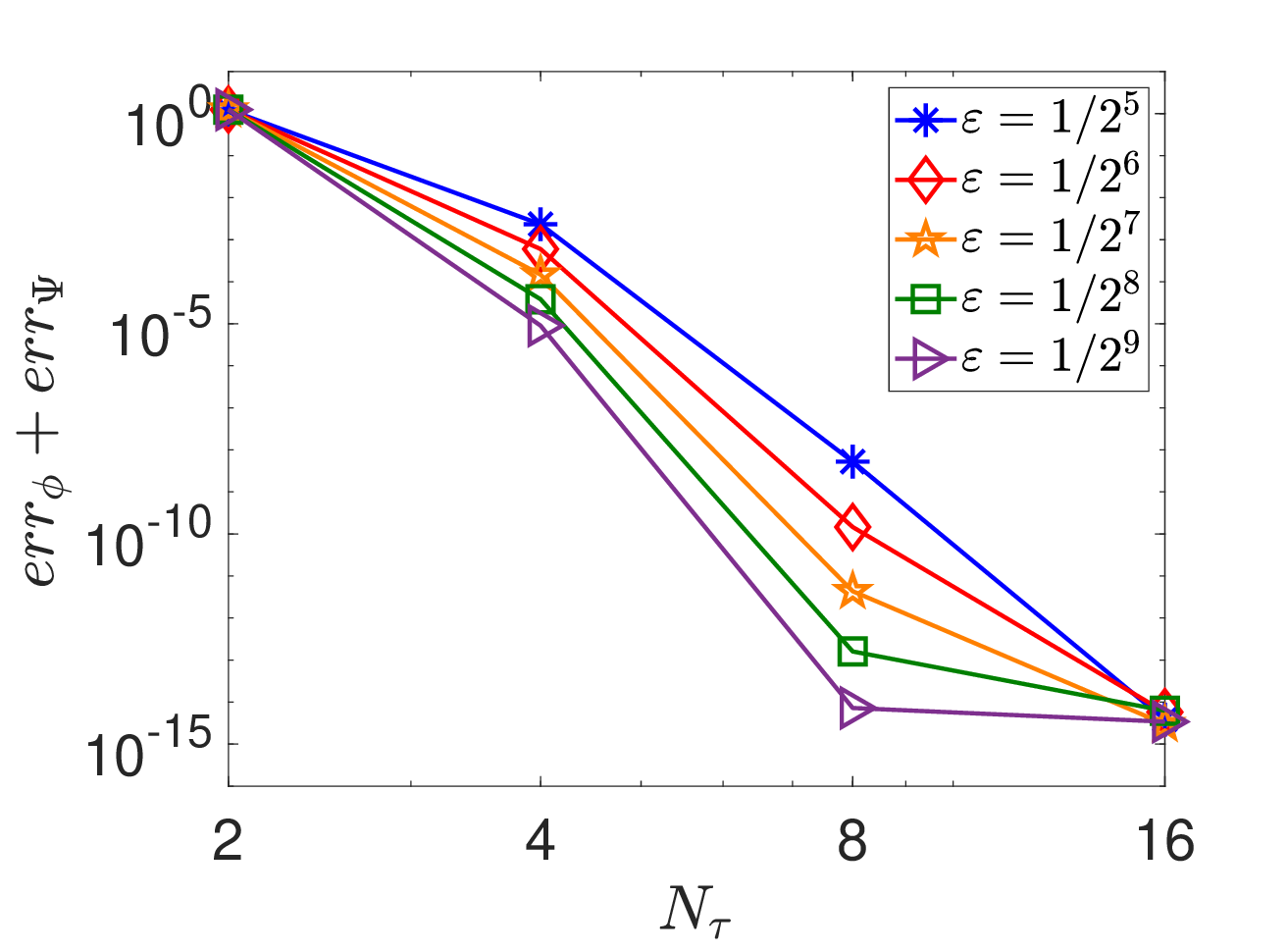}
    \caption{Spatial errors of KGD in 1D at $T=1$ under different $\eps$ in the $x$-direction (left) and $\tau$-direction (right).}
    \label{fig-5-1-1}
\end{figure}

\begin{figure}[t!]
    \centering
        \includegraphics[width=0.38\linewidth,height=3.6cm]{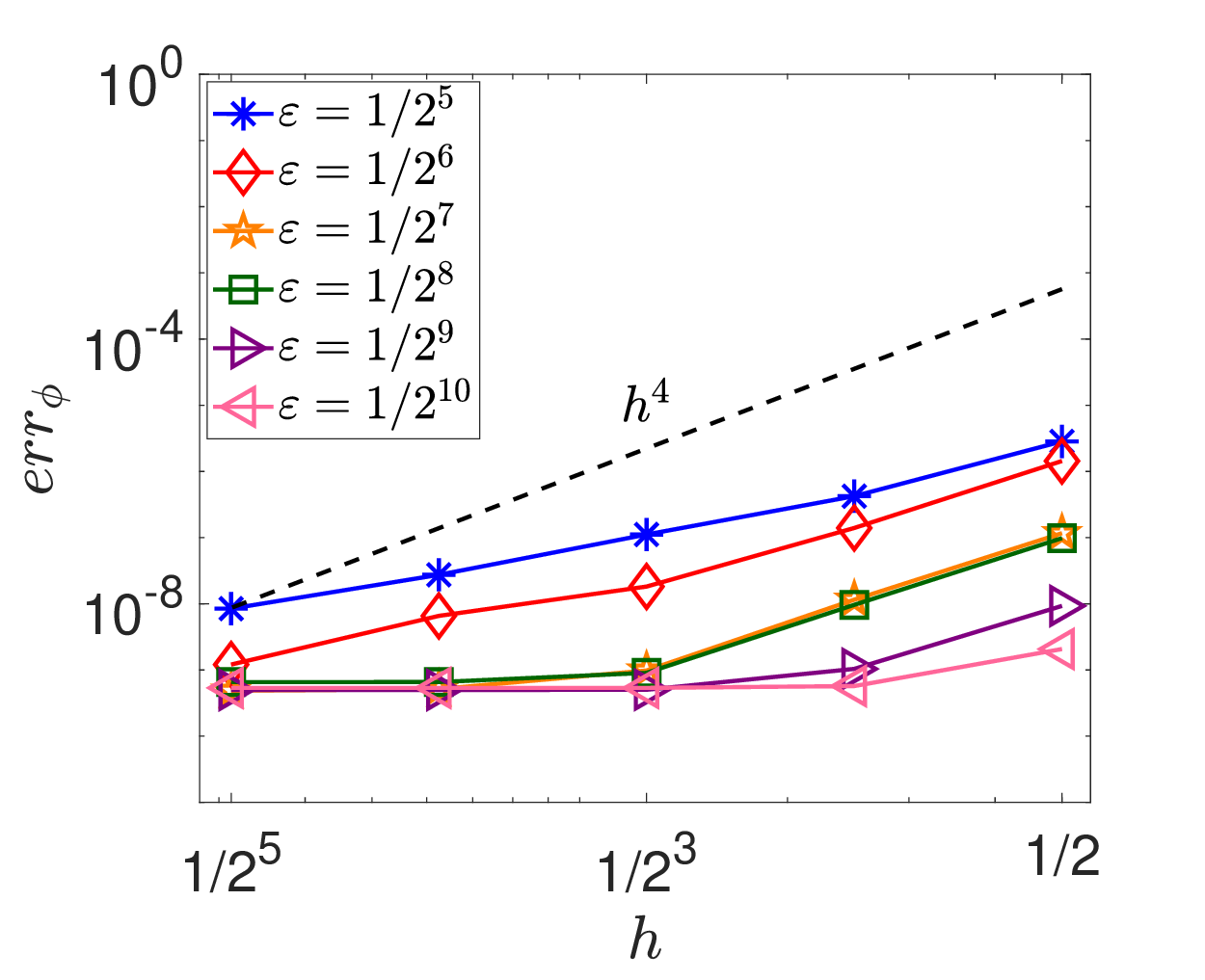}
        \includegraphics[width=0.38\linewidth,height=3.6cm]{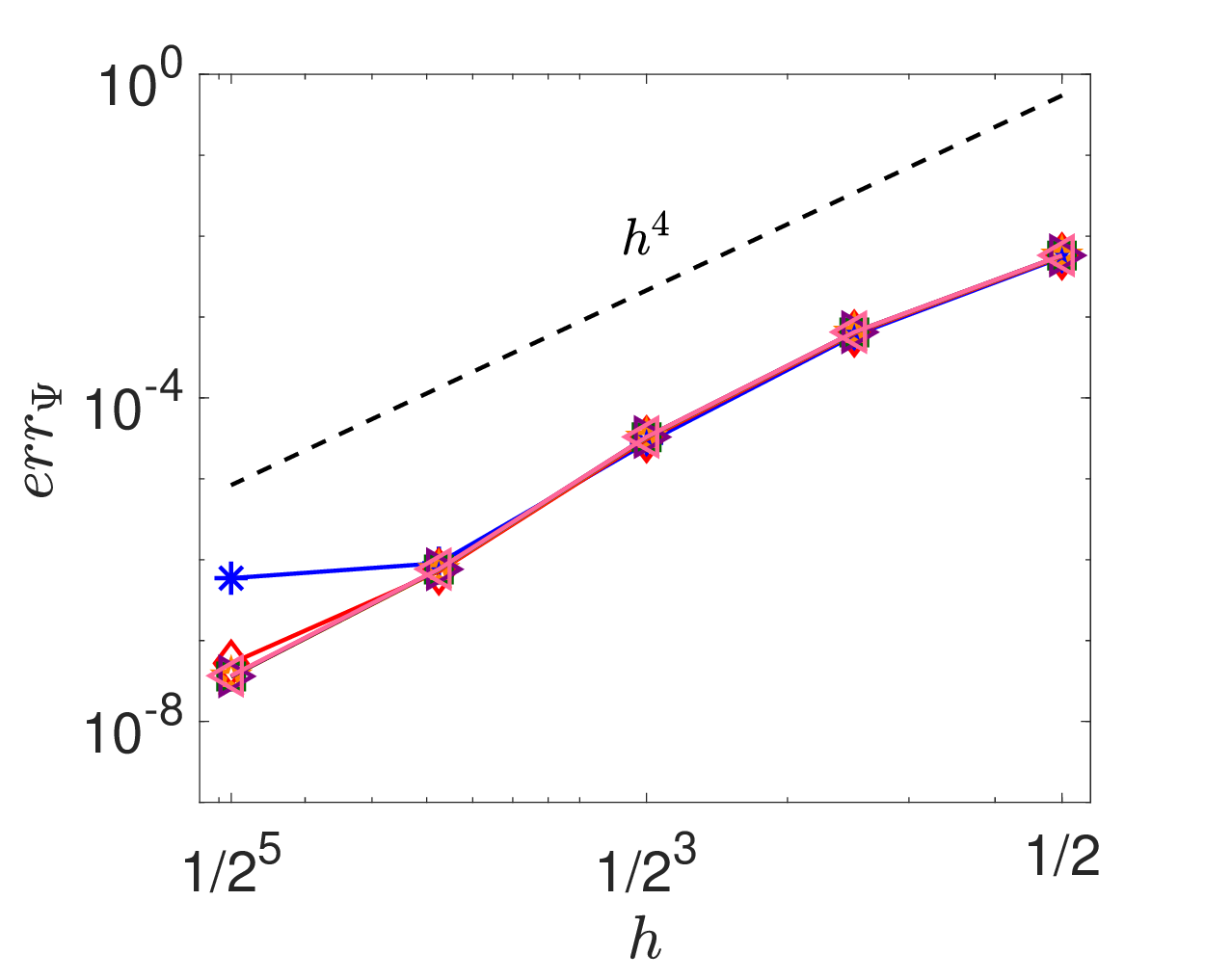}\\
        \includegraphics[width=0.38\linewidth,height=3.6cm]{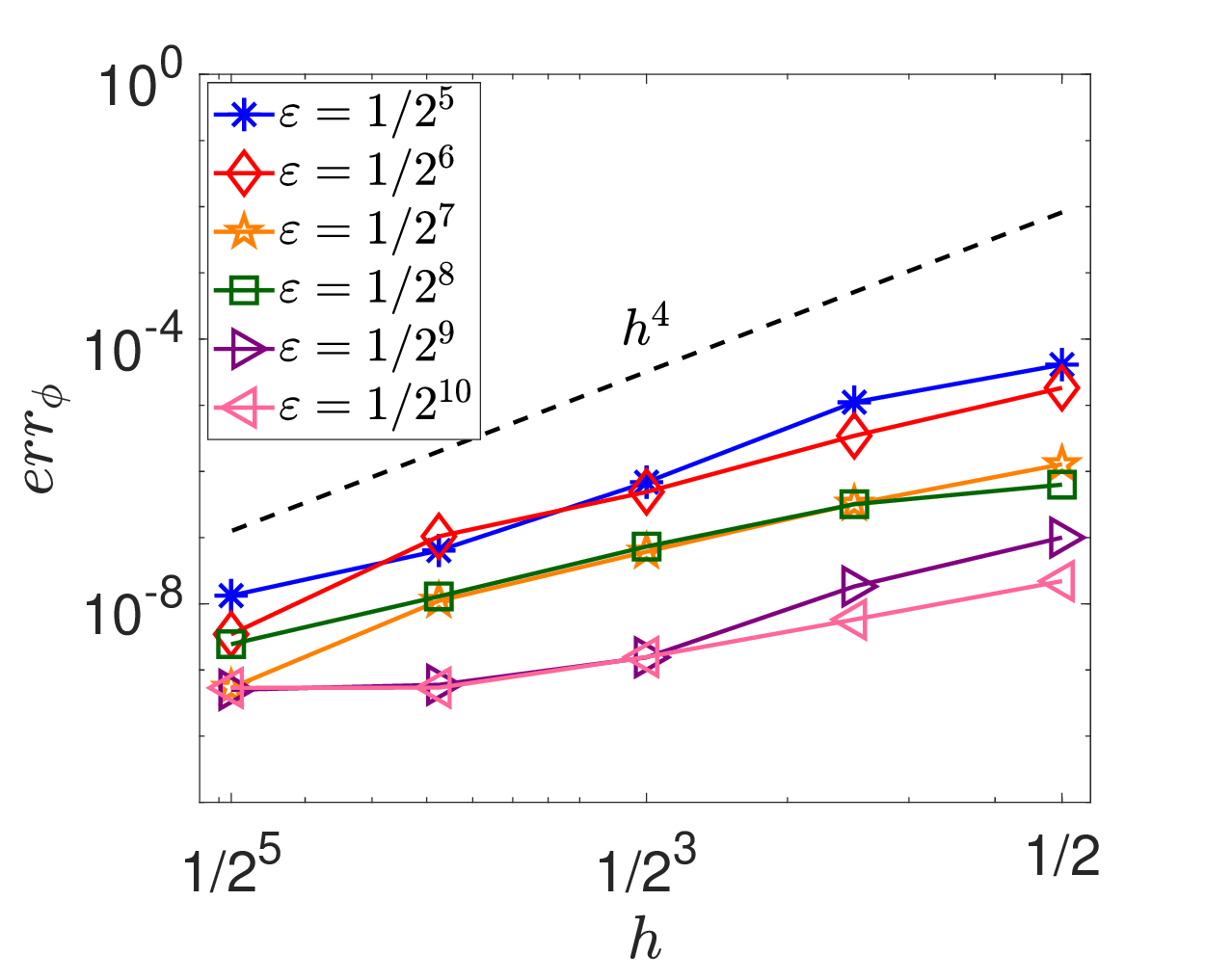}
        \includegraphics[width=0.38\linewidth,height=3.6cm]{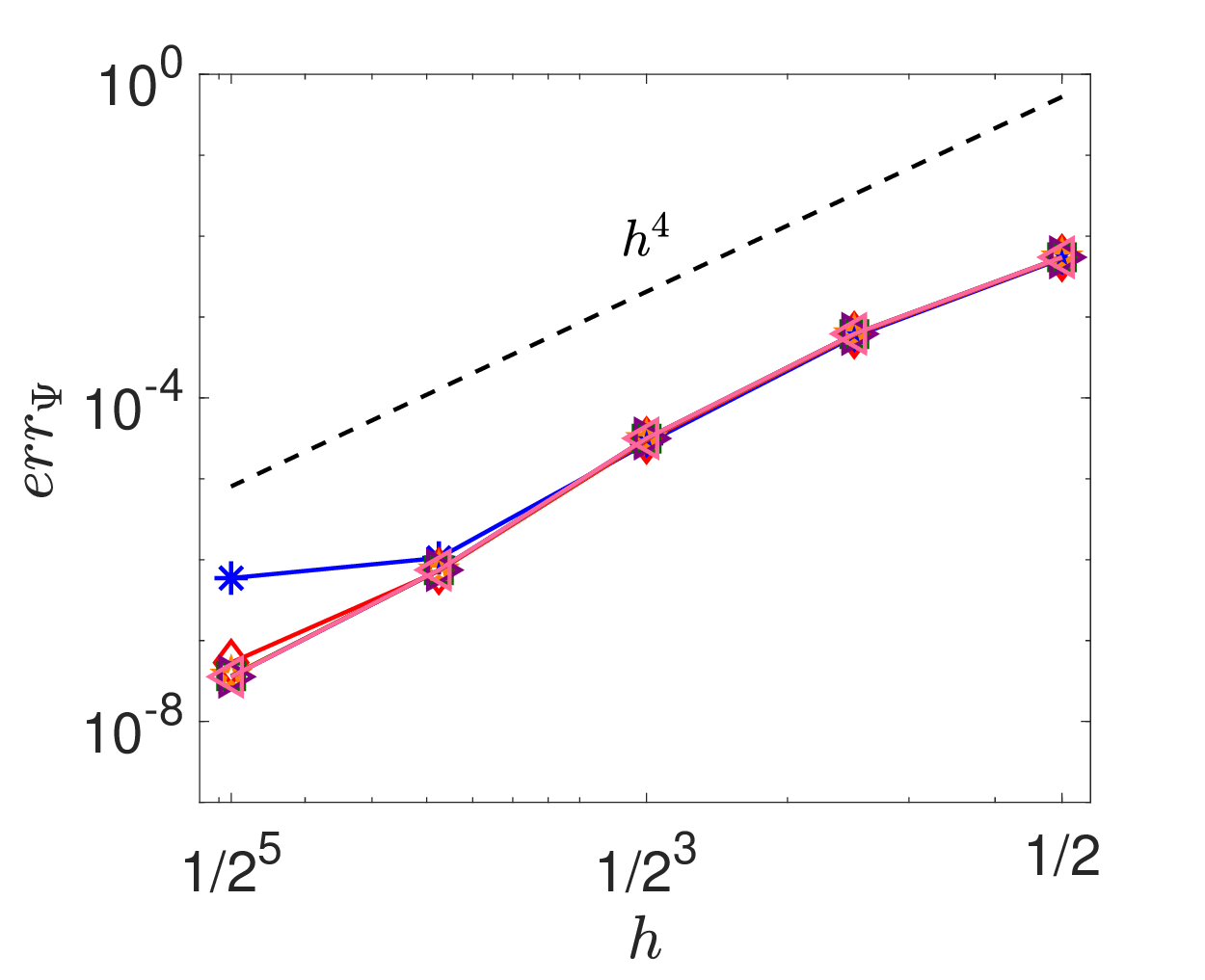}
     \caption{Temporal error of KGD in 1D at $T=1$ under different $\eps$ for SUA-EI4 scheme (first row) and EUA-EI4 scheme (second row).}
    \label{fig-5-1-2}
\end{figure}

\begin{figure}[t!]
    \centering
        \includegraphics[width=0.38\linewidth,height=3.6cm]{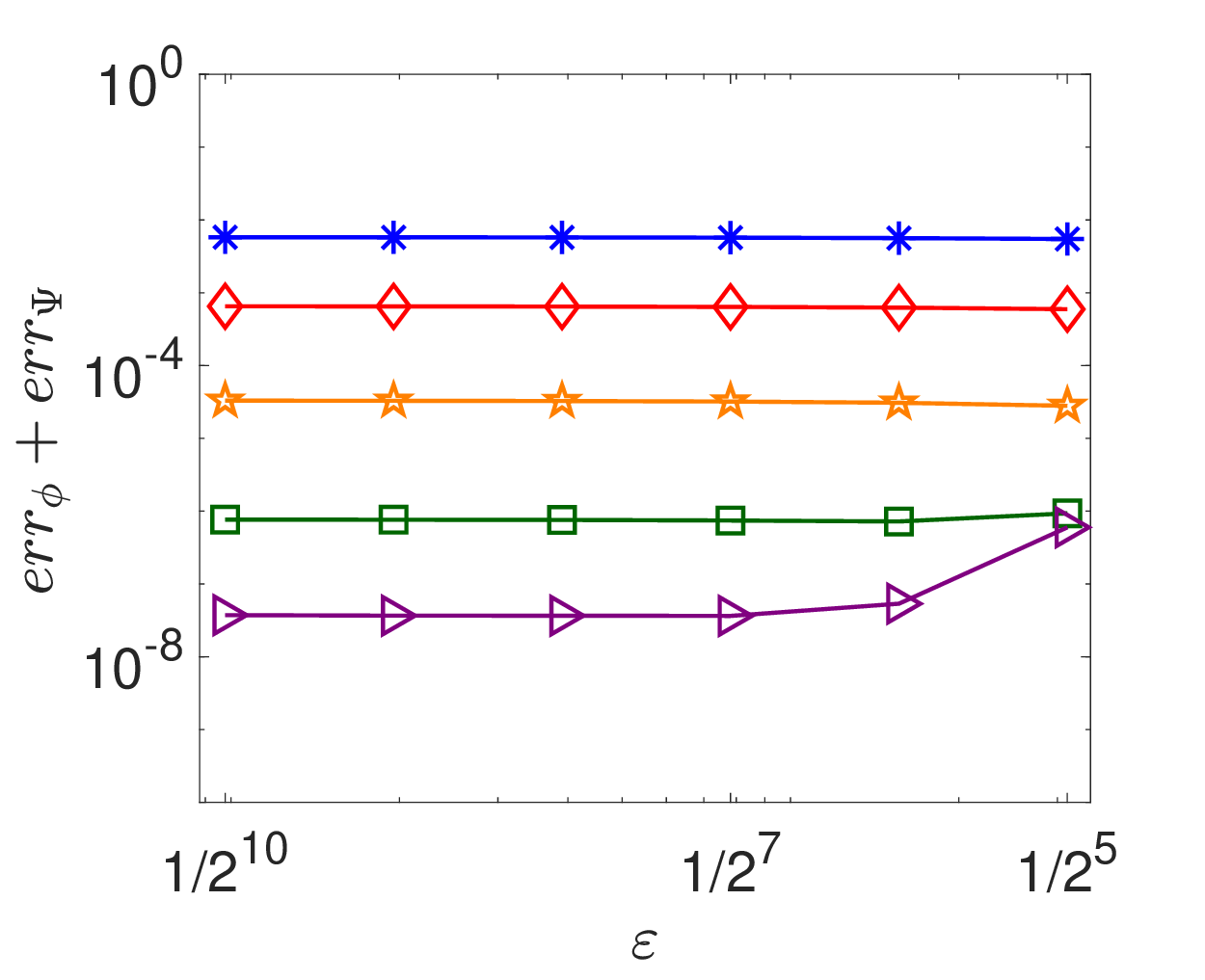}
        \includegraphics[width=0.38\linewidth,height=3.6cm]{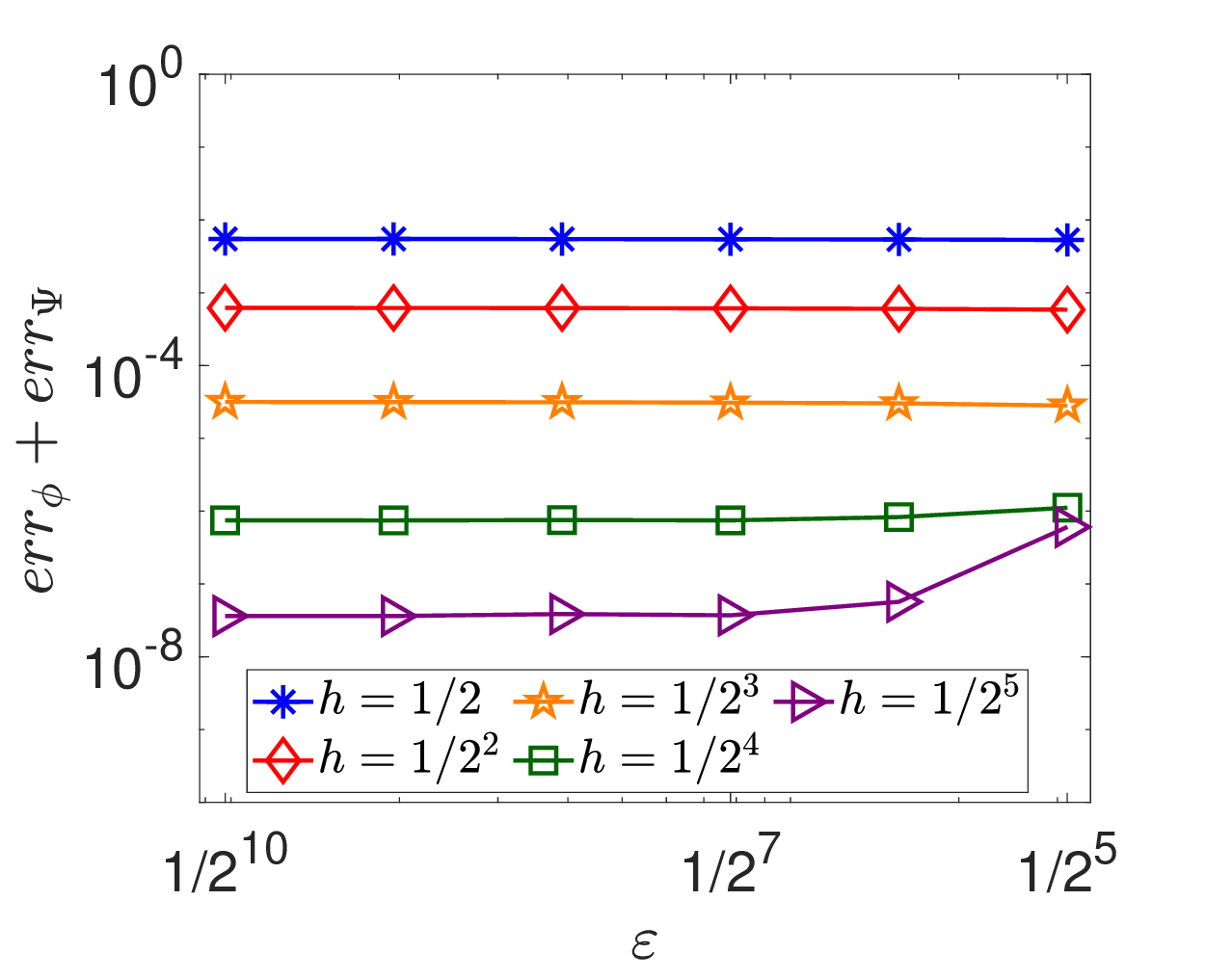}
    \caption{Temporal error of KGD in 1D at $T=1$ under different $h$ for SUA-EI4 scheme (left) and EUA-EI4 scheme (right).}
    \label{fig-5-1-3}
\end{figure}

\begin{figure}[t!]
    \centering
        \includegraphics[width=0.32\linewidth,height=3.1cm]{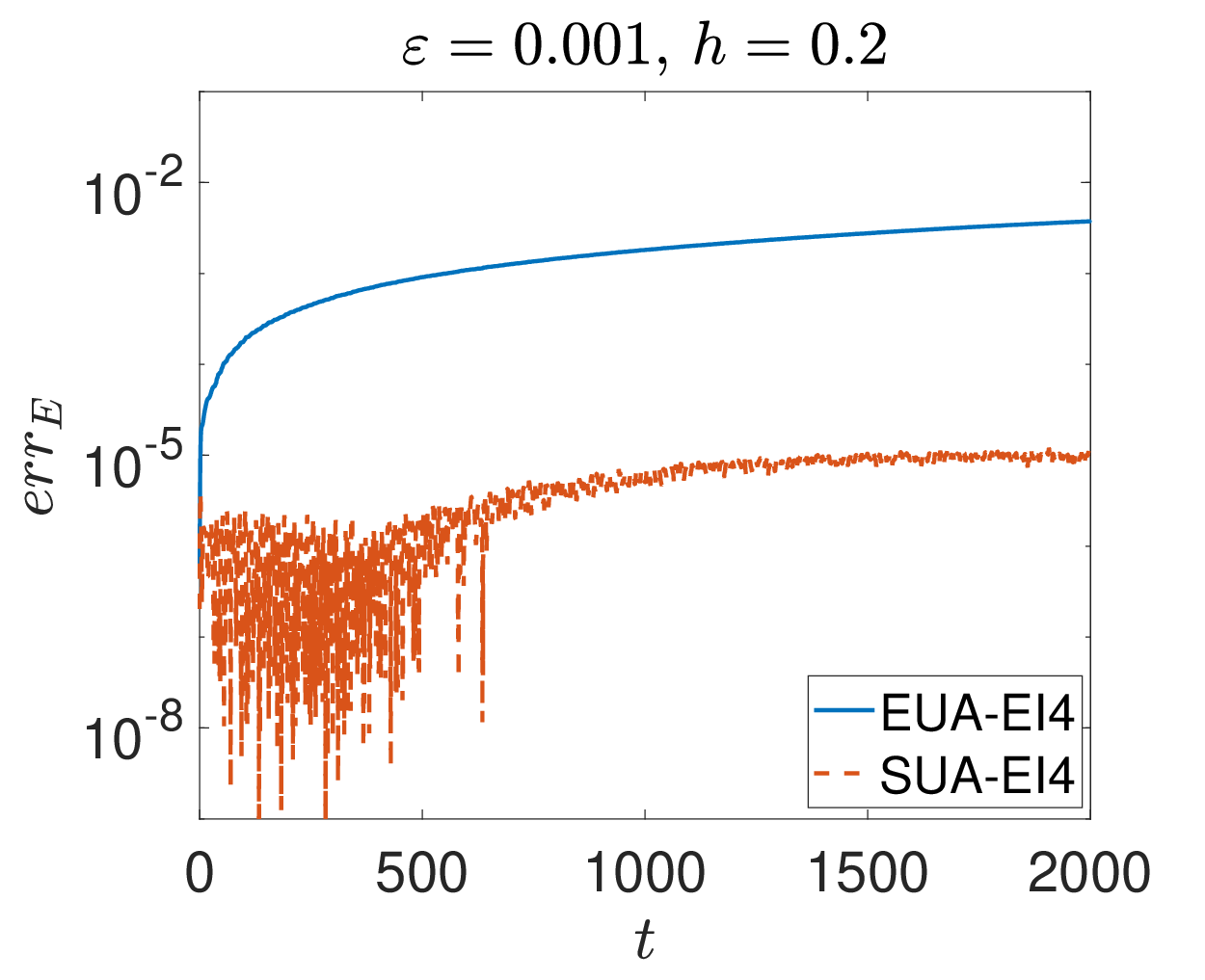}
        \includegraphics[width=0.32\linewidth,height=3.1cm]{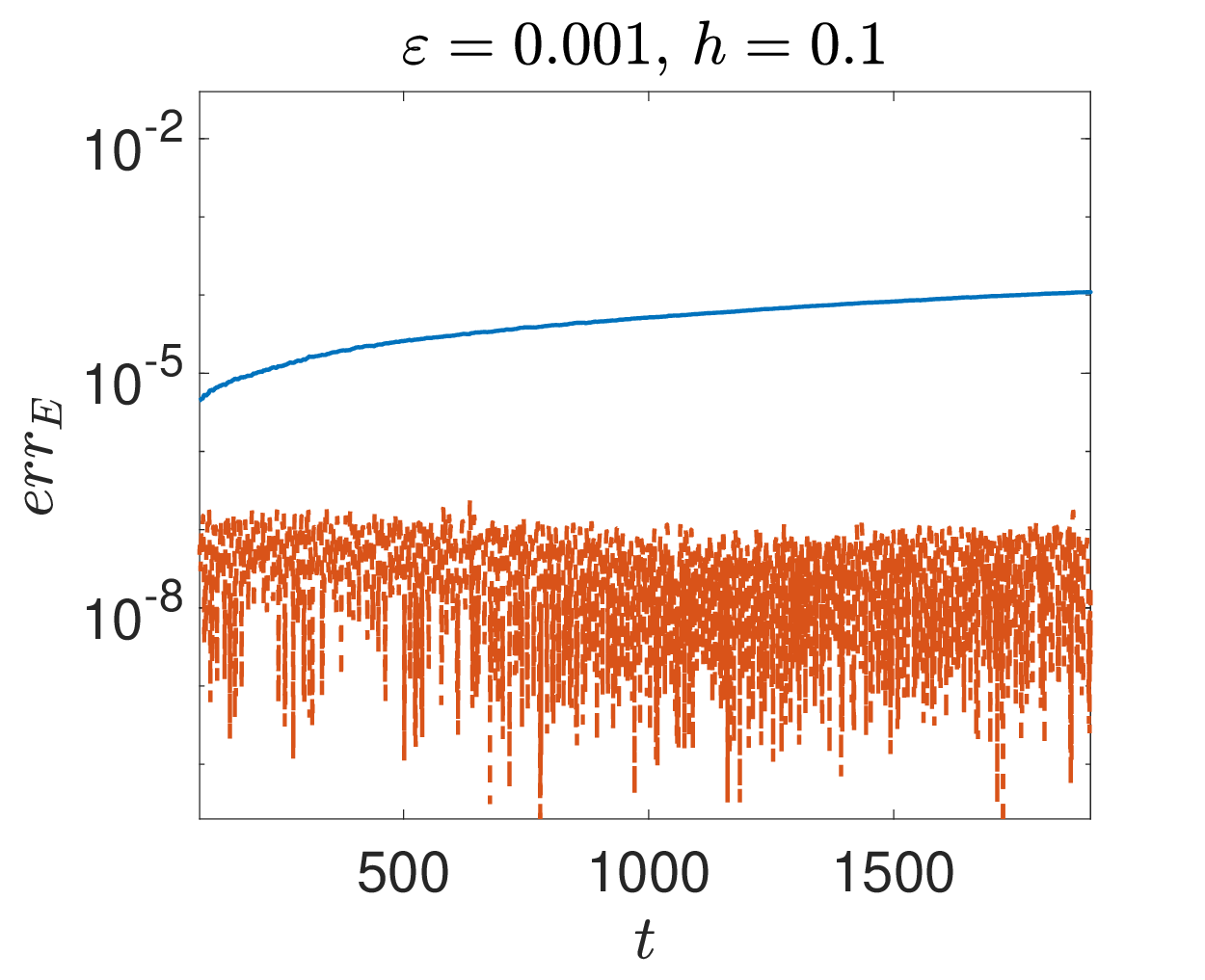}
        \includegraphics[width=0.32\linewidth,height=3.1cm]{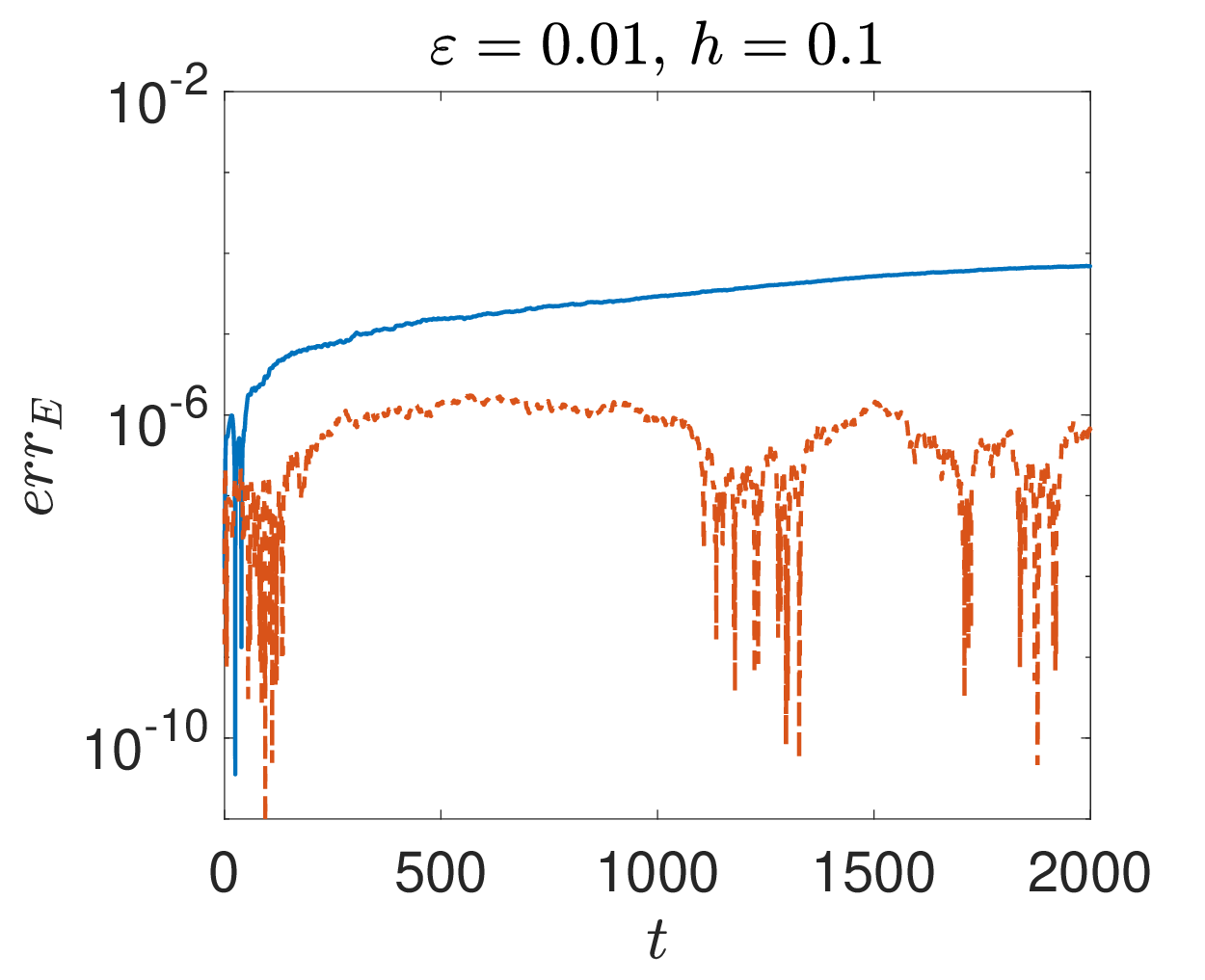}\\
        \includegraphics[width=0.32\linewidth,height=3.1cm]{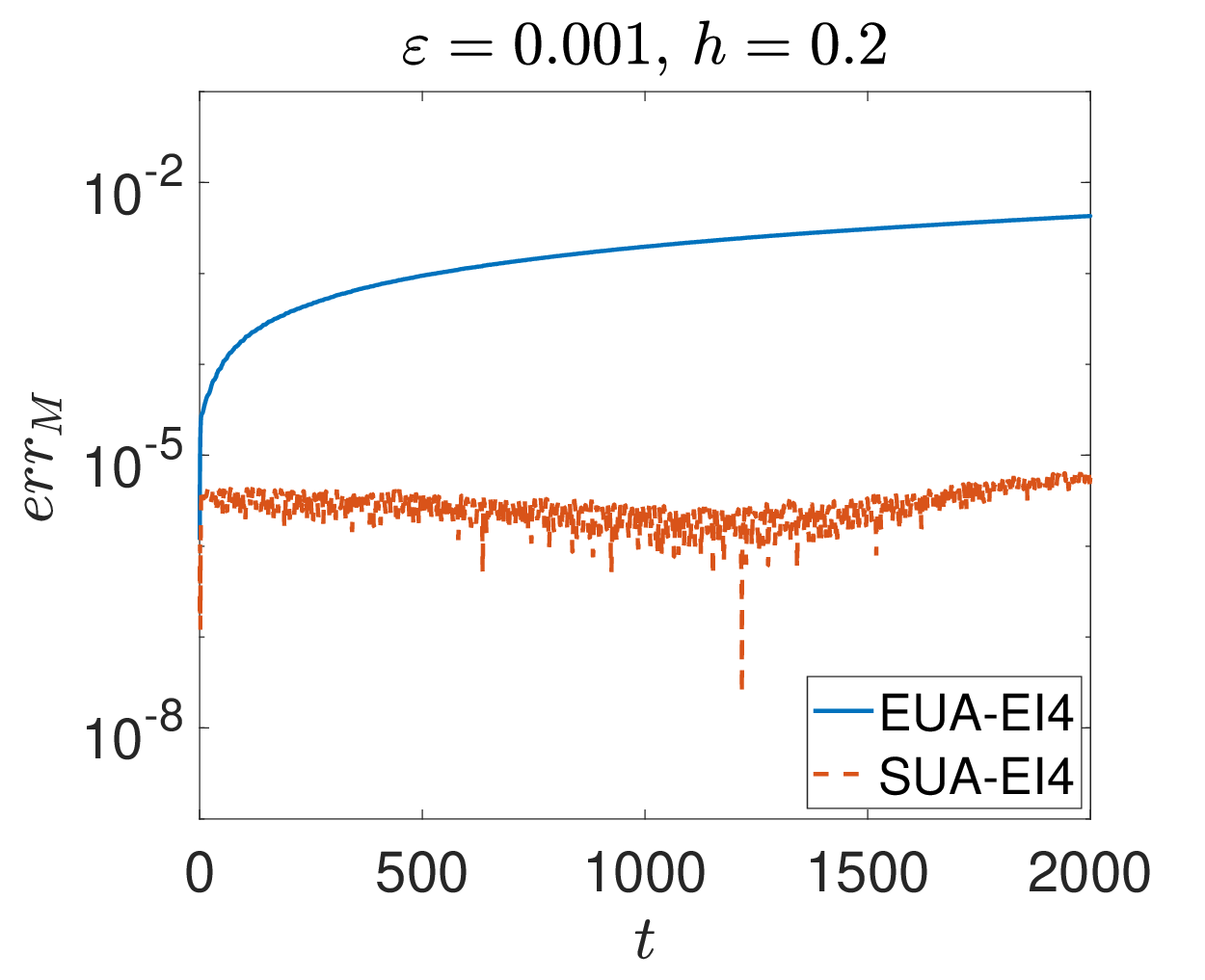}
        \includegraphics[width=0.32\linewidth,height=3.1cm]{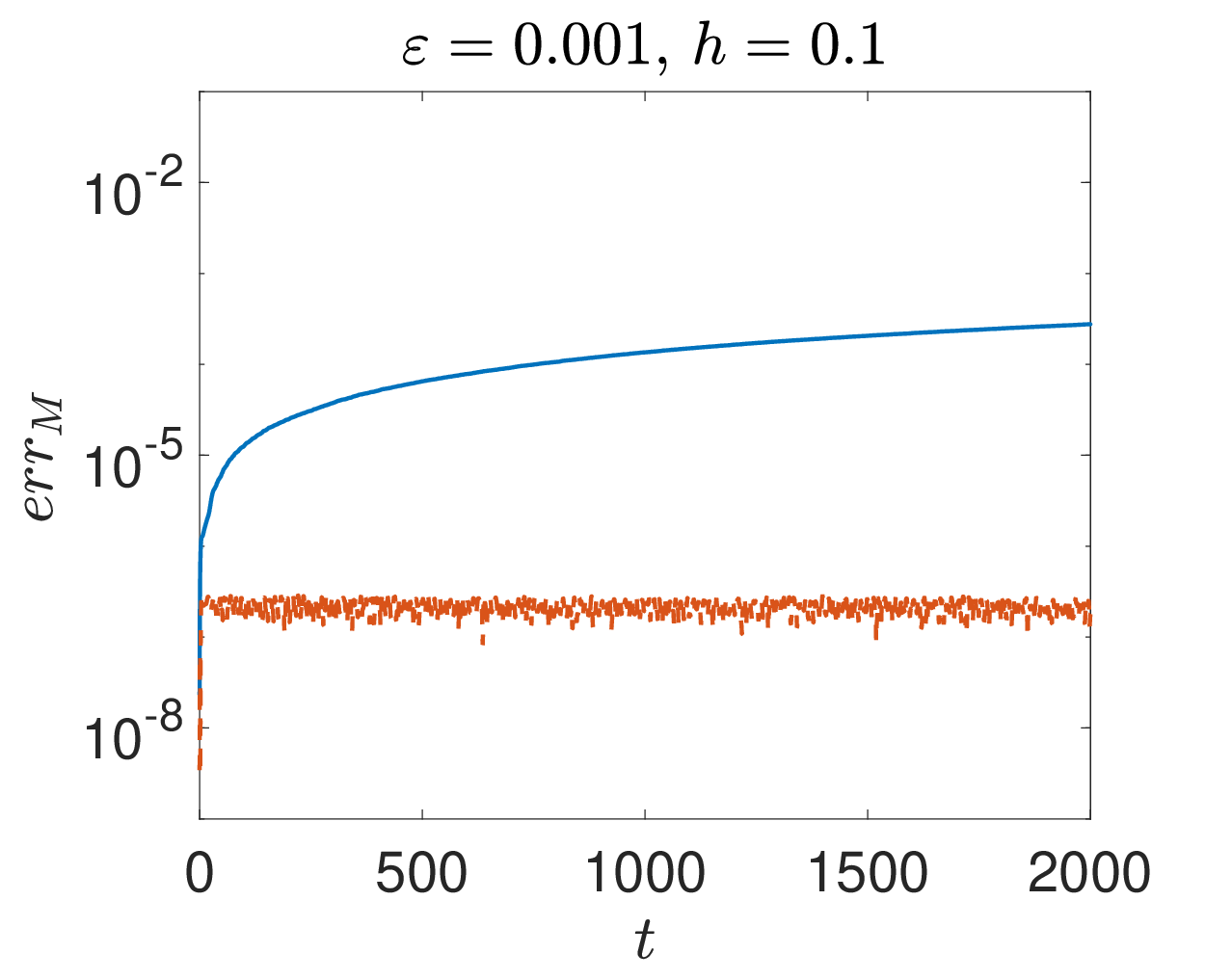}
        \includegraphics[width=0.32\linewidth,height=3.1cm]{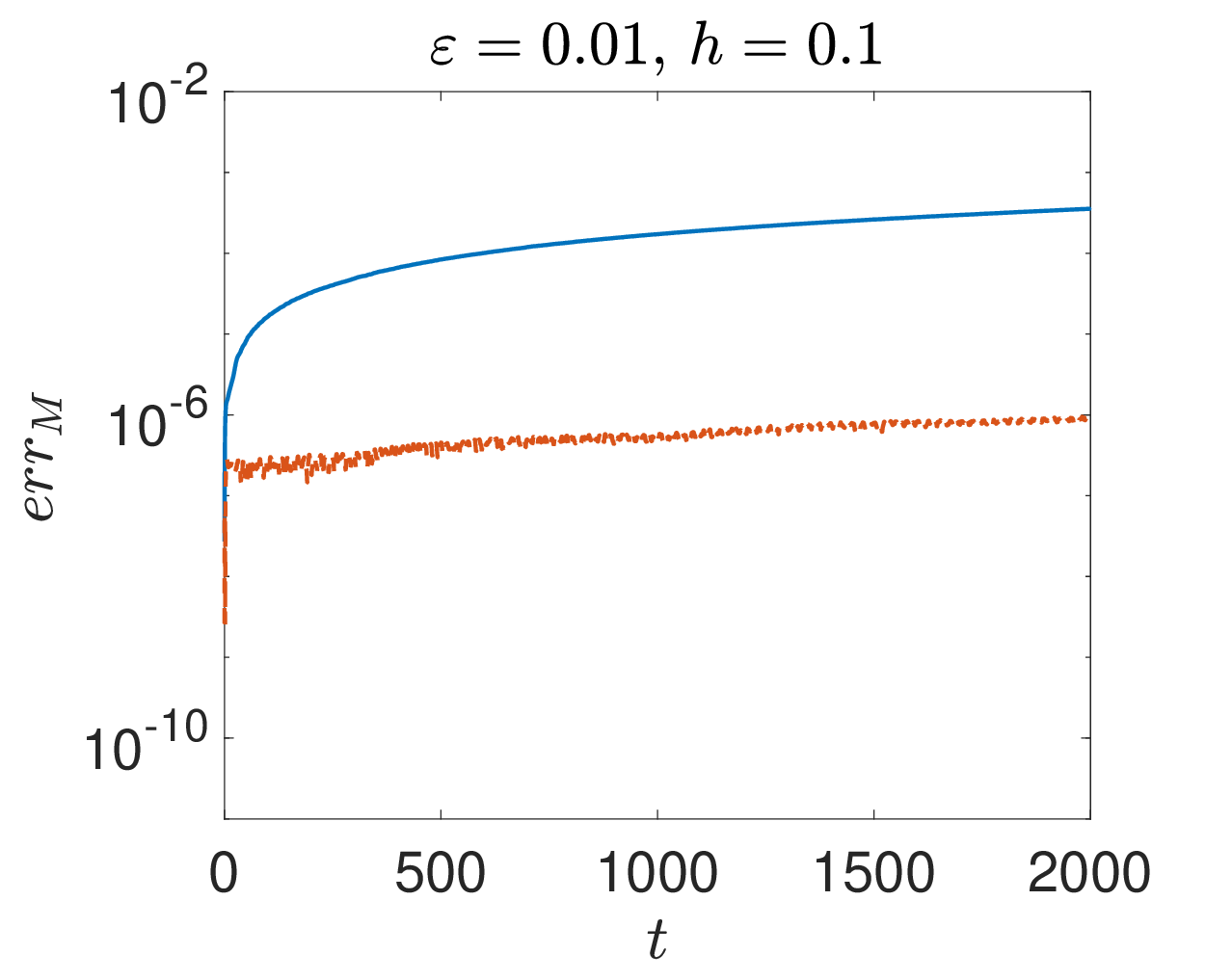}
    \caption{Energy error (top) and mass error (bottom) of KGD in 1D under different $\eps$ and $h$.}
    \label{fig-5-1-4}
\end{figure}

\subsubsection{Two-dimensional KGD equation}
We proceed to the 2D KGD equation with coefficients $g=1$ and $\omega=1$,
taking $\Omega=(-10,10)^{2}$, $\triangle x=1/16$, and $N_{\tau}=2^{4}$, together
with the honeycomb lattice potential \cite{YRS}
\begin{equation*}
V(\bx)=\cos\Big(\frac{4\pi}{\sqrt{3}}\mathbf{e}_{1}\cdot \bx\Big)+\cos\Big(\frac{4\pi}{\sqrt{3}}\mathbf{e}_{2}\cdot \bx\Big)+\cos\Big(\frac{4\pi}{\sqrt{3}}\mathbf{e}_{3}\cdot \bx\Big), \quad
A_{1}(\bx)=A_{2}(\bx)=0, \quad \bx=(x,y)^{\intercal}\in\mathbb{R}^{2}
\end{equation*}
with $\mathbf{e}_{1}=(-1,0)^{\intercal}$,
$\mathbf{e}_{2}=(1/2,\sqrt{3}/2)^{\intercal}$, and
$\mathbf{e}_{3}=(1/2,-\sqrt{3}/2)^{\intercal}$. The initial data are taken as
\[
\Psi_{0}(\bx)=\big(e^{-(x^2+y^2)/2},\,e^{-((x-1)^2+y^2)/2}\big)^{\intercal}, \quad
\phi_{0}(\bx)=e^{-(x^2+y^2)/2}, \quad
\phi_{1}(\bx)=\frac{3}{2}e^{-(x^2+y^2)/2}.
\]
Table \ref{tab-5-2-2} and Figures \ref{fig-5-2-1} and \ref{fig-5-2-3} report the
temporal errors and the long-time behavior of the 2D KGD equation \eqref{KGD},
respectively, with the reference solution supplied by the 2D TSFP method
\cite{YRS} using $h=10^{-6}$, $\triangle x=1/16$, and $\triangle\tau=1/16$. Both
fourth-order uniform accuracy and favorable long-time behavior are clearly
observed.

We next investigate the dynamics of the 2D KGD equation \eqref{KGD}, obtained by
SUA-EI4 with $h=0.01$. Figures \ref{fig-5-2-4} and \ref{fig-5-2-5} depict the
densities $\rho_{1}(t,\bx)=\abs{\phi(t,\bx)}^2$ and
$\rho_{2}(t,\bx)=\abs{\psi_{2}(t,\bx)}^2$ for $\eps=0.5$ and $\eps=0.01$ under
different interaction coefficients $g$ and $\omega$, respectively. The results
reveal that the KGD dynamics depend strongly on both the singular perturbation
parameter $\eps$ and the interaction strength. At $\eps=0.5$, the densities
fluctuate randomly, exhibiting the quantum Zitterbewegung phenomenon
\cite{Dirac}; as $\eps$ decreases, the relativistic effects vanish and the
densities spread ever more smoothly.

\begin{figure}[t!]
    \centering
        \includegraphics[width=0.38\linewidth,height=3.6cm]{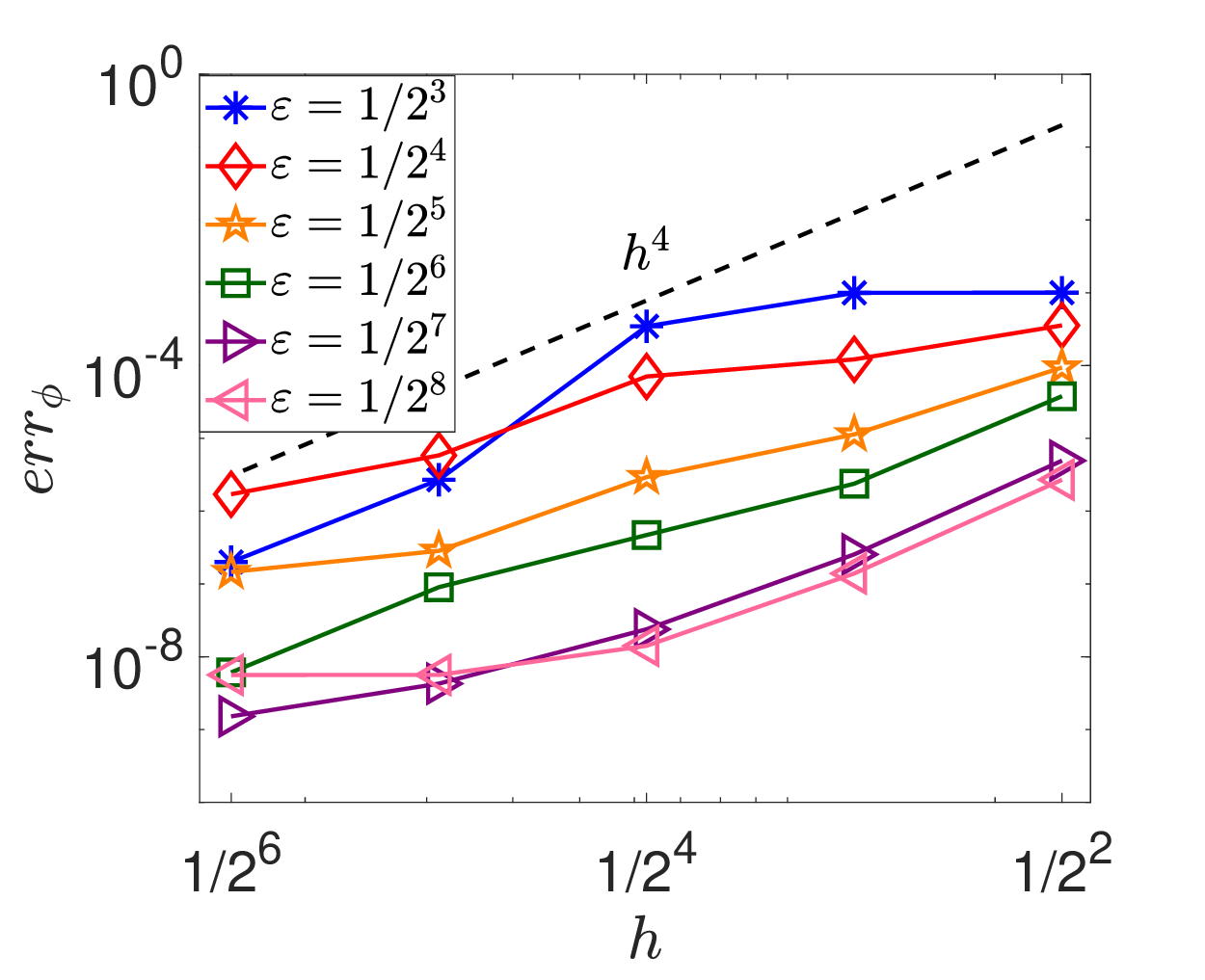}
        \includegraphics[width=0.38\linewidth,height=3.6cm]{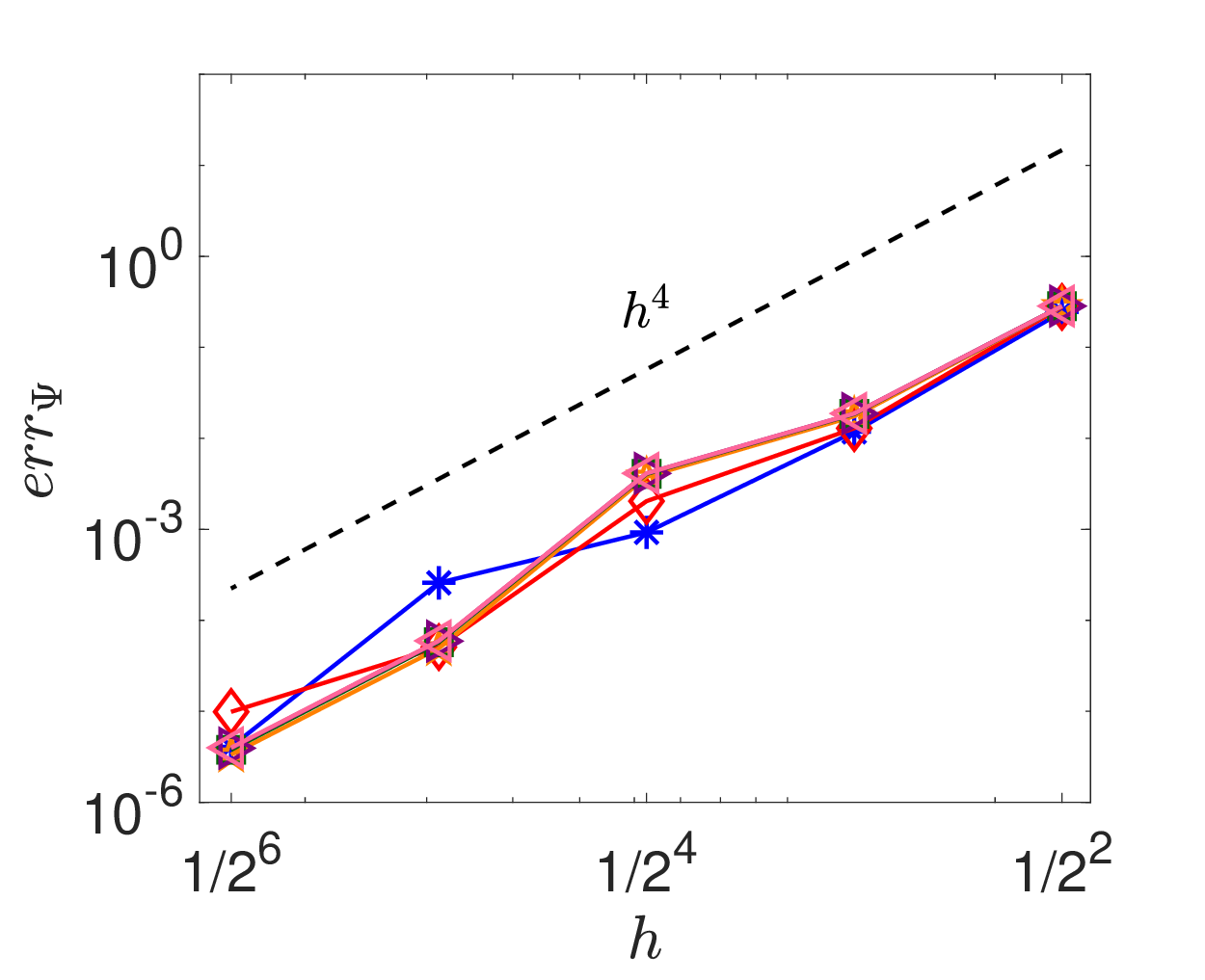}
    \caption{Temporal error of KGD in 2D at $T=1$ under different $\eps$ for SUA-EI4 scheme.}
    \label{fig-5-2-1}
\end{figure}

\begin{table}[t!]
    \centering
    \caption{Temporal error of KGD in 2D at $T=1$ under different $\eps$ for the EUA-EI4 scheme.}
    \label{tab-5-2-2}

    \renewcommand{\arraystretch}{1.15}
    \setlength{\tabcolsep}{5pt}

    \resizebox{\textwidth}{!}{%
    \begin{tabular}{c ccccc ccccc}
        \toprule
        &
        \multicolumn{5}{c}{\ $err_{\Psi}$\ } &
        \multicolumn{5}{c}{\ $err_{\phi}$\ } \\
        \cmidrule(lr){2-6}
        \cmidrule(lr){7-11}
        
        $\eps$ & $h_{0}=1/2^2$ & $h_{0}/2$ & $h_{0}/4$ & $h_{0}/8$ & $h_{0}/16$ &  $h_{0}$ & $h_{0}/2$ & $h_{0}/4$ & $h_{0}/8$ & $h_{0}/16$ \\
        \midrule
        $1/2^3$ & 2.26E-3 & 1.17E-3 & 3.04E-4 & 4.55E-6 & 2.13E-7 &  2.60E-1 & 1.23E-2 & 1.03E-3 & 2.74E-4 & 4.12E-6 \\
        order & -- & 0.94  & 1.95 & 6.06 & 4.41 & -- & 4.40  & 3.58 &  1.91  & 6.05    \\
        $1/2^4$ & 9.59E-4 & 2.50E-4 & 9.19E-5 & 7.50E-6 & 1.88E-6 & 2.95E-1 & 1.56E-2 & 1.94E-3 & 6.37E-5 & 1.04E-5    \\
        order & -- & 1.94 & 1.44 & 3.62 & 2.00 &  -- & 4.23 &  3.01  & 4.93 &  2.62         \\
        $1/2^5$ &  1.85E-4 & 2.53E-4 & 9.80E-6 & 1.12E-6 & 1.80E-7 & 3.01E-1 & 2.06E-2 & 3.65E-3 & 6.73E-5 & 4.42E-6  \\
        order & -- & -0.45 & 4.69 & 3.13 & 2.64 & -- &  3.86 & 2.50 & 5.76  &  3.93  \\
        $1/2^6$ & 6.38E-5 & 1.24E-4 & 2.87E-5 & 5.94E-7 & 2.21E-8 &  3.02E-1 & 2.18E-2 & 3.94E-3 & 7.27E-5 & 4.84E-6    \\
        order & -- & -0.96 & 2.11 & 5.59 & 4.75 & -- & 3.79 & 2.47 & 5.76 & 3.91  \\
        $1/2^7$ & 9.87E-6 & 2.92E-6 & 4.65E-7 & 1.12E-7 & 2.20E-8 & 3.02E-1 & 2.20E-2 & 3.99E-3 & 7.43E-5 & 5.00E-6  \\
        order & -- & 1.76 & 2.65 & 2.06 & 2.35 & -- & 3.78  & 2.46 & 5.75  &  3.89  \\
        $1/2^8$ & 5.28E-6 & 1.65E-6 & 2.89E-7 & 4.10E-8 & 8.23E-9 & 3.03E-1 & 2.20E-2 & 4.00E-3 & 7.47E-5 & 5.06E-6 \\
        order &  -- & 1.68 & 2.51 & 2.82 & 2.32 &  -- & 3.78  & 2.46 & 5.75  & 3.88 \\
       \bottomrule
    \end{tabular}%
    }
\end{table}

\begin{figure}[t!]
    \centering
        \includegraphics[width=0.32\linewidth,height=3.1cm]{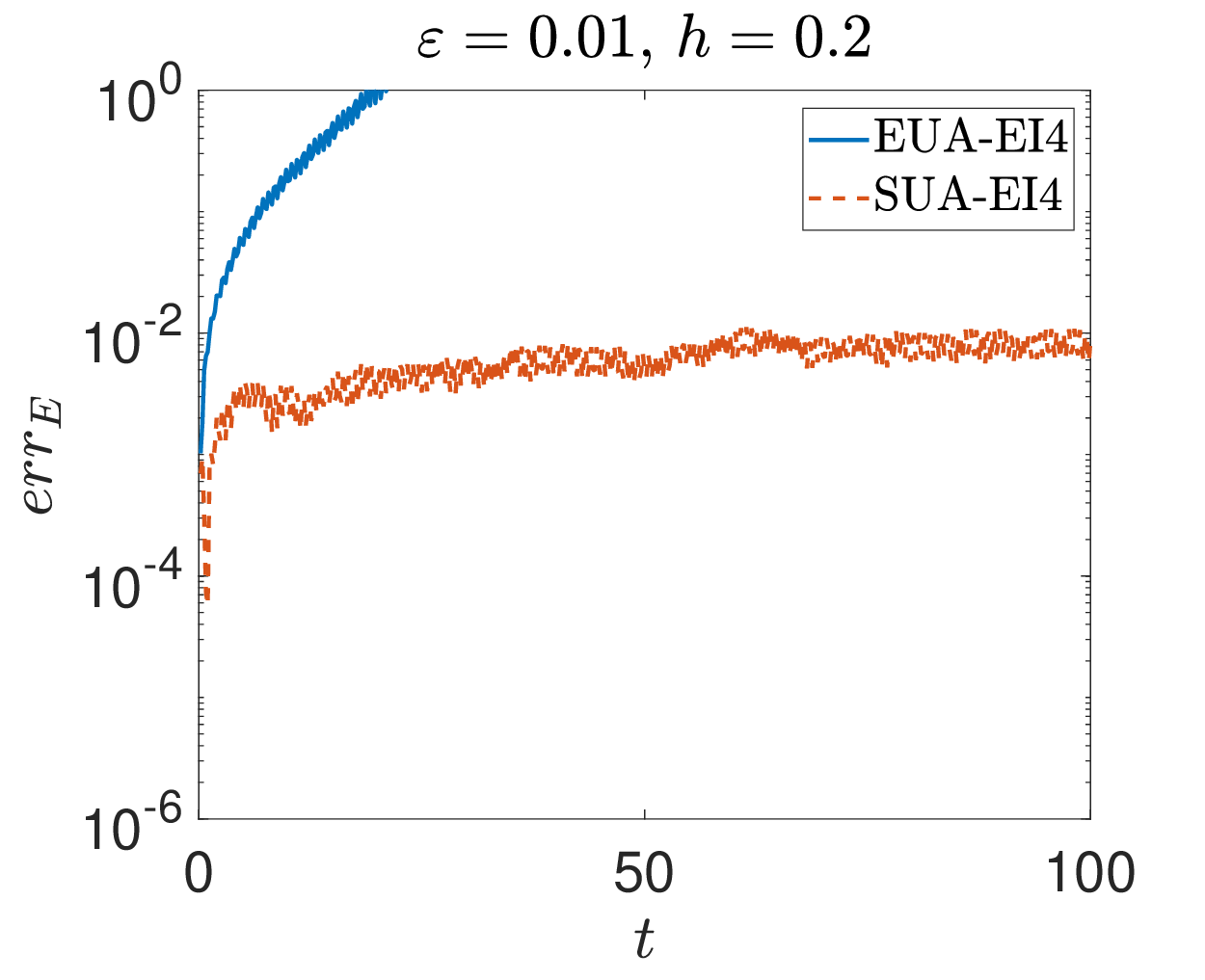}
        \includegraphics[width=0.32\linewidth,height=3.1cm]{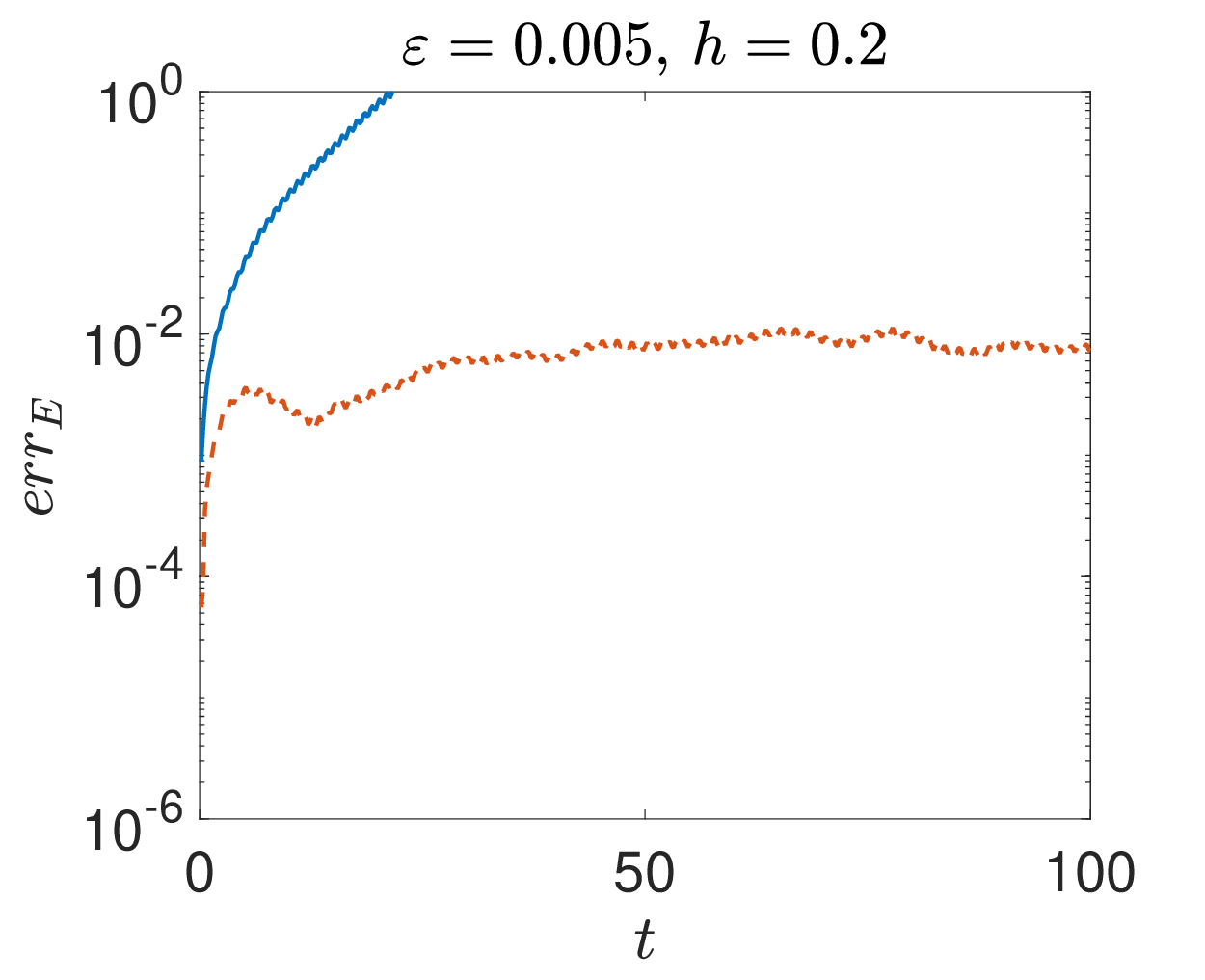}
        \includegraphics[width=0.32\linewidth,height=3.1cm]{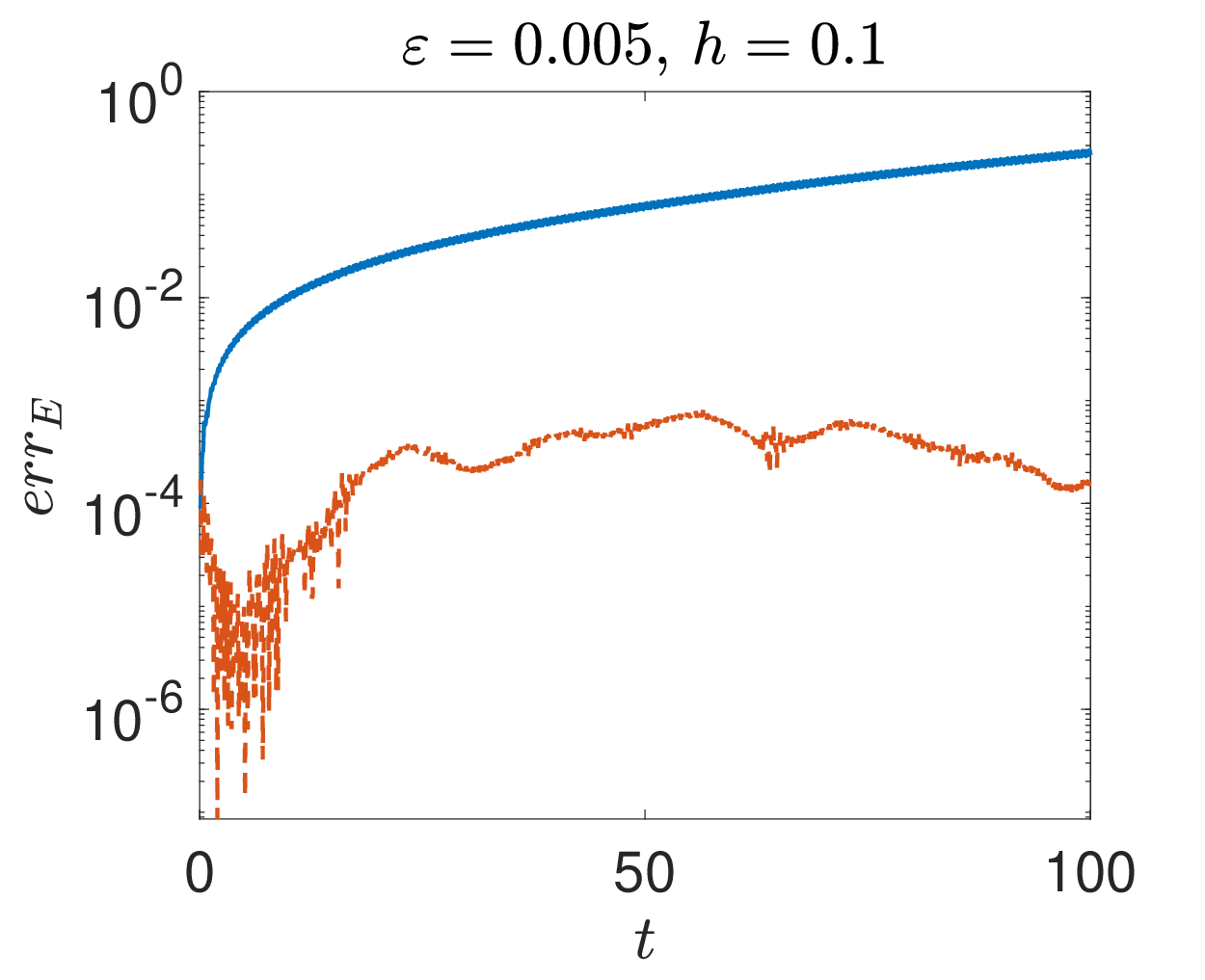}\\
        \includegraphics[width=0.32\linewidth,height=3.1cm]{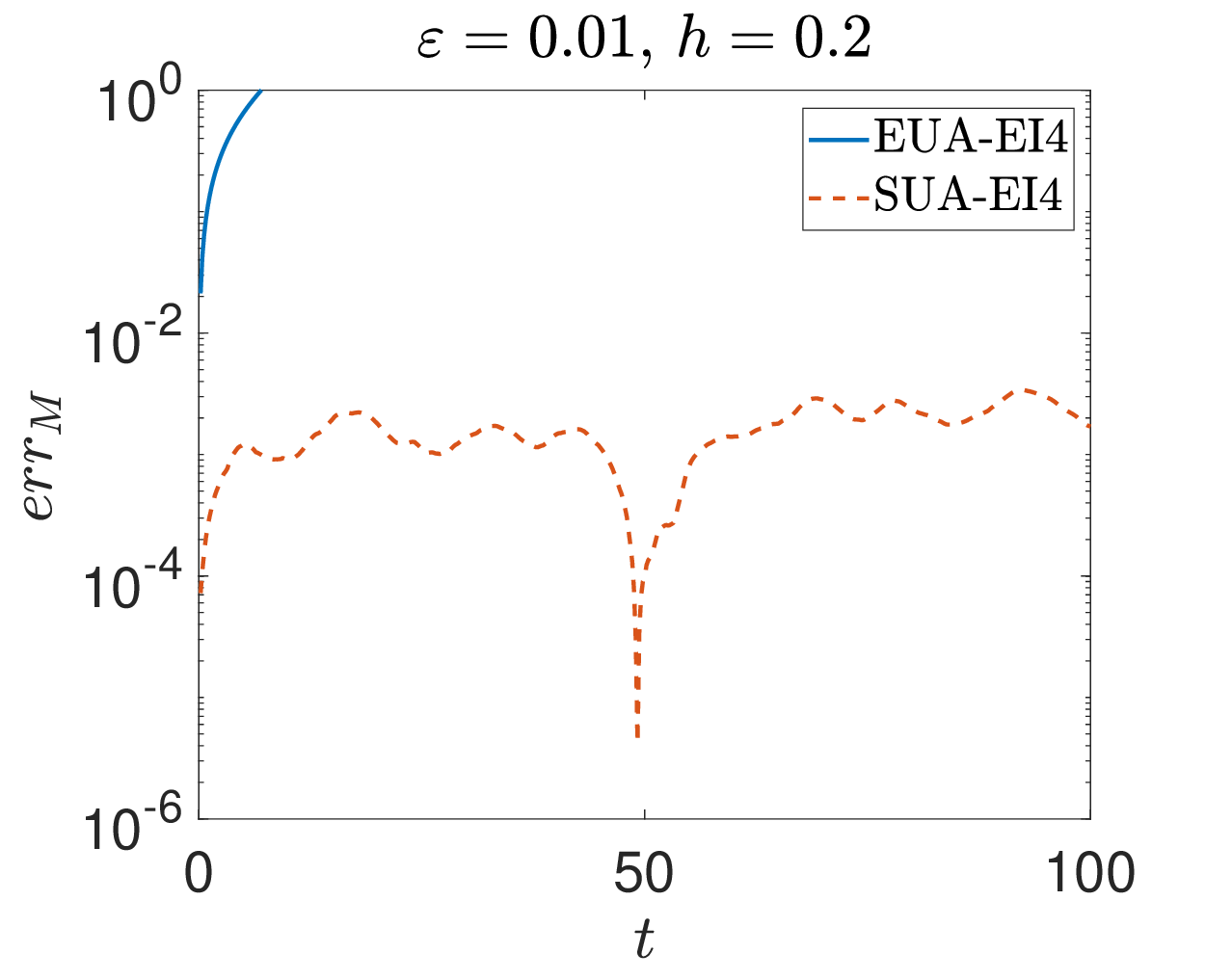}
        \includegraphics[width=0.32\linewidth,height=3.1cm]{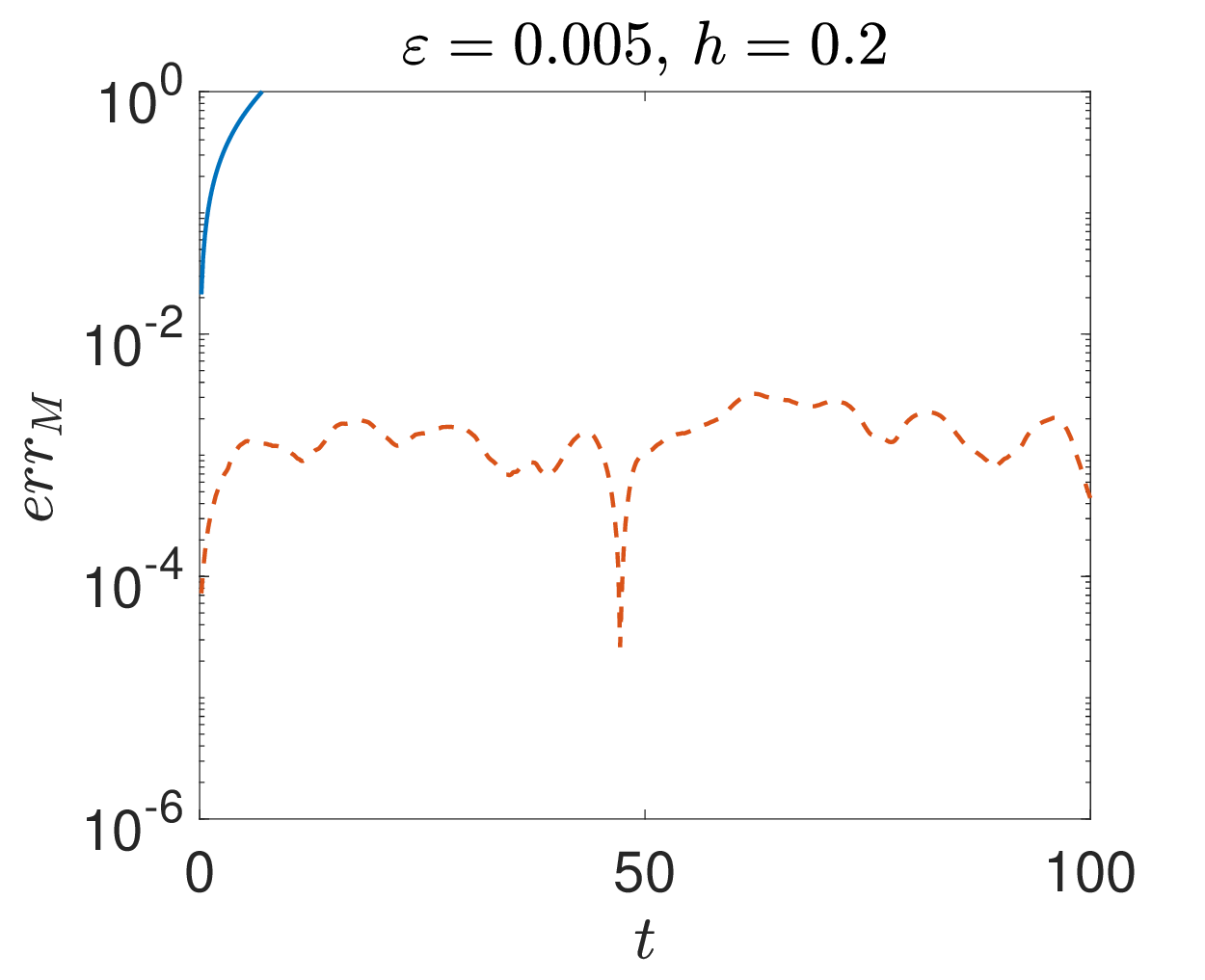}
        \includegraphics[width=0.32\linewidth,height=3.1cm]{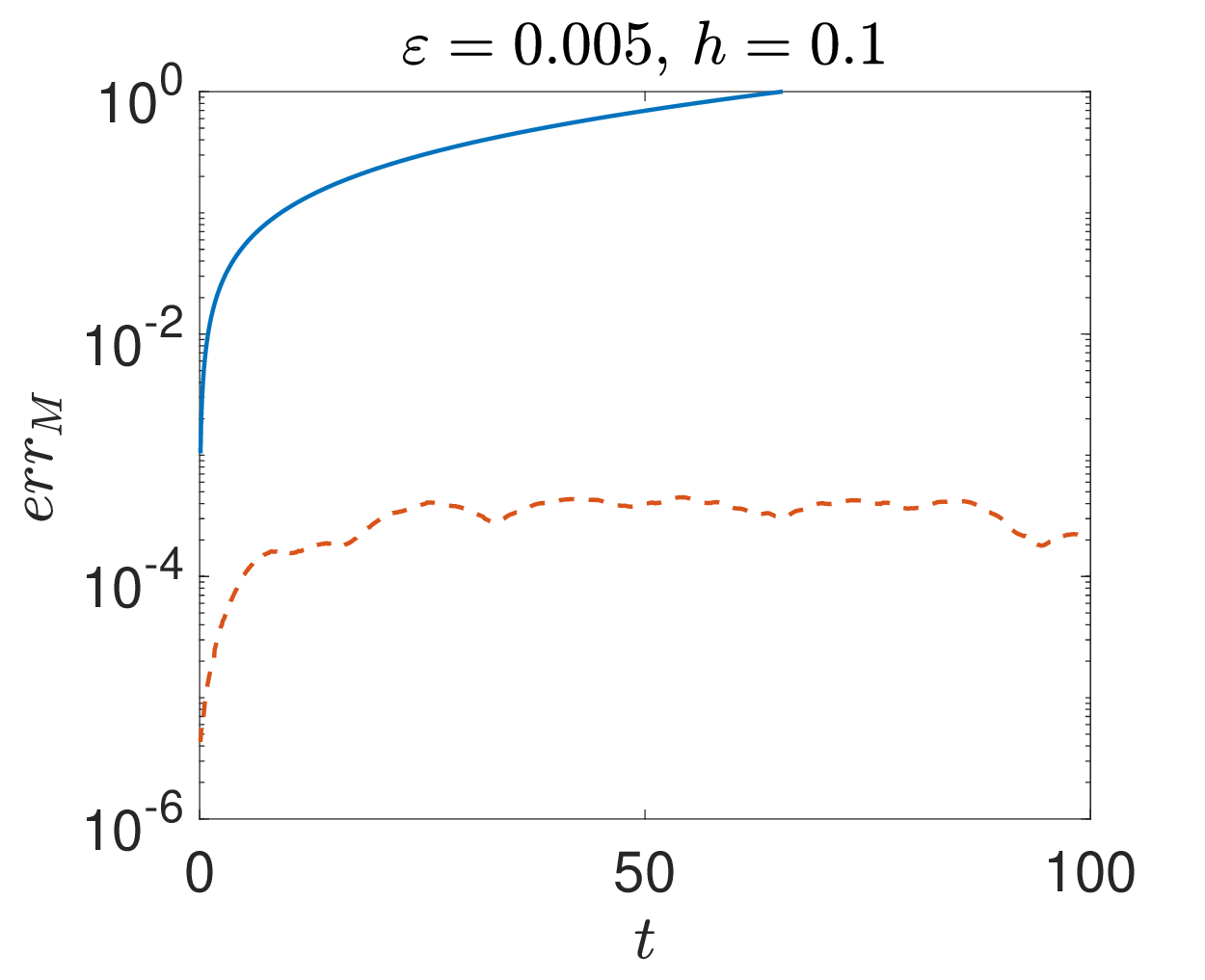}
    \caption{Energy error (top) and mass error (bottom) of KGD in 2D under different $\eps$ and $h$.}
    \label{fig-5-2-3}
\end{figure}

\begin{figure}[t!]
    \centering
        \includegraphics[width=0.22\linewidth,height=3.1cm]{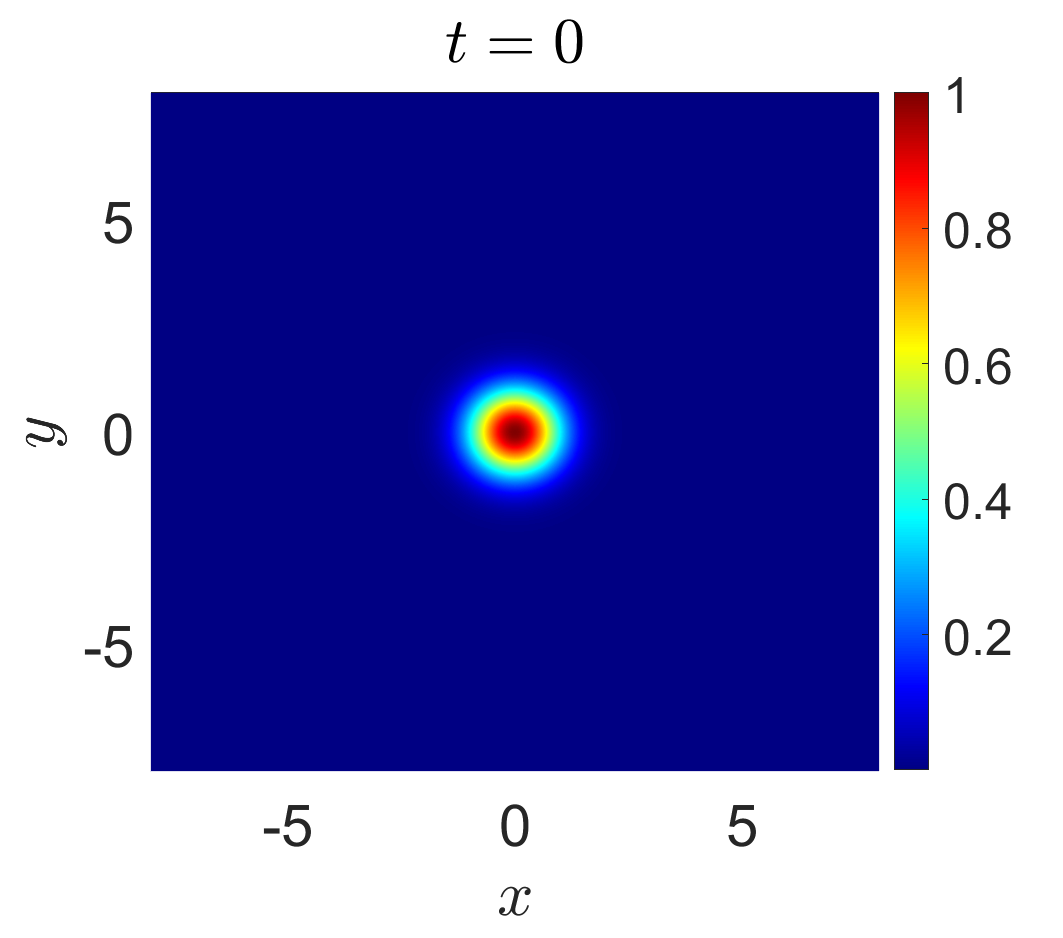}
        \includegraphics[width=0.22\linewidth,height=3.1cm]{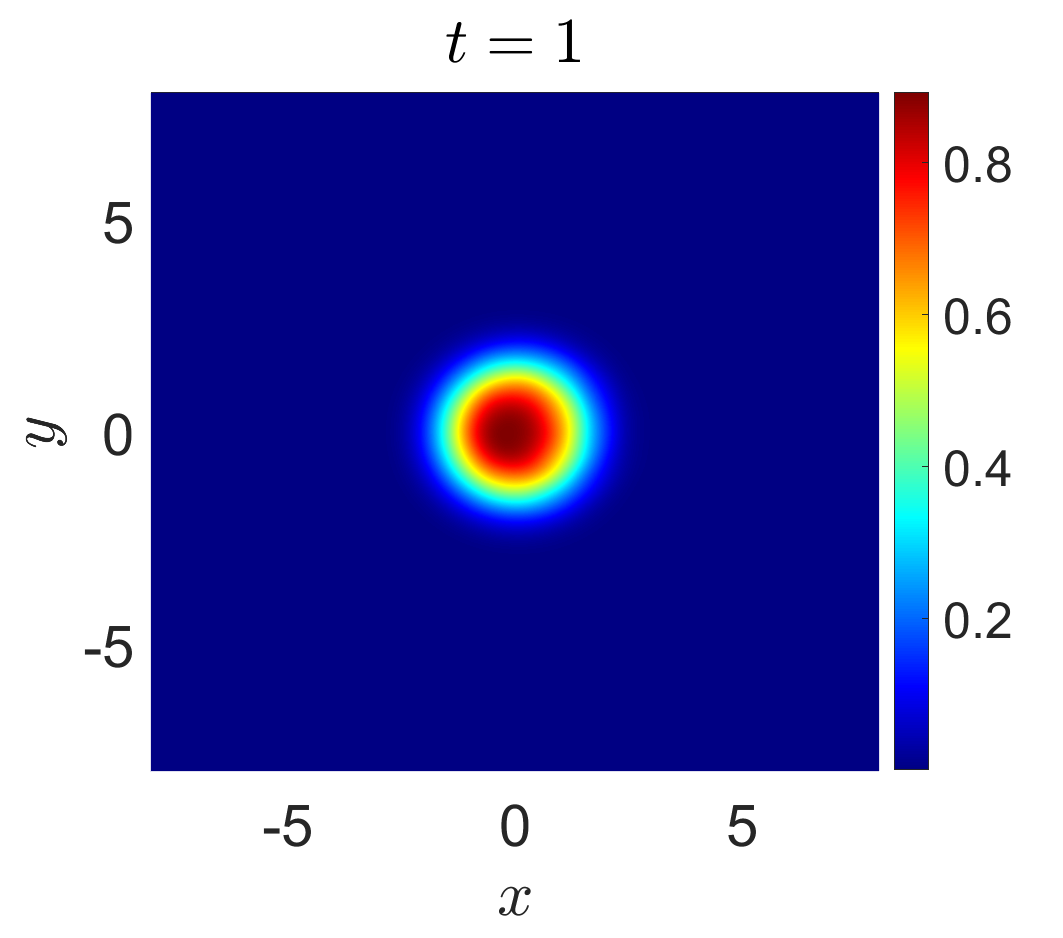}
        \includegraphics[width=0.22\linewidth,height=3.1cm]{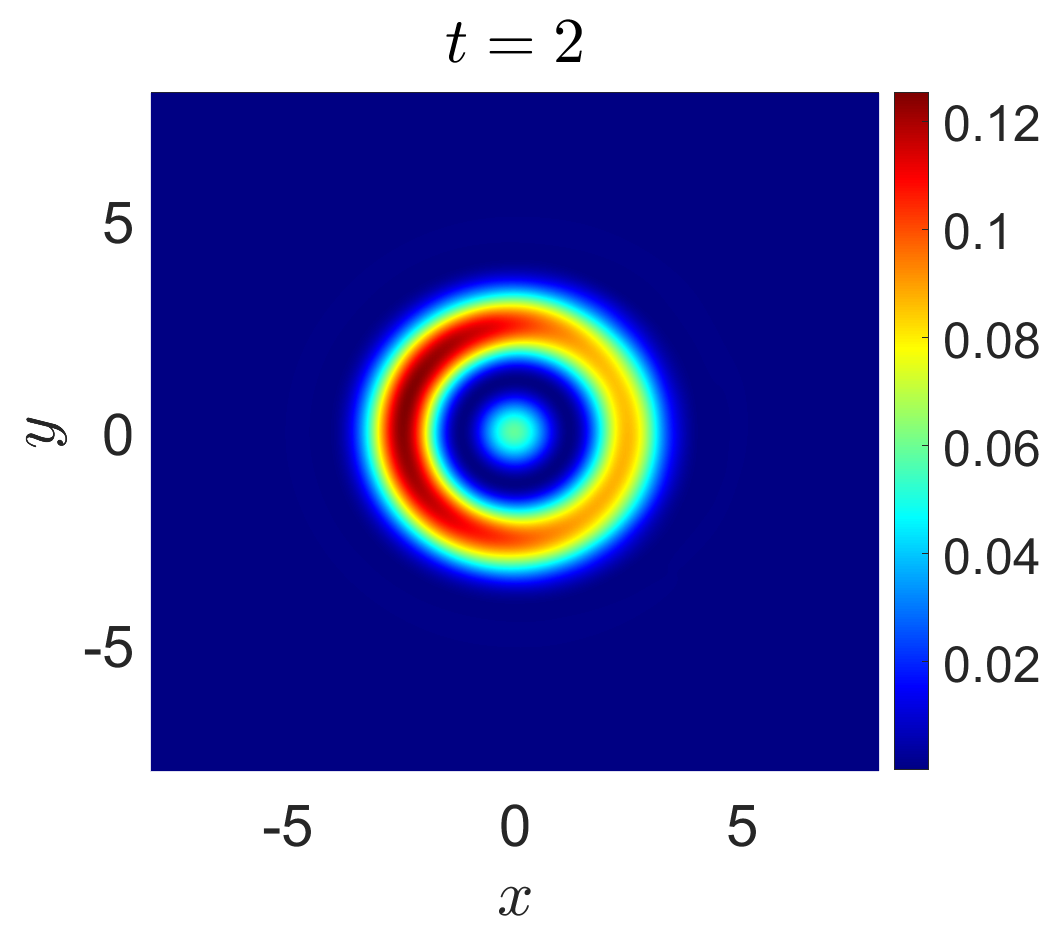}
        \includegraphics[width=0.22\linewidth,height=3.1cm]{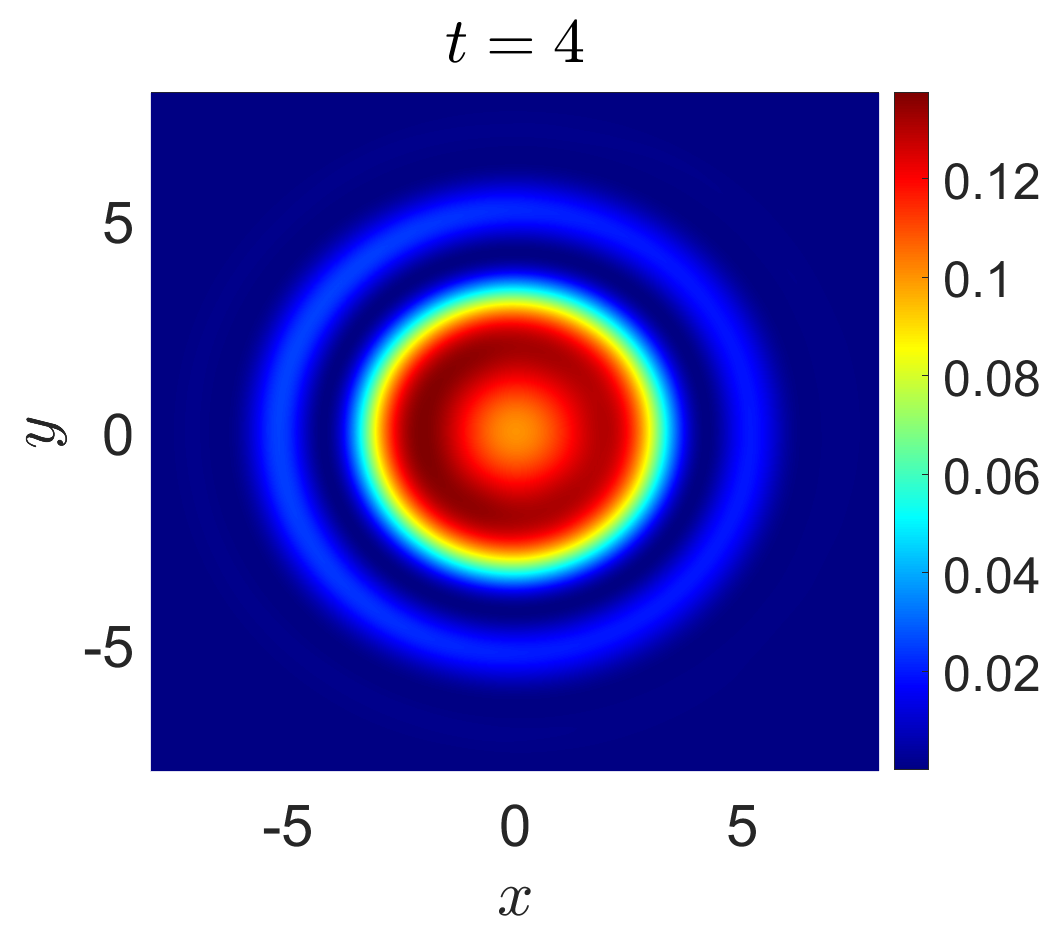}\\
        \includegraphics[width=0.22\linewidth,height=3.1cm]{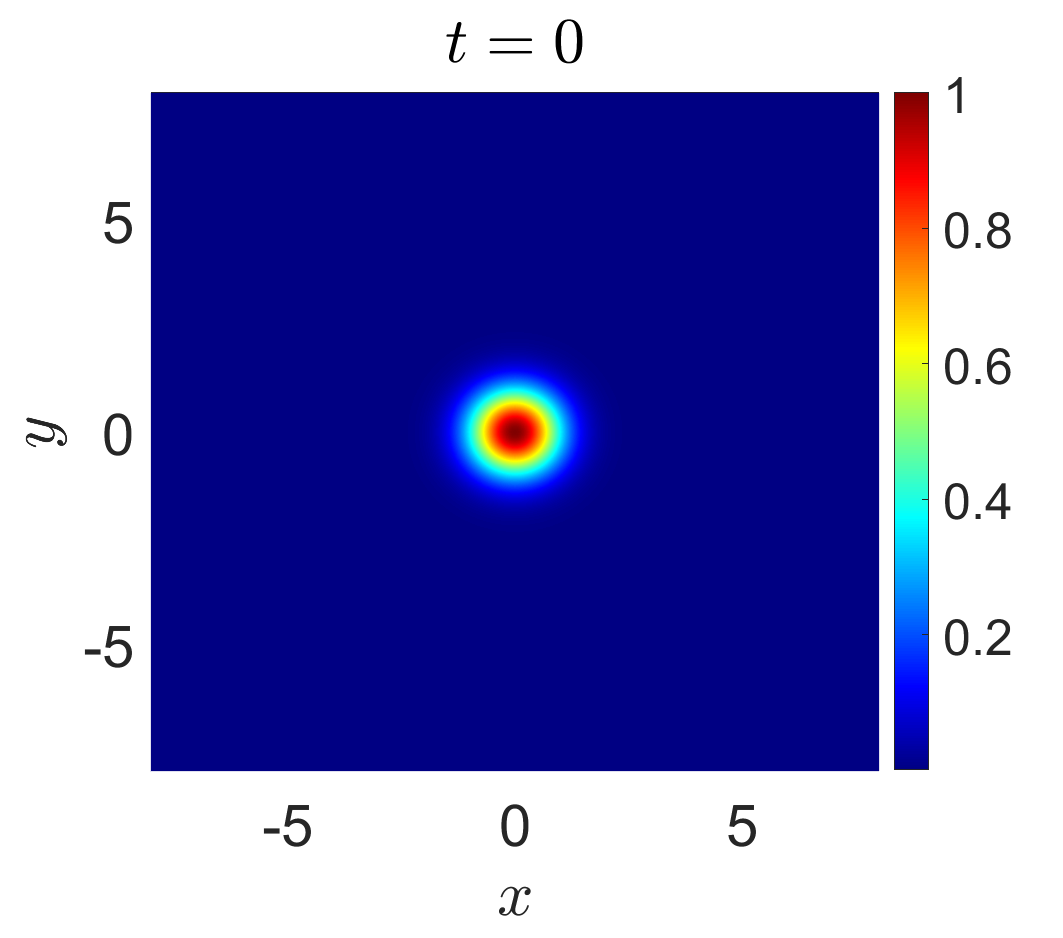}
        \includegraphics[width=0.22\linewidth,height=3.1cm]{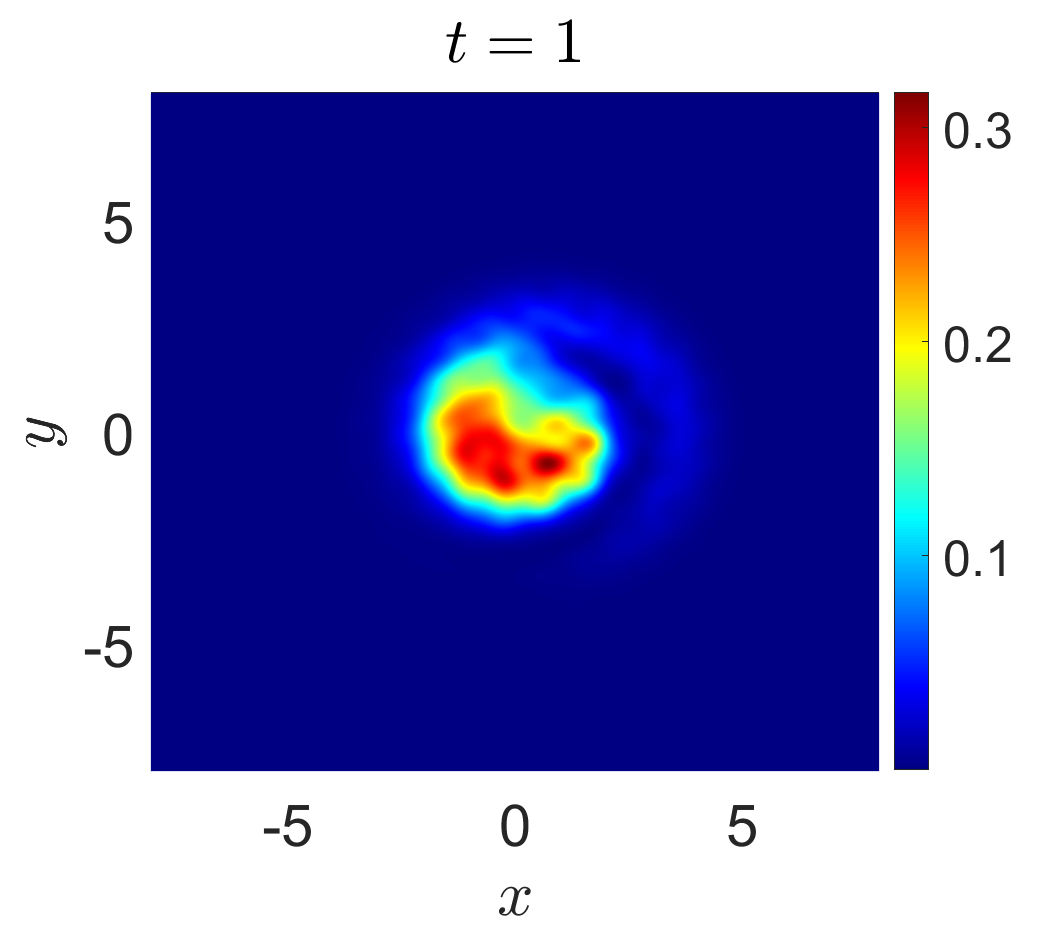}
        \includegraphics[width=0.22\linewidth,height=3.1cm]{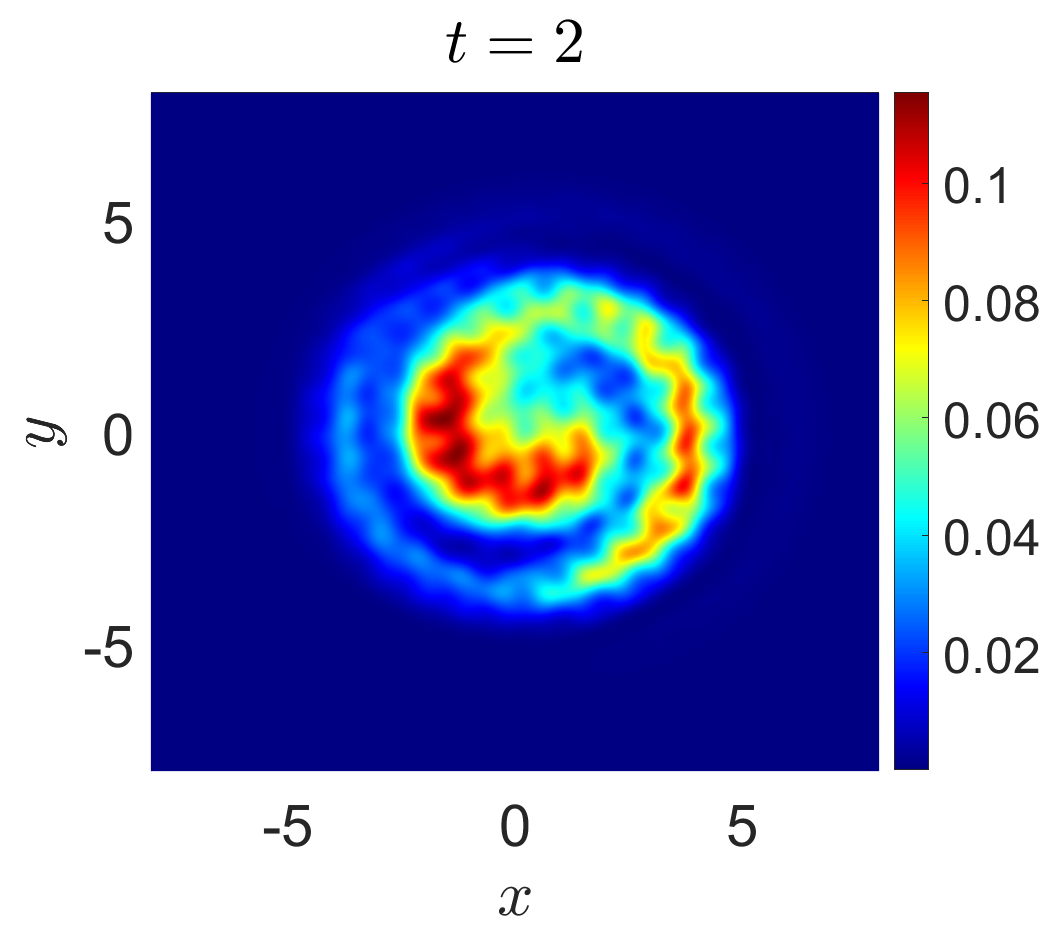}
        \includegraphics[width=0.22\linewidth,height=3.1cm]{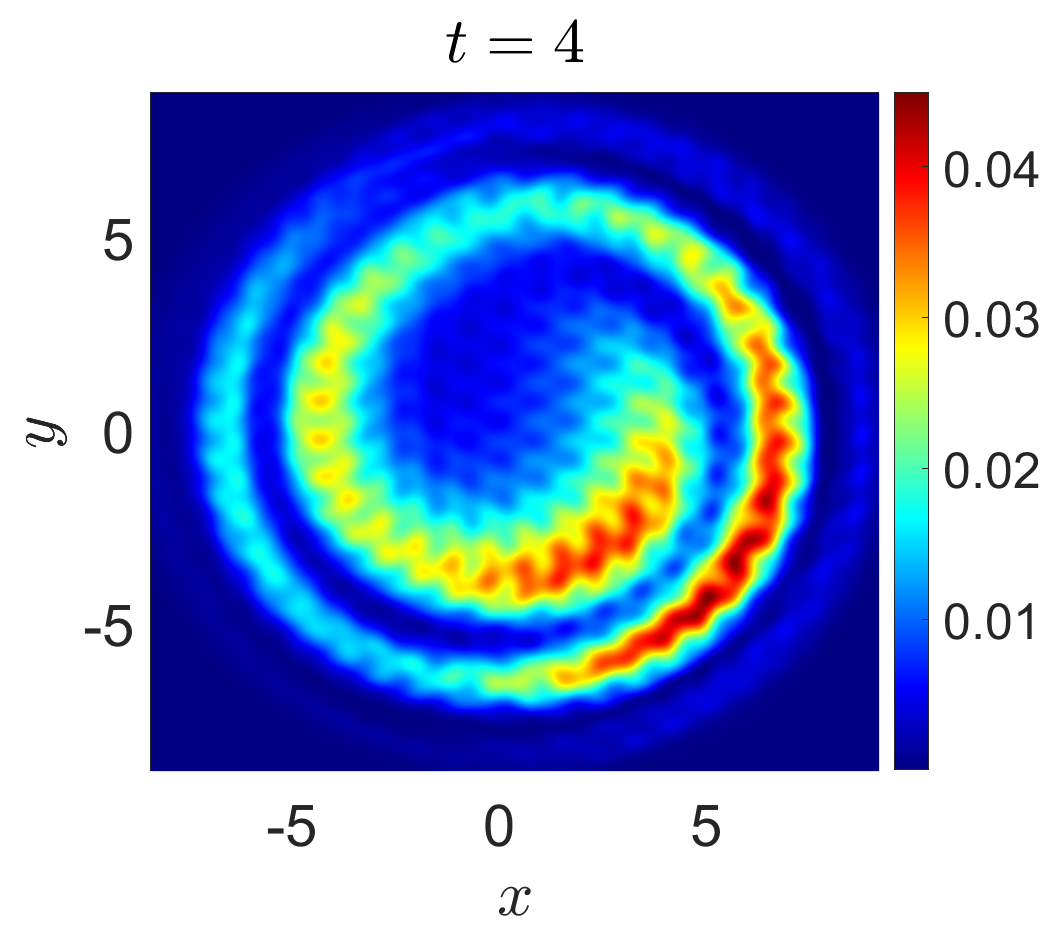}\\
    \caption{Dynamics of the densities $\rho_{1}(t,\bx)=\abs{\phi(t,\bx)}^{2}$ (first row), $\rho_{2}(t,\bx)=\abs{\psi_{1}(t,\bx)}^{2}$ (second row) of the KGD equation in 2D with a honeycomb potential with $\eps=0.5$, $g=1$ and $\omega=\frac{1}{2}$.}
    \label{fig-5-2-4}
\end{figure}

\begin{figure}[t!]
    \centering
        \includegraphics[width=0.22\linewidth,height=3.1cm]{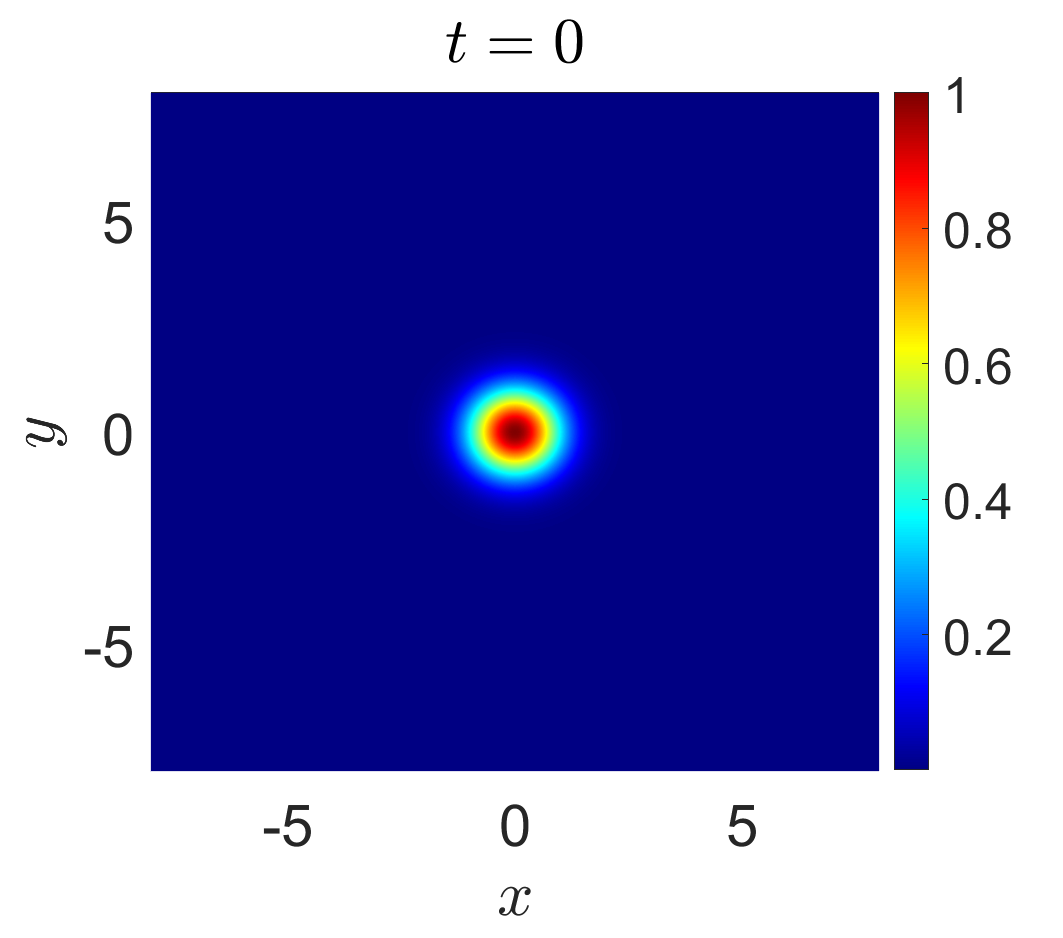}
        \includegraphics[width=0.22\linewidth,height=3.1cm]{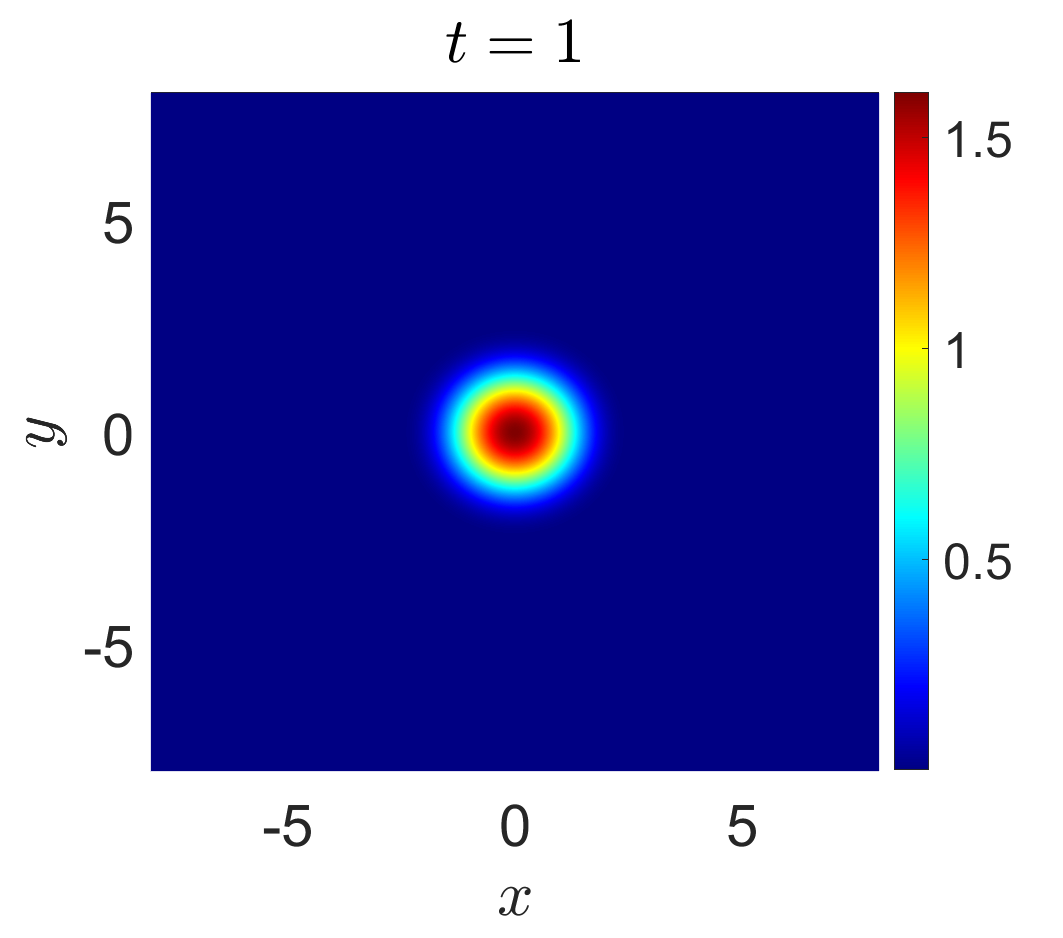}
        \includegraphics[width=0.22\linewidth,height=3.1cm]{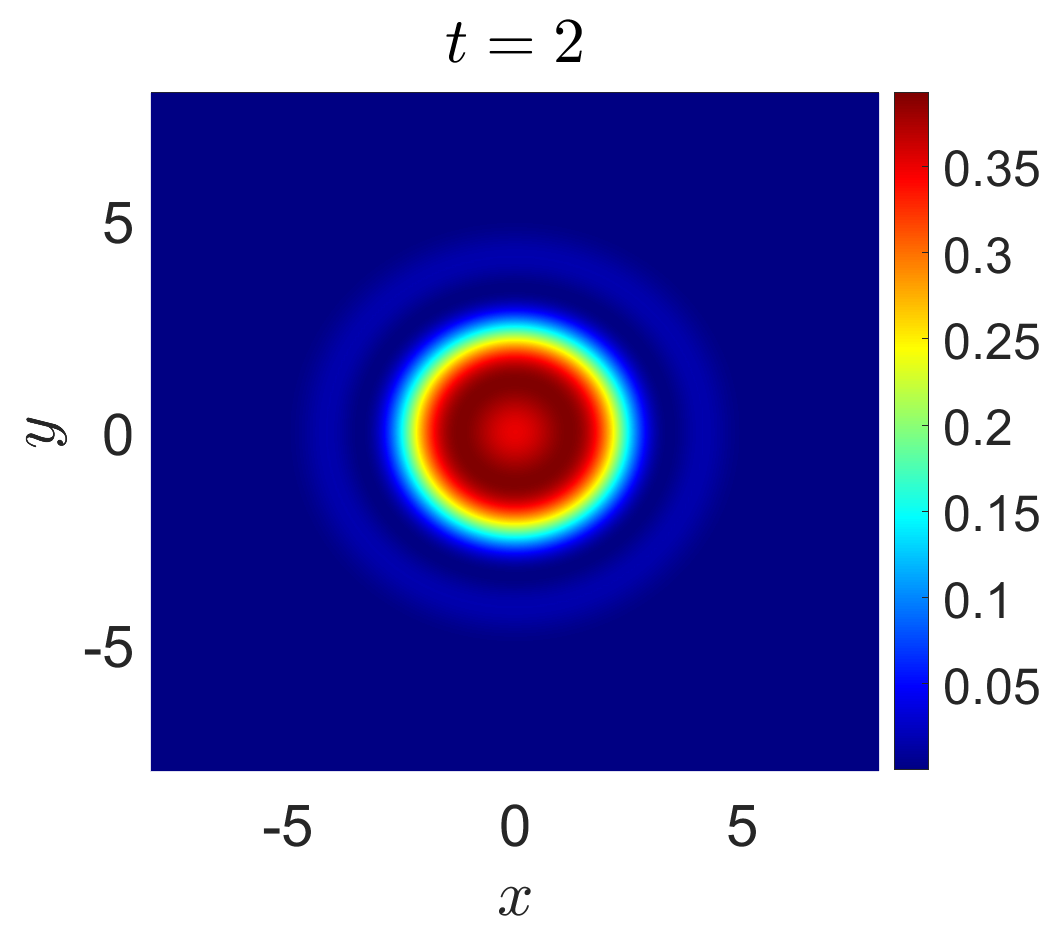}
        \includegraphics[width=0.22\linewidth,height=3.1cm]{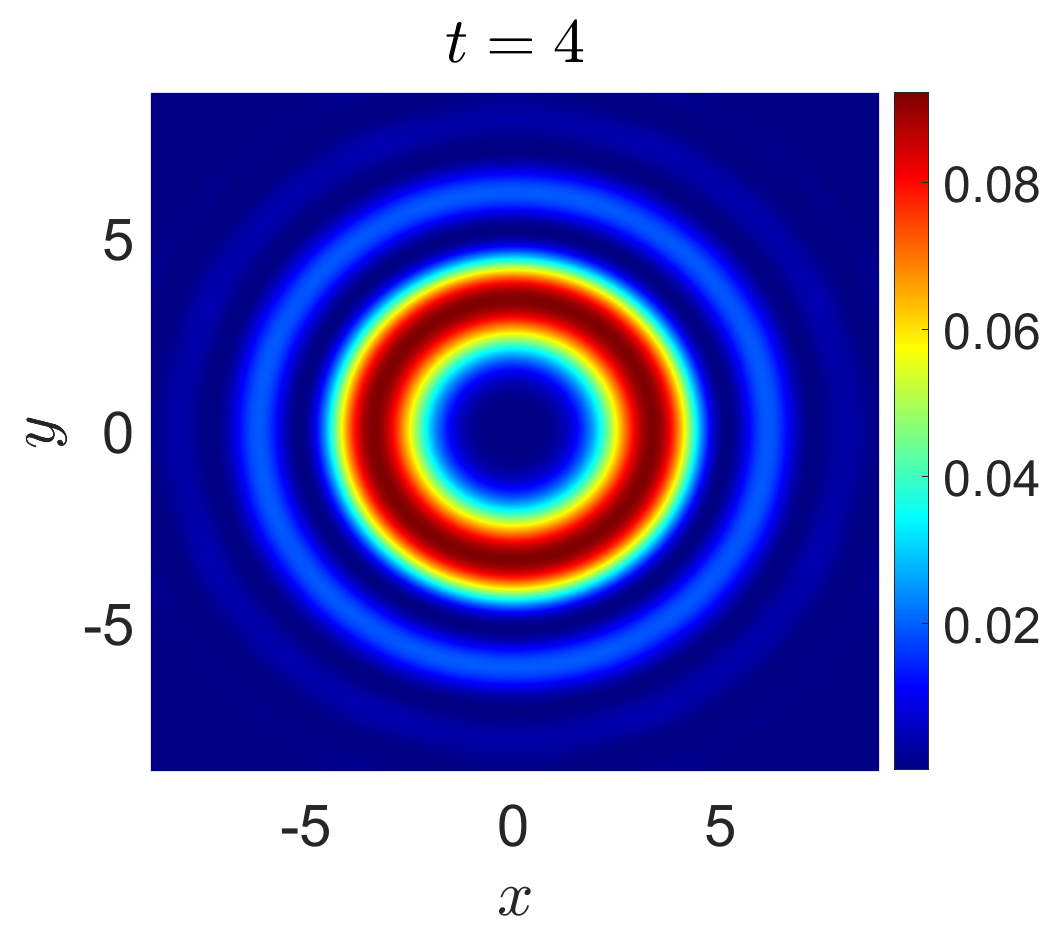}\\
        \includegraphics[width=0.22\linewidth,height=3.1cm]{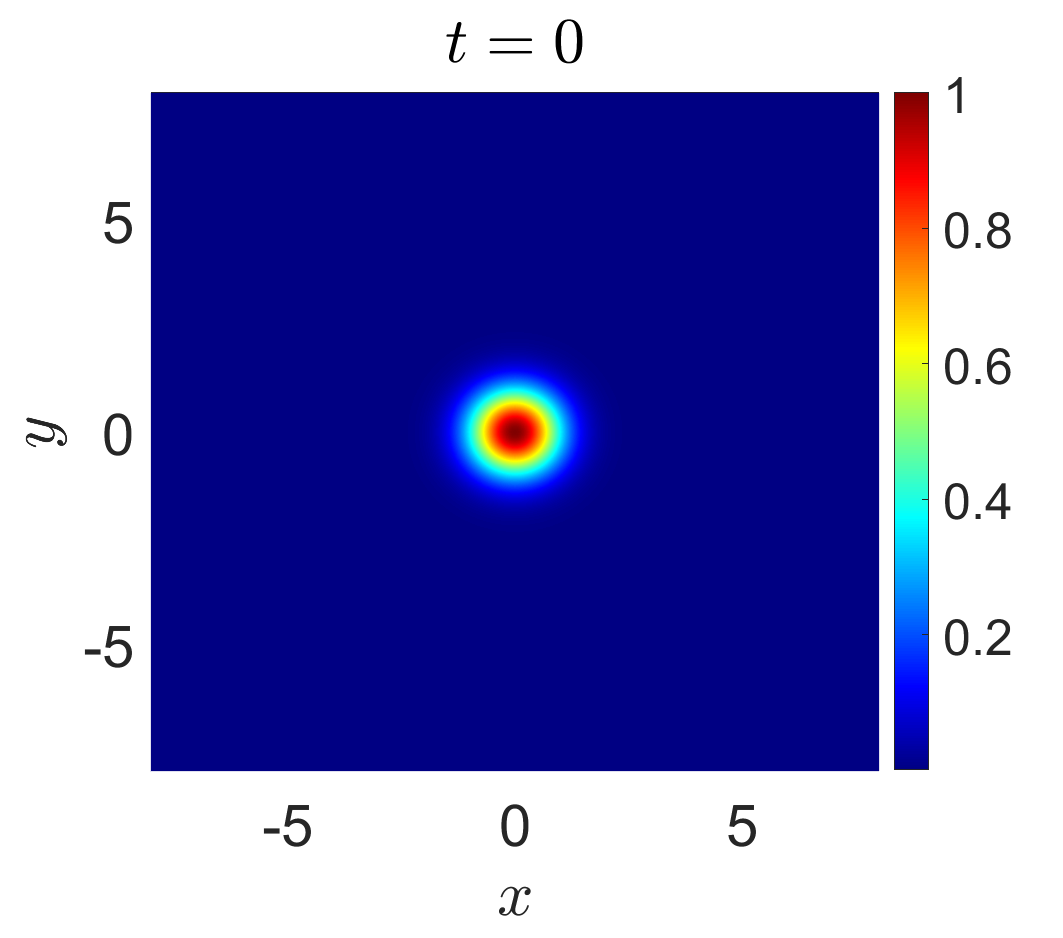}
        \includegraphics[width=0.22\linewidth,height=3.1cm]{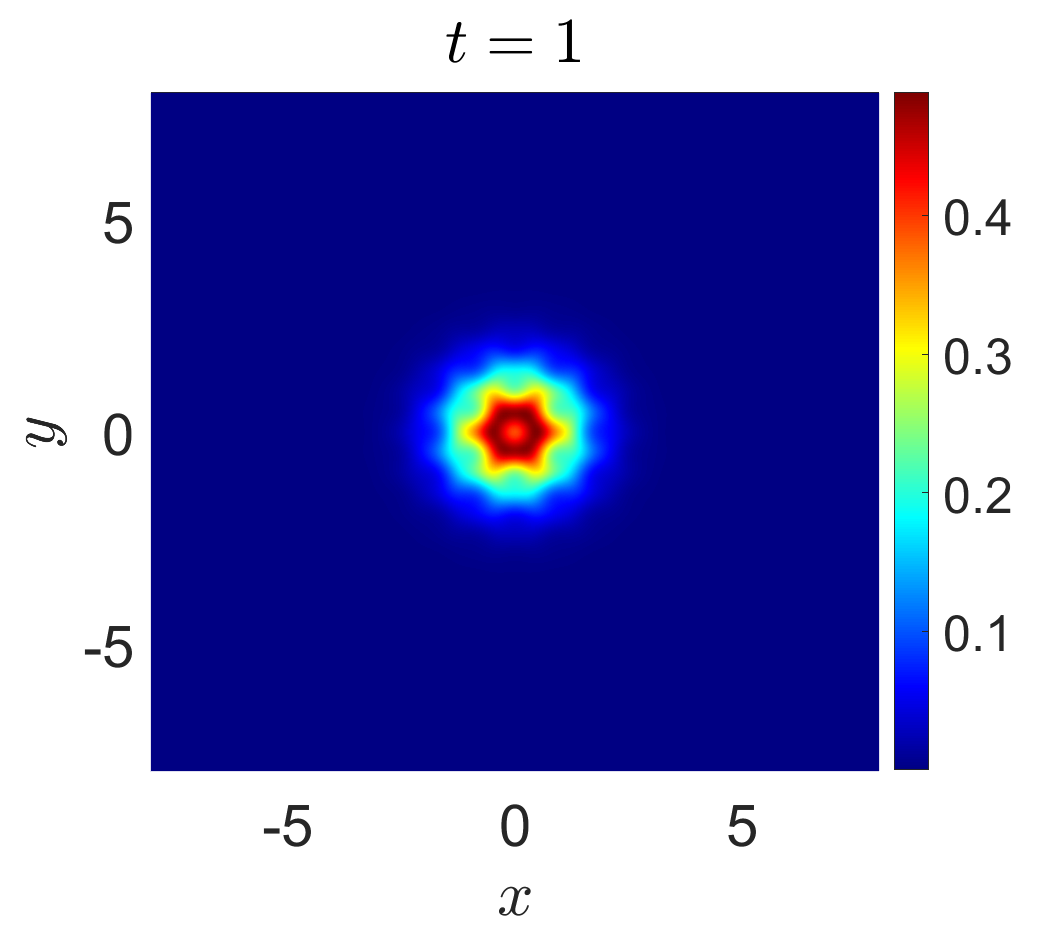}
        \includegraphics[width=0.22\linewidth,height=3.1cm]{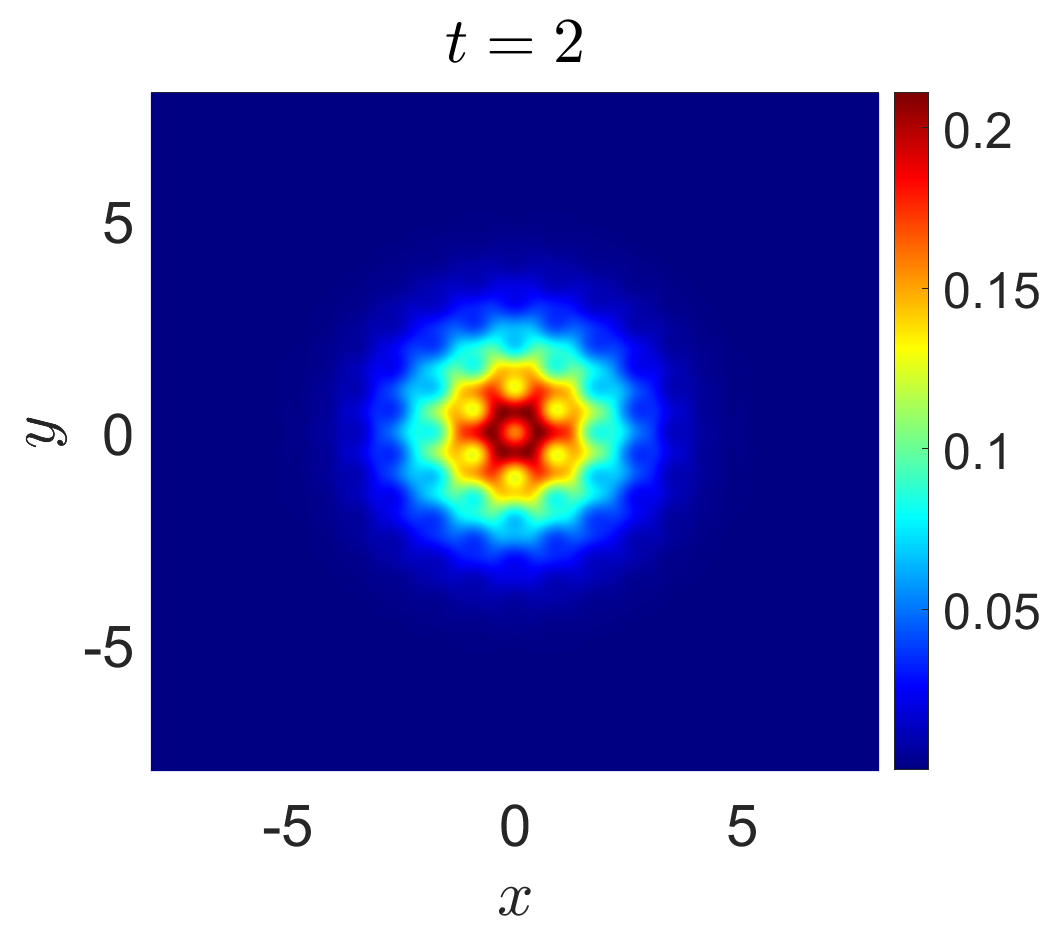}
        \includegraphics[width=0.22\linewidth,height=3.1cm]{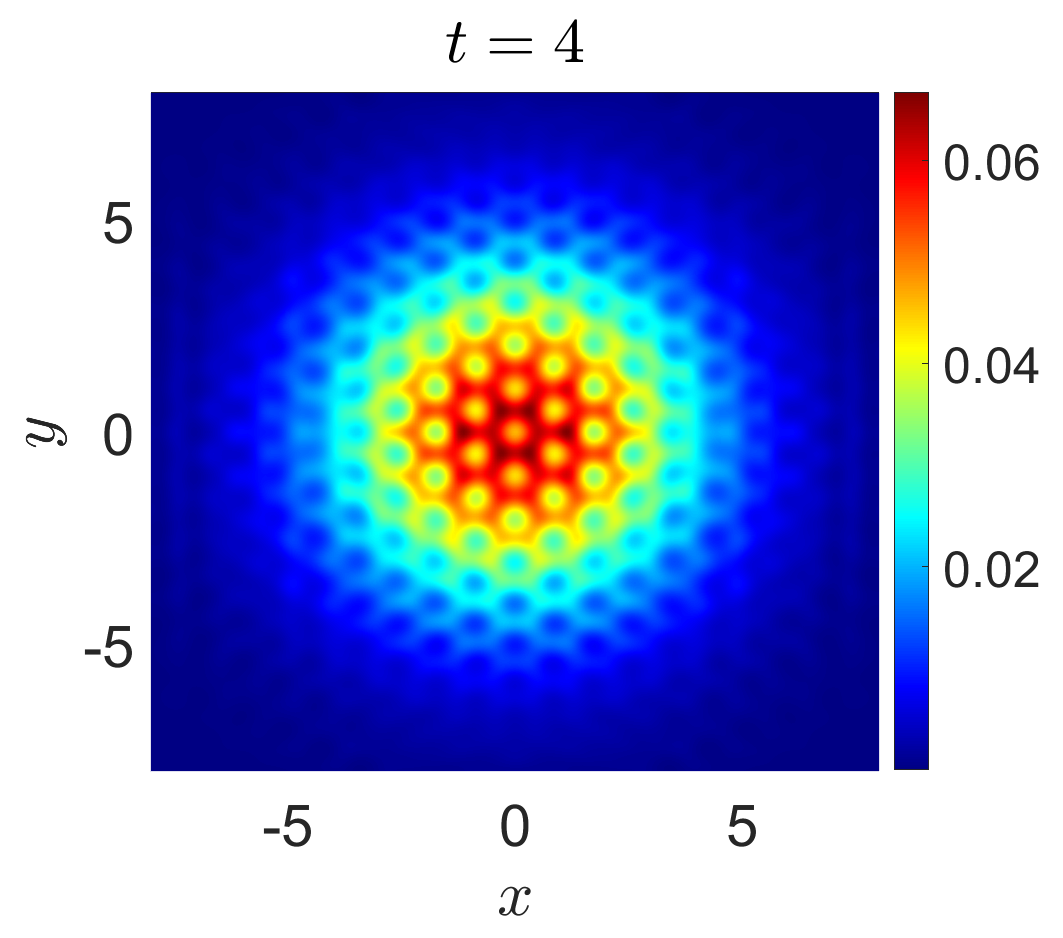}\\
    \caption{Dynamics of the densities $\rho_{1}(t,\bx)=\abs{\phi(t,\bx)}^{2}$ (first row), $\rho_{2}(t,\bx)=\abs{\psi_{1}(t,\bx)}^{2}$ (second row) of the KGD equation in 2D with a honeycomb potential with $\eps=0.01$, $g=10$ and $\omega=1$.}
    \label{fig-5-2-5}
\end{figure}

\subsubsection{Three-dimensional KGD equation}
We now consider the dynamics of the KGD system \eqref{KGD} in 3D with
coefficients $g=1$, $\omega=\frac{1}{2}$, and $\eps=0.1$. We take
$\Omega=(-5,5)^{3}$, $N_{x}=N_{y}=N_{z}=50$, and $N_{\tau}=2^{4}$, with
electromagnetic potentials $A_{j}=0$ for $j=1,2,3$ and
$V(\bx)=\frac{x+y+z-1}{x^2+y^2+z^2+1}$, $\bx=(x,y,z)^{\intercal}\in\mathbb{R}^{3}$.
The initial conditions are chosen as in \cite{GMYL} with $\tilde{\alpha}=0.01$:
\begin{align*}
&\phi_{0}=e^{-x^2-y^2-(z-2)^2}+\sum\limits_{k=0}^{1}e^{-(x+(-1)^{k}\sqrt{3})^2-y^2-(z+1)^2}, \quad
\phi_{1}=e^{-x^2-y^2-z^2}, \\
&\psi_{0,1}=\psi_{0,2}=\frac{1}{\sqrt{2}}\sum\limits_{k=0}^{1}e^{-(x+(-1)^{k}2)^2-y^2-z^2}e^{\ii\tilde{\alpha} (x+y+z)}, \quad
\psi_{0,3}=\psi_{0,4}=\frac{1}{\sqrt{2}}\psi_{0,1}.
\end{align*}

We first examine the dynamics of $\abs{\psi_{1}}$ using the EUA-EI4 scheme.
Initially, the isosurface consists of two ellipsoidal components
(Figure \ref{fig-5-3-1}(a)). These components radiate and come into contact at
$t=0.22$ (Figure \ref{fig-5-3-1}(b)), after which a smaller ellipsoid emerges at
the domain center and gradually expands (Figures \ref{fig-5-3-1}(c)--(d)). The
subsequent slices show continued radiation and the formation of increasingly
complex structures (Figures \ref{fig-5-3-1}(e)--(h)).
Figures \ref{fig-5-3-2}(a)--(d) show the evolution of the isosurface
$\phi=0.1$, while Figures \ref{fig-5-3-2}(e)--(h) present the corresponding
$y=0$ slices. At $t=0$, the isosurface comprises three distinct components,
which contract into a compact central structure by $t=0.15$, separate again at
$t=0.3$, and expand and merge into a connected structure by $t=0.5$. The slices
exhibit the same sequence of three localized peaks, a compact central structure,
three separated peaks, and an expanded connected pattern. These observations
reflect the radiation and complex interaction of the three localized structures.

Figure \ref{fig-5-3-3} illustrates the temporal evolution of the mass and energy
for the 3D KGD system. Under the symmetric scheme, both quantities are
approximately conserved, which demonstrates the effectiveness of the proposed
numerical method for solving the KGD system.

\begin{figure}[t!]
    \centering
    \begin{subfigure}[t]{0.245\textwidth}
        \centering
        \includegraphics[width=\linewidth, height=3.2cm]{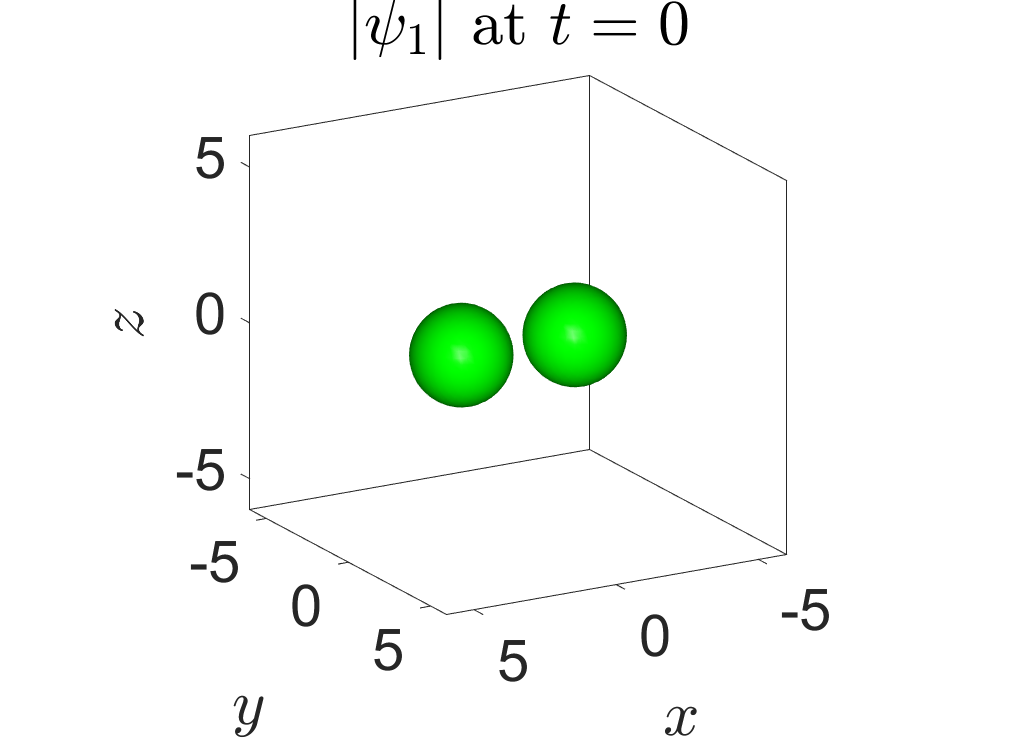}
        \caption{}
    \end{subfigure}
    \hfill
    \begin{subfigure}[t]{0.245\textwidth}
        \centering
        \includegraphics[width=\linewidth, height=3.2cm]{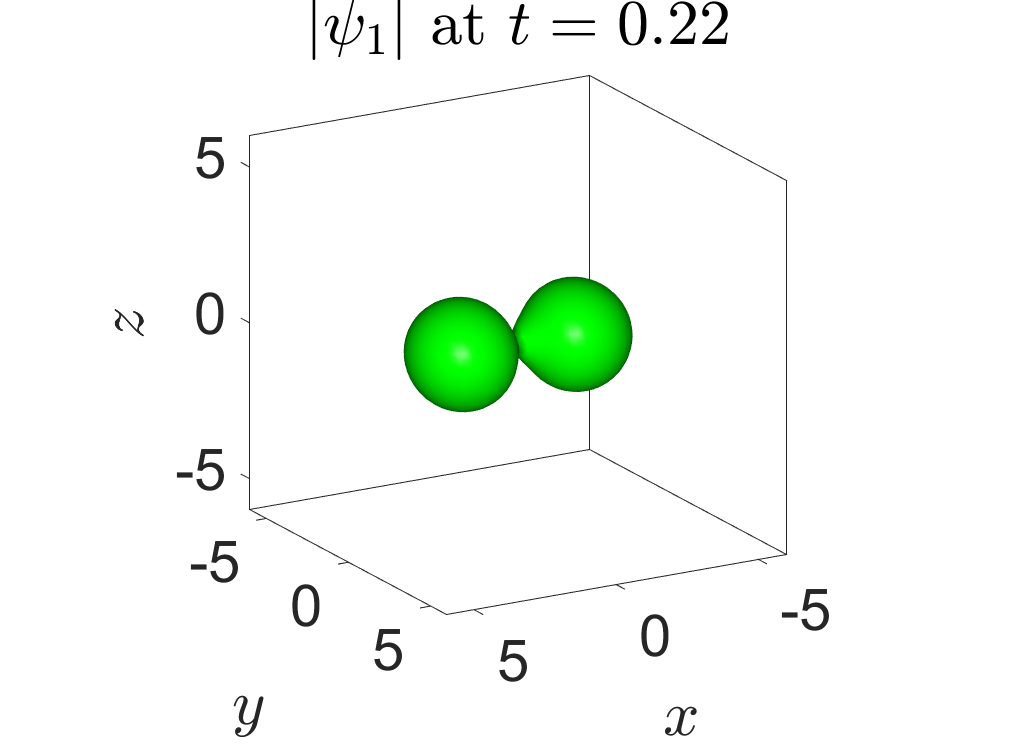}
        \caption{}
    \end{subfigure}
    \hfill
    \begin{subfigure}[t]{0.245\textwidth}
        \centering
        \includegraphics[width=\linewidth, height=3.2cm]{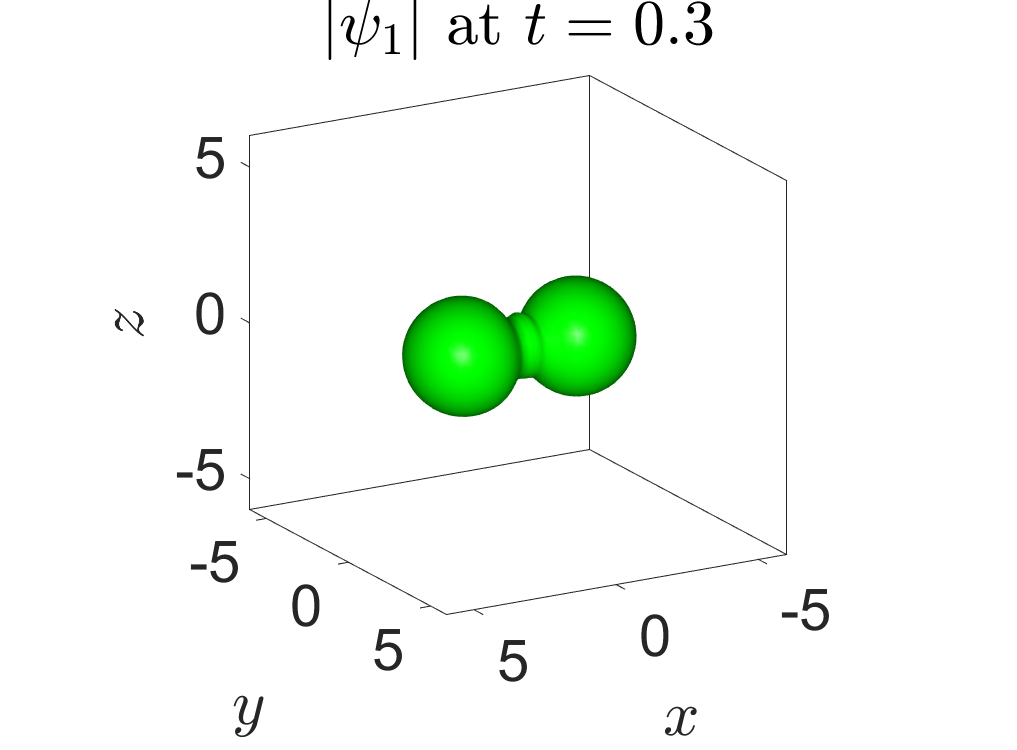}
        \caption{}
    \end{subfigure}
    \hfill
    \begin{subfigure}[t]{0.245\textwidth}
        \centering
        \includegraphics[width=\linewidth, height=3.2cm]{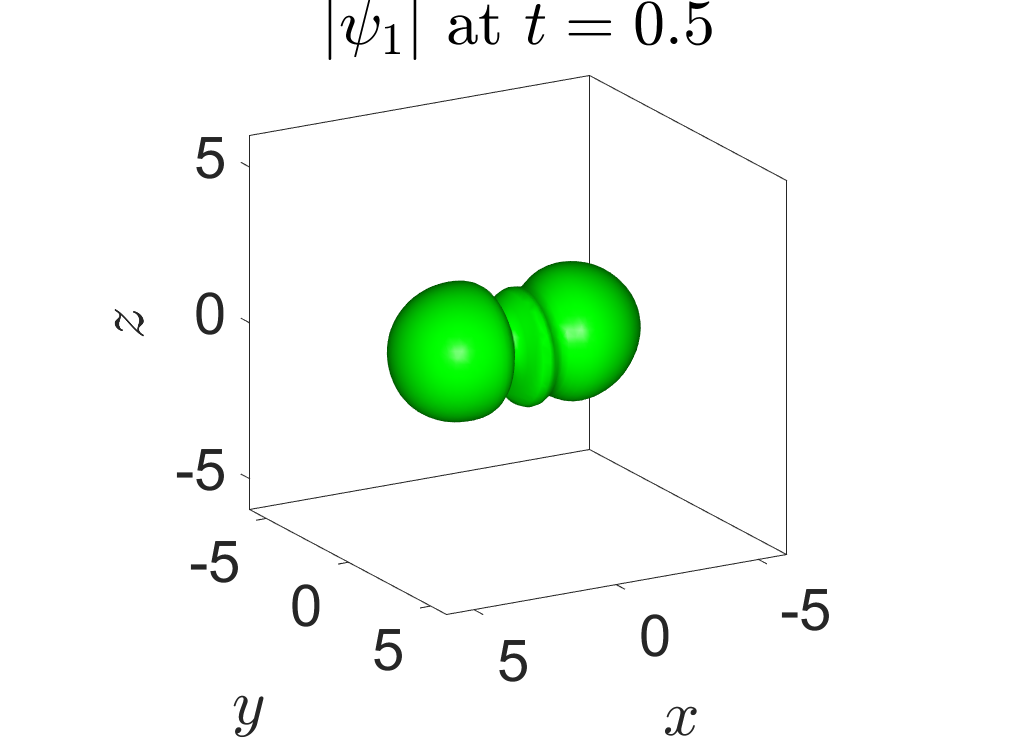}
        \caption{}
    \end{subfigure}
    \par
    \begin{subfigure}[t]{0.23\textwidth}
        \centering
        \includegraphics[width=\linewidth, height=3.2cm]{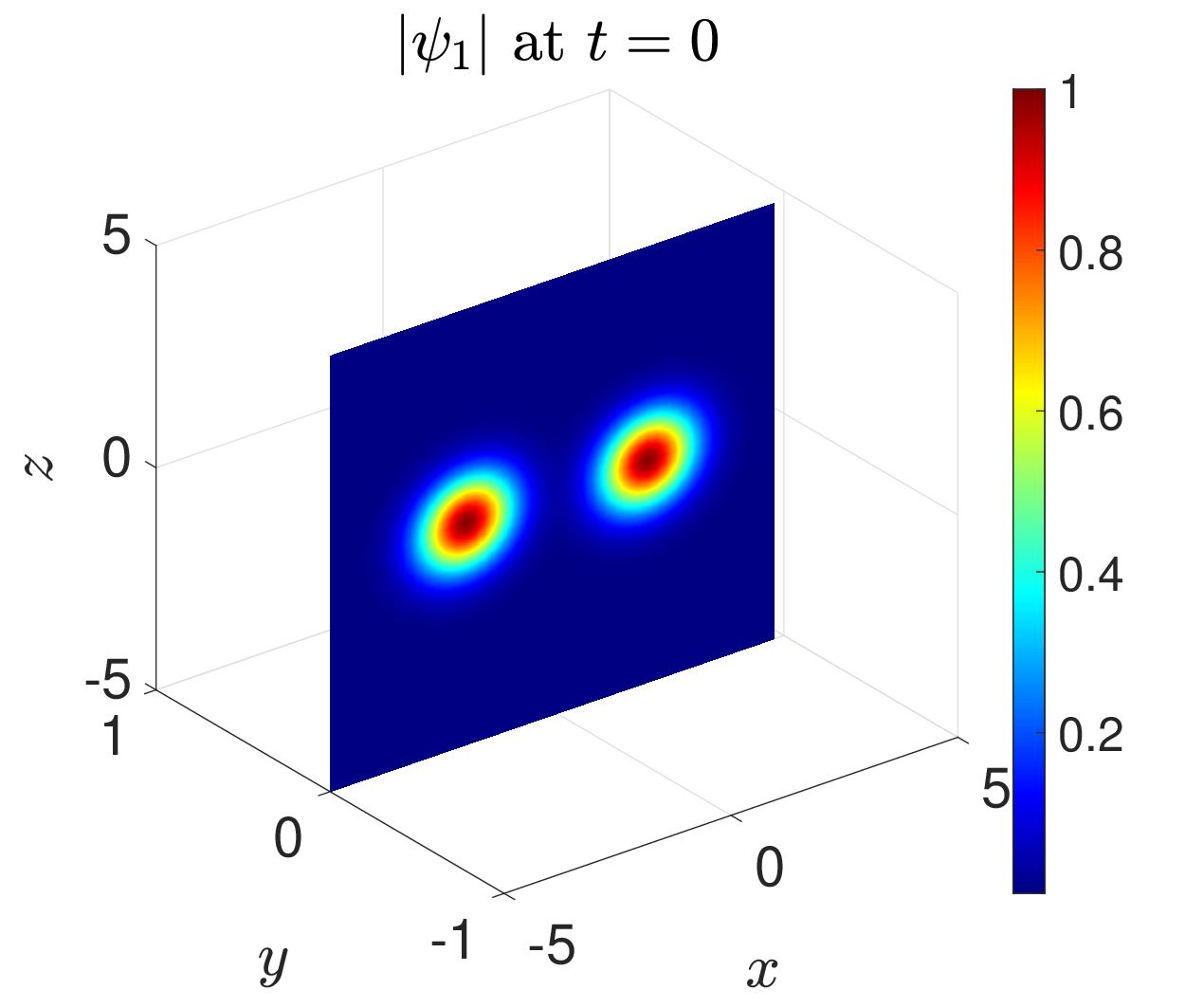}
        \caption{}
    \end{subfigure}
    \hfill
    \begin{subfigure}[t]{0.23\textwidth}
        \centering
        \includegraphics[width=\linewidth, height=3.2cm]{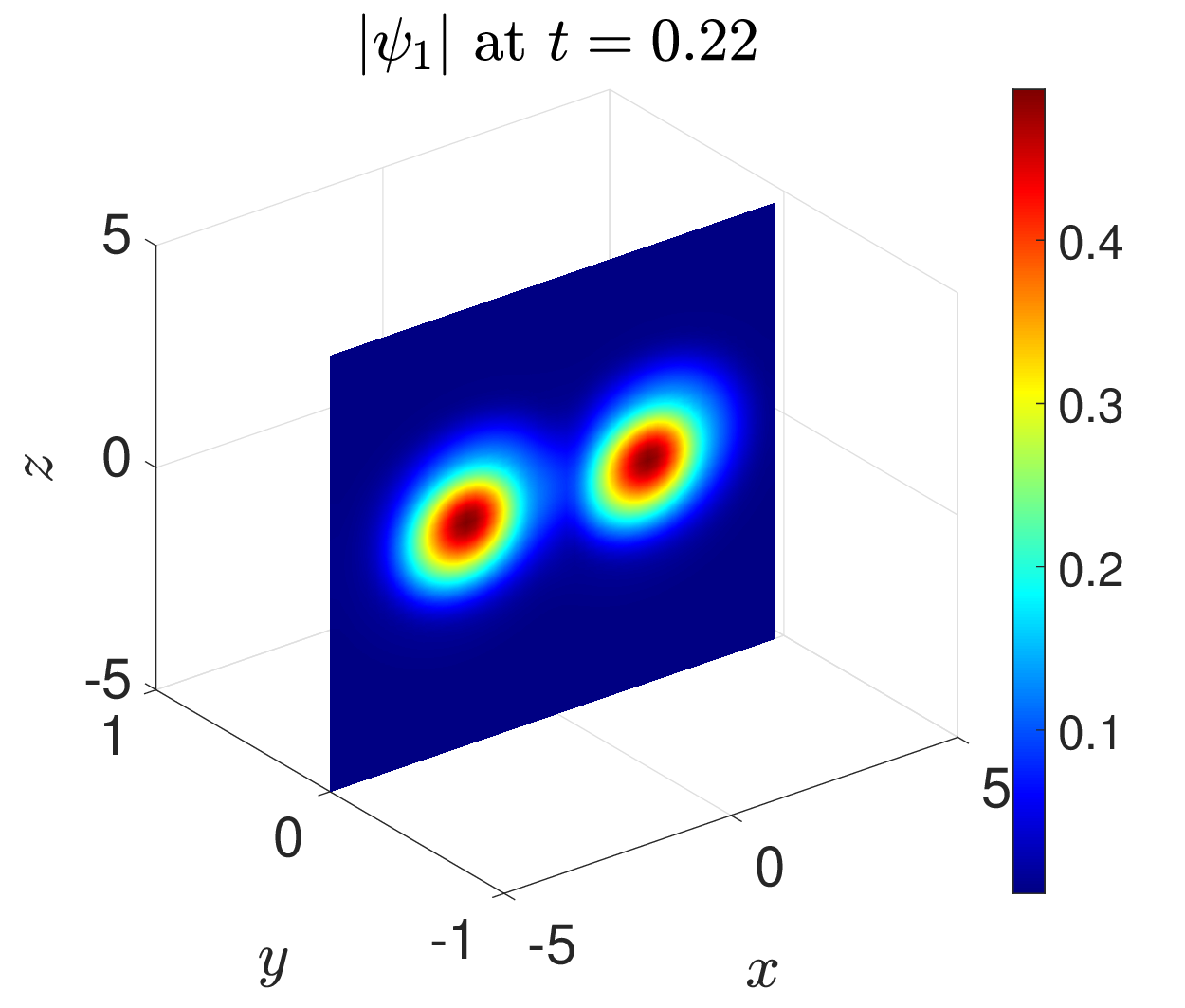}
        \caption{}
    \end{subfigure}
    \hfill
    \begin{subfigure}[t]{0.23\textwidth}
        \centering
        \includegraphics[width=\linewidth, height=3.2cm]{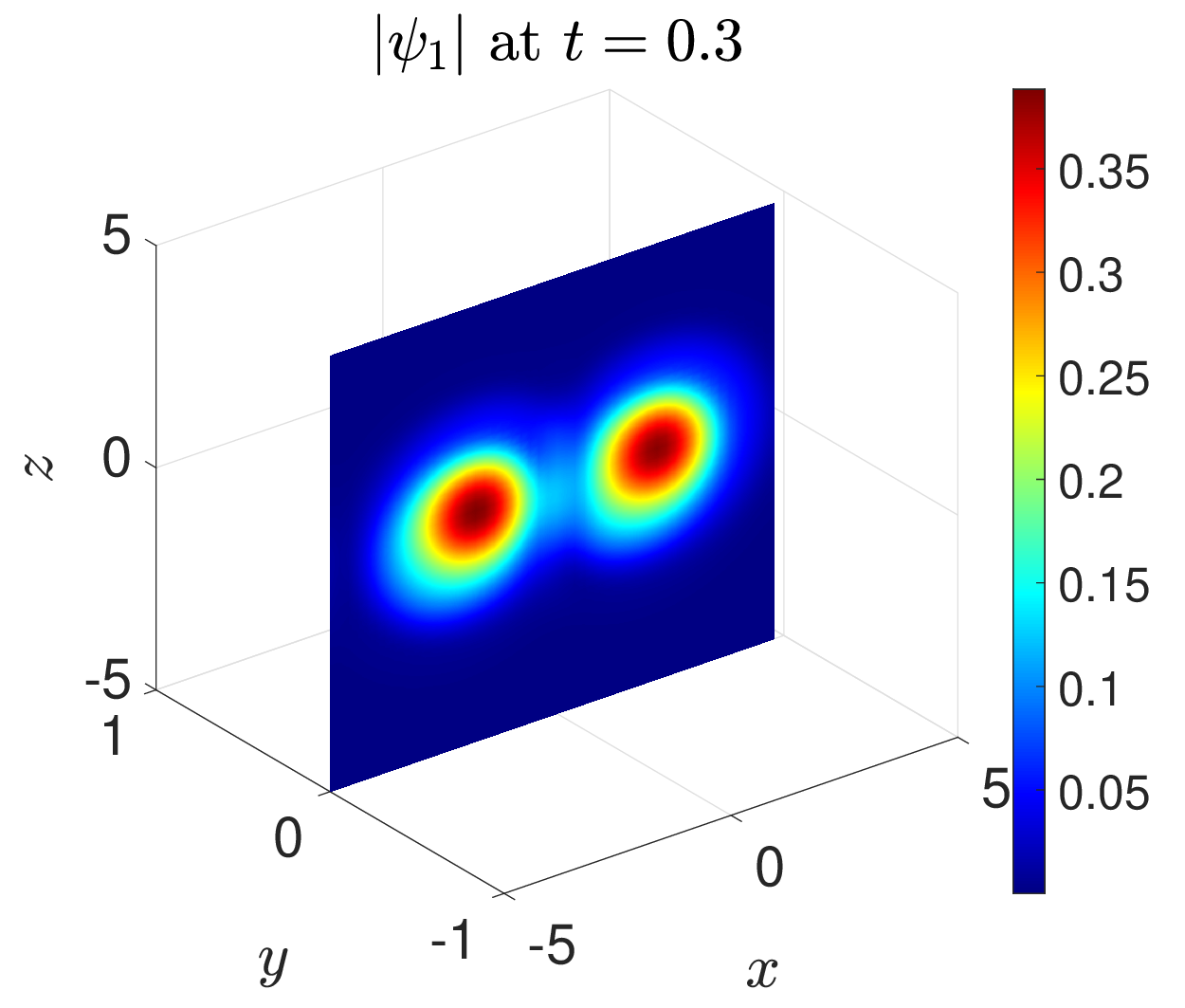}
        \caption{}
    \end{subfigure}
    \hfill
    \begin{subfigure}[t]{0.23\textwidth}
        \centering
        \includegraphics[width=\linewidth, height=3.2cm]{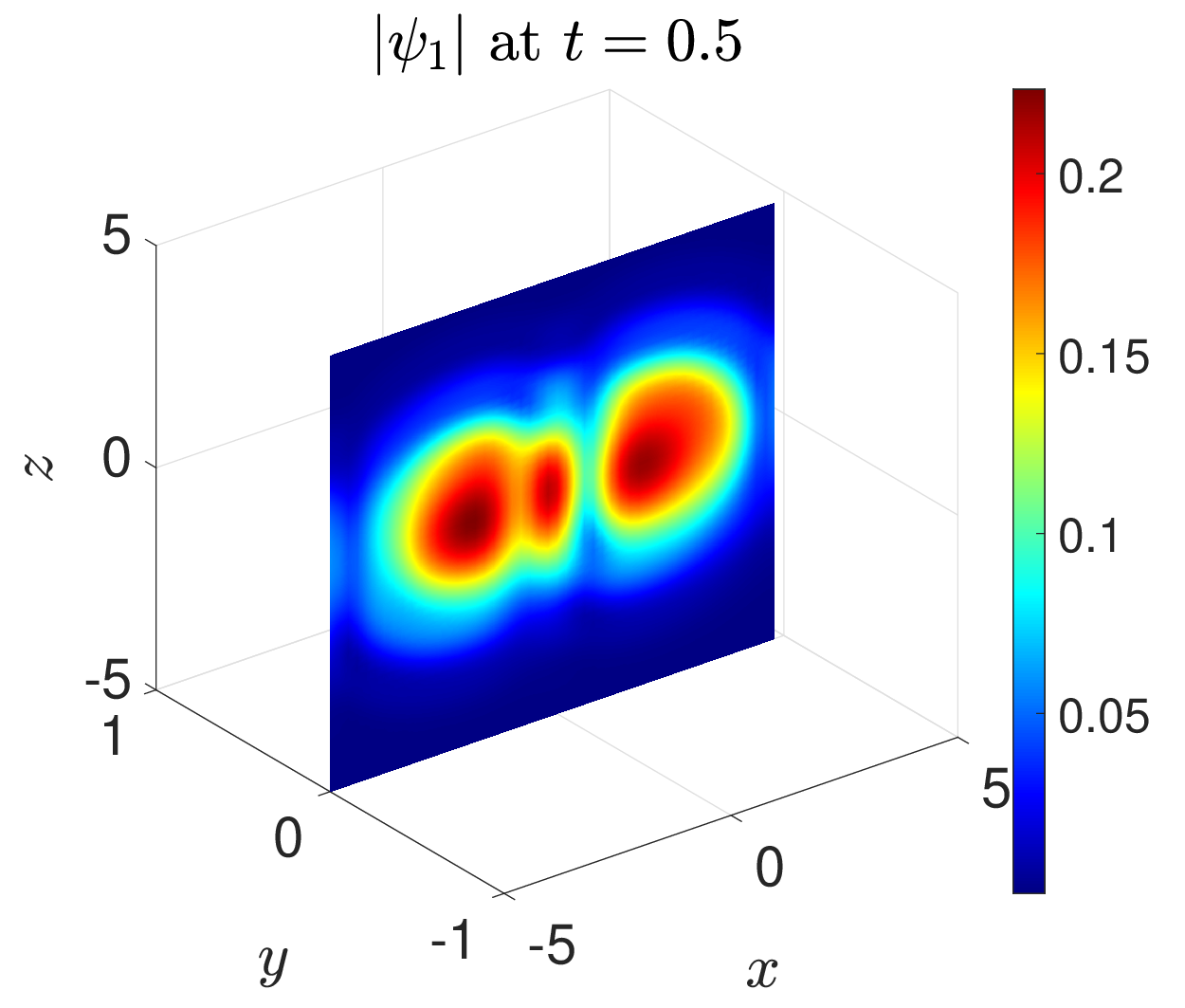}
        \caption{}
    \end{subfigure}
    \caption{The isosurfaces $\abs{\psi_{1}}=0.08$ (first row) and slices of $\abs{\psi_{1}}$ at $y=0$ (secong row) for KGD equation in 3D with $g=1$, $\omega=\frac{1}{2}$ and $\eps=0.1$.}
    \label{fig-5-3-1}
\end{figure}

\begin{figure}[t!]
    \centering
    \begin{subfigure}[t]{0.245\textwidth}
        \centering
        \includegraphics[width=\linewidth, height=3.2cm]{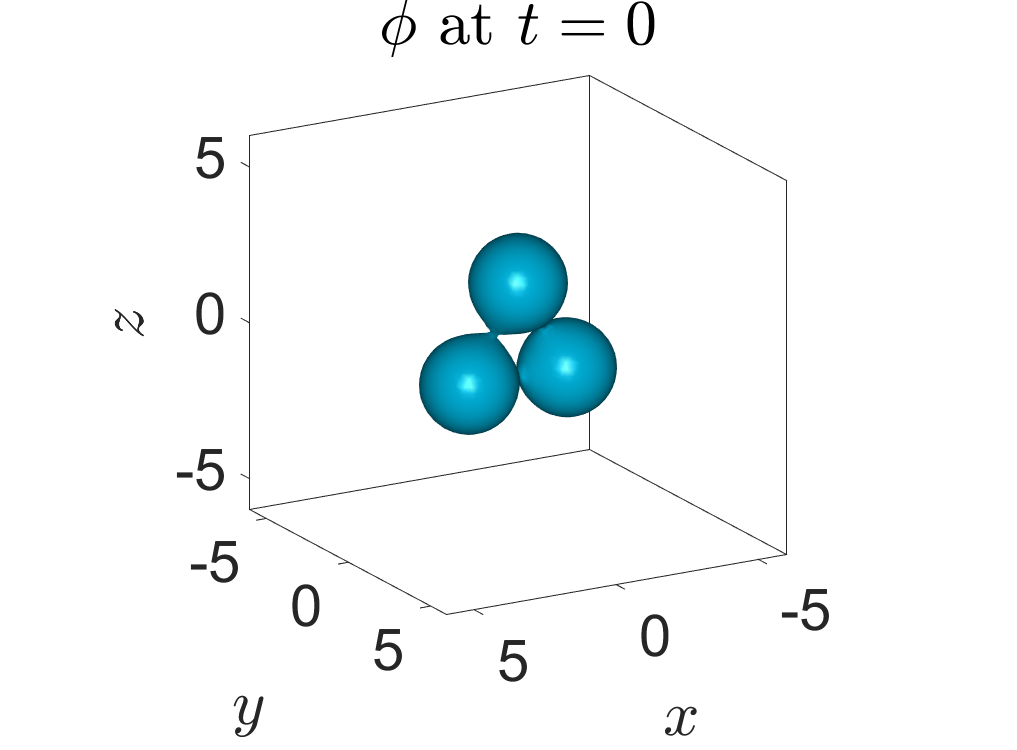}
        \caption{}
    \end{subfigure}
    \hfill
    \begin{subfigure}[t]{0.245\textwidth}
        \centering
        \includegraphics[width=\linewidth, height=3.2cm]{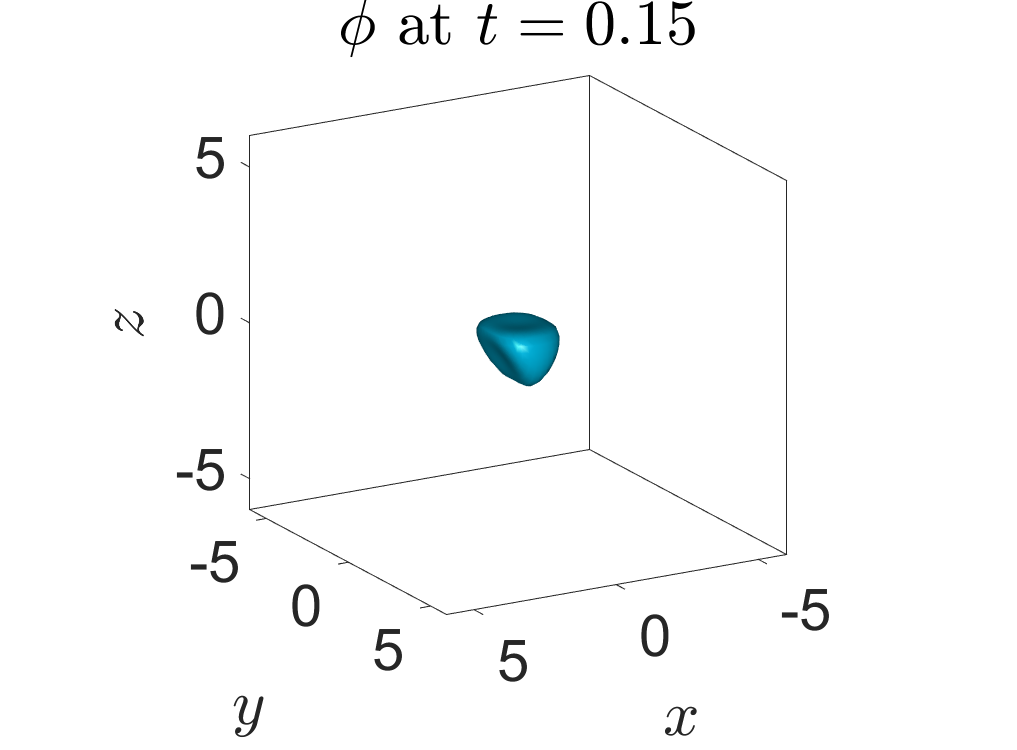}
        \caption{}
    \end{subfigure}
    \hfill
    \begin{subfigure}[t]{0.245\textwidth}
        \centering
        \includegraphics[width=\linewidth, height=3.2cm]{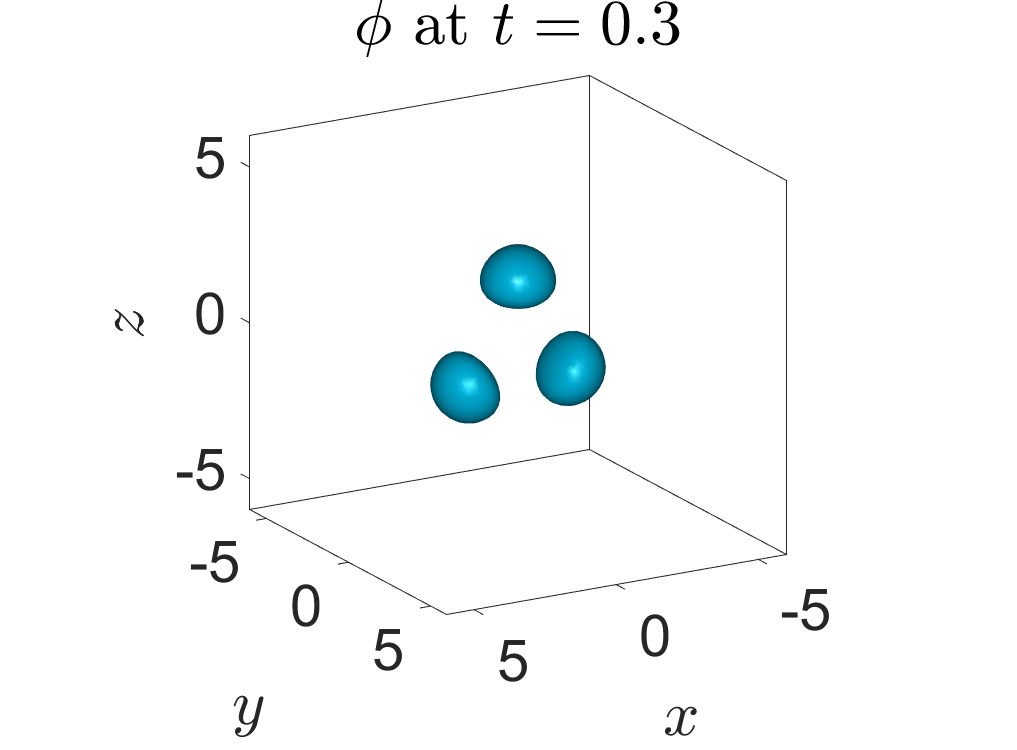}
        \caption{}
    \end{subfigure}
    \hfill
    \begin{subfigure}[t]{0.245\textwidth}
        \centering
        \includegraphics[width=\linewidth, height=3.2cm]{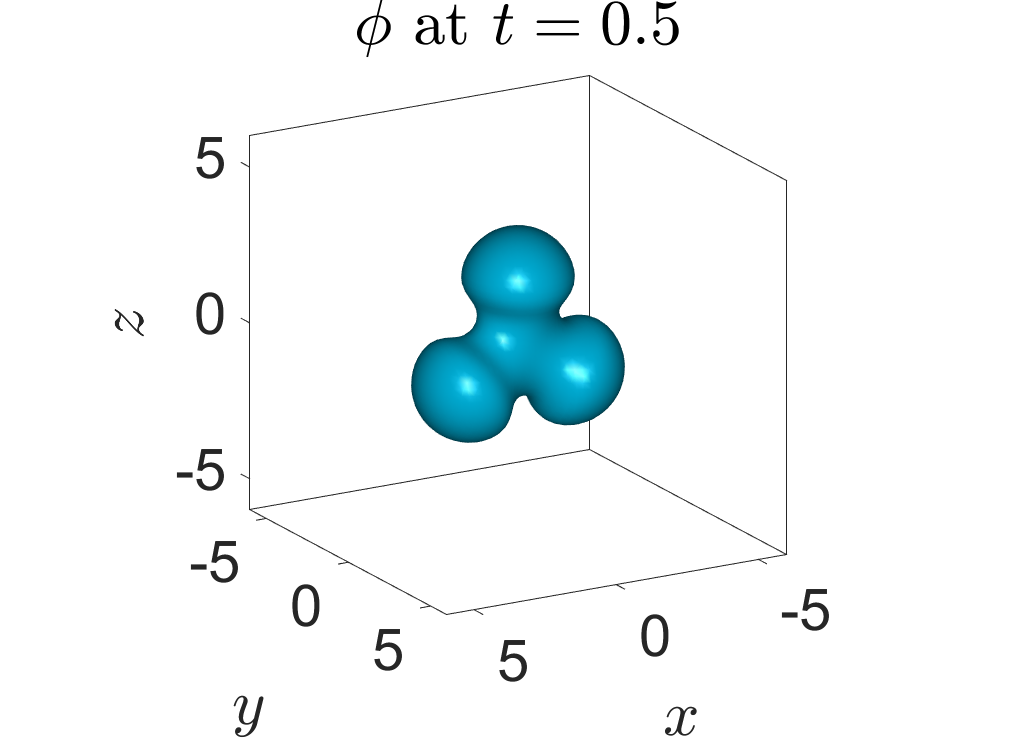}
        \caption{}
    \end{subfigure}
    \par
    \begin{subfigure}[t]{0.23\textwidth}
        \centering
        \includegraphics[width=\linewidth, height=3.2cm]{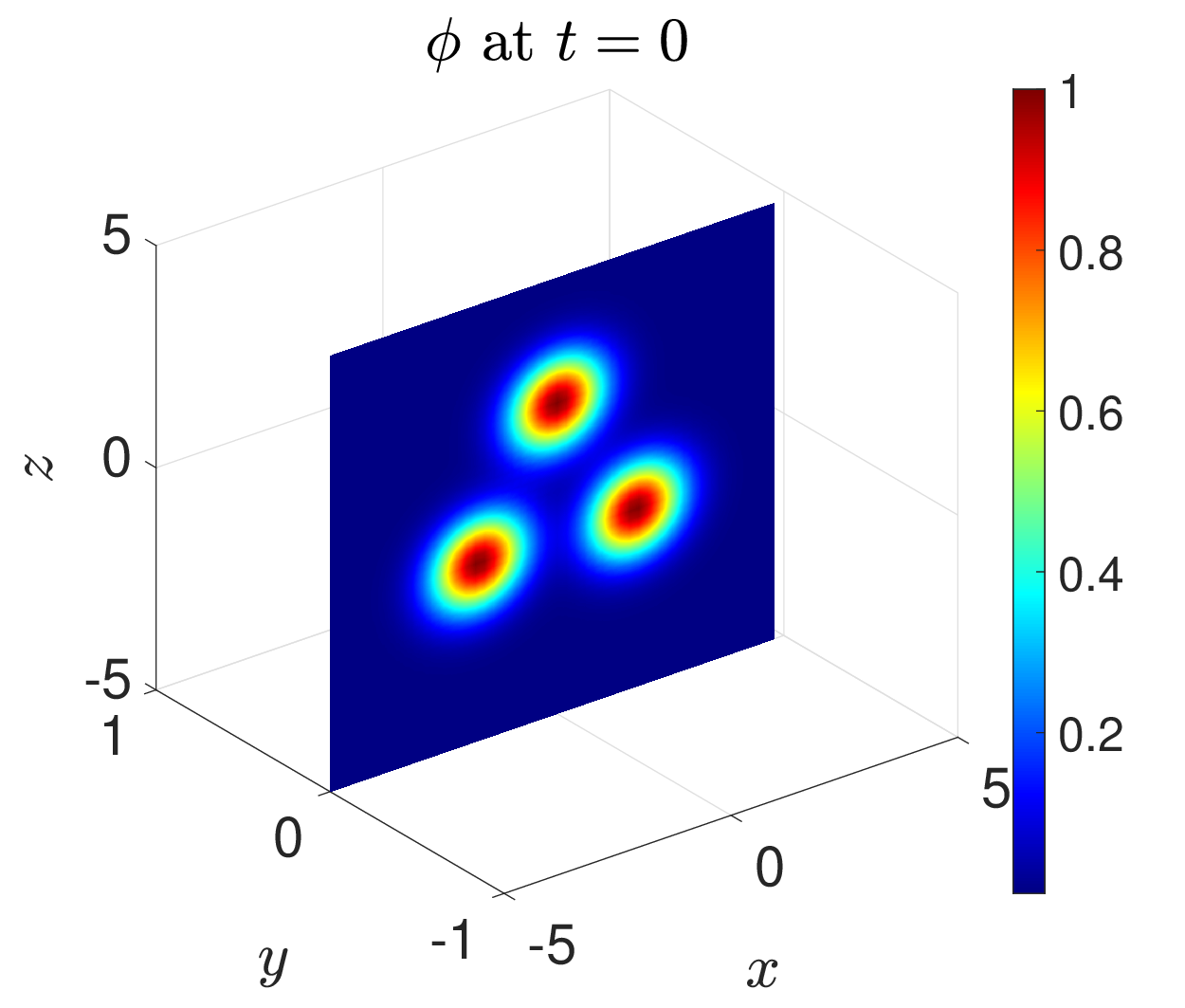}
        \caption{}
    \end{subfigure}
    \hfill
    \begin{subfigure}[t]{0.23\textwidth}
        \centering
        \includegraphics[width=\linewidth, height=3.2cm]{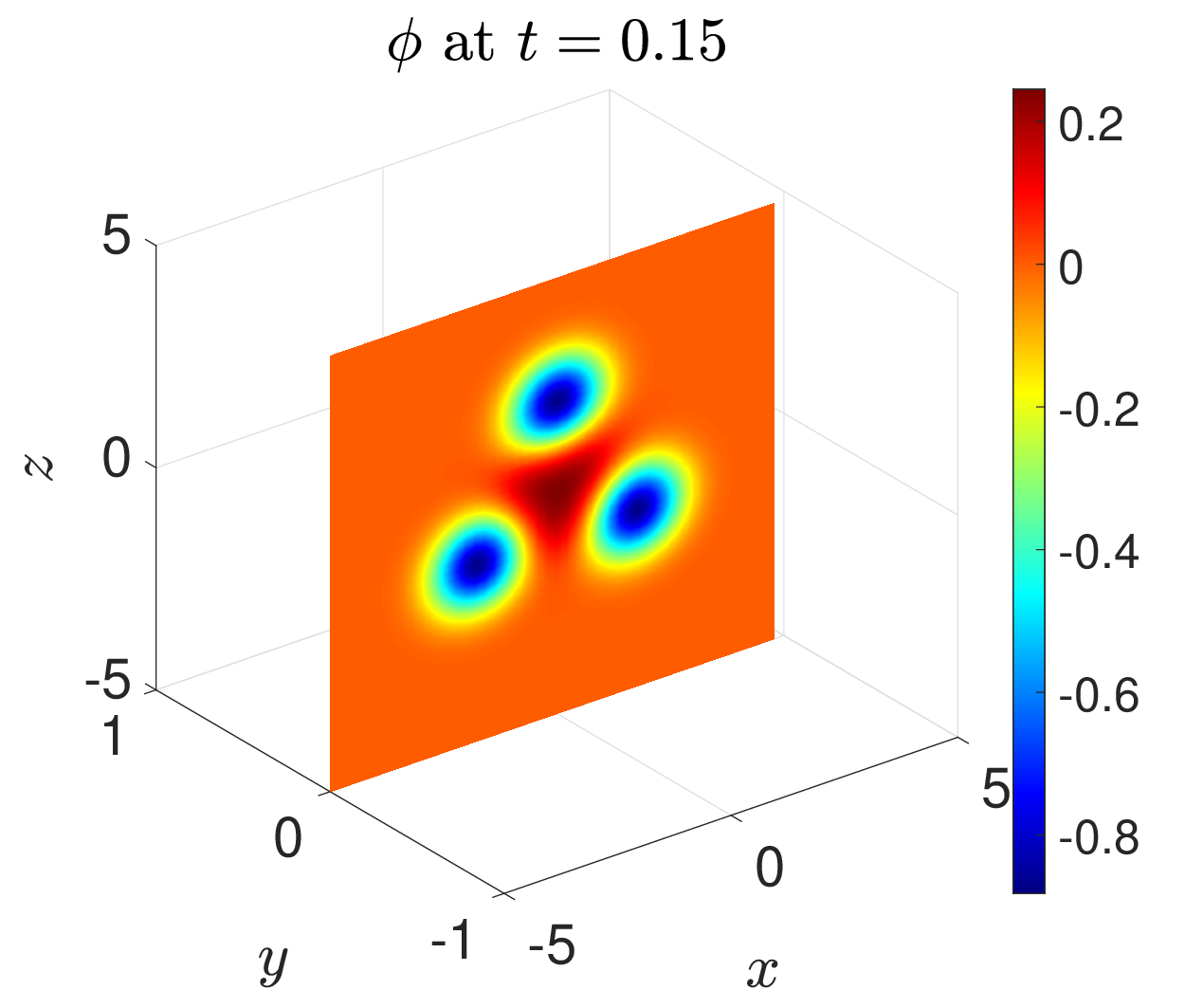}
        \caption{}
    \end{subfigure}
    \hfill
    \begin{subfigure}[t]{0.23\textwidth}
        \centering
        \includegraphics[width=\linewidth, height=3.2cm]{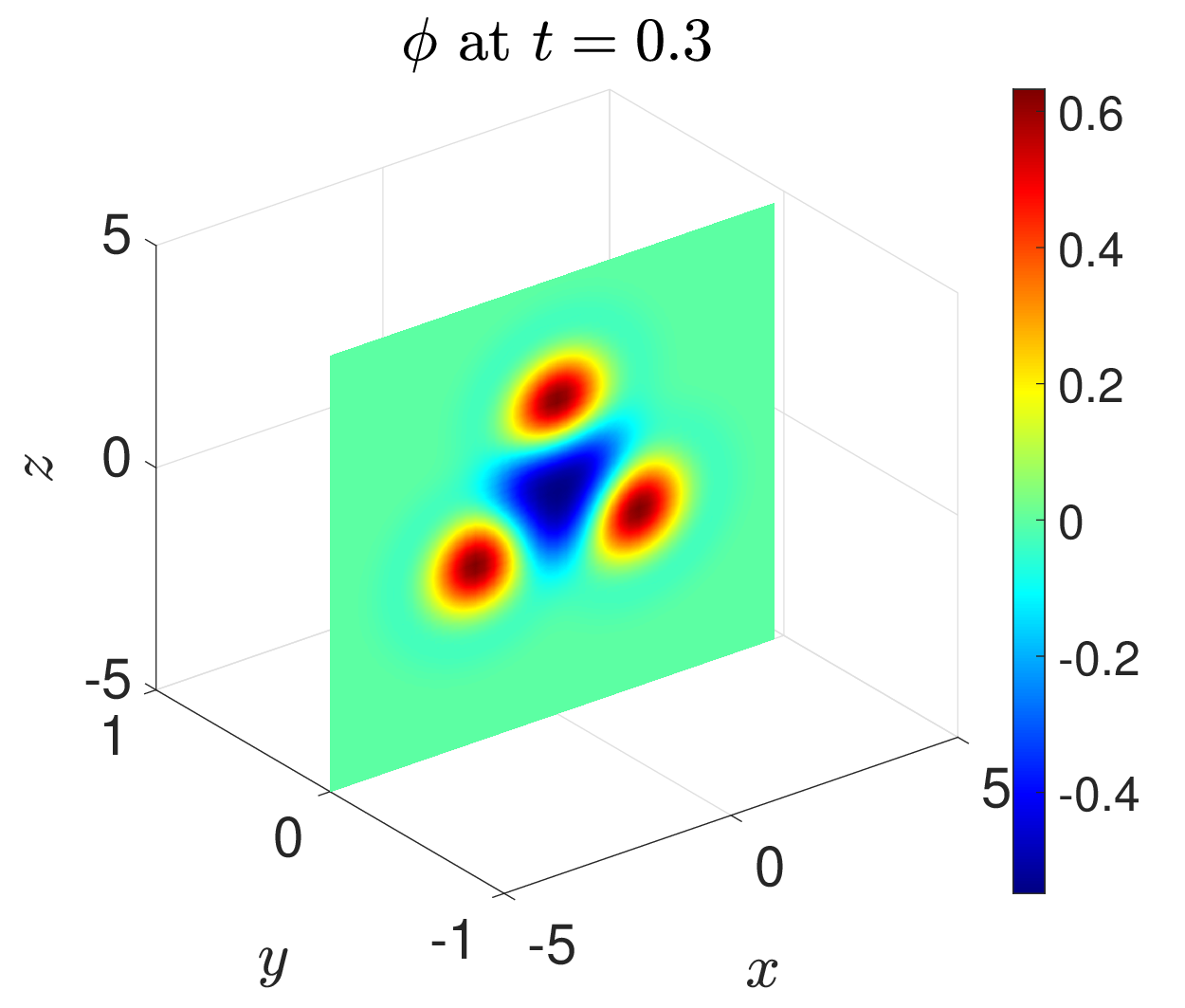}
        \caption{}
    \end{subfigure}
    \hfill
    \begin{subfigure}[t]{0.23\textwidth}
        \centering
        \includegraphics[width=\linewidth, height=3.2cm]{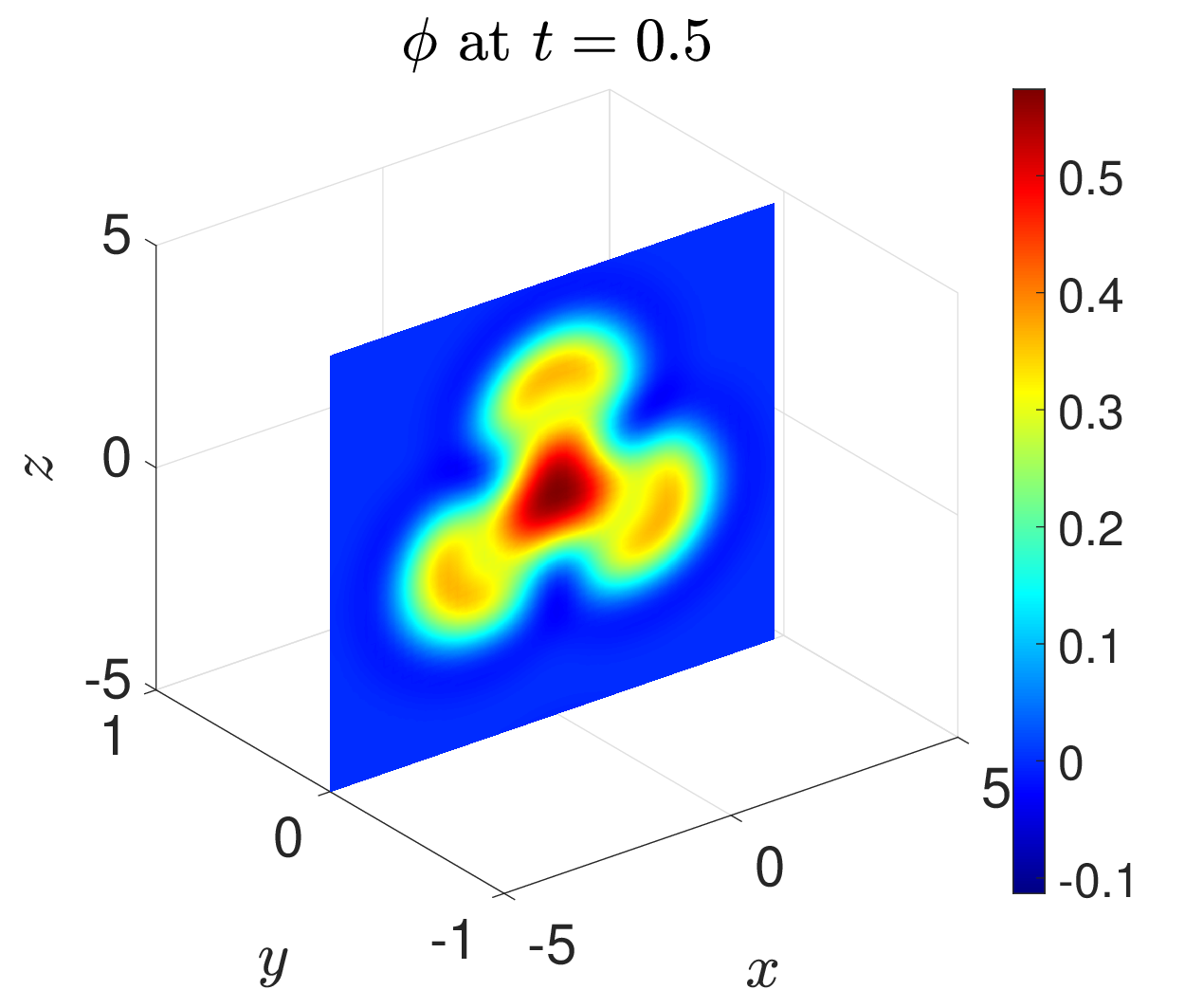}
        \caption{}
    \end{subfigure}
    \caption{The isosurfaces $\phi=0.1$ (first row) and slices of $\phi$ at $y=0$ (secong row) for KGD equation in 3D with $g=1$, $\omega=\frac{1}{2}$ and $\eps=0.1$.}
    \label{fig-5-3-2}
\end{figure}

\begin{figure}[t!]
    \centering
        \includegraphics[width=0.32\linewidth,height=3.1cm]{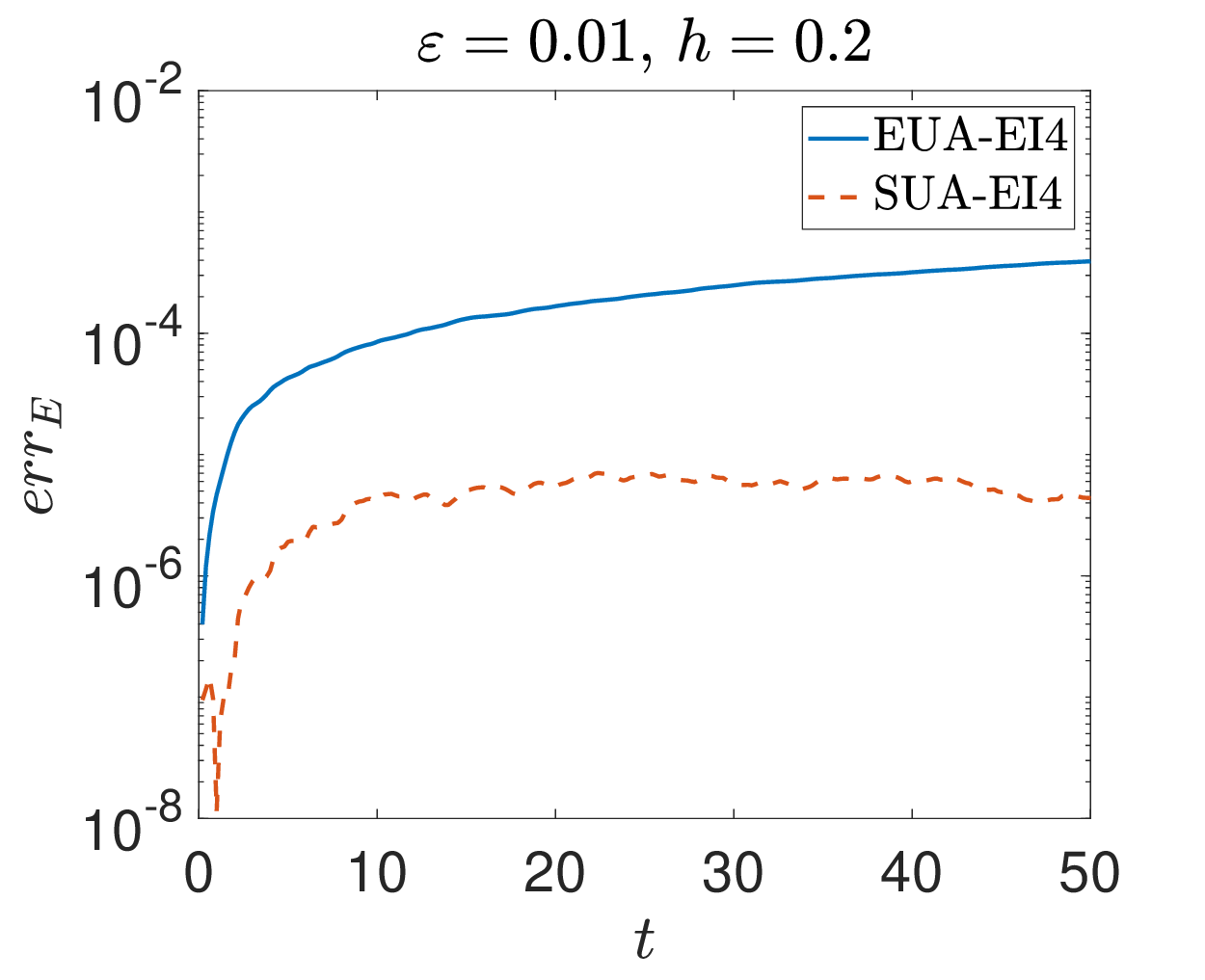}
        \includegraphics[width=0.32\linewidth,height=3.1cm]{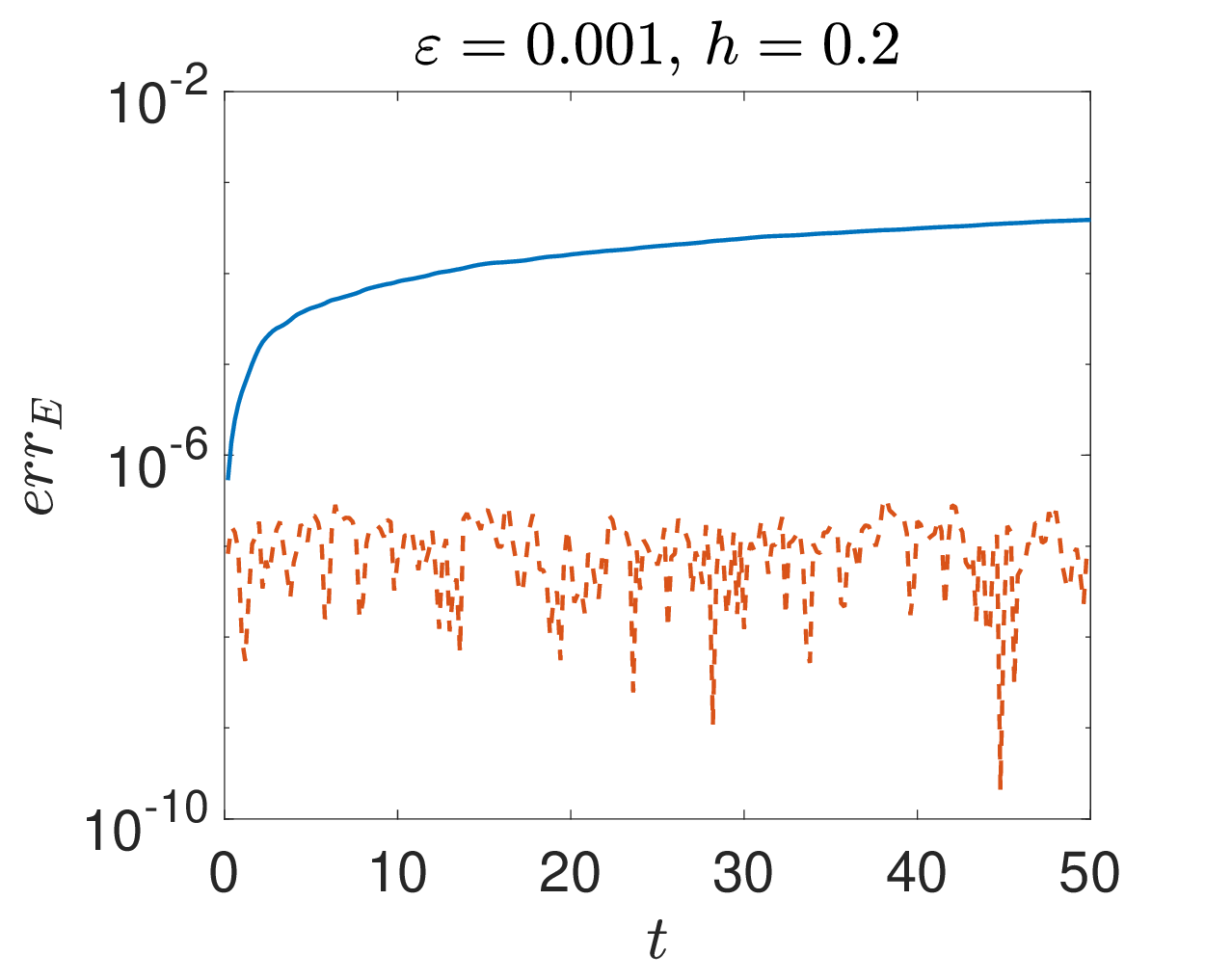}
        \includegraphics[width=0.32\linewidth,height=3.1cm]{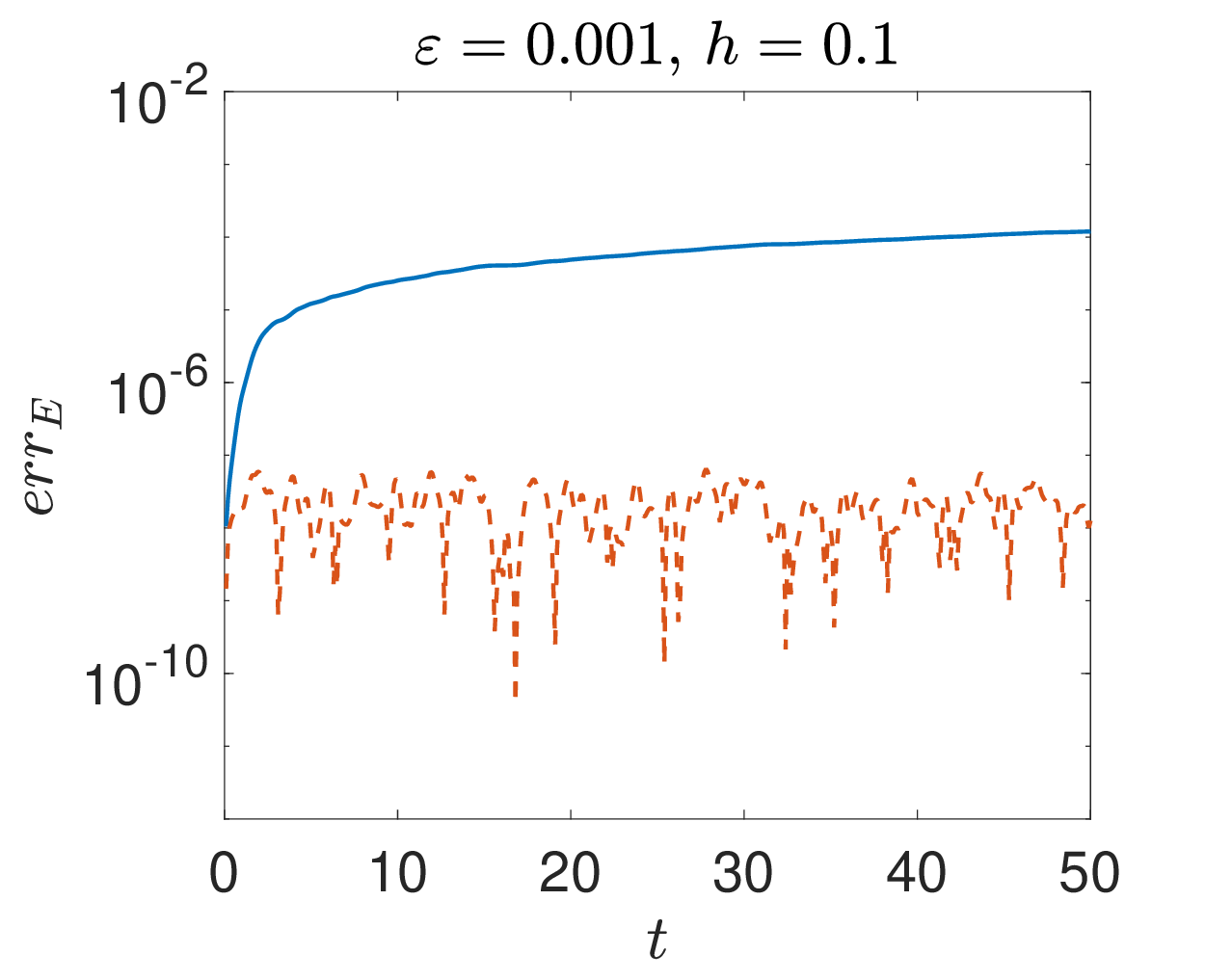}\\
        \includegraphics[width=0.32\linewidth,height=3.1cm]{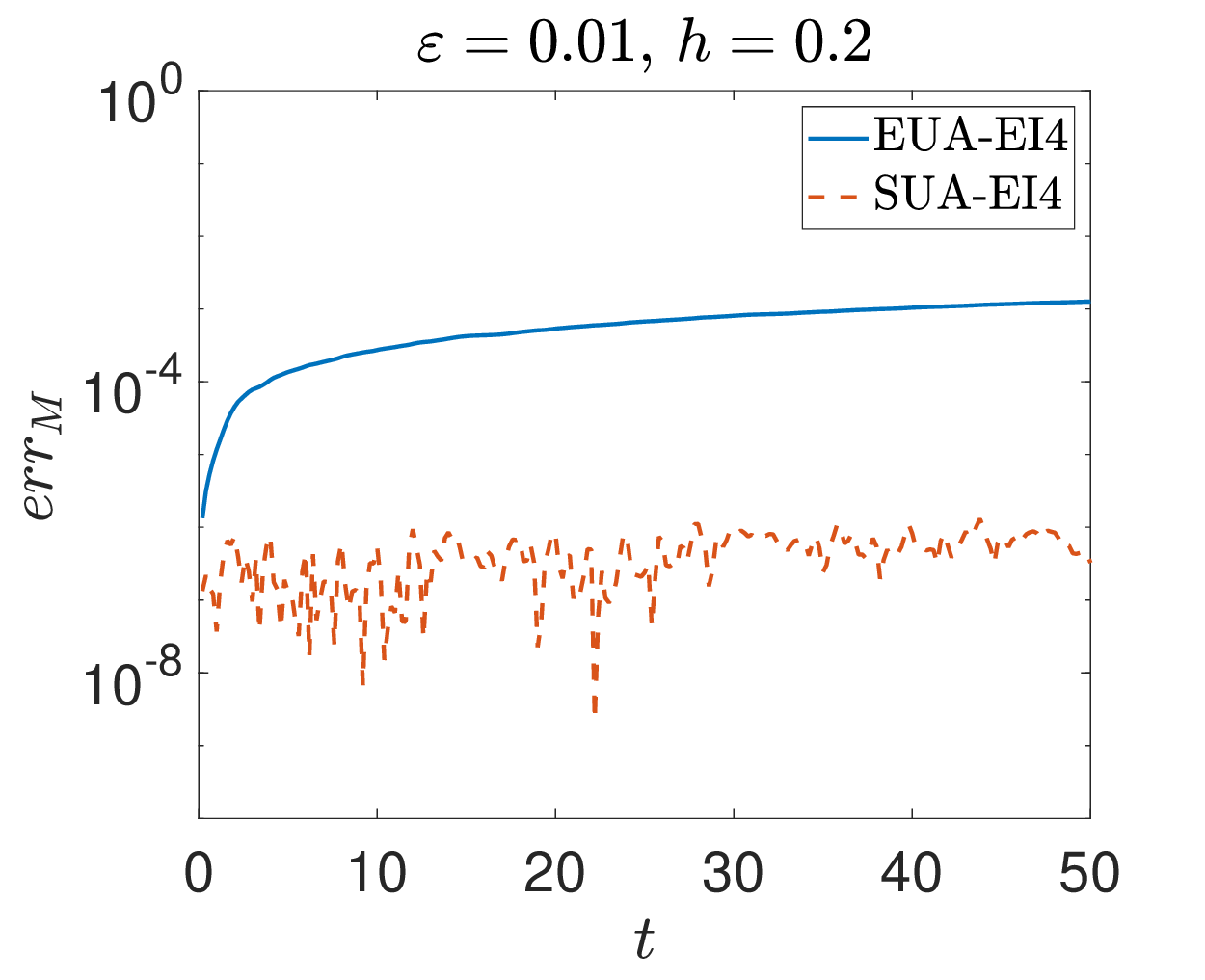}
        \includegraphics[width=0.32\linewidth,height=3.1cm]{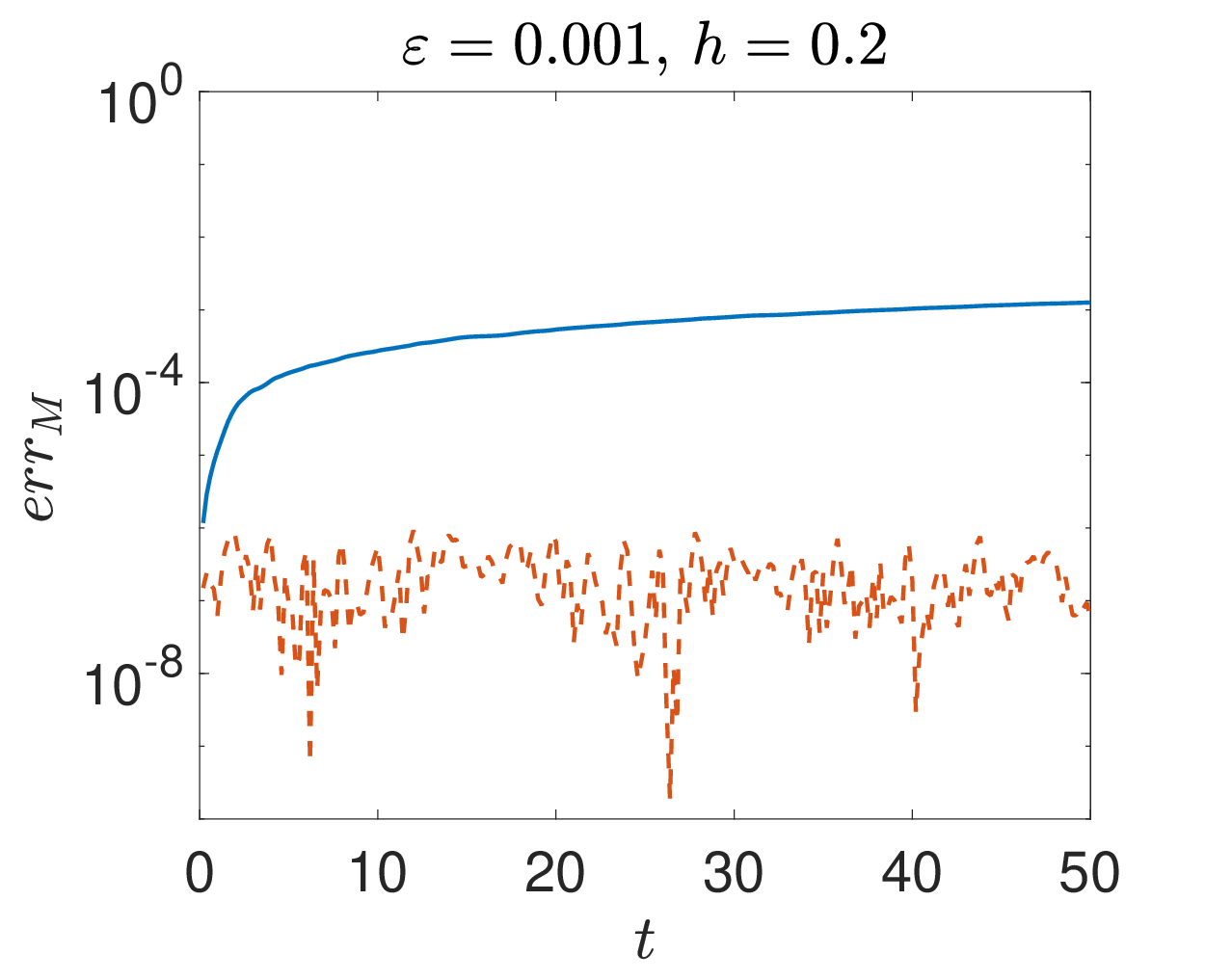}
        \includegraphics[width=0.32\linewidth,height=3.1cm]{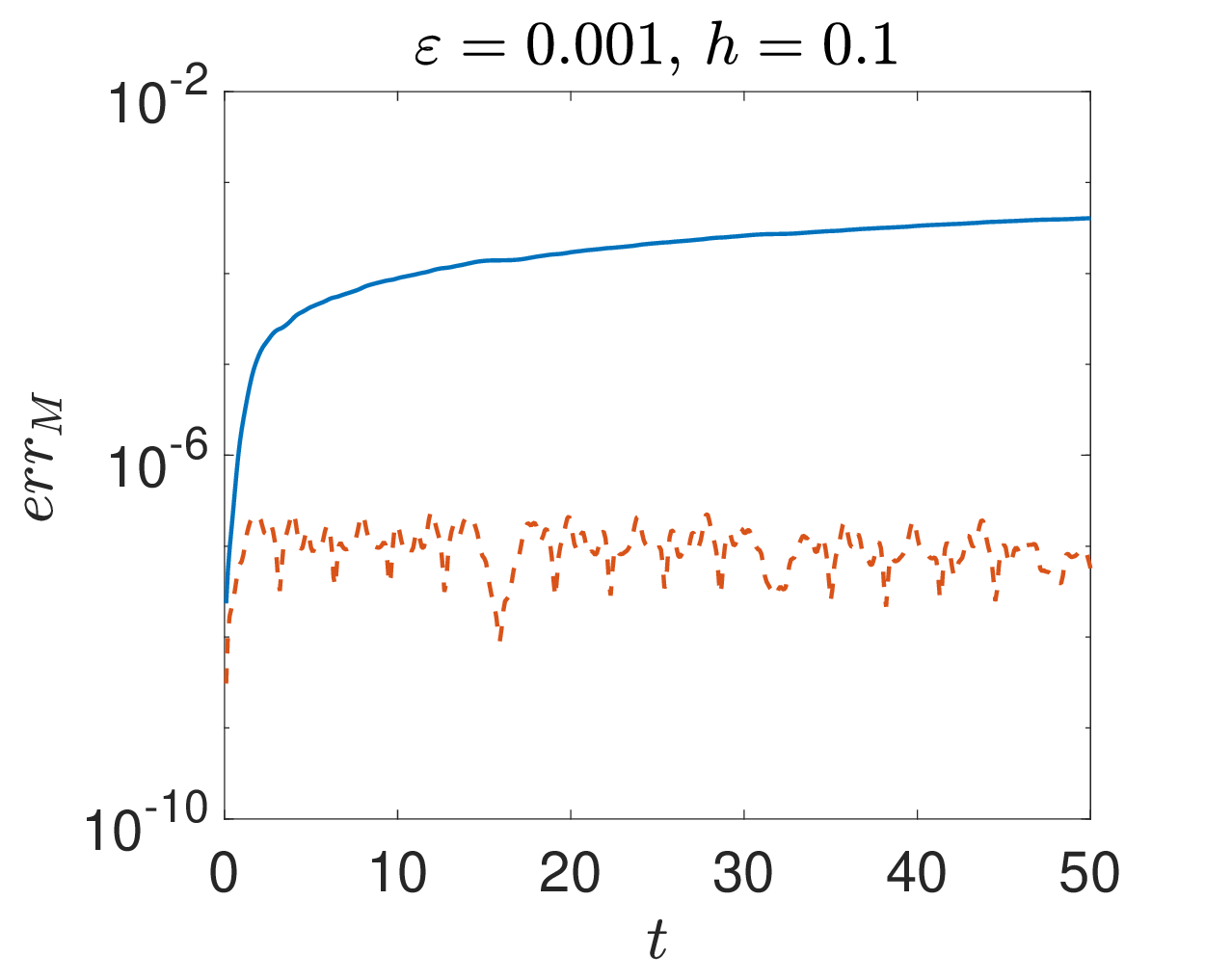}
    \caption{Energy error (top) and mass error (bottom) of KGD in 3D under different $\eps$ and $h$.}
    \label{fig-5-3-3}
\end{figure}

\subsection{KGS equation}\label{sec-4.2}
In this subsection, we present numerical results demonstrating the uniform
convergence and long-time near-conservation of the proposed UA-EI4 schemes for
$\eps\in(0,1]$. Likewise, the relative errors in accuracy $err_{\psi}$ and
$err_{\phi}$, mass $err_{M}$, and energy $err_{E}$ are employed as metrics.

\subsubsection{Two-dimensional KGS equation}
We first study the convergence and long-time behavior of the 2D KGS system with
$\mu=\lambda=1$, taking the bounded domain $\Omega=(-10,10)^{2}$ and the initial
data from \cite{BZ}:
\begin{align*}
\psi_{0}(\bx)=(1+i)e^{-(x^2+y^2)}, \quad \phi_{0}(\bx)=\operatorname{sech}(x^2+y^2), \quad \phi_{1}(\bx)=\sin(x+y)e^{-2(x^2+y^2)}, \quad \bx=(x,y)^{\intercal}\in\mathbb{R}^{2}.
\end{align*}
The spatial mesh sizes are set as $\triangle x=\triangle y=1/8$ and
$\triangle\tau=1/16$, with the reference solution computed by the 2D MTI-FP
method using a fine time step $h=10^{-6}$. Table \ref{tab-5-5-2} and Figures
\ref{fig-5-5-1} and \ref{fig-5-5-3} display the convergence and long-time
behavior, clearly revealing fourth-order uniform accuracy and near-conservation
of mass and energy.

Next, we evaluate the dynamics of the KGS equation in 2D with $\mu=\lambda=1$
and $\eps=0.01$, with the initial condition taken from \cite{BY,ZS}:
\begin{align*}
\psi_{0}(\bx)=\frac{2}{e^{x^2+2y^2}+e^{-(x^2+2y^2)}}e^{i5/\cosh(\sqrt{4x^2+y^2})}, \quad \phi_{0}(\bx)=e^{-(x^2+y^2)}, \quad \phi_{1}(\bx)=\frac{1}{2}e^{-(x^2+y^2)}, \quad \bx\in\mathbb{R}^{2}.
\end{align*}
This example is solved on the square $(-64,64)^{2}$ with mesh sizes
$\triangle x=\triangle y=1/4$ and $\triangle\tau=1/8$, and time step $h=0.01$,
using the SUA-EI4 scheme. Figures \ref{fig-5-5-4} and \ref{fig-5-5-5} present
the surface plots and snapshots of $\abs{\psi}^{2}$ and $\phi$ at different
times for $\eps=0.01$. It is observed that the nucleon density
$\abs{\psi}^{2}$ gradually splits from one wave into two waves propagating in
opposite directions along the $x$-axis, while the meson field $\phi$ becomes
flattened with a downward-concave crest.

\begin{figure}[t!]
    \centering
        \includegraphics[width=0.38\linewidth,height=3.6cm]{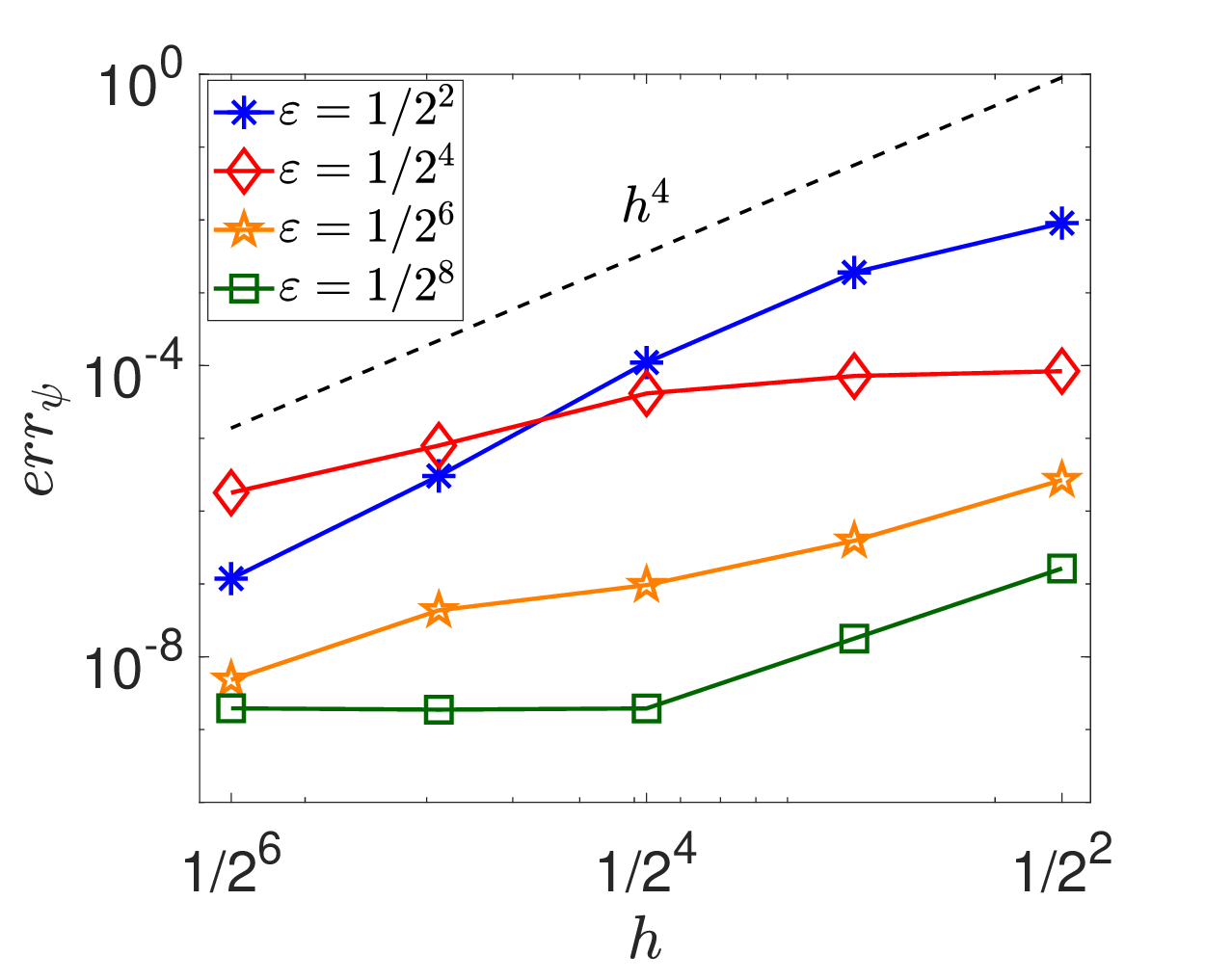}
        \includegraphics[width=0.38\linewidth,height=3.6cm]{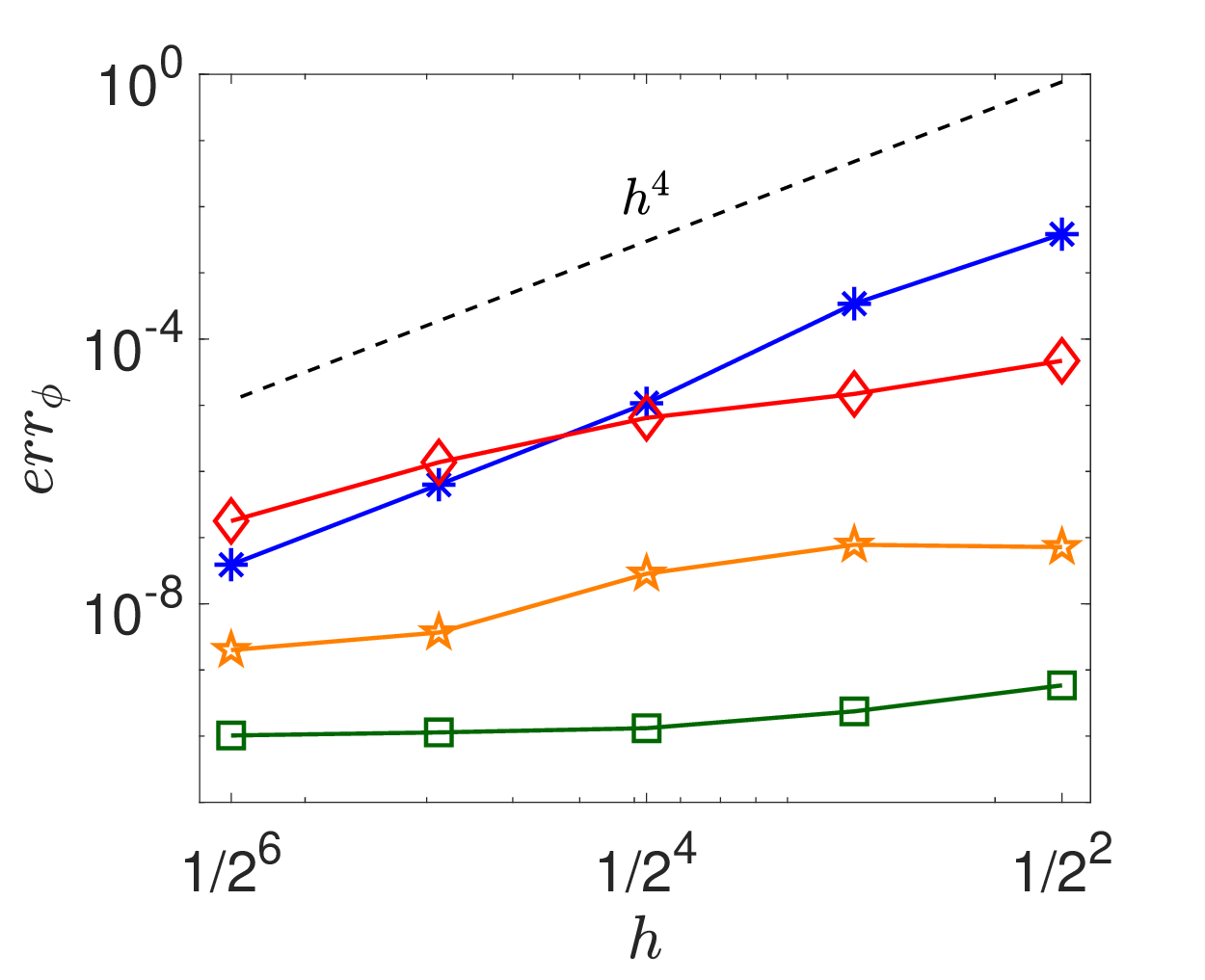}
    \caption{Temporal error of KGS in 2D at $T=1$ under different $\eps$ for SUA-EI4 scheme.}
    \label{fig-5-5-1}
\end{figure}

\begin{table}[t!]
    \centering
    \caption{Temporal error of KGS in 2D at $T=1$ under different $\eps$ for the EUA-EI4 scheme.}
    \label{tab-5-5-2}

    \renewcommand{\arraystretch}{1.15}
    \setlength{\tabcolsep}{5pt}

    \resizebox{\textwidth}{!}{%
    \begin{tabular}{c ccccc ccccc}
        \toprule
        &
        \multicolumn{5}{c}{\ $err_{\psi}$\ } &
        \multicolumn{5}{c}{\ $err_{\phi}$\ } \\
        \cmidrule(lr){2-6}
        \cmidrule(lr){7-11}
        
       $\eps$ & $h_{0}=1/2^2$ & $h_{0}/2$ & $h_{0}/4$ & $h_{0}/8$ & $h_{0}/16$ &  $h_{0}$ & $h_{0}/2$ & $h_{0}/4$ & $h_{0}/8$ & $h_{0}/16$ \\
        \midrule
        $1/2^2$ & 9.06E-3 & 1.90E-3 & 1.10E-4 & 3.04E-6 & 1.16E-7 &  4.13E-3 & 3.89E-4 & 1.13E-5 & 8.95E-7 & 6.21E-8      \\
        order & -- & 2.26 & 4.11 & 5.18 & 4.71 &  -- & 3.41 & 5.11 & 3.66 & 3.85            \\
        $1/2^4$ &  1.09E-4 & 7.18E-5 & 4.16E-5 & 7.96E-6 & 1.79E-6 &  9.07E-5  & 4.48E-5 & 1.66E-5 & 2.43E-6 & 2.78E-7      \\
        order &  -- & 0.60  &  0.79 & 2.38 & 2.15 &  -- & 1.02 & 1.43 & 2.78 & 3.13     \\
        $1/2^6$ &  4.03E-6 & 7.65E-7 & 1.44E-7 & 4.49E-8 & 5.00E-9 & 2.94E-6 & 1.78E-6 & 5.34E-7 & 9.13E-8 & 1.39E-8       \\
        order & -- & 2.40 & 2.41 & 1.68 & 3.17 &  -- & 0.73 & 1.74  & 2.55 & 2.71    \\
        $1/2^8$ & 2.46E-7 & 3.95E-8 & 5.55E-9 & 1.92E-9 & 1.94E-9 &  1.87E-7 & 1.14E-7 & 3.33E-8  & 5.76E-9 & 8.05E-10       \\
        order &  -- &  2.64  & 2.83 & 1.53 & -0.01 &  -- & 0.72  & 1.77 & 2.53 & 2.84    \\
       \bottomrule
    \end{tabular}%
    }
\end{table}

\begin{figure}[t!]
    \centering
        \includegraphics[width=0.32\linewidth,height=3.1cm]{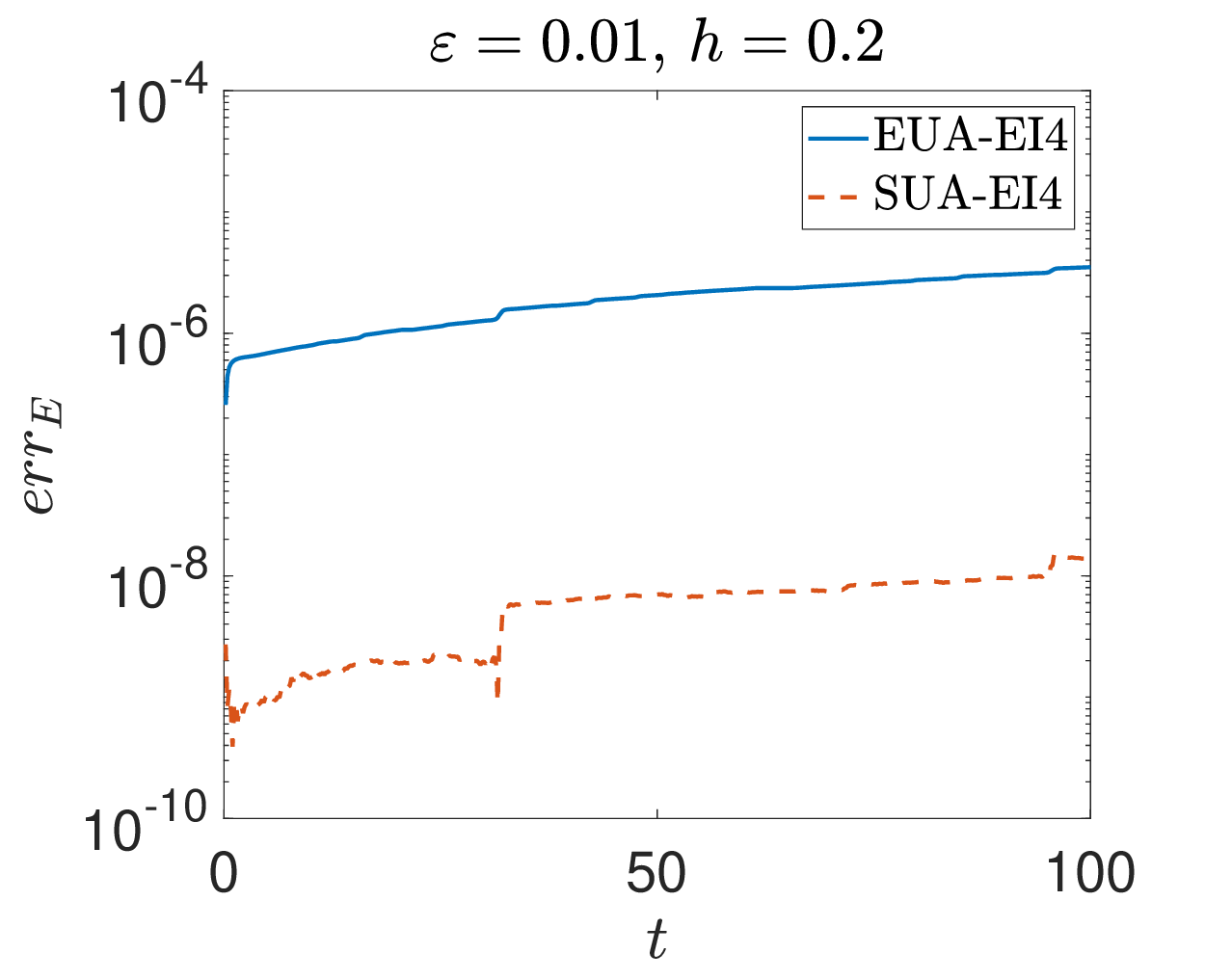}
        \includegraphics[width=0.32\linewidth,height=3.1cm]{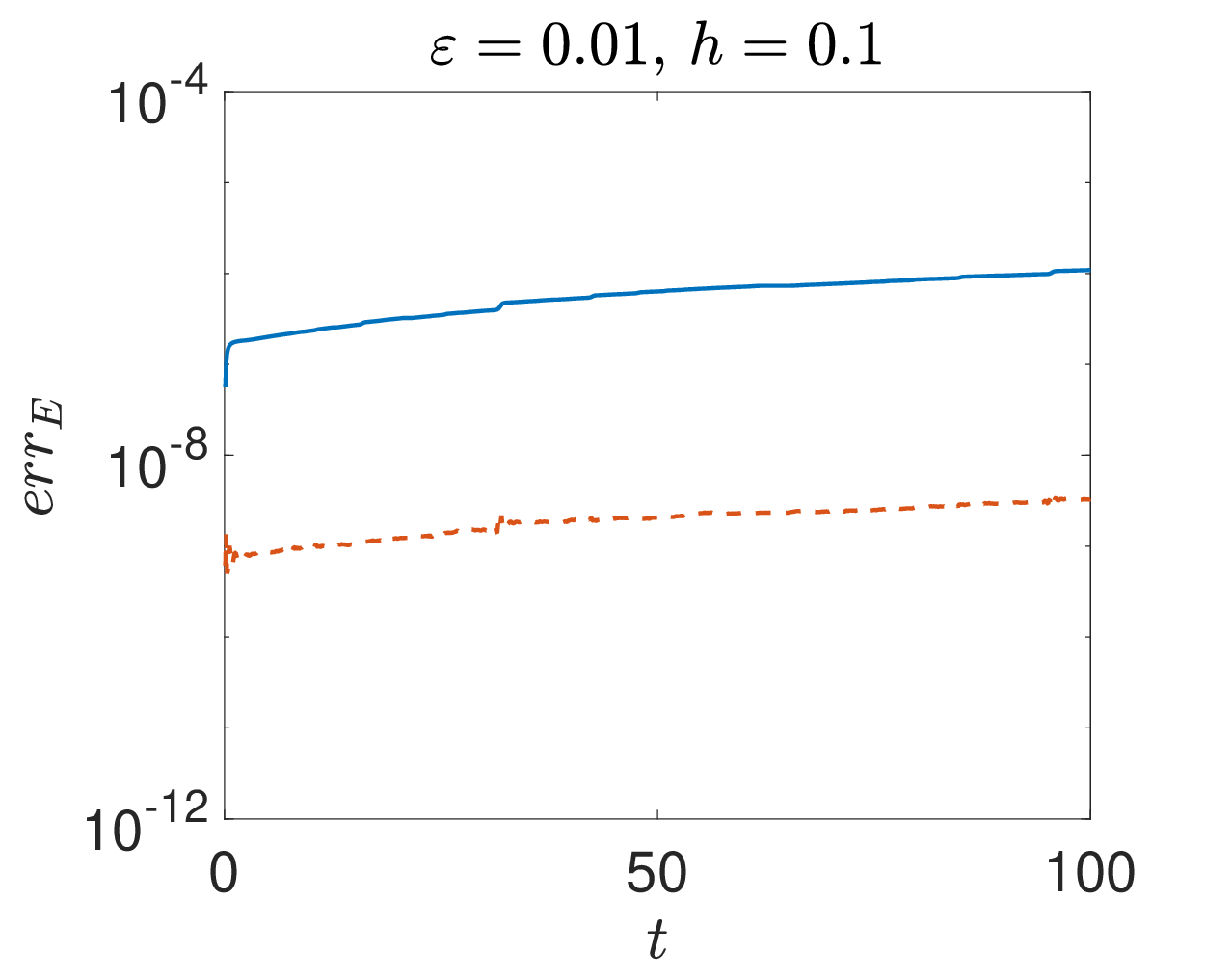}
        \includegraphics[width=0.32\linewidth,height=3.1cm]{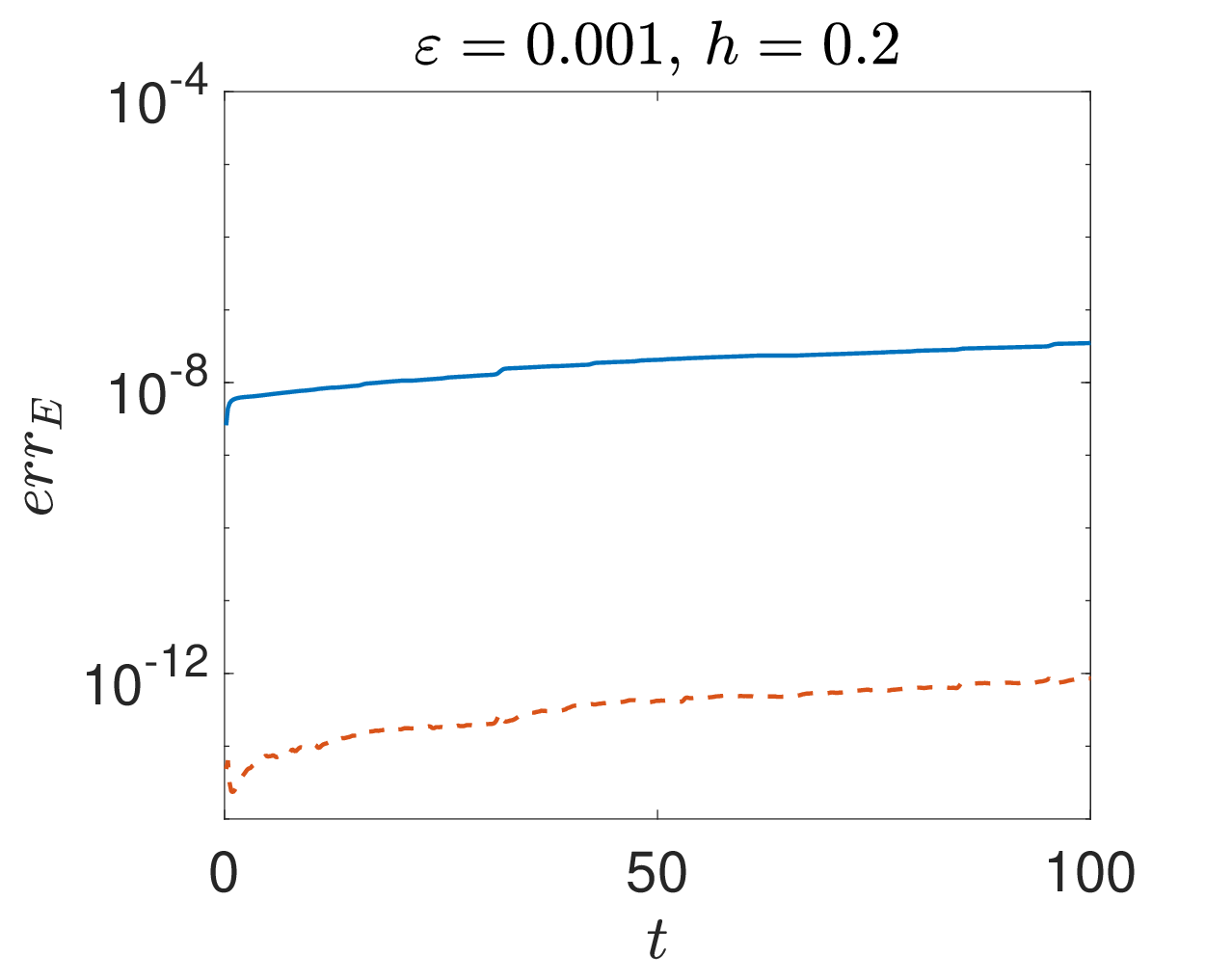}\\
        \includegraphics[width=0.32\linewidth,height=3.1cm]{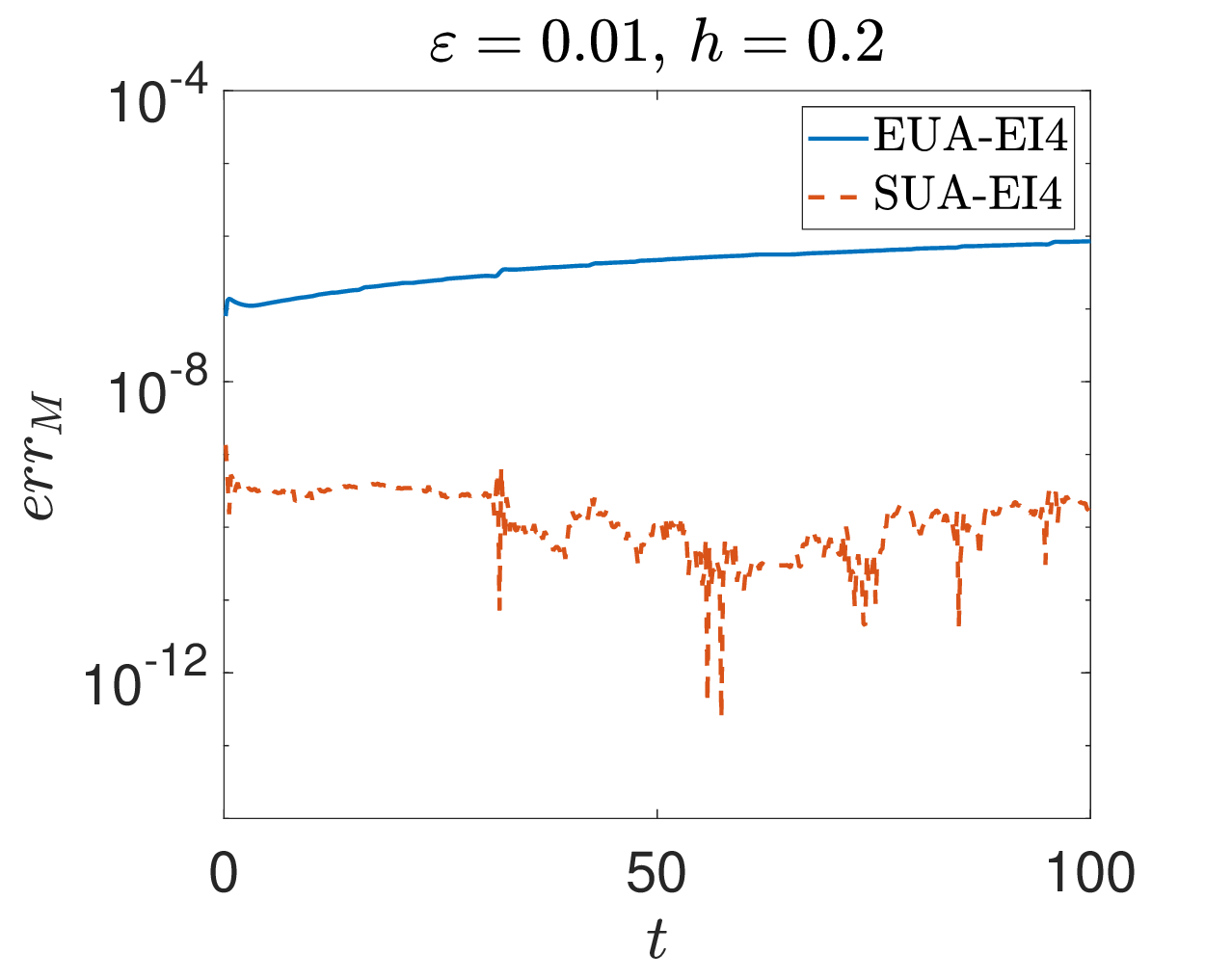}
        \includegraphics[width=0.32\linewidth,height=3.1cm]{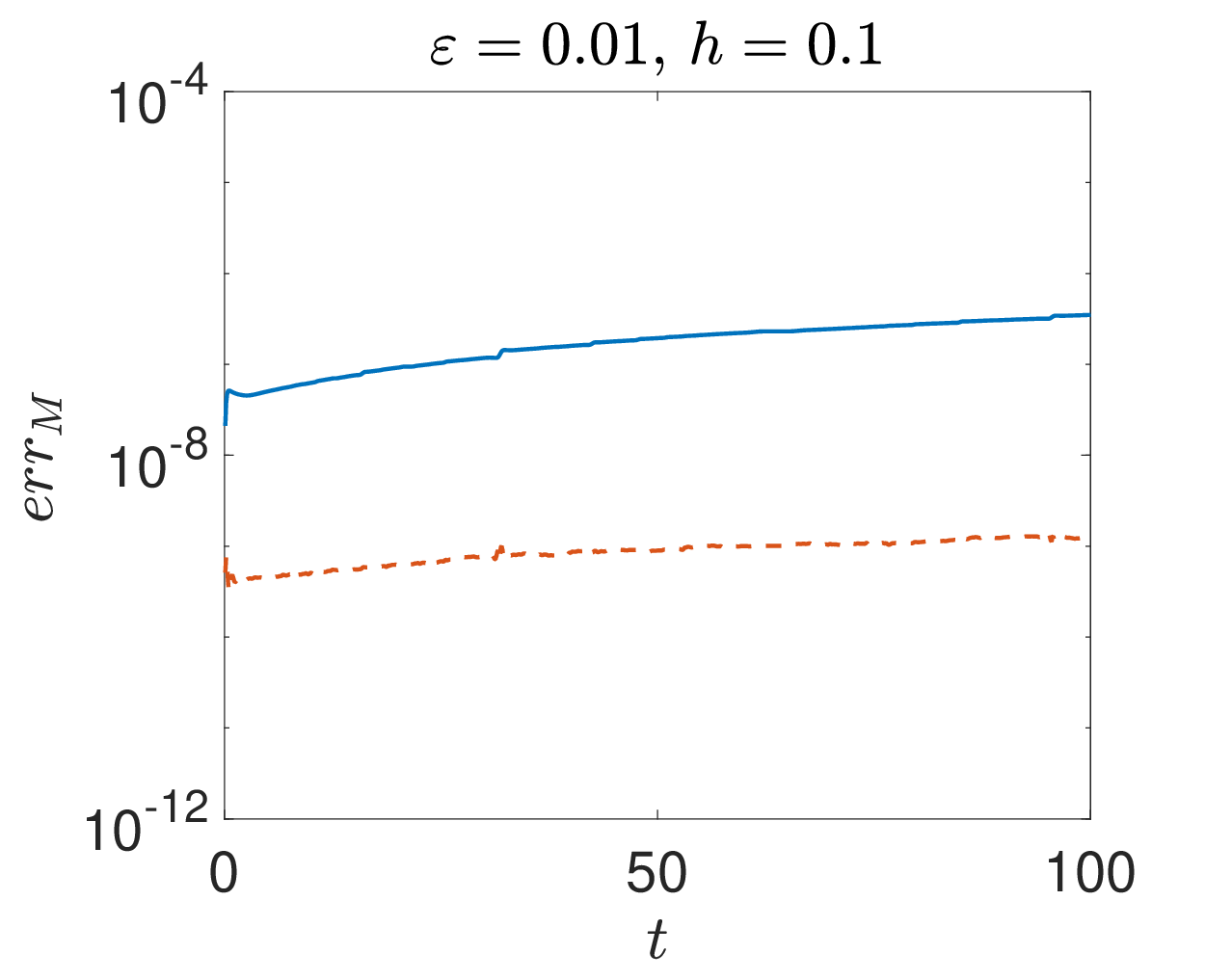}
        \includegraphics[width=0.32\linewidth,height=3.1cm]{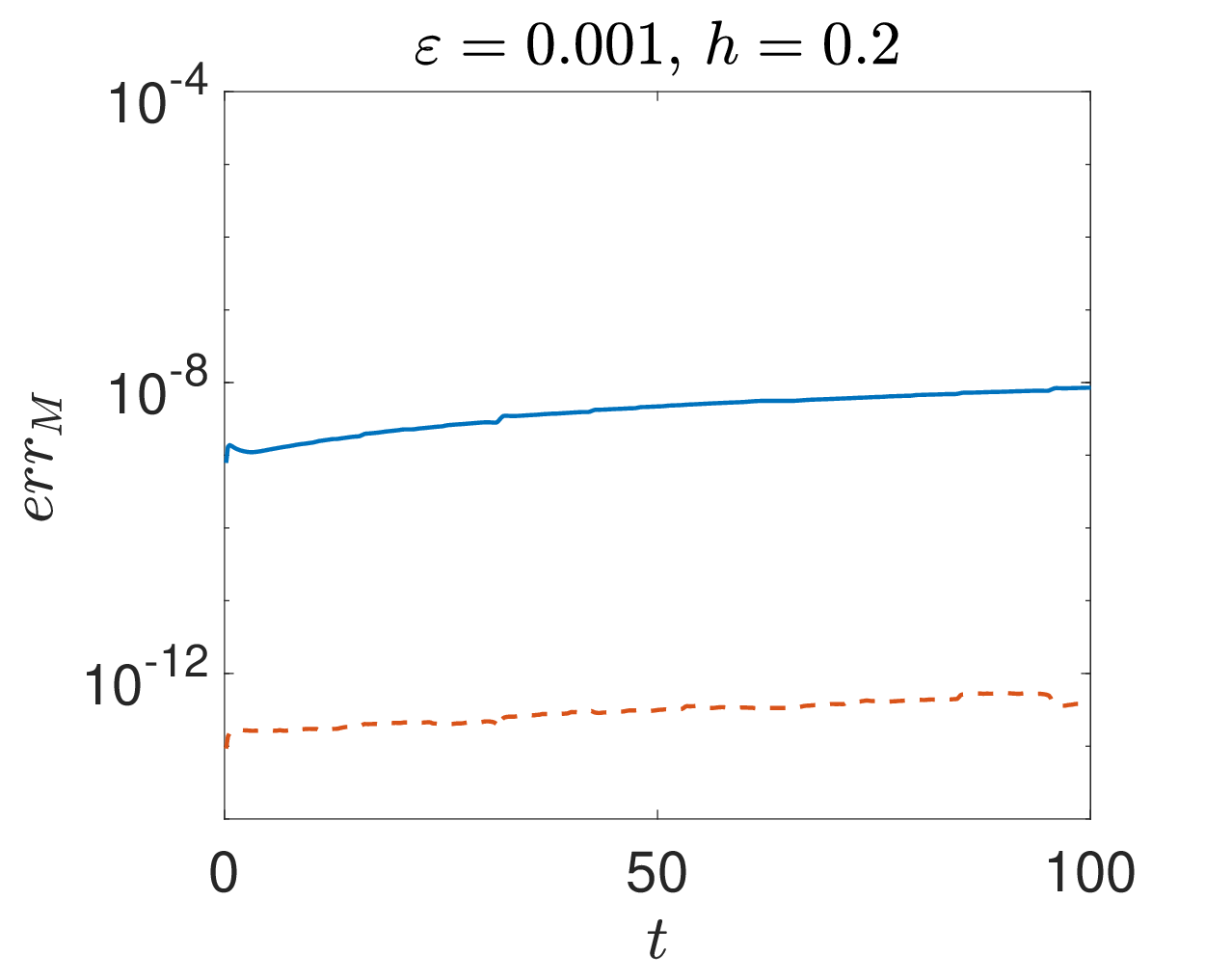}
    \caption{Energy error (top) and mass error (bottom) of KGS in 2D under different $\eps$ and $h$.}
    \label{fig-5-5-3}
\end{figure}

\begin{figure}[t!]
    \centering
        \includegraphics[width=0.24\linewidth,height=3.2cm]{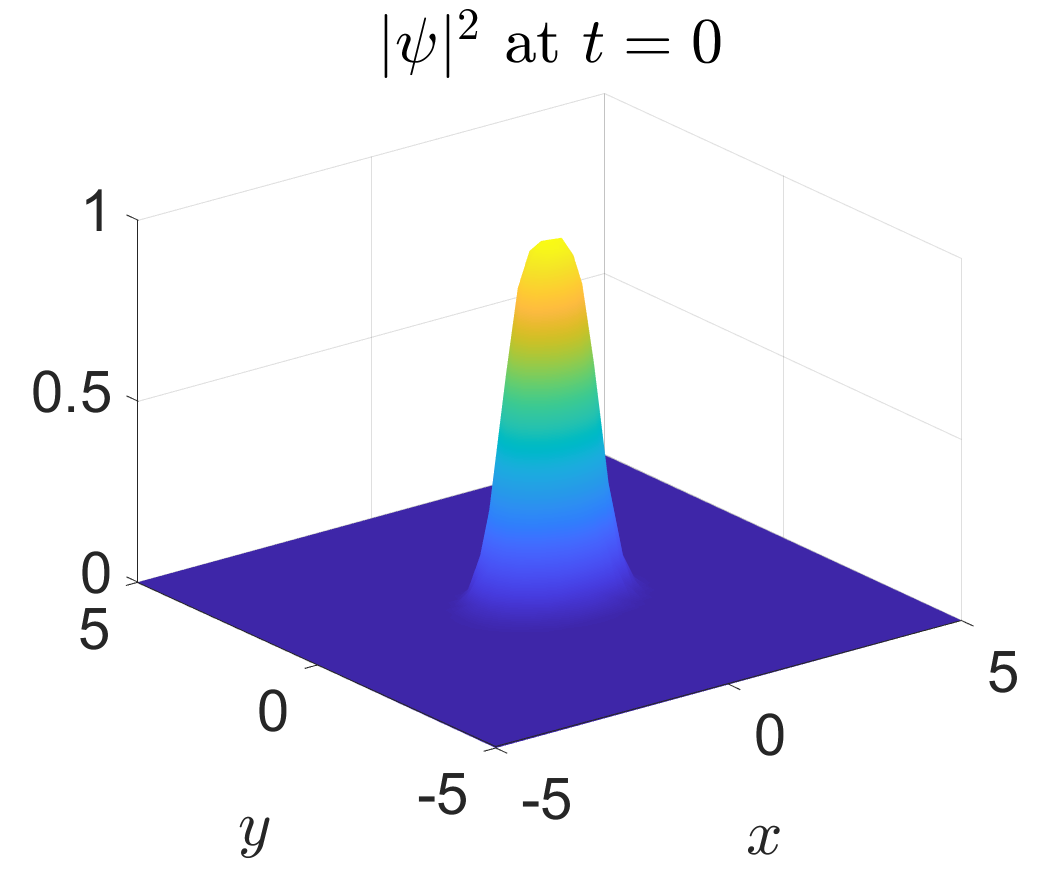}
        \includegraphics[width=0.24\linewidth,height=3.2cm]{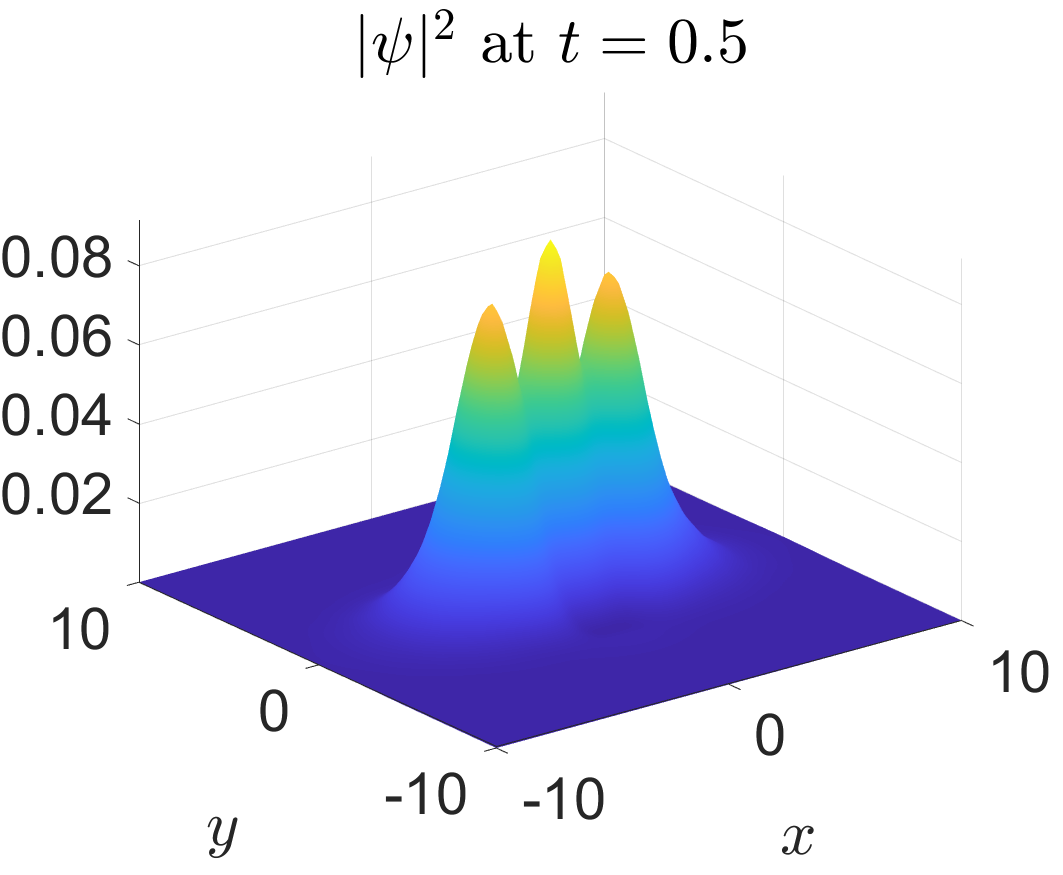}
        \includegraphics[width=0.24\linewidth,height=3.2cm]{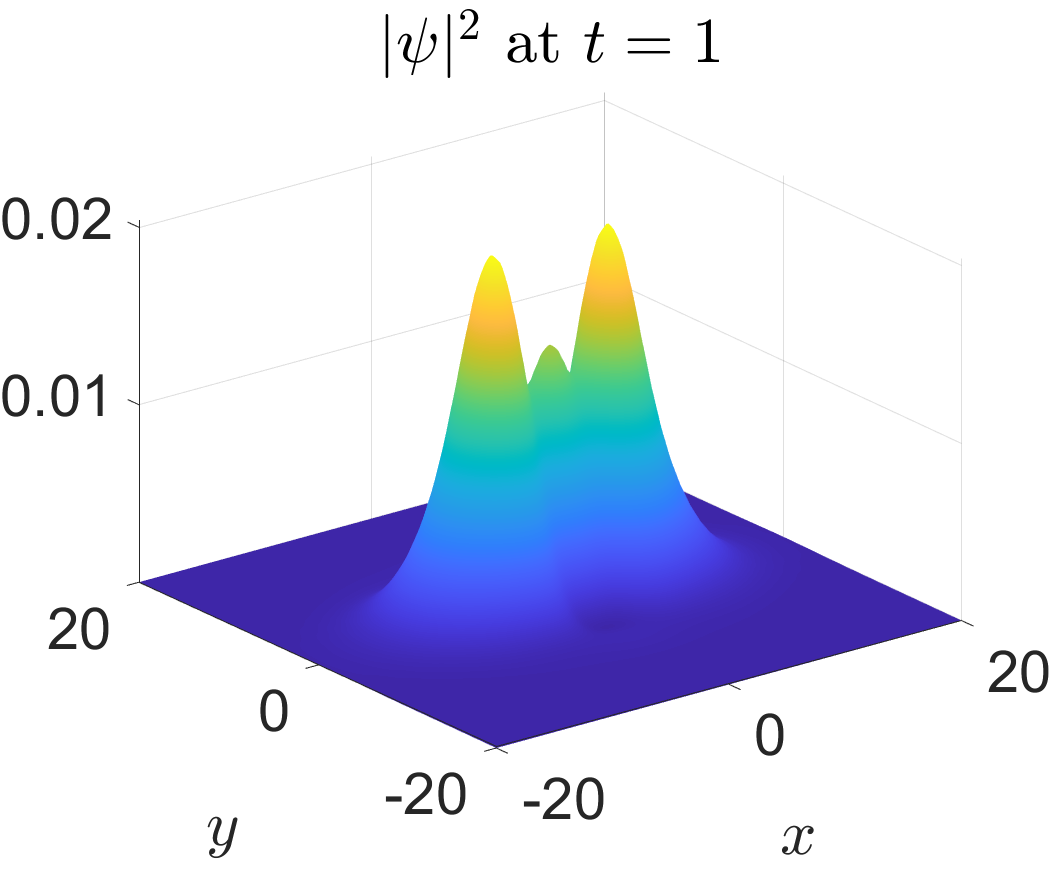}
        \includegraphics[width=0.24\linewidth,height=3.2cm]{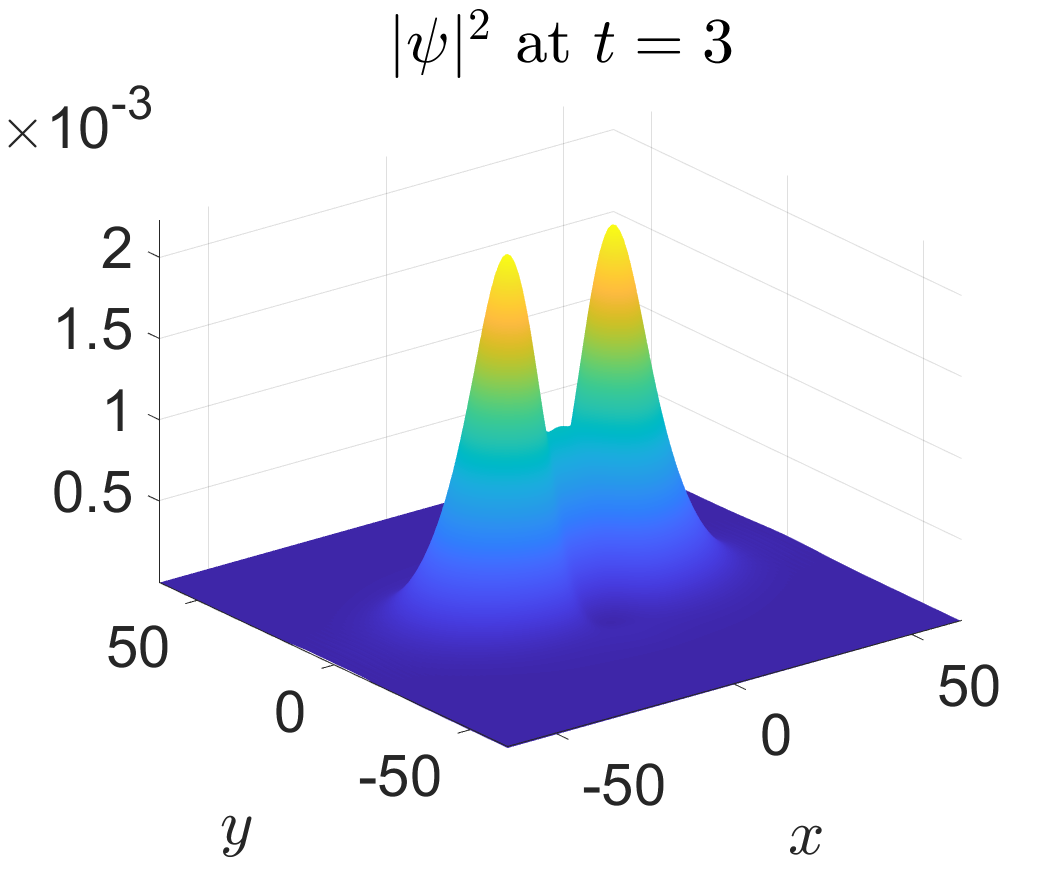}\\
        \includegraphics[width=0.24\linewidth,height=3.2cm]{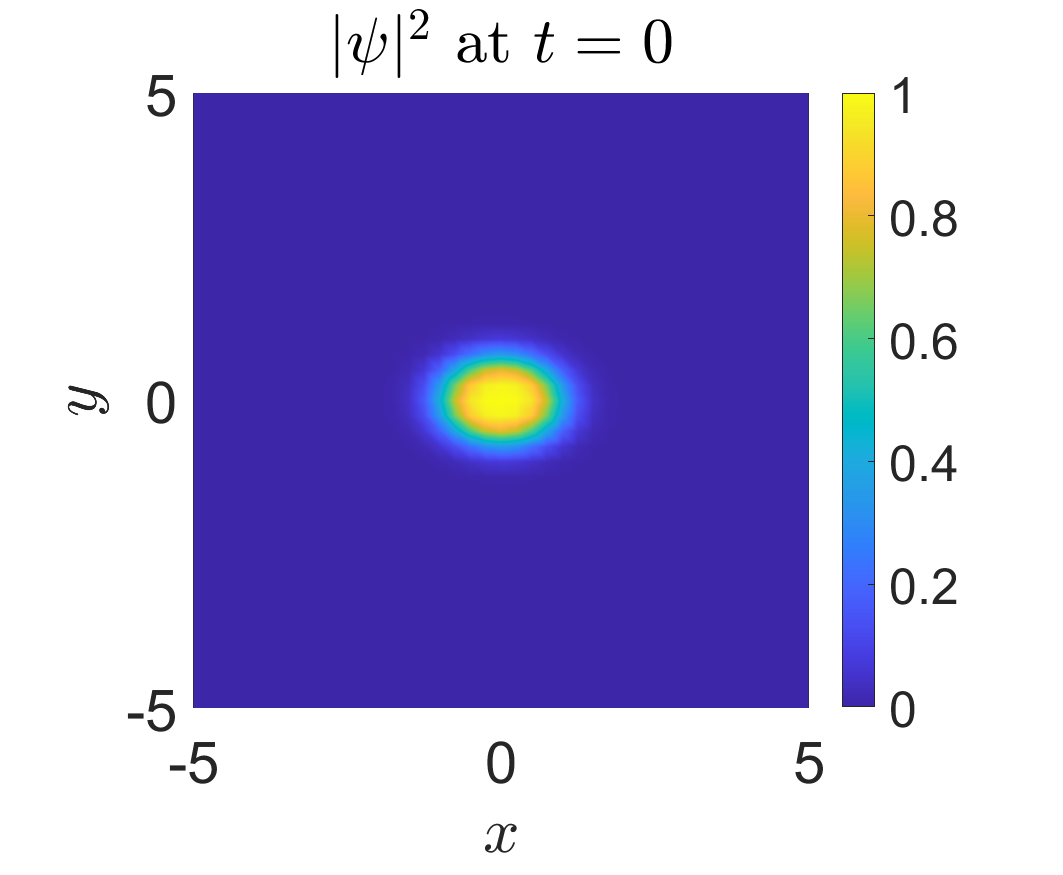}
        \includegraphics[width=0.24\linewidth,height=3.2cm]{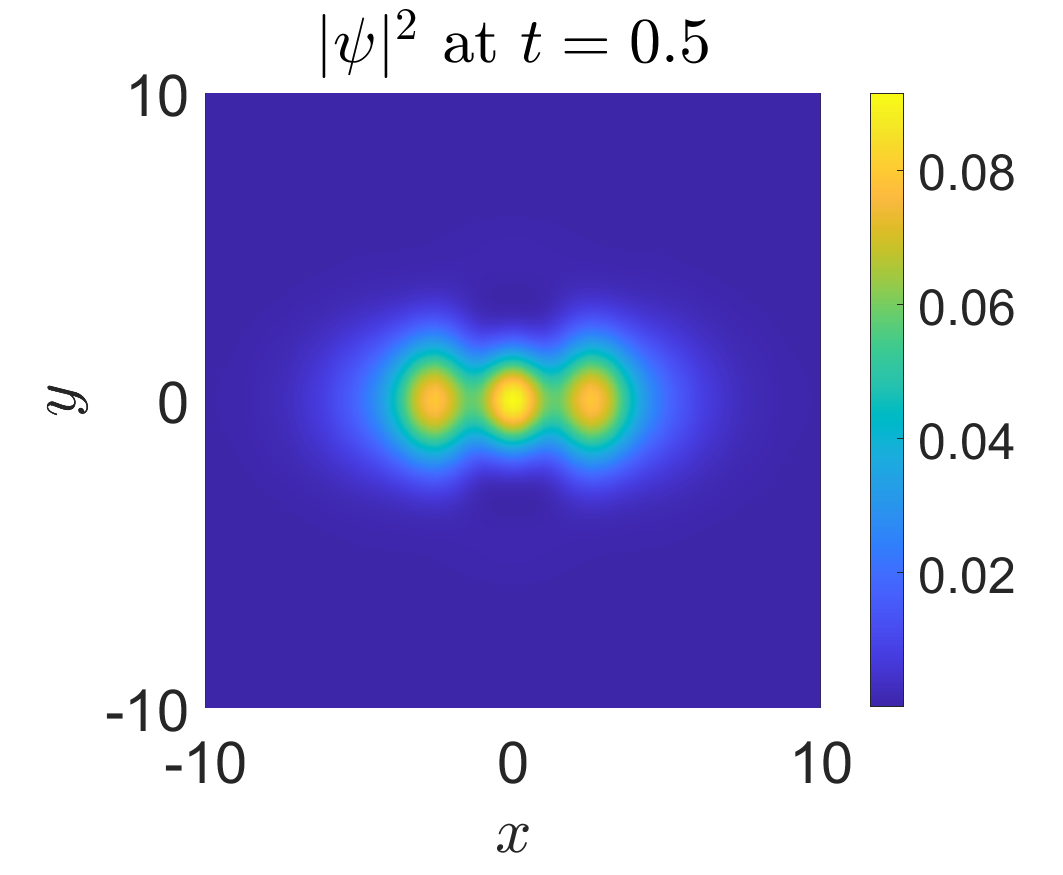}
        \includegraphics[width=0.24\linewidth,height=3.2cm]{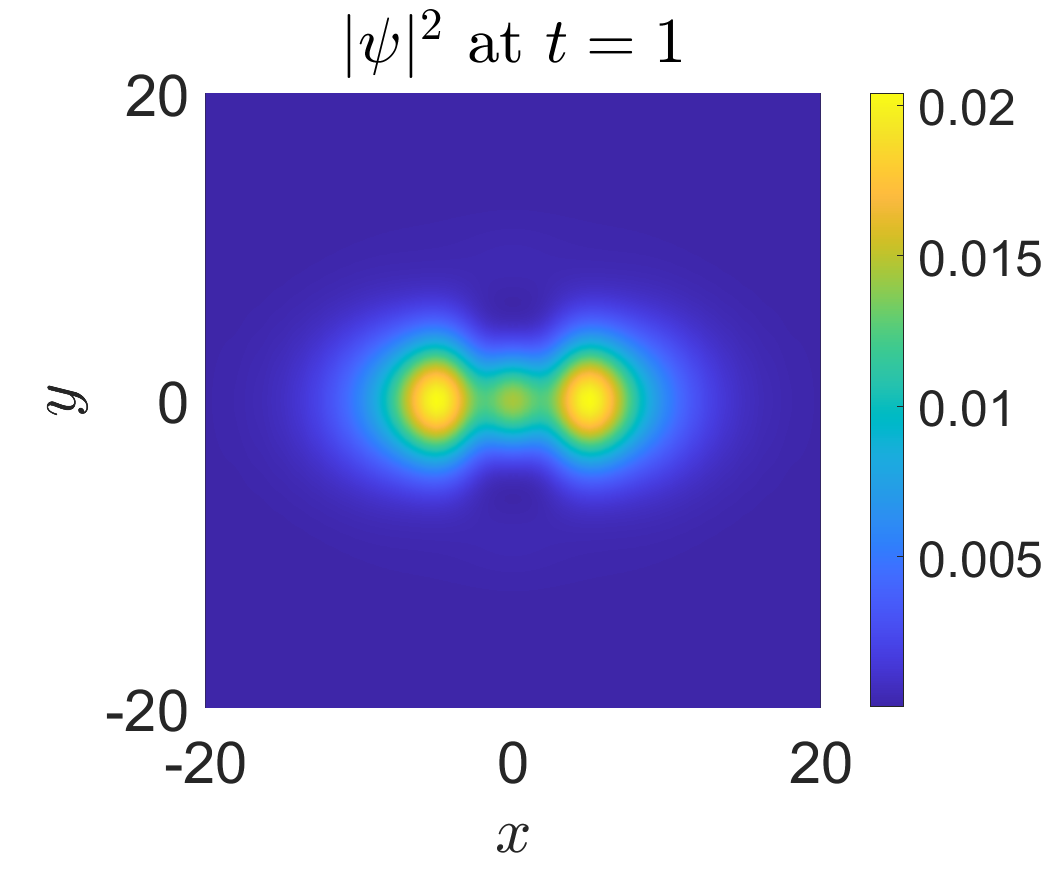}
        \includegraphics[width=0.24\linewidth,height=3.2cm]{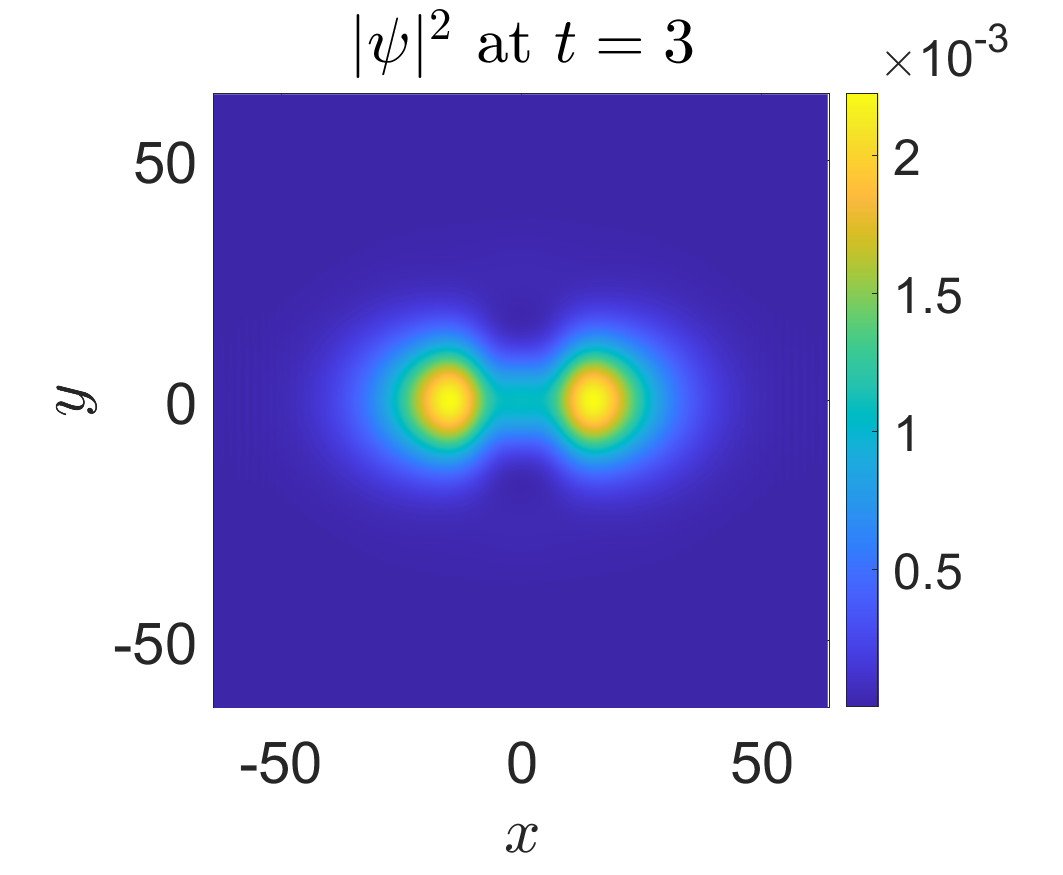}
    \caption{Surface plots (top) and snapshots (bottom) of the nucleon density $\abs{\psi}^{2}$ for the KGS in 2D at different times with $\eps=0.01$.}
    \label{fig-5-5-4}
\end{figure}

\begin{figure}[t!]
    \centering
        \includegraphics[width=0.24\linewidth,height=3.2cm]{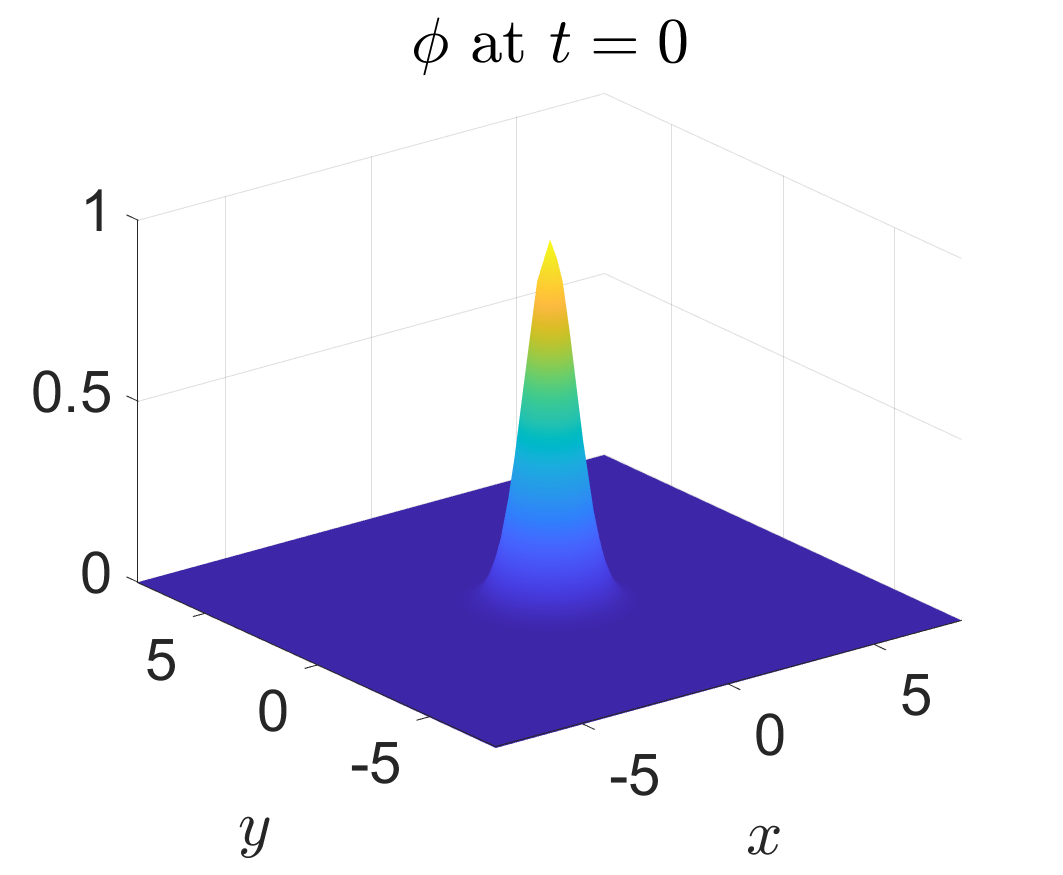}
        \includegraphics[width=0.24\linewidth,height=3.2cm]{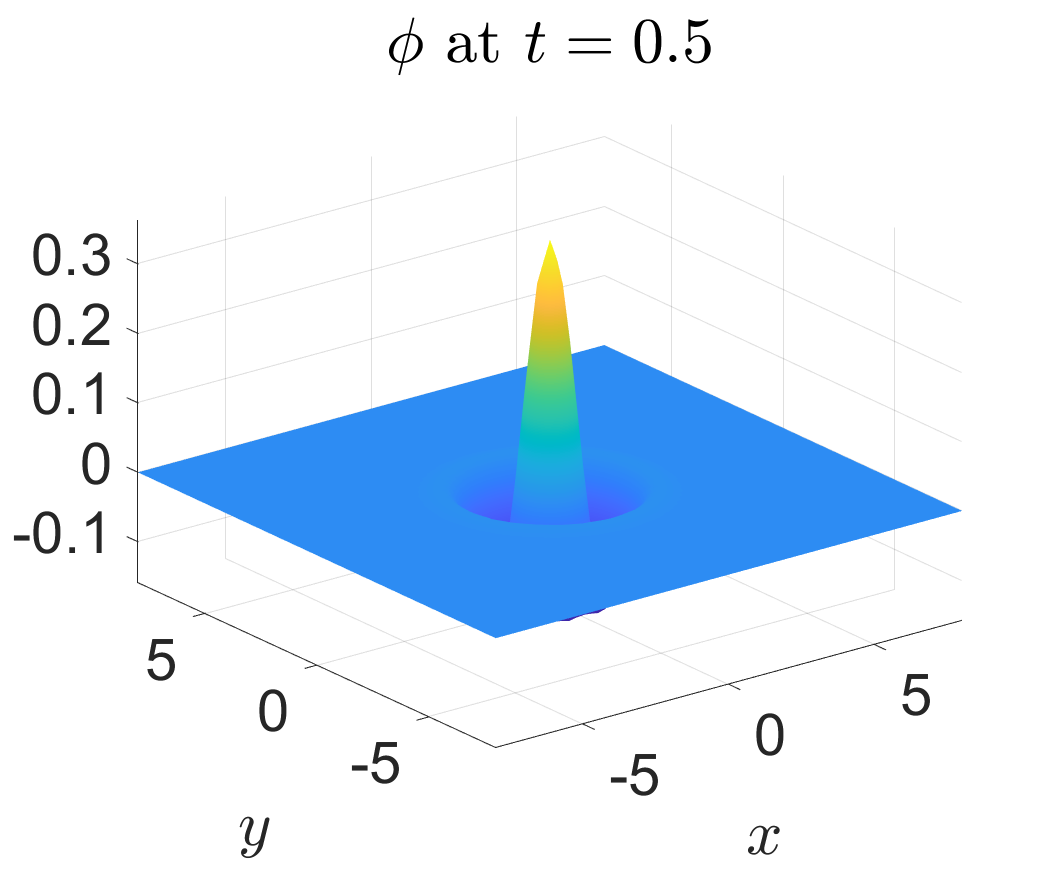}
        \includegraphics[width=0.24\linewidth,height=3.2cm]{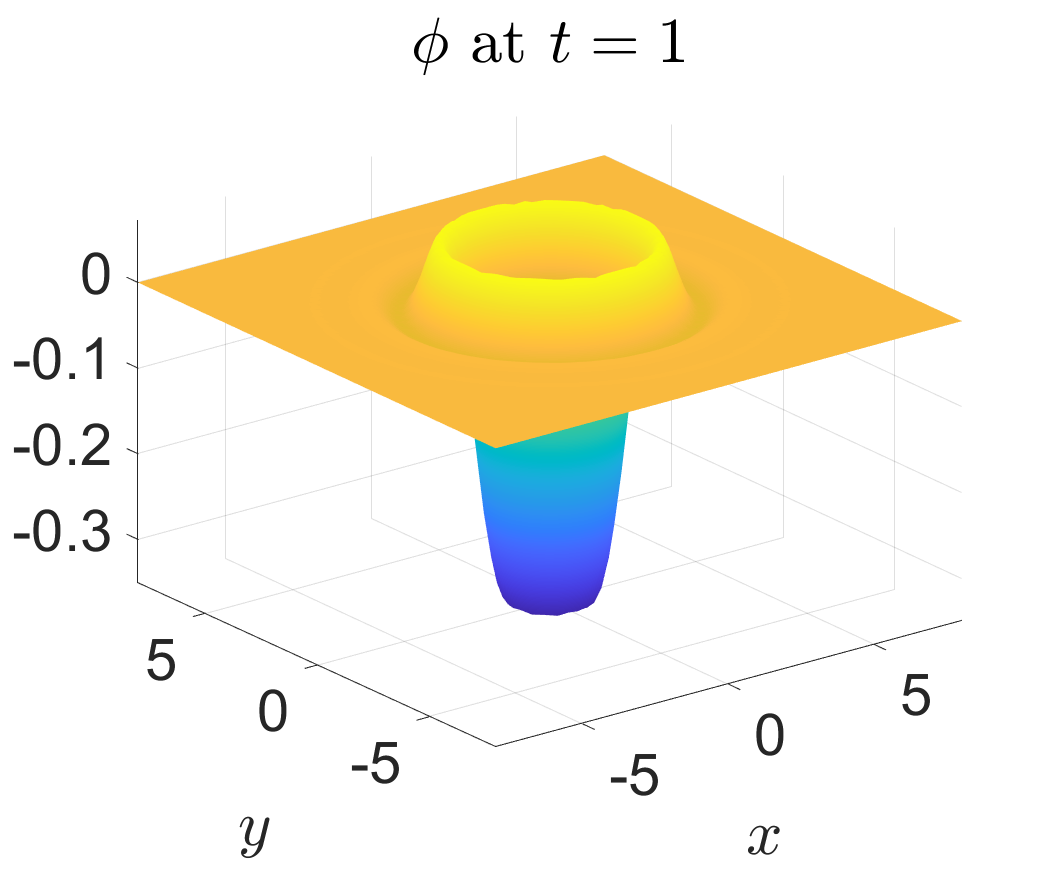}
        \includegraphics[width=0.24\linewidth,height=3.2cm]{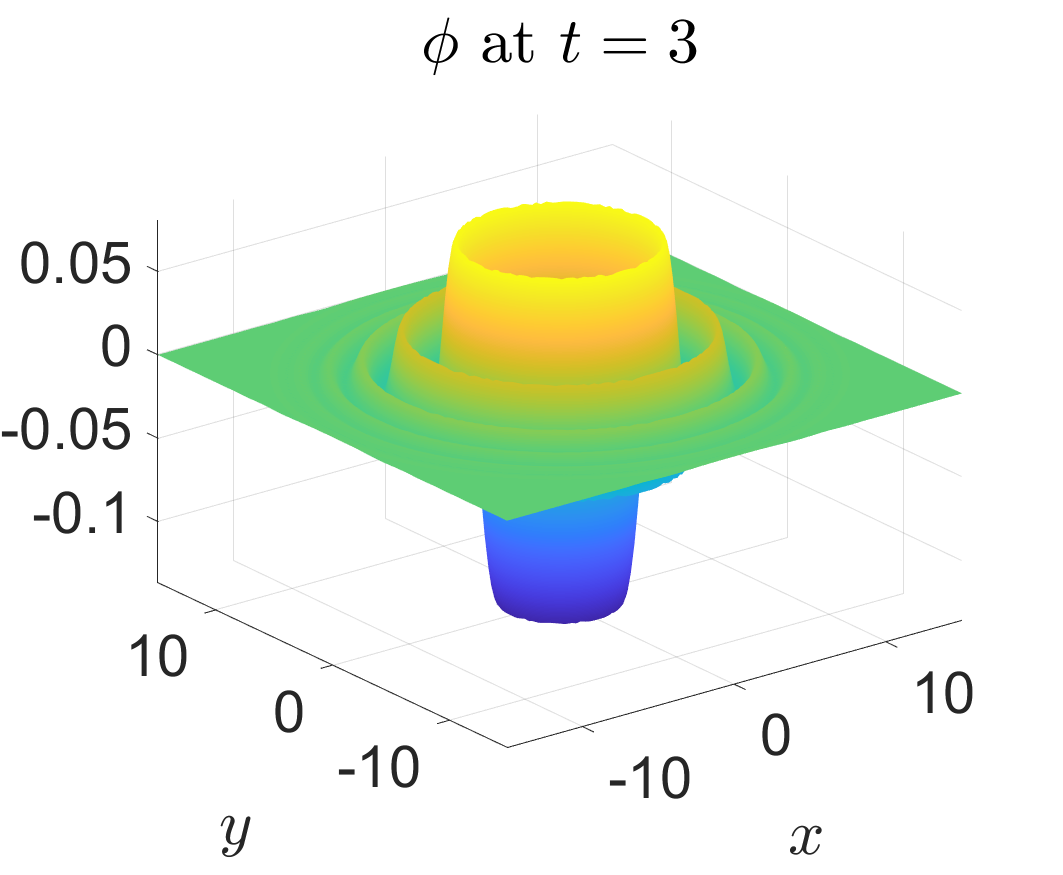}\\
        \includegraphics[width=0.24\linewidth,height=3.2cm]{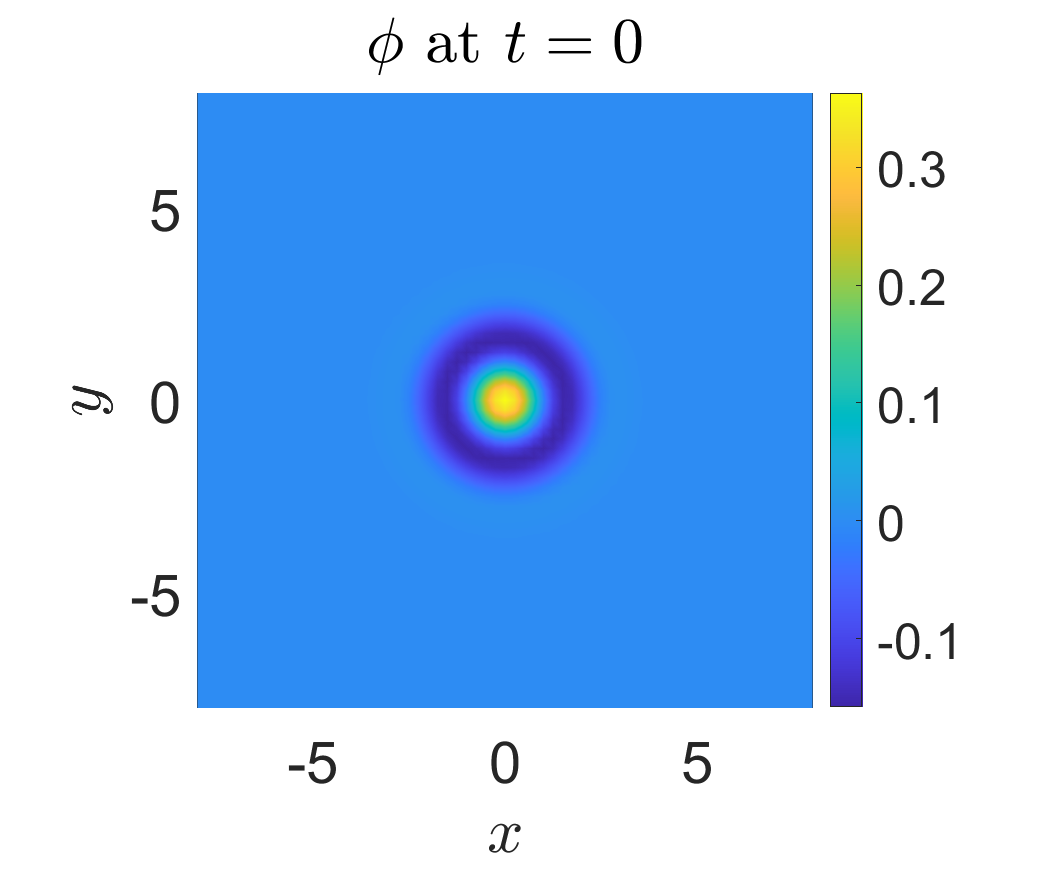}
        \includegraphics[width=0.24\linewidth,height=3.2cm]{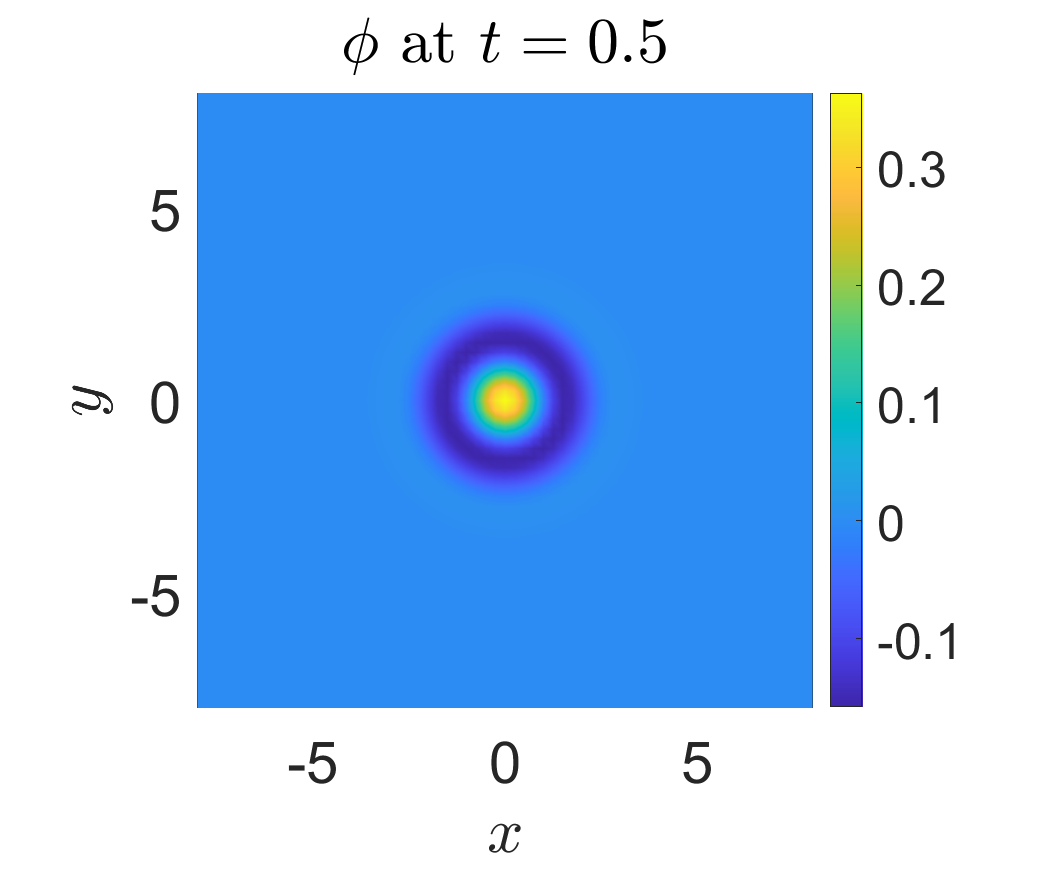}
        \includegraphics[width=0.24\linewidth,height=3.2cm]{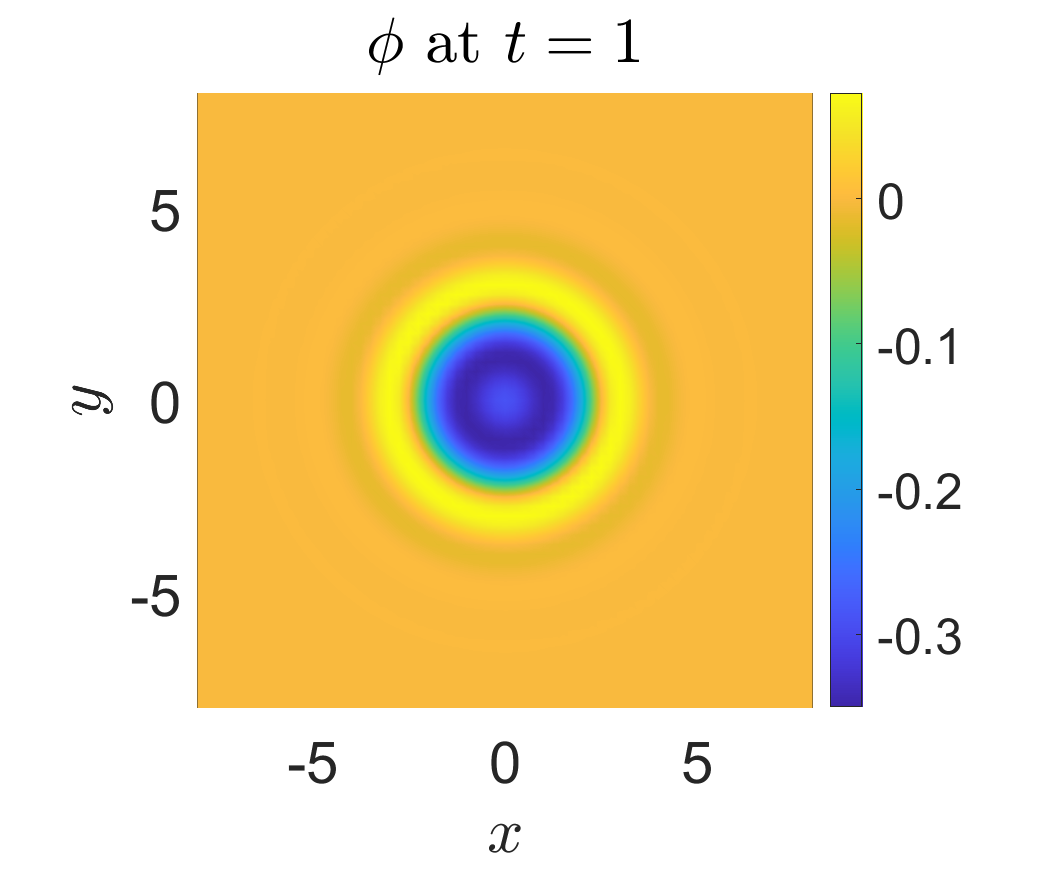}
        \includegraphics[width=0.24\linewidth,height=3.2cm]{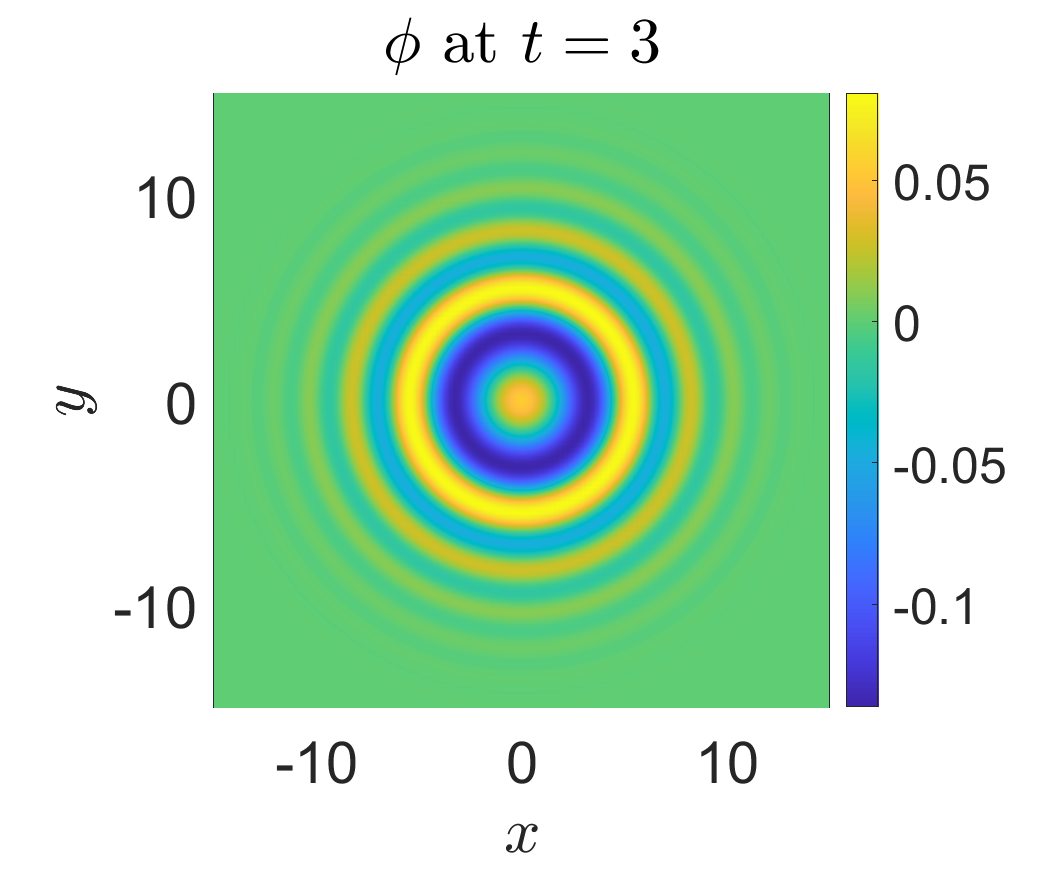}
    \caption{Surface plots (top) and snapshots (bottom) of the meson field $\phi$ for the KGS in 2D at different times with $\eps=0.01$.}
    \label{fig-5-5-5}
\end{figure}

\subsubsection{Three-dimensional KGS equation}
Finally, we evaluate the collisions of elliptical vector solitons in the 3D KGS
equation. We take the bounded domain $\Omega=(-5,5)^{3}$ with mesh sizes
$\triangle x=\triangle y=\triangle z=\frac{1}{5}$ and $\triangle\tau=\frac{1}{16}$,
and time step $h=0.01$, using the EUA-EI4 scheme. The initial data are taken
from \cite{GLLM}:
\begin{equation*}
\begin{aligned}
& \psi_{0}=\frac{(1+\ii)}{10}\sum\limits_{k=0}^{1}e^{-(x-(-1)^{k}1.8)^{2}-(2y)^{2}-(2z)^{2}}, \quad \phi_{0}=\frac{1}{10}\sum\limits_{k=0}^{1}\sum\limits_{j=0}^{1}e^{1-\sqrt{\frac{(4(x-(-1)^{k}1.8))^2}{2}+(4y)^{2}+(4(z-(-1)^{k}1.8))^2}}, \\
&\phi_{1}=\frac{1}{10}\sum\limits_{k=0}^{1}\sum\limits_{j=0}^{1}\arctan\left(e^{1-\sqrt{\frac{(4(x-(-1)^{k}1.8))^2}{2}+(4y)^{2}+(4(z-(-1)^{k}1.8))^2}}\right).
\end{aligned}
\end{equation*}

We study the interactions of the solitons in the component $\abs{\psi}$ in
Figure \ref{fig-5-6-1}, where the first and second rows show the isosurfaces of
$\abs{\psi}=0.01$ and the corresponding slices at $y=0$, respectively. Figure
\ref{fig-5-6-1}(a) shows four separated ellipsoids at $t=0$. As time evolves,
these ellipsoids deform and begin to connect, as illustrated in
Figure \ref{fig-5-6-1}(b). At $t=0.25$, they form a connected, four-lobed
structure with a small opening near the center, as shown in
Figure \ref{fig-5-6-1}(c). By $t=0.4$, the isosurface has developed into a more
complex, multi-lobed structure, as shown in Figure \ref{fig-5-6-1}(d). The
slices in Figures \ref{fig-5-6-1}(e)--(h) show four initially separated
localized regions that gradually broaden and overlap, with additional structures
emerging between them. These observations reveal the complex spatial patterns
that develop during the interactions.

Figure \ref{fig-5-6-2} presents the interactions of the solitons in the
component $\phi$, with the isosurfaces $\phi=0.005$ and the corresponding slices
at $y=0$ displayed in the first and second rows, respectively. At $t=0$, the
isosurface $\phi=0.005$ consists of four separated ellipsoids, as shown in
Figure \ref{fig-5-6-2}(a). These ellipsoids collide and merge into a single
compact structure at $t=0.15$, as illustrated in Figure \ref{fig-5-6-2}(b).
Subsequently, the merged structure splits into five separate parts at $t=0.25$:
four surrounding ellipsoids and a smaller central one, as shown in
Figure \ref{fig-5-6-2}(c). By $t=0.4$, these structures collide and merge again
into a single irregular structure, as shown in Figure \ref{fig-5-6-2}(d). The
slices in Figures \ref{fig-5-6-2}(e)--(h) present the corresponding evolution of
the spatial profiles, with distinct changes in both the central region and the
four surrounding regions. These results reveal complex spatial patterns during
the successive processes of merger, separation, and renewed merger.

The temporal evolution of the mass and energy of the 3D KGS system is shown in
Figure \ref{fig-5-6-3}. The favorable long-time behavior of the mass and energy
confirms the effectiveness of the proposed UA-EI4 schemes in solving the KGS
equation \eqref{KGS}.

\begin{figure}[t!]
    \centering
    \begin{subfigure}[t]{0.245\textwidth}
        \centering
        \includegraphics[width=\linewidth, height=3.2cm]{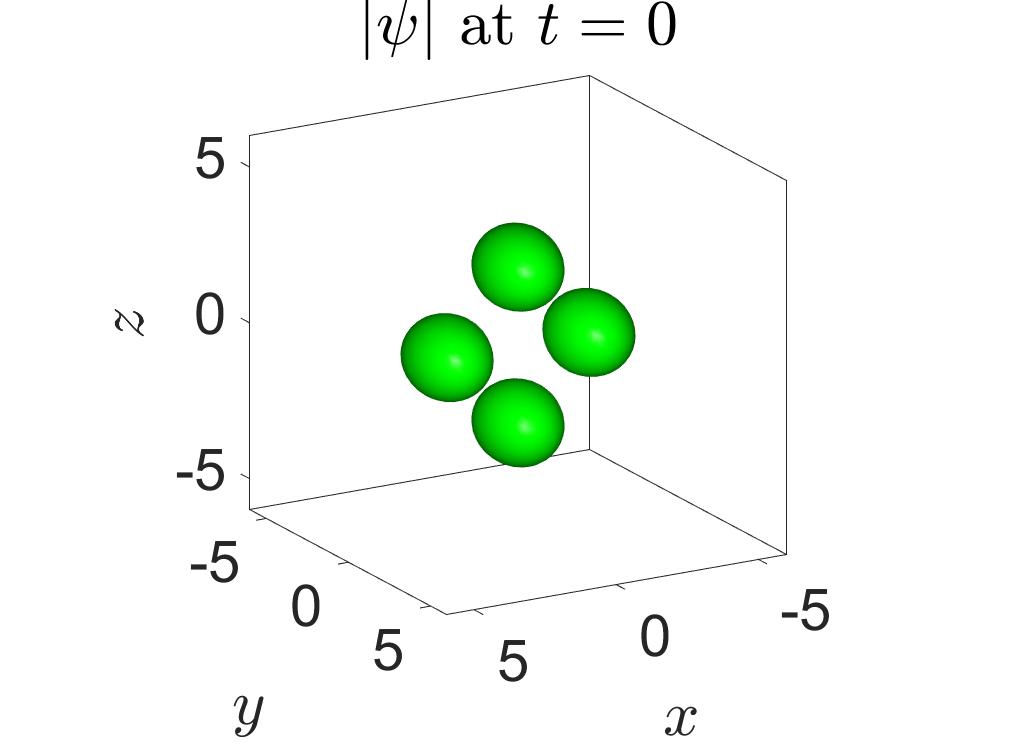}
        \caption{}
    \end{subfigure}
    \hfill
    \begin{subfigure}[t]{0.245\textwidth}
        \centering
        \includegraphics[width=\linewidth, height=3.2cm]{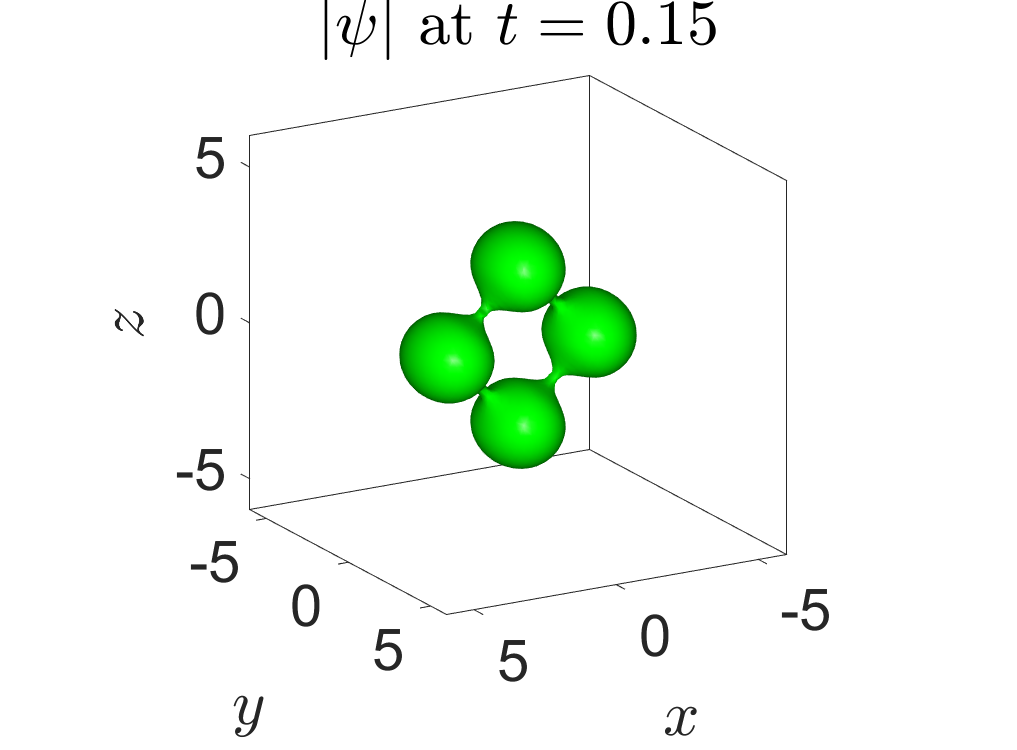}
        \caption{}
    \end{subfigure}
    \hfill
    \begin{subfigure}[t]{0.245\textwidth}
        \centering
        \includegraphics[width=\linewidth, height=3.2cm]{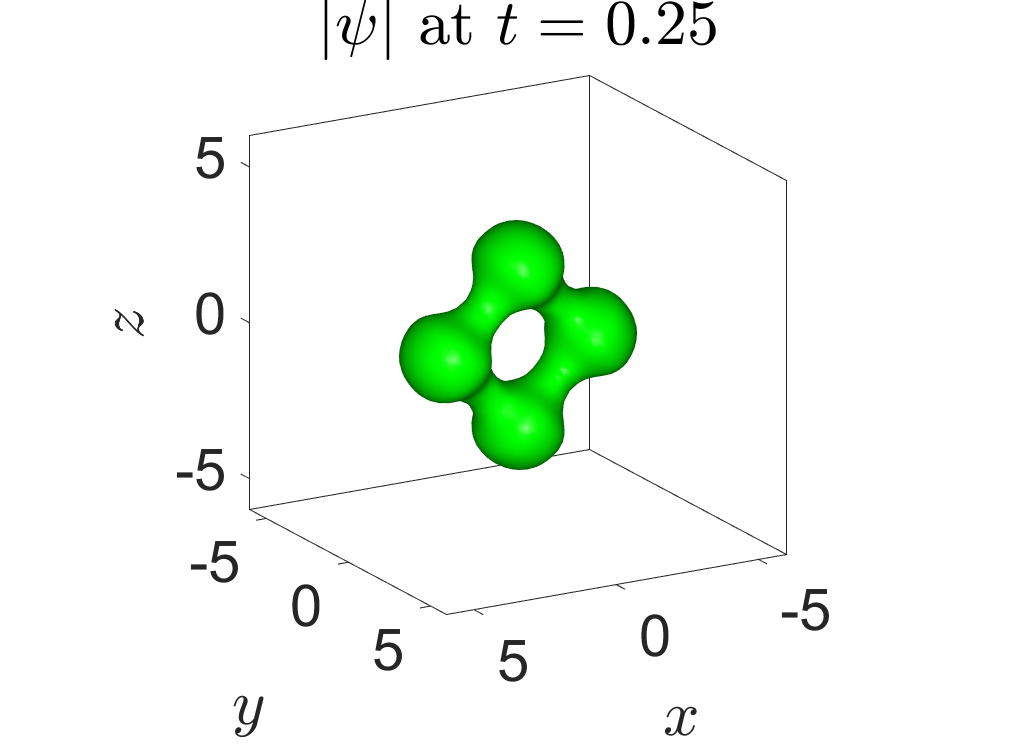}
        \caption{}
    \end{subfigure}
    \hfill
    \begin{subfigure}[t]{0.245\textwidth}
        \centering
        \includegraphics[width=\linewidth, height=3.2cm]{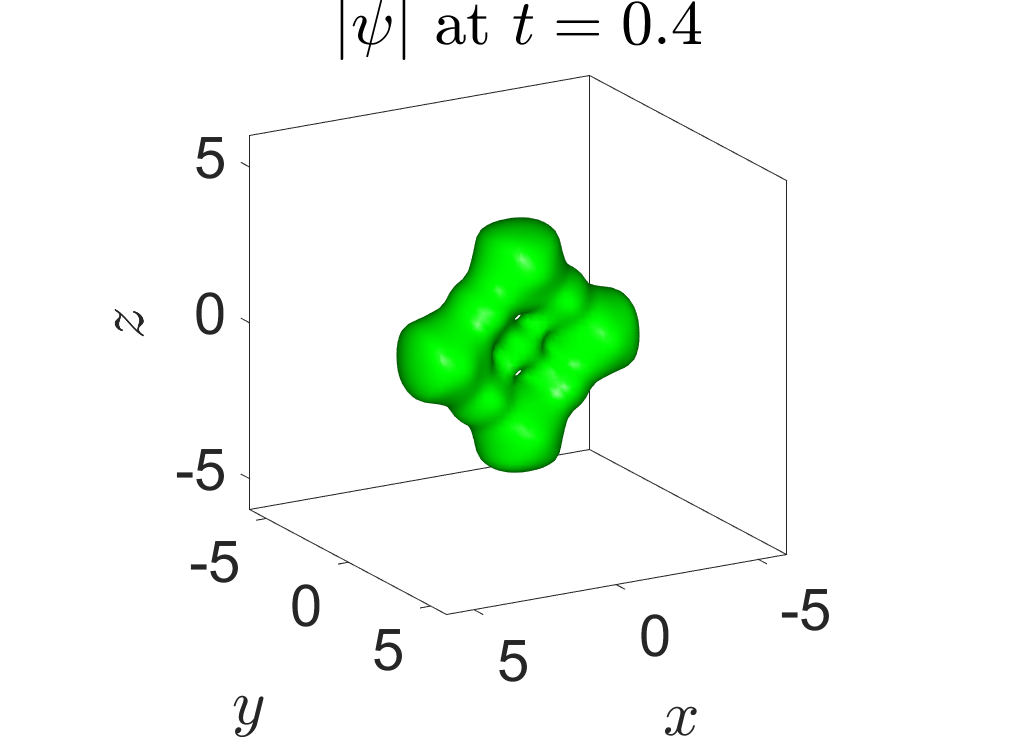}
        \caption{}
    \end{subfigure}
    \par
    \begin{subfigure}[t]{0.23\textwidth}
        \centering
        \includegraphics[width=\linewidth, height=3.2cm]{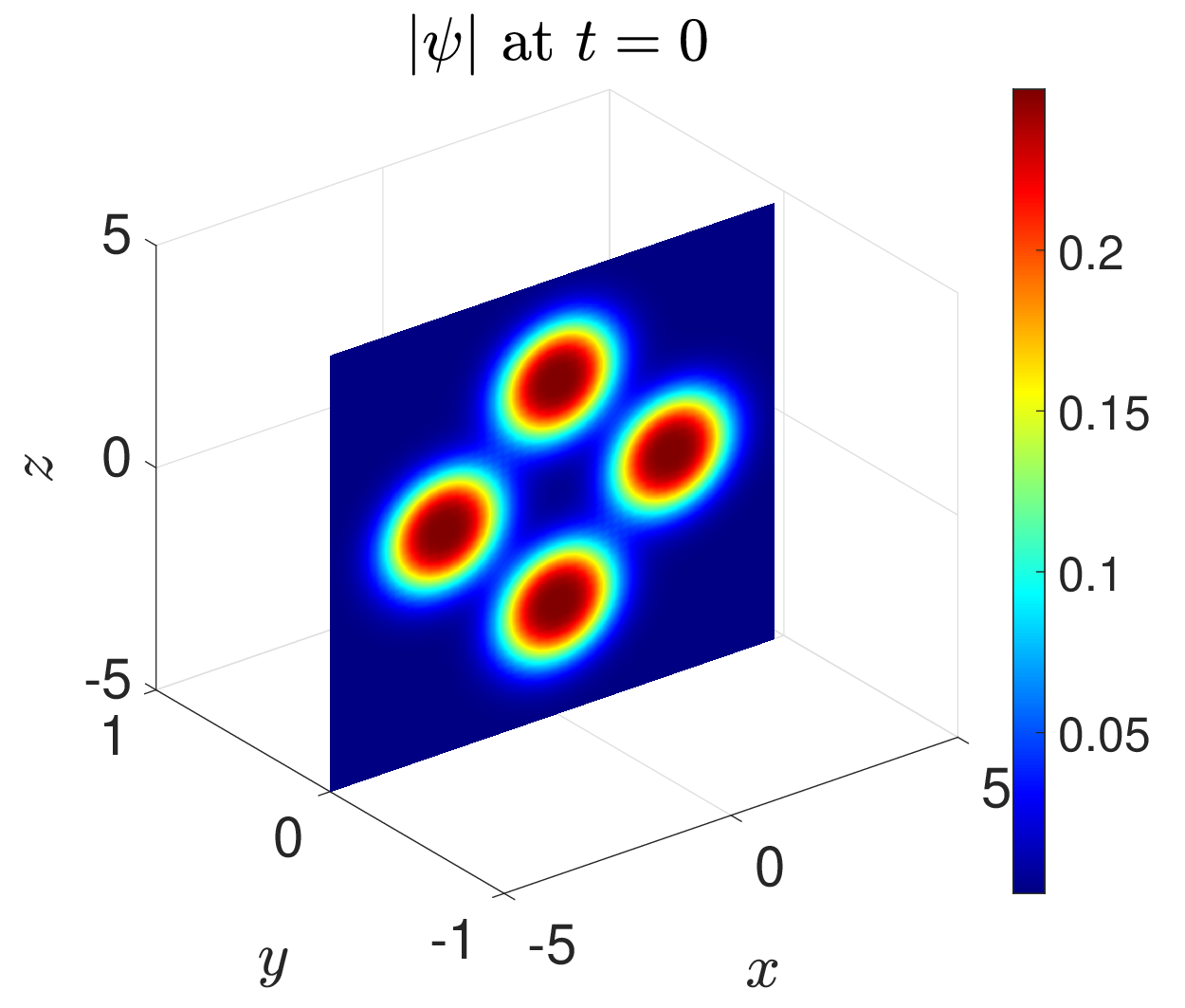}
        \caption{}
    \end{subfigure}
    \hfill
    \begin{subfigure}[t]{0.23\textwidth}
        \centering
        \includegraphics[width=\linewidth, height=3.2cm]{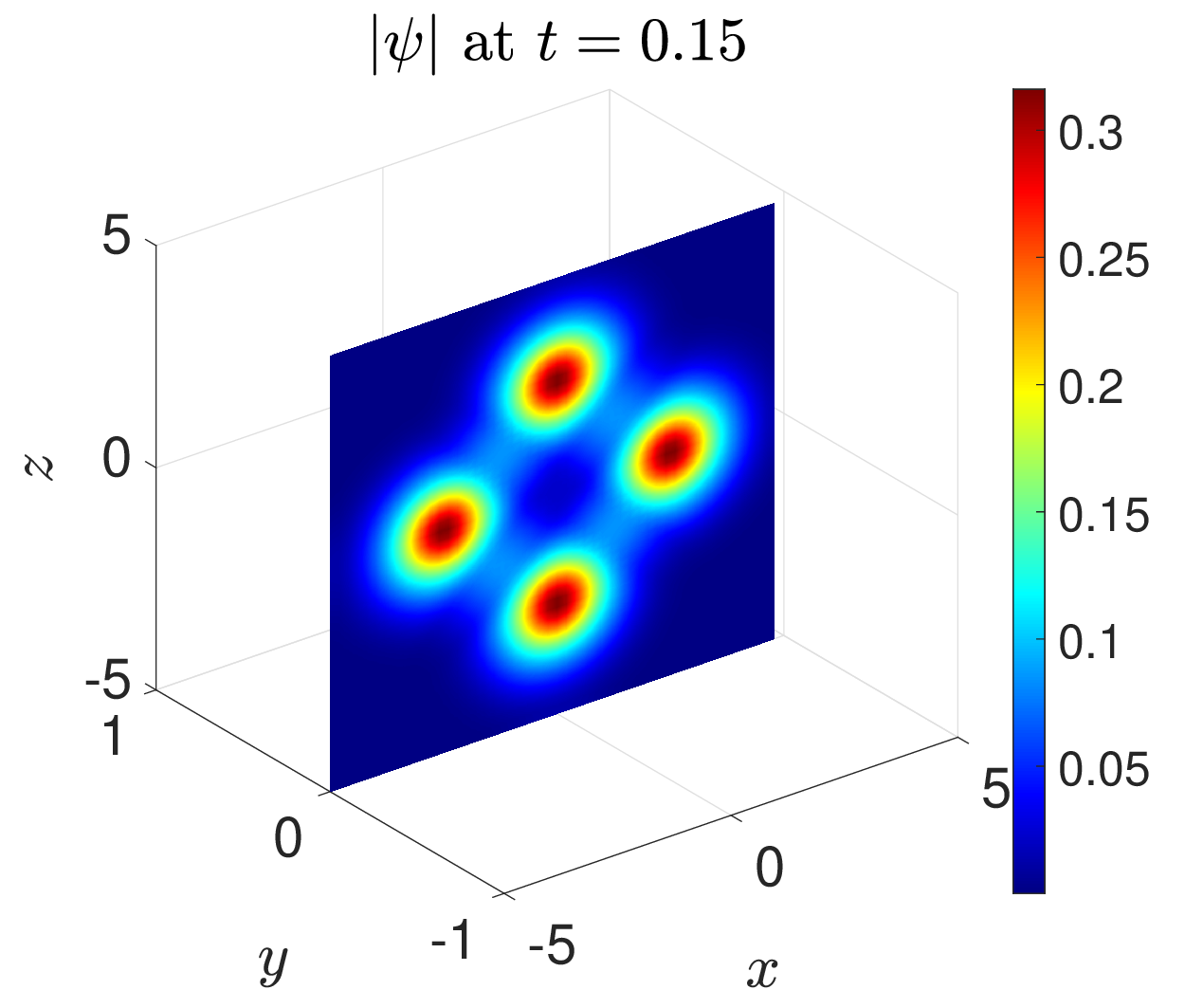}
        \caption{}
    \end{subfigure}
    \hfill
    \begin{subfigure}[t]{0.23\textwidth}
        \centering
        \includegraphics[width=\linewidth, height=3.2cm]{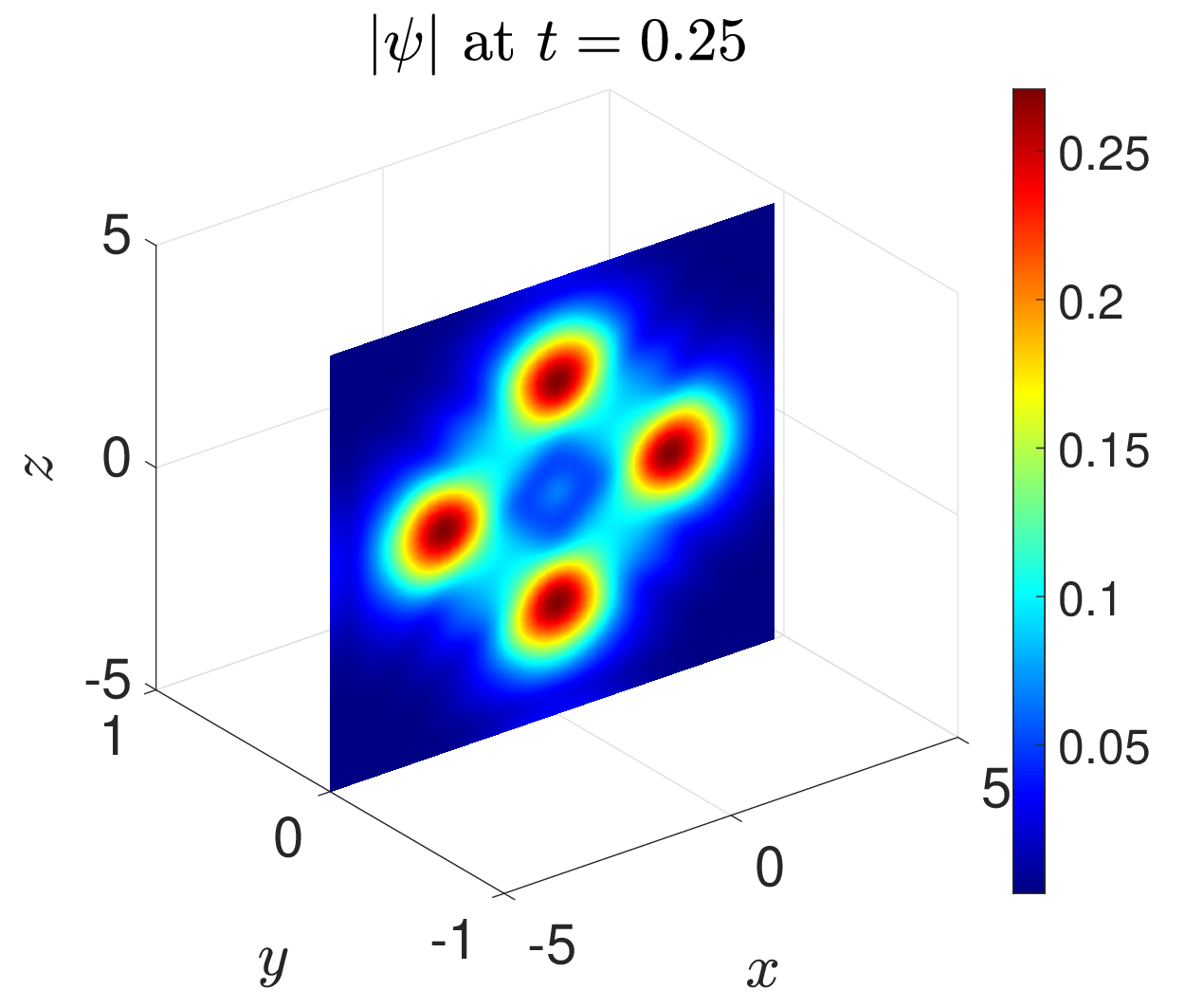}
        \caption{}
    \end{subfigure}
    \hfill
    \begin{subfigure}[t]{0.23\textwidth}
        \centering
        \includegraphics[width=\linewidth, height=3.2cm]{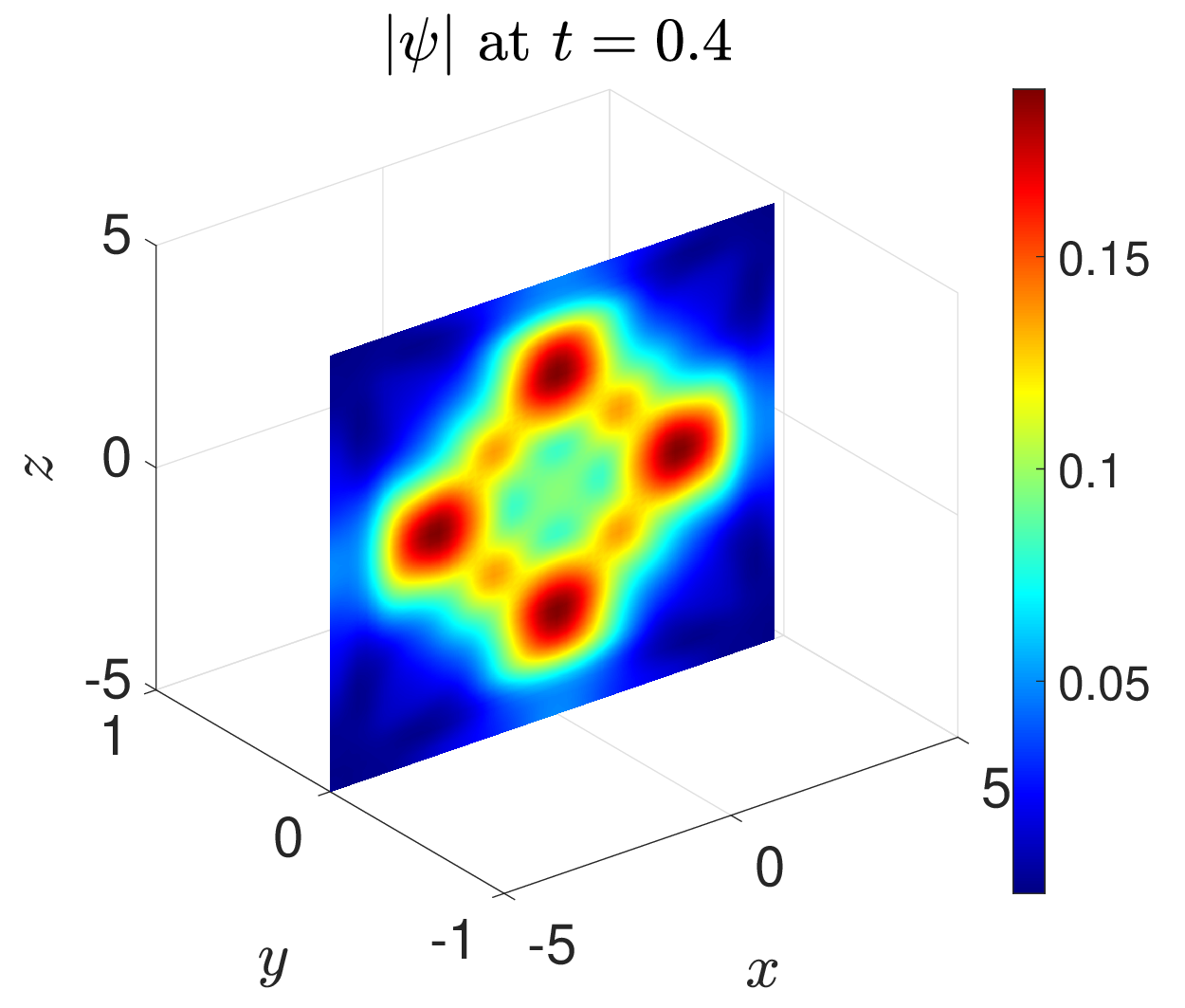}
        \caption{}
    \end{subfigure}
    \caption{The isosurfaces $\abs{\psi}=0.01$ (first row) and slices of $\abs{\psi}$ at $y=0$ (secong row) for KGS equation in 3D with $\mu=1$, $\lambda=0.8$ and $\eps=0.1$.}
    \label{fig-5-6-1}
\end{figure}

\begin{figure}[t!]
    \centering
    \begin{subfigure}[t]{0.245\textwidth}
        \centering
        \includegraphics[width=\linewidth, height=3.2cm]{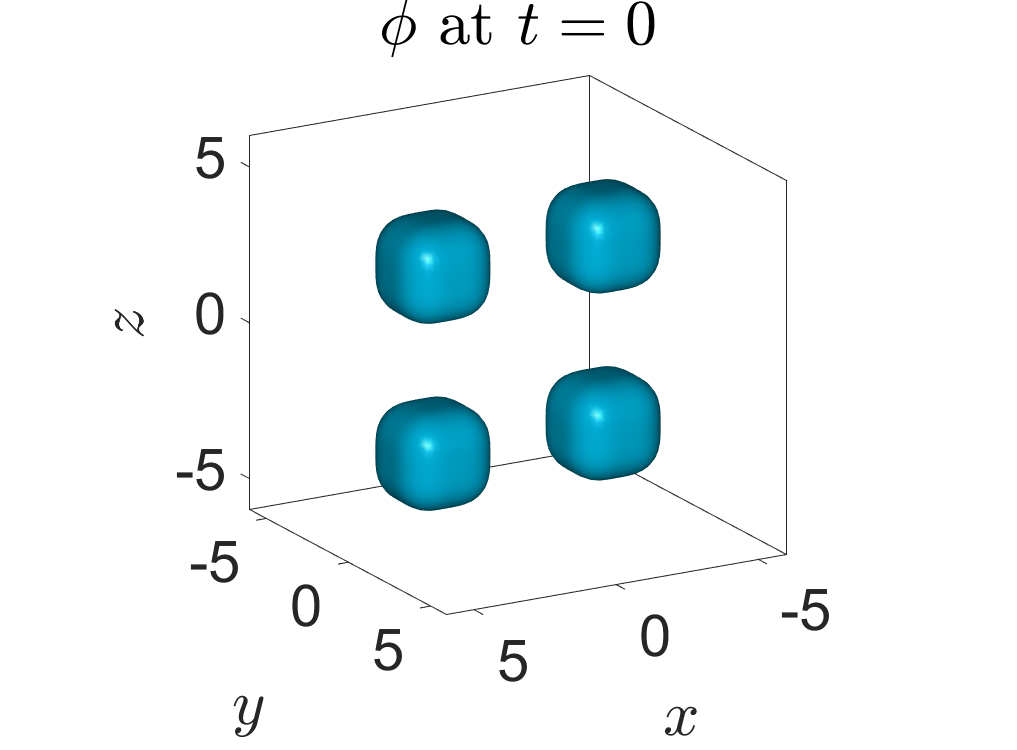}
        \caption{}
    \end{subfigure}
    \hfill
    \begin{subfigure}[t]{0.245\textwidth}
        \centering
        \includegraphics[width=\linewidth, height=3.2cm]{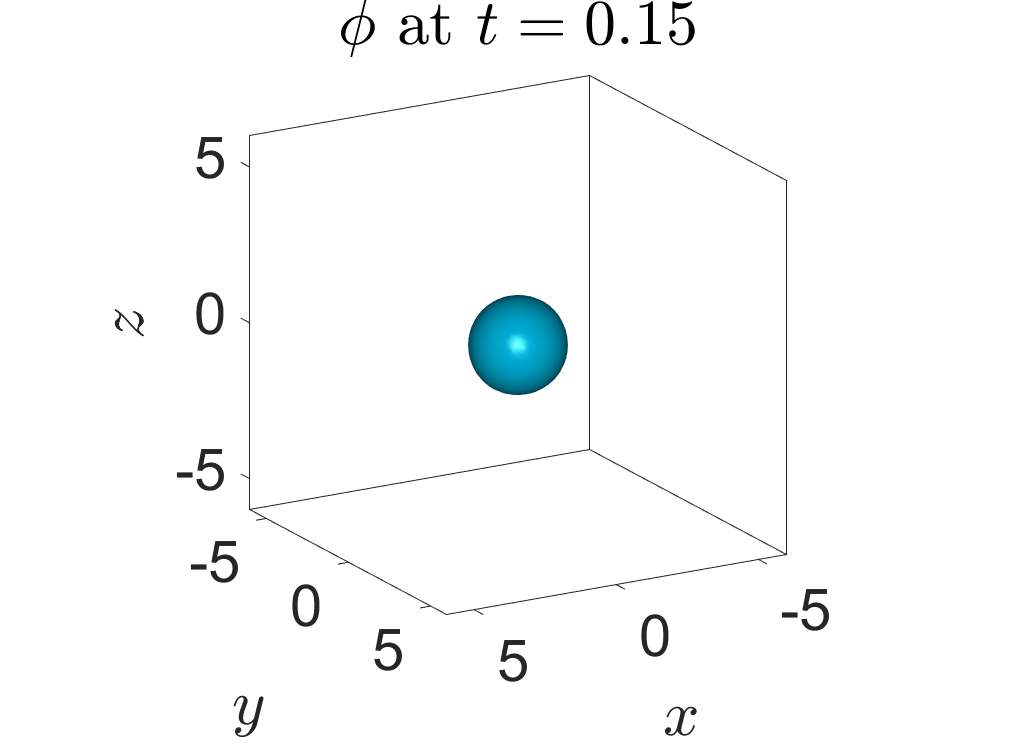}
        \caption{}
    \end{subfigure}
    \hfill
    \begin{subfigure}[t]{0.245\textwidth}
        \centering
        \includegraphics[width=\linewidth, height=3.2cm]{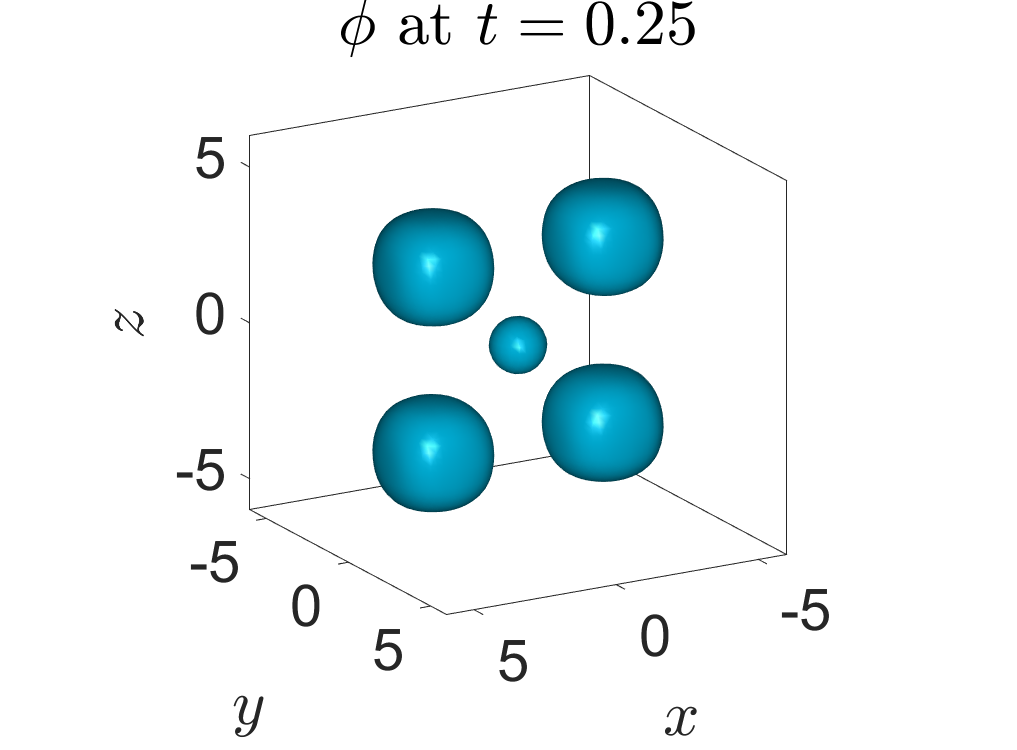}
        \caption{}
    \end{subfigure}
    \hfill
    \begin{subfigure}[t]{0.245\textwidth}
        \centering
        \includegraphics[width=\linewidth, height=3.2cm]{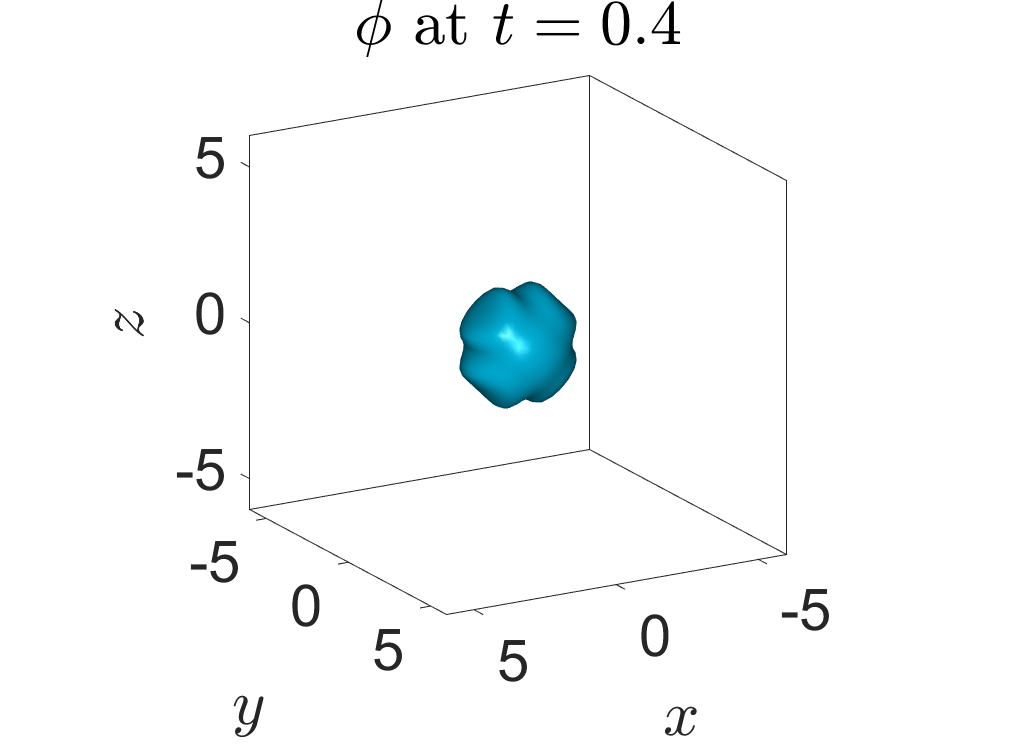}
        \caption{}
    \end{subfigure}
    \par
    \begin{subfigure}[t]{0.23\textwidth}
        \centering
        \includegraphics[width=\linewidth, height=3.2cm]{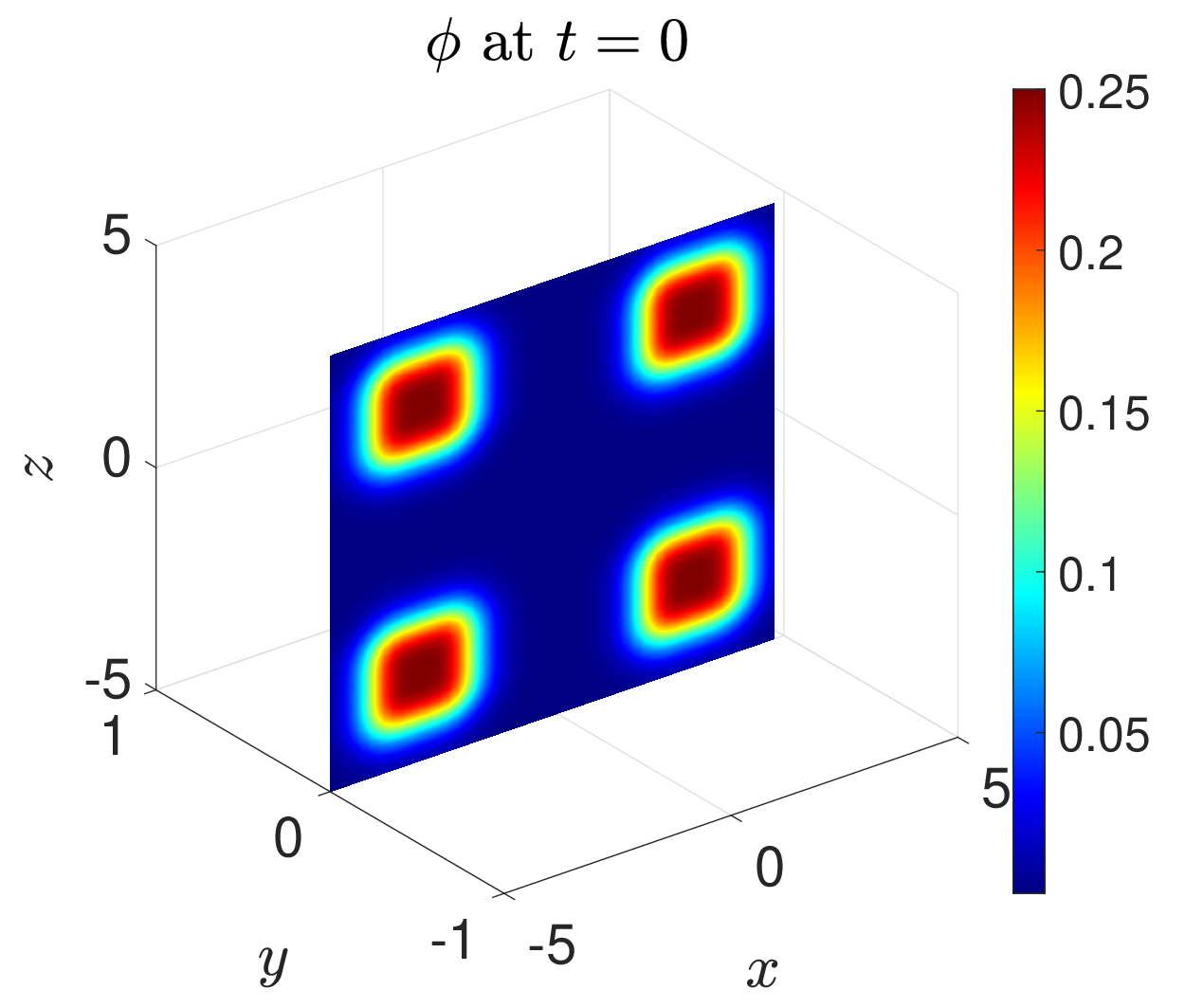}
        \caption{}
    \end{subfigure}
    \hfill
    \begin{subfigure}[t]{0.23\textwidth}
        \centering
        \includegraphics[width=\linewidth, height=3.2cm]{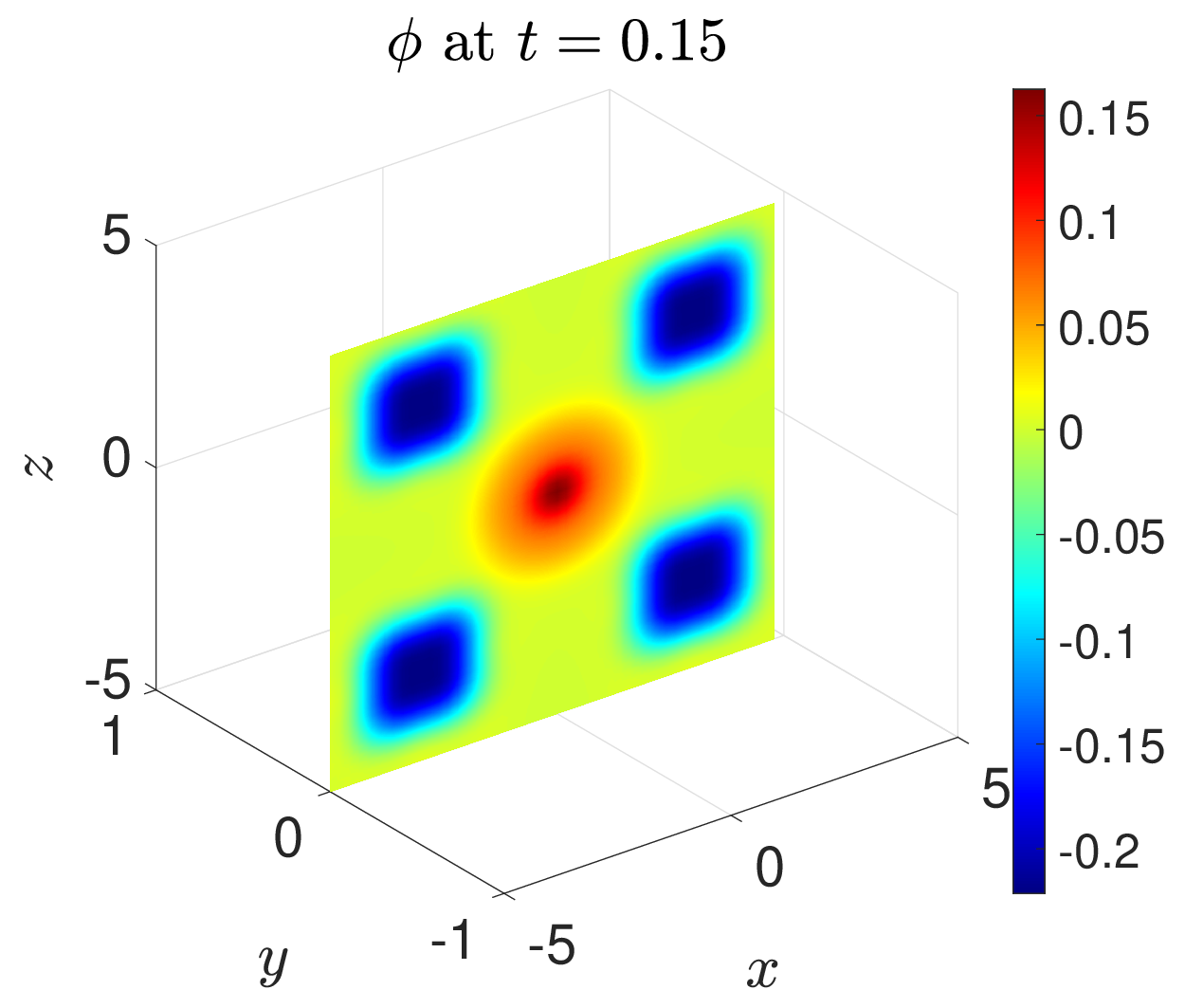}
        \caption{}
    \end{subfigure}
    \hfill
    \begin{subfigure}[t]{0.23\textwidth}
        \centering
        \includegraphics[width=\linewidth, height=3.2cm]{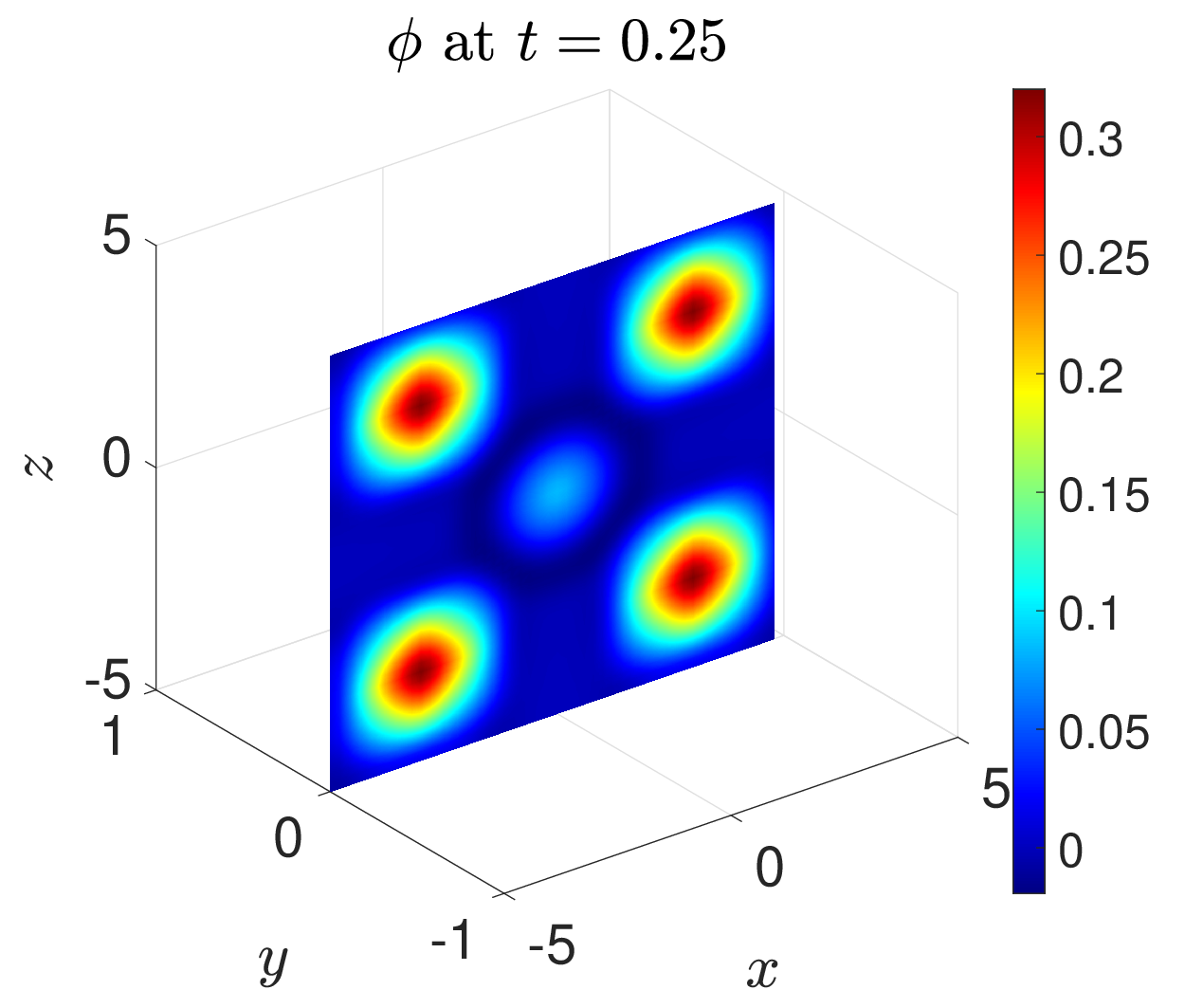}
        \caption{}
    \end{subfigure}
    \hfill
    \begin{subfigure}[t]{0.23\textwidth}
        \centering
        \includegraphics[width=\linewidth, height=3.2cm]{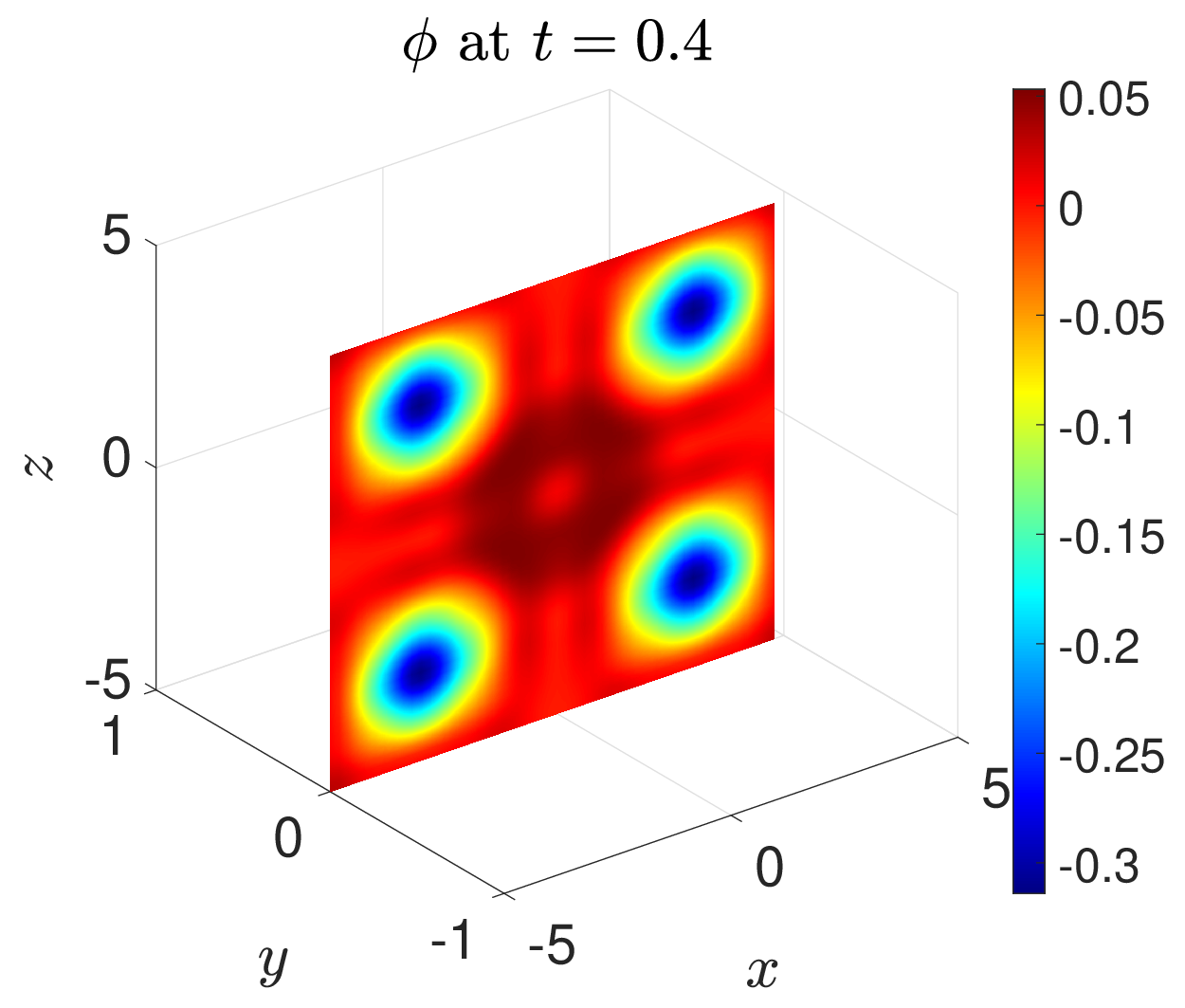}
        \caption{}
    \end{subfigure}
    \caption{The isosurfaces $\phi=0.005$ (first row) and slices of $\phi$ at $y=0$ (secong row) for KGS equation in 3D with $\mu=1$, $\lambda=0.8$ and $\eps=0.1$.}
    \label{fig-5-6-2}
\end{figure}

\begin{figure}[t!]
    \centering
        \includegraphics[width=0.32\linewidth,height=3.1cm]{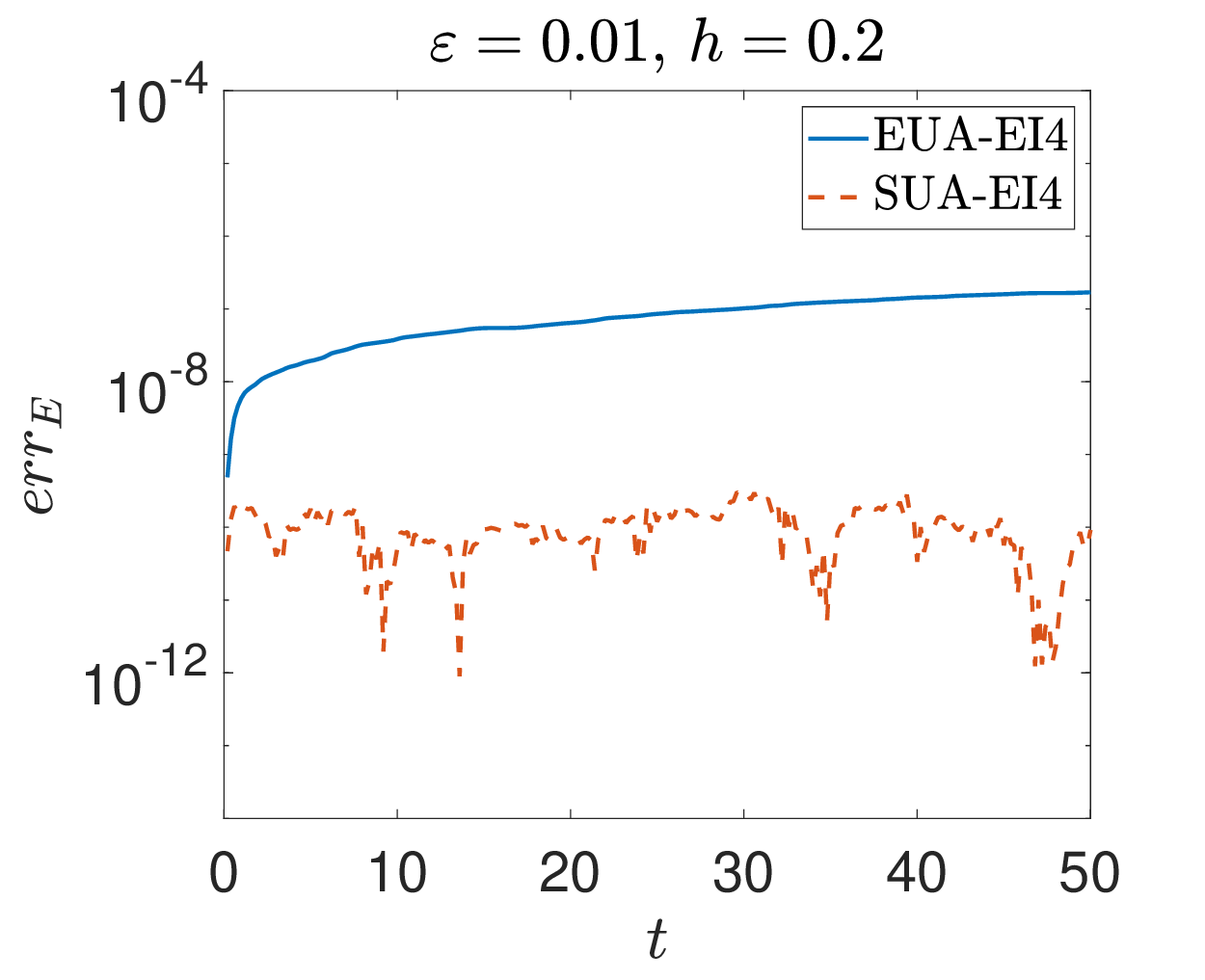}
        \includegraphics[width=0.32\linewidth,height=3.1cm]{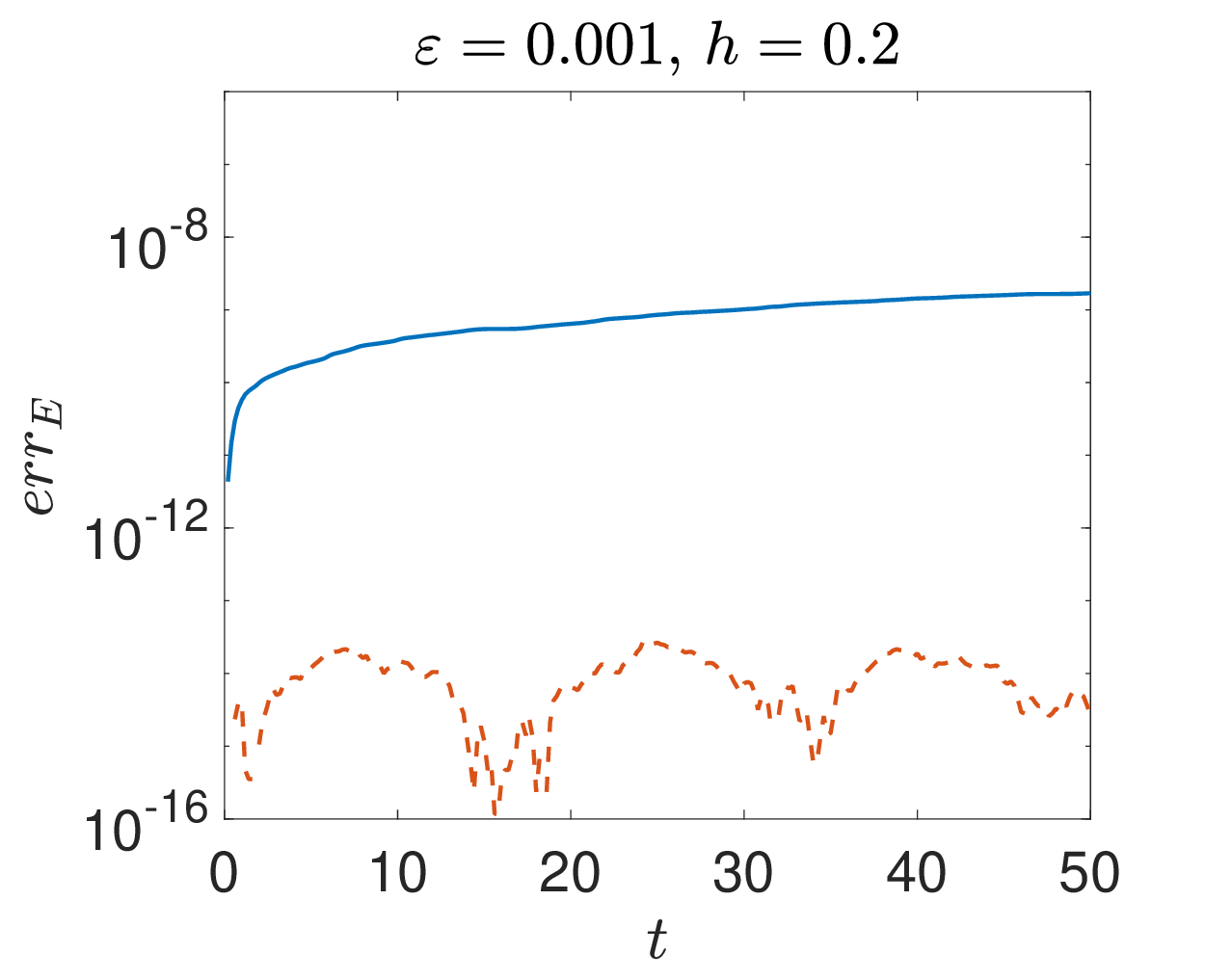}
        \includegraphics[width=0.32\linewidth,height=3.1cm]{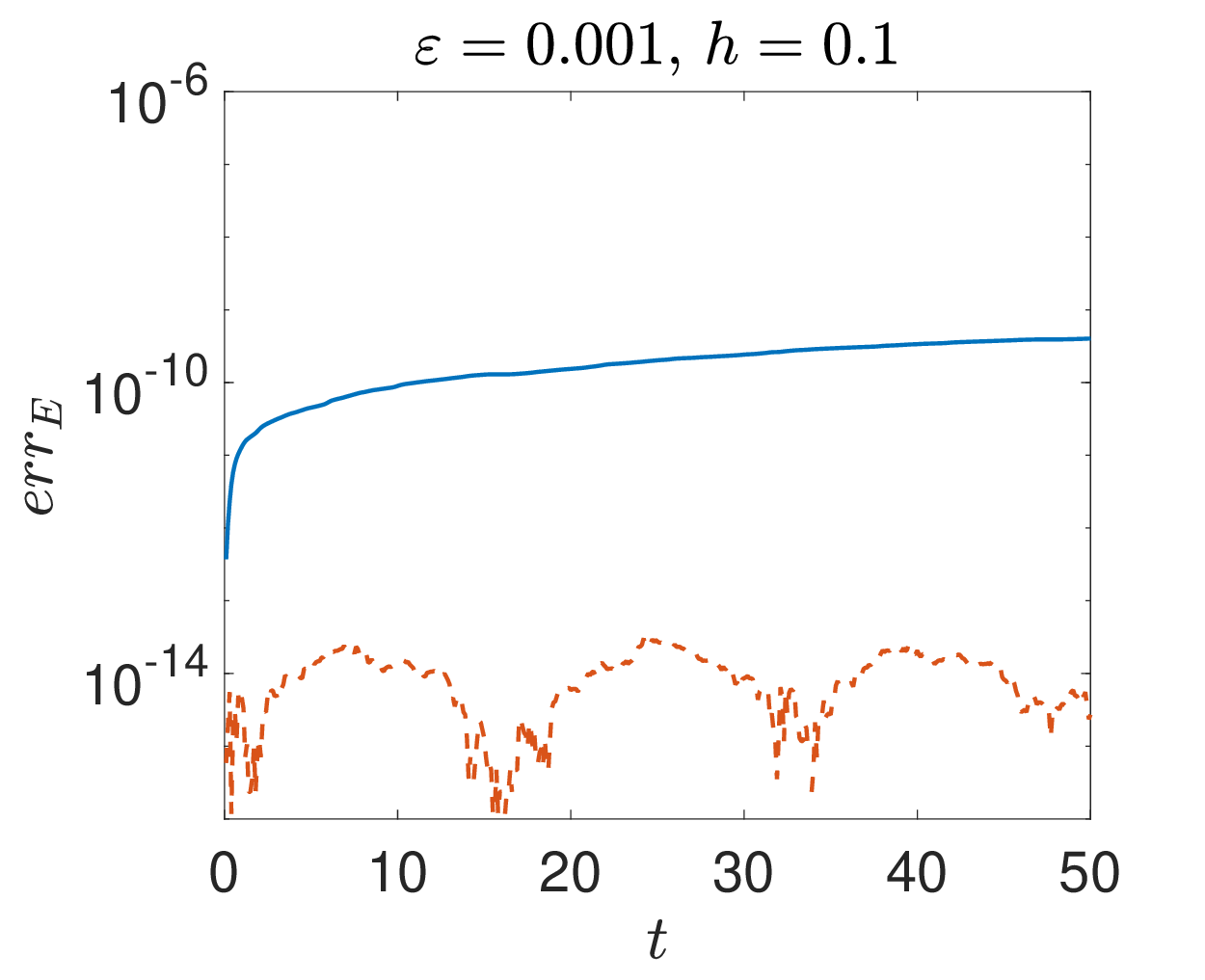}\\
        \includegraphics[width=0.32\linewidth,height=3.1cm]{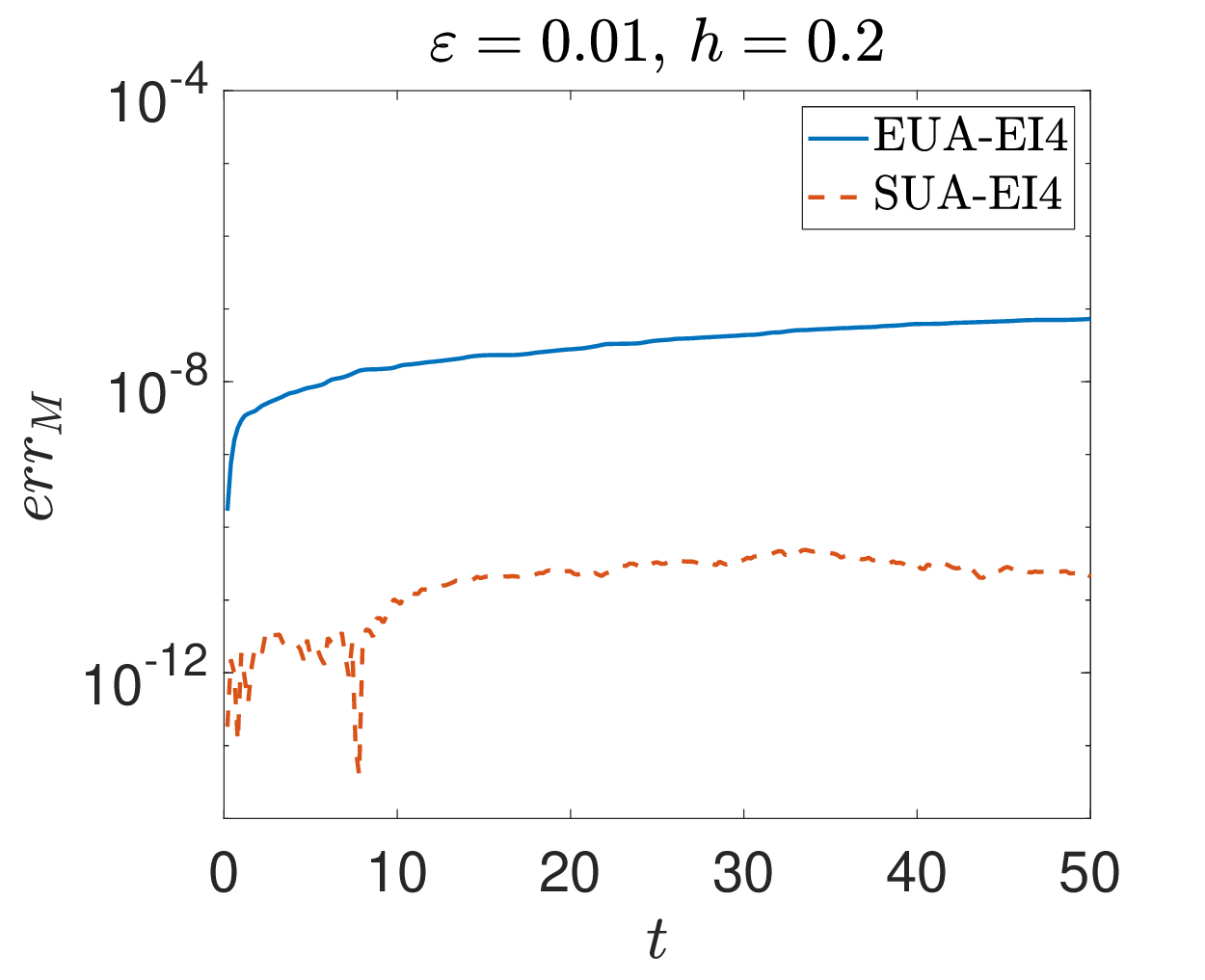}
        \includegraphics[width=0.32\linewidth,height=3.1cm]{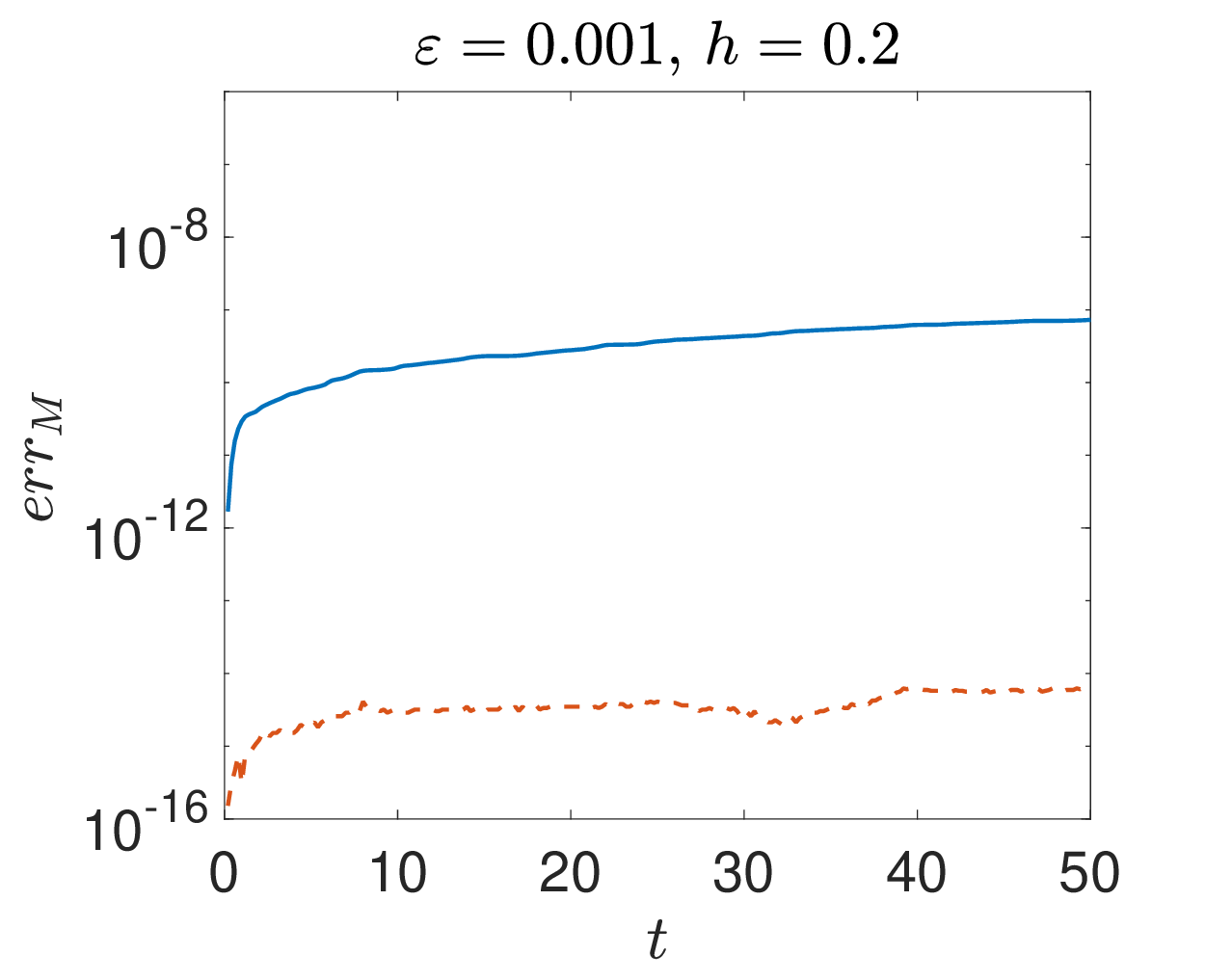}
        \includegraphics[width=0.32\linewidth,height=3.1cm]{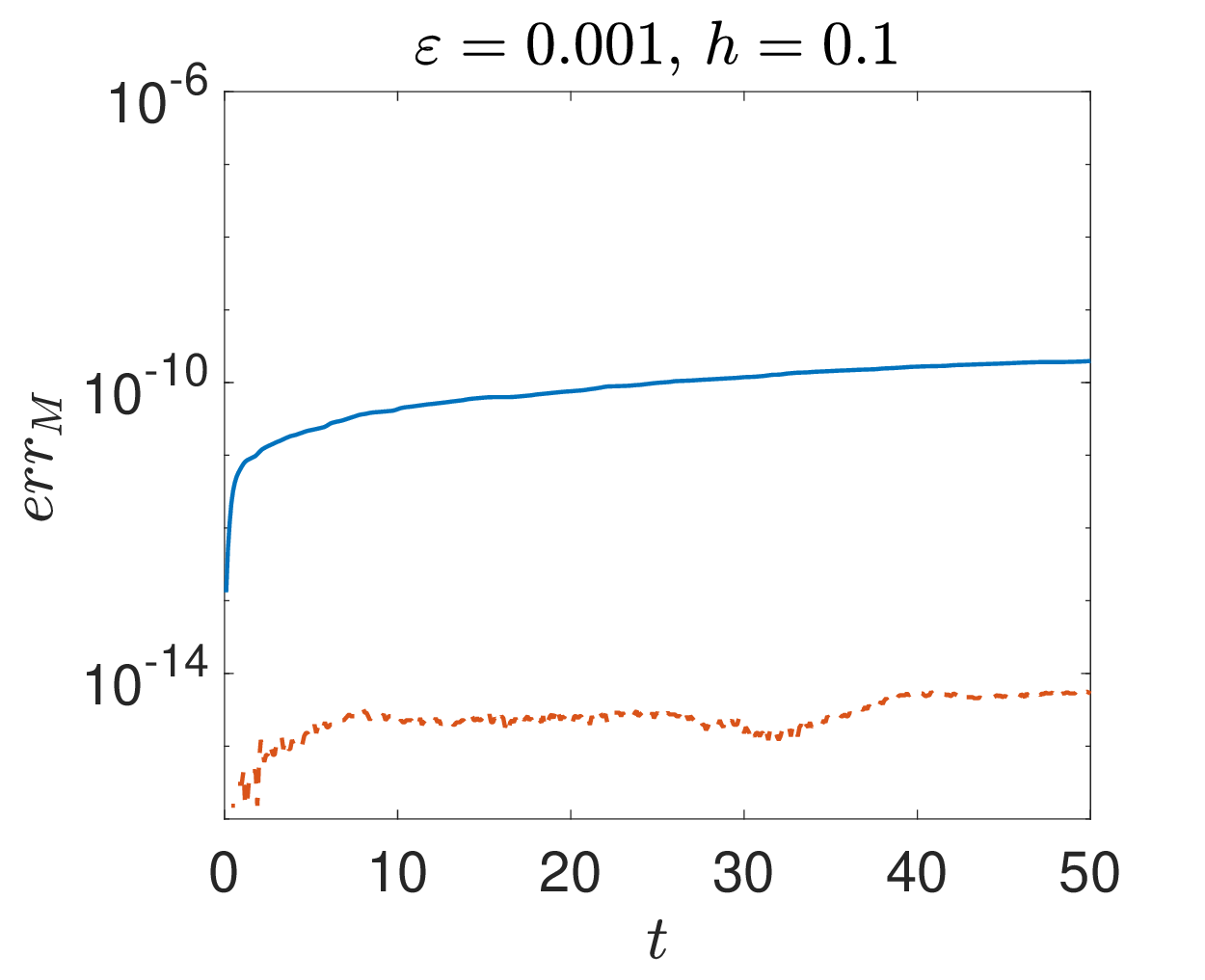}
    \caption{Energy error (top) and mass error (bottom) of KGS in 3D under different $\eps$ and $h$.}
    \label{fig-5-6-3}
\end{figure}

\section{Conclusion}
In this work, the two-scale framework has been applied for the first time to nonlinear dispersive coupled systems in the non-relativistic limit. On this
basis, a class of arbitrarily high-order UA fully discrete schemes has been
constructed, which combines the Fourier pseudospectral discretization in space
with exponential integrators in time. The explicit scheme enables efficient
computation, while the symmetric scheme achieves favorable long-time
near-conservation of mass and energy for both the KGD and KGS systems, owing to
its time-symmetric structure. Numerical experiments in two and three dimensions
reveal a wealth of dynamical behavior inherent in the two classes of systems.
Moreover, the proposed approach is applicable to the KGD equation with a
time-dependent electromagnetic potential; in this setting, energy conservation
no longer holds, whereas mass conservation can be assessed in an analogous
manner. Extending the two-scale framework to a broader class of highly
oscillatory nonlinear dispersive coupled systems, and constructing high-order UA
schemes that preserve such structures approximately over long times, will be
pursued in future work.

\bibliographystyle{model-num-names}

\begin{thebibliography}{00}

\bibitem{AA}
{\sc H. Added, S. Added}, Equations of Langmuir turbulence and nonlinear Schr\"{o}dinger equation: smoothness and approximation. J. Funct. Anal., 79 (1988), pp. 183-210.


\bibitem{BCJT}
{\sc W. Bao, Y. Cai, X. Jia, and Q. Tang}, A uniformly accurate (UA) multiscale time integrator pseudospectral method for the Dirac equation in the nonrelativistic limit regime. SIAM J. Numer. Anal., 52 (2015), pp. 1-41.

\bibitem{BCJY}
{\sc W. Bao, Y. Cai, X. Jia, and J. Yin}, Error estimates of numerical methods for the nonlinear Dirac equation in the nonrelativistic limit regime. Sci. China Math., 59 (2016), pp. 1-34.

\bibitem{BCZ}
{\sc W. Bao, Y. Cai, and X. Zhao}, A uniformly accurate multiscale time integrator pseudospectral method for the Klein-Gordon equation in the nonrelativistic limit regime, SIAM J. Numer. Anal., 52 (2014), pp. 2488-2511.

\bibitem{BS}
{\sc W. Bao and C. Su}, Uniform error estimates of a finite difference method for the Klein-Gordon-Schr\"{o}dinger system in the nonrelativistic and massless limit regimes, Kinet. Relat. Mod., 11 (2018), pp. 1037-1062.

\bibitem{BY}
{\sc W. Bao and L. Yang}, Efficient and accurate numerical methods for the Klein-Gordon-Schr\"{o}dinger equations, J. Comput. Phys., 225 (2007), pp. 1863-1893.

\bibitem{BZ}
{\sc W. Bao and X. Zhao}, A uniformly accurate (UA) multiscale time integrator Fourier pseudospectral method for the Klein-Gordon-Schr\"{o}dinger equations
in the nonrelativistic limit regime, Numer. Math., 135 (2017), pp. 833-873.

\bibitem{BFS}
{\sc S. Baumstark, E. Faou, K. Schratz}, Uniformly accurate exponential type integrators for Klein-Gordon equations with asymptotic convergence to the classical NLS splitting,  Math. Comp., 87 (2018), pp. 1227-1254.

\bibitem{BKS}
{\sc S. Baumstark, G. Kokkala, K. Schratz}, Asymptotic consistent exponential-type integrators for Klein-Gordon-Schr\"{o}dinger systems from relativistic to non-relativistic regimes, Electron. T. Numer. Ana., 48 (2018), pp. 63-80.

\bibitem{BMP}
{\sc P. Bechouche, N.J. Mauser, F. Poupaud}, (Semi)-Nonrelativistic limits of the Dirac equation with external time-dependent electromagnetric field, Commun. Math. Phys., 197 (1998), pp. 405-425.

\bibitem{Bil}
{\sc P. Biler}, Attractors for the system of Schrödinger and Klein-Gordon equations with Yukawa coupling, SIAM J. Math. Anal., 21 (1990), pp. 1190-1212.

\bibitem{Bou1}
{\sc N. Bournaveas}, Local existence of energy class solutions for the Dirac-Klein-Gordon equations, Commun. Partial Differ. Equ., 24 (1999), pp. 1167-1193.

\bibitem{Bou2}
{\sc N. Bournaveas},  Low regularity solutions of the Dirac-Klein-Gordon equations in two space dimensions, Commun. Partial Differ. Equ.,  26 (2021), pp. 1345-1366.

\bibitem{CY1}
{\sc Y. Cai and W. Yi}, A uniformly accurate method for the Klein-Gordon-Dirac system in the nonrelativistic regime. J. Comput. Phys., 486 (2023), 112105.

\bibitem{CY2}
{\sc Y. Cai and W. Yi}, Error estimates of finite difference time domain methods for the Klein-Gordon-Dirac system in the nonrelativistic limit regime. Commun. Math. Sci., 16 (2018), pp. 1325-1346.

\bibitem{CY3}
{\sc Y. Cai and W. Yi}, Uniformly accurate nested Picard iterative integrators for the Klein-Gordon-Dirac System in the nonrelativistic limit regime, J. Sci. Comput., 109 (2026), 2.

\bibitem{CZ}
{\sc Y. Cai and X. Zhou}, Uniformly accurate nested Picard iterative integrators for the Klein-Gordon-Schr\"{o}dinger equation in the nonrelativistic regime, Numer. Algorithms., 94 (2023), pp. 371-396.

\bibitem{Cal}
{\sc M.C. Calvo}, Uniformly accurate integrators for Klein-Gordon-Schr\"{o}dinger systems from the classical to non-relativistic limit regime, J. Comput. Appl. Math., 420 (2023), 114756.


\bibitem{CCLM}
{\sc Ph. Chartier, N. Crouseilles, M. Lemou, and F. M\'{e}hats}, Uniformly accurate numerical schemes for highly oscillatory Klein-Gordon and nonlinear Schr\"{o}dinger equations, Numer. Math., 129 (2015), pp. 211-250.

\bibitem{CCLMZ}
{\sc Ph. Chartier, N. Crouseilles, M. Lemou, F. M\'{e}hats and X. Zhao}, Uniformly accurate methods for three dimensional Vlasov equations under strong magnetic field with varying direction, SIAM J. Sci. Comput., 42 (2020), pp. B520-B547.

\bibitem{Zhao}
{\sc Ph. Chartier, M. Lemou, F. M\'{e}hats, and X. Zhao},
Derivative-free high-order uniformly accurate schemes for highly-oscillatory systems, IMA J. Numer. Anal.,  42 (2022), pp. 1623-1644.

\bibitem{CZ}
{\sc G. Chen and Y. Zheng}, Solitary waves for the Klein-Gordon-Dirac model. J. Differ. Equ., 7 (2012), pp. 2263-2284.

\bibitem{Dirac}
{\sc P.A.M. Dirac}, Principles of Quantum Mechanics. Oxford University Press, London (1958).

\bibitem{FS}
{\sc E. Faou, K. Schratz}, Asymptotic preserving schemes for the Klein-Gordon equation in the non-relativistic limit regime, Numer. Math., 126 (2014), pp. 441-469. 

\bibitem{FL}
{\sc Y. Feng and C. Liu}, A Uniformly Accurate Multiscale Time Integrator for the Klein-Gordon-Schr\"{o}dinger Equations in the Nonrelativistic Regime via Simplified Transmission Conditions, arXiv:2605.12936, 2026.

\bibitem{FXY}
{\sc Y. Feng, Z. Xu, and J. Yin},  Uniform error bounds of exponential wave integrator methods for the long-time dynamics of the Dirac equation with small
potentials, Appl. Numer. Math., 172 (2022), pp. 50-66.

\bibitem{FGWC}
{\sc Y. Fu, X. Gu, Y. Wang, and W. Cai}, Mass-, and Energy Preserving Schemes with Arbitrarily High Order for the Klein-Gordon-Schr\"{o}dinger Equations, J. Sci. Comput., 97 (2023), 75.

\bibitem{FT}
{\sc I. Fukuda, M. Tsutsumi}, On the Yukawa-coupled Klein-Gordon-Schr\"{o}dinger equations in three space dimensions. Proc. Jpn. Acad., 51 (1975), pp. 402-405.

\bibitem{Gre}
{\sc W. Greiner}, Relativistic Quantum Mechanics-Wave Equations, Springer, Berlin, Heidelberg, 1994.

\bibitem{GM}
{\sc B. Guo, C. Miao}, Asymptotic behavior of coupled Klein-Gordon-Schr\"{o}dinger equations. Sci. China Ser. A, 25 (1995), pp. 705-714.

\bibitem{GLLM}
{\sc S. Guo, C. Li, X. Li, and L. Mei}, Energy-conserving and time-stepping-varying ESAV-Hermite-Galerkin spectral scheme for nonlocal Klein-Gordon-Schr\"{o}dinger system with fractional Laplacian in unbounded domains, J. Comput. Phys., 458 (2022), 111096.

\bibitem{GMYL}
{\sc S. Guo, L. Mei, W. Yan, and Y. Yin}, Mass-, Energy-, and Momentum-Preserving spectral scheme for Klein-Gordon-Schr\"{o}dinger system on infinite domains, SIAM J. Sci. Comput., 45 (2023), pp. B200-B230.

\bibitem{HO06}
{\sc M. Hochbruck and A. Ostermann},  Explicit exponential Runge--Kutta methods for semilinear parabolic problems, SIAM J. Numer. Anal., 43 (2006), pp. 1069-1090.

\bibitem{HO10}
{\sc M. Hochbruck and A. Ostermann}, Exponential integrators, Acta Numer., 19 (2010), pp. 209-286.

\bibitem{Hol}
{\sc J.W.V. Holten}, On the electrodynamics of spinning particles, Nucl. Phys. B 356 (1991) 3-26.


\bibitem{Li}
{\sc J. Li}, Improved uniform error bounds of an exponential wave integrator method for the Klein-Gordon-Schr\"{o}dinger equation with the small coupling constant., Commun. Math. Sci., 22 (2024), pp. 583-612.

\bibitem{LJ}
{\sc J. Li and X. Jin}, Structure-preservmg exponential wave integrator methods and the long-time convergence analysis for the Klein-Gordon-Dirac equation with the small
coupling constant, Numer. Methods Partial Differ. Equ., 39 (2023), pp. 3375-3416.

\bibitem{LWCD}
{\sc J. Li, X. Wang, Q. Chen, and S. Deng}, Improved uniform error bounds of a Lawson-type exponential wave integrator method for the Klein-Gordon-Dirac equation. Appl. Math. Comput., 479 (2024), 128877.

\bibitem{LSZ}
{\sc X. Li, Z. Sheng, and L. Zhang}, High-order Lagrange multiplier method for the coupled Klein-Gordon-Schr\"{o}dinger system, J. Comput. Phys., 493 (2023), 112456.

\bibitem{LGW}
{\sc F. Liao, F. Geng, and T. Wang}, A mass and energy conservative fourth‑order compact  difference scheme for the Klein‑Gordon‑Dirac equations, Calcolo, 59 (2022), 9.

\bibitem{LWZ}
{\sc K. Liu, B. Wang, and X. Zhao}, Solving long-time nonlinear Schr\"{o}dinger equation by a class of oscillation-relaxation integrators, SIAM Multi. Model. Simul., 23 (2025), pp. 313-338.

\bibitem{MLJ}
{\sc L. Mei, X. Liu, and Y. Jiang}, Unconditionally stable explicit exponential methods for the Klein-Gordon-Schr\"{o}dinger equations, J. Comput. Phys., 533 (2025), 113993.

\bibitem{MRZ}
{\sc S.L. Musher, A.M. Rubenchik, and V.E. Zakharov}, Weak Langmuir turbulence, Phys. Rep. 252 (1995), pp. 77-274.

\bibitem{Ni}
{\sc D.R. Nicholson}, Topics in strong Langmuir turbulence, Phys. Scr. 27 (1983), pp. 77-82.

\bibitem{Ohl}
{\sc T. Ohlsson}, Relativistic Quantum Physics: From Advanced Quantum Mechanics to Introductory Quantum Field Theory, Cambridge University Press, 2011.

\bibitem{STW}
{\sc J. Shen, T. Tang, and L. Wang}, Spectral Methods: Algorithms, Analysis and Applications, Springer, 2011.

\bibitem{Shen}
{\sc Y. Shen}, The Principles of Nonlinear Optics, Wiley, New York, 1984.

\bibitem{SK}
{\sc J.J. Slawianowski, V. Kovalchuk}, Klein-Gordon-Dirac equation: physical justification and quantization attempts, Rep. Math. Phys. 49 (2002), pp. 249-257.

\bibitem{WZ23} 
{\sc B. Wang and X. Zhao}, Geometric two-scale integrators for highly oscillatory system: uniform accuracy and near conservations, SIAM J. Numer. Anal., 61 (2023),  pp. 1246-1277.


\bibitem{WWL}
{\sc L. Wang, B. Wang, and J. Li}, Fourth-order uniformly accurate integrators with long time near conservations for the nonlinear Dirac equation in the nonrelativistic regime, SIAM Multi. Model. Simul., 24 (2026), pp. 1-31.

\bibitem{WZ}
{\sc M. Wang and Y. Zhou}, The periodic wave solutions for the Klein-Gordon-Schr\"{o}dinger equations, Phys. Lett. A, 318 (2003), pp. 84-92.

\bibitem{WZJ}
{\sc T. Wang, X. Zhao, and J. Jiang},  Unconditional and optimal $H^{2}$-error estimates of two linear and conservative finite difference schemes for the klein-Gordon-Schr\"{o}dinger equation in high dimensions, Adv. Comput. Math., 44 (2018), pp. 477-503.

\bibitem{WZ}
{\sc Y. Wang and X. Zhao}, Symmetric high order Gautschi-type exponential wave integrators psedospectral method for the nonlinear Klein-Gordon equation in the nonrelativistic limit regime, J. Numer. Anal. Model., 15 (2018), pp. 405-427.

\bibitem{CY4}
{\sc W. Yi and Y. Cai}, Optimal error estimates of finite difference time domain methods for the Klein-Gordon-Dirac system. IMA J. Numer. Anal. 40 (2020), pp. 1266-1293.

\bibitem{YRS}
{\sc W. Yi, X. Ruan, and C. Su}, Optimal resolution methods for the Klein-Gordon-Dirac system in the nonrelativistic limit regime, J. Sci. Comput., 79 (2019), pp. 19907-1935.

\bibitem{ZS}
{\sc Y. Zhang and J. Shen}, Efficient Structure Preserving Schemes for the Klein-Gordon-Schr\"{o}dinger Equations, J. Sci. Comput., 89 (2021), 47.



\end{thebibliography}

\end{document}